\numberwithin{equation}{section}
\numberwithin{figure}{section}
\theoremstyle{plain}
\newtheorem{thm}{\protect\theoremname}[section]
  \theoremstyle{definition}
  \newtheorem{defn}[thm]{\protect\definitionname}
  \theoremstyle{remark}
  \newtheorem*{rem*}{\protect\remarkname}
  \theoremstyle{plain}
  \newtheorem{lem}[thm]{\protect\lemmaname}
  \theoremstyle{plain}
  \newtheorem{cor}[thm]{\protect\corollaryname}
 \newlist{casenv}{enumerate}{4}
 \setlist[casenv]{leftmargin=*,align=left,widest={iiii}}
 \setlist[casenv,1]{label={{\itshape\ \casename} \arabic*.},ref=\arabic*}
 \setlist[casenv,2]{label={{\itshape\ \casename} \roman*.},ref=\roman*}
 \setlist[casenv,3]{label={{\itshape\ \casename\ \alph*.}},ref=\alph*}
 \setlist[casenv,4]{label={{\itshape\ \casename} \arabic*.},ref=\arabic*}
  \theoremstyle{remark}
  \newtheorem*{claim*}{\protect\claimname}
  \theoremstyle{remark}
  \newtheorem{claim}[thm]{\protect\claimname}
  \theoremstyle{definition}
  \newtheorem{example}[thm]{\protect\examplename}
  \theoremstyle{remark}
  \newtheorem{rem}[thm]{\protect\remarkname}
\theoremstyle{remark}
\renewenvironment{claim}
  {\pushQED{\qed}\claimx}
  {\popQED\endclaimx}
\theoremstyle{remark}
\renewenvironment{claim*}
  {\pushQED{\qed}\claimxx}
  {\popQED\endclaimxx}
\theoremstyle{remark}
\renewenvironment{rem}
  {\pushQED{\qed}\remx}
  {\popQED\endremx}
\theoremstyle{remark}
\renewenvironment{rem*}
  {\pushQED{\qed}\remxx}
  {\popQED\endremxx}
\theoremstyle{definition}
\renewenvironment{example}
  {\pushQED{\qed}\examplex}
  {\popQED\endexamplex}
\theoremstyle{definition}
\renewenvironment{defn}
  {\pushQED{\qed}\defnx}
  {\popQED\enddefnx}
\theoremstyle{plain}
\renewenvironment{lem}
  {\pushQED{\qed}\lemx}
  {\popQED\endlemx}
\theoremstyle{plain}
\renewenvironment{cor}
  {\pushQED{\qed}\corx}
  {\popQED\endcorx}
\theoremstyle{definition}
\renewenvironment{thm}
  {\pushQED{\qed}\thmx}
  {\popQED\endthmx}
\let\emptyset\varnothing
  \providecommand{\claimname}{Claim}
  \providecommand{\corollaryname}{Corollary}
  \providecommand{\definitionname}{Definition}
  \providecommand{\examplename}{Example}
  \providecommand{\lemmaname}{Lemma}
  \providecommand{\remarkname}{Remark}
 \providecommand{\casename}{Case}
\providecommand{\theoremname}{Theorem}
\begin{document}

\title{Embeddings of Decomposition Spaces into Sobolev and BV Spaces}

\author{Felix Voigtlaender}

\keywords{Decomposition spaces, Sobolev spaces, Coorbit spaces, Smoothness
spaces, Embeddings, Shearlets, $\alpha$-modulation spaces, Besov
spaces, BV spaces}
\begin{abstract}
In the present paper, we investigate whether an embedding of a decomposition
space $\mathcal{D}\left(\mathcal{Q},L^{p},Y\right)$ into a given
Sobolev space $W^{k,q}\left(\mathbb{R}^{d}\right)$ exists. As special
cases, this includes embeddings into Sobolev spaces of (homogeneous
and inhomogeneous) Besov spaces, ($\alpha$)-modulation spaces, shearlet
smoothness spaces and also of a large class of wavelet coorbit spaces,
in particular of shearlet-type coorbit spaces.

Precisely, we will show that under extremely mild assumptions on the
covering $\mathcal{Q}=\left(Q_{i}\right)_{i\in I}$, we have $\mathcal{D}\left(\mathcal{Q},L^{p},Y\right)\hookrightarrow W^{k,q}\left(\mathbb{R}^{d}\right)$
as soon as $p\leq q$ and $Y\hookrightarrow\ell_{u^{\left(k,p,q\right)}}^{q^{\triangledown}}\left(I\right)$
hold. Here, $q^{\triangledown}=\min\left\{ q,q'\right\} $ and the
weight $u^{\left(k,p,q\right)}$ can be easily computed, only based
on the covering $\mathcal{Q}$ and on the parameters $k,p,q$.

Conversely, a necessary condition for existence of the embedding is
that $p\leq q$ and $Y\cap\ell_{0}\left(I\right)\hookrightarrow\ell_{u^{\left(k,p,q\right)}}^{q}\left(I\right)$
hold, where $\ell_{0}\left(I\right)$ denotes the space of finitely
supported sequences on $I$. Thus, even though our approach applies
to (almost) \emph{arbitrary} decomposition spaces, we can \emph{completely
characterize} existence of the embedding $\mathcal{D}\left(\mathcal{Q},L^{p},Y\right)\hookrightarrow W^{k,q}\left(\mathbb{R}^{d}\right)$
for the range $q\in\left(0,2\right]$, since this implies $q=q^{\triangledown}$.
Furthermore, this complete characterization also remains valid for
the important special case $q=\infty$. In this case, we will even
see that the embedding $\mathcal{D}\left(\mathcal{Q},L^{p},Y\right)\hookrightarrow W^{k,\infty}\left(\mathbb{R}^{d}\right)$
is equivalent to an embedding into the space $C_{b}^{k}\left(\mathbb{R}^{d}\right)$.

As a further result, we show that a decomposition space embeds into
${\rm BV}$ if and only if it embeds into $W^{1,1}\left(\mathbb{R}^{d}\right)$.
Hence, our results also yield a \emph{complete characterization} of
embeddings of decomposition spaces into ${\rm BV}$.

Finally, for the most important special case of a weighted Lebesgue
sequence space $Y=\ell_{v}^{r}\left(I\right)$, we simplify the conditions
from above even further: In this case, $Y\hookrightarrow\ell_{u}^{s}\left(I\right)$
is equivalent to $\frac{u}{v}\in\ell^{s\cdot\left(r/s\right)'}\left(I\right)$.
As a consequence, verification of the previously mentioned necessary
or sufficient conditions reduces to an exercise in calculus which
can be solved without requiring any knowledge of Fourier analysis.

To indicate the power and simplicity of our criteria, we apply them
to all of the classes of decomposition spaces mentioned in the first
paragraph above.
\end{abstract}
\maketitle
\global\long\def\with{\,\middle|\,}
\global\long\def\esssup{\operatorname*{ess\, sup}}

\section{Introduction}

\label{sec:Introduction}Decomposition spaces were first introduced
in full generality by Feichtinger and Gröbner \cite{DecompositionSpaces1,DecompositionSpaces2}
in the 80s. They were then used by Gröbner in his PhD thesis\cite{GroebnerAlphaModulationSpaces}
to define the so-called $\alpha$-modulation spaces.

Recently, decomposition spaces received increased attention. This
began with the work of Borup and Nielsen \cite{BorupNielsenDecomposition},
who constructed Banach frames for certain decomposition spaces. Note,
however, that Borup and Nielsen consider a more restricted class of
decomposition spaces than Feichtinger and Gröbner. It is this class
of decomposition spaces that we will consider in the present paper.
Roughly, these decomposition spaces are defined as follows: One fixes
a covering $\mathcal{Q}=\left(Q_{i}\right)_{i\in I}$ of a subset
$\mathcal{O}$ of the \emph{frequency space} $\mathbb{R}^{d}$. Then,
given a suitable partition of unity $\Phi=\left(\varphi_{i}\right)_{i\in I}$
subordinate to $\mathcal{Q}$, the decomposition space (quasi)-norm
is defined as
\[
\left\Vert f\right\Vert _{\mathcal{D}\left(\mathcal{Q},L^{p},Y\right)}:=\left\Vert \left(\left\Vert \mathcal{F}^{-1}\left(\smash{\varphi_{i}\widehat{f}}\right)\right\Vert _{L^{p}}\right)_{i\in I}\right\Vert _{Y},
\]
where $Y\leq\mathbb{C}^{I}$ is a suitable sequence space. Thus, these
spaces are defined analogously to Besov spaces, in contrast Triebel
Lizorkin spaces, where the ``order'' of the $L^{p}$ and $Y$ norms
would be reversed.

Recent papers which study concrete examples of decomposition spaces
are \cite{HanWangAlphaModulationEmbeddings,ToftWahlbergAlphaModulationEmbeddings},
in which the respective authors consider embeddings between $\alpha$-modulation
spaces for different values of $\alpha$. Furthermore, we would like
to mention \cite{Labate_et_al_Shearlet}, in which Labate et al.\@
use the construction of decomposition spaces to introduce a new class
of function spaces, the so-called shearlet smoothness spaces. In the
same paper, the authors establish certain embeddings between Besov
spaces and shearlet smoothness spaces.

As a more abstract application of decomposition spaces, we mention
the paper \cite{FuehrVoigtlaenderCoorbitSpacesAsDecompositionSpaces},
in which Hartmut Führ and the present author showed that a large class
of wavelet coorbit spaces are naturally isomorphic to certain decomposition
spaces, i.e., we have
\begin{equation}
{\rm Co}\left(L_{v}^{p,q}\left(\mathbb{R}^{d}\rtimes H\right)\right)\cong\mathcal{D}\left(\mathcal{Q}_{H},L^{p},\ell_{u}^{q}\right)\text{ for a certain weight }u=u_{\mathcal{Q},v,q}.\label{eq:CoorbitDecompositionIsomorphism}
\end{equation}
Here, the covering $\mathcal{Q}_{H}$ used to define the decomposition
spaces (which is called an \textbf{induced covering}) depends crucially
on the dilation group $H\leq{\rm GL}\left(\mathbb{R}^{d}\right)$
through which the wavelet coorbit space under consideration is defined.
This decomposition space view on coorbit spaces is often superior
to the original description as coorbit spaces; for example, the decomposition
space view makes it possible to consider embeddings between wavelet
coorbit spaces ``living'' on different groups.

Indeed, in my PhD thesis \cite{VoigtlaenderPhDThesis} and in the
upcoming paper \cite{DecompositionEmbeddings}, I developed a general
theory of embeddings between different decomposition spaces, i.e.\@
embeddings of the form
\[
\mathcal{D}\left(\mathcal{Q},L^{p_{1}},\ell_{u}^{q_{1}}\right)\hookrightarrow\mathcal{D}\left(\mathcal{P},L^{p_{2}},\ell_{v}^{q_{2}}\right)
\]
for (possibly) different coverings $\mathcal{Q},\mathcal{P}$. Here,
we write $Y\hookrightarrow X$ if%
\footnote{Actually, in the present generality, the inclusion $Y\subset X$ is
sometimes not satisfied. In this case, we still write $Y\hookrightarrow X$
if there is a bounded linear map $\iota:Y\to X$ which (in some sense)
deserves to be called an embedding. This is made more precise in Definition
\ref{def:EmbeddingDefinition}. As a concrete example, note that \emph{homogeneous}
Besov spaces are usually defined as subspaces $\dot{\mathcal{B}}_{s}^{p,q}\leq\nicefrac{\mathcal{S}'}{\mathcal{P}}$,
where $\mathcal{P}$ is the space of polynomials. Thus, the inclusion
$\dot{\mathcal{B}}_{s_{1}}^{p_{1},q_{1}}\subset\mathcal{B}_{s_{2}}^{p_{2},q_{2}}\subset\mathcal{S}'$
for an inhomogeneous Besov space $\mathcal{B}_{s_{2}}^{p_{2},q_{2}}$
can \emph{never} be satisfied. Nevertheless, a (bounded linear) map
$\iota:\dot{\mathcal{B}}_{s_{1}}^{p_{1},q_{1}}\to\mathcal{B}_{s_{2}}^{p_{2},q_{2}}$
``deserves'' to be called an embedding if $\iota f+\mathcal{P}=f$
for all $f\in\dot{\mathcal{B}}_{s_{1}}^{p_{1},q_{1}}$.%
} $Y\subset X$ and $\left\Vert x\right\Vert _{X}\leq C\cdot\left\Vert y\right\Vert _{Y}$
for all $y\in Y$.

Using this embedding theory and the interpretation of wavelet coorbit
spaces as decomposition spaces, the existence of embeddings between
coorbit spaces with respect to the shearlet-type dilation groups
\begin{equation}
H^{\left(c\right)}:=\left\{ \varepsilon\left(\begin{matrix}a & b\\
0 & a^{c}
\end{matrix}\right)\with a\in\left(0,\infty\right),\varepsilon>0,\, b\in\mathbb{R}\right\} \label{eq:ShearletTypeGroup}
\end{equation}
for different values of $c\in\mathbb{R}$ could be characterized,
cf.\@ \cite[Theorem 6.38]{VoigtlaenderPhDThesis}. Furthermore, I
considered embeddings between shearlet-type coorbit spaces and (inhomogeneous)
Besov spaces, achieving a complete characterization in many cases\cite[Theorem 6.3.11]{VoigtlaenderPhDThesis}.

As further applications, the embedding results for $\alpha$-modulation
spaces given in \cite{HanWangAlphaModulationEmbeddings} can be obtained
(and even generalized) using the theory developed in \cite{VoigtlaenderPhDThesis},
cf.\@ \cite[Theorem 6.1.7]{VoigtlaenderPhDThesis}. Likewise, the
results in \cite{Labate_et_al_Shearlet} for embeddings between shearlet
smoothness spaces and Besov spaces can be improved to obtain a complete
characterization of the existence of such embeddings, cf.\@ \cite[Theorem 6.4.3]{VoigtlaenderPhDThesis}.

Despite these results, the thesis \cite{VoigtlaenderPhDThesis} only
considers embeddings between different decomposition spaces. Thus,
embeddings of decomposition spaces into Sobolev spaces are not covered.
In the present paper, we will initiate the development of such an
embedding theory. Indeed, we will completely characterize the existence
of an embedding $\mathcal{D}\left(\mathcal{Q},L^{p},Y\right)\hookrightarrow W^{k,q}\left(\mathbb{R}^{d}\right)$
for $q\in\left(0,2\right]\cup\left\{ \infty\right\} $. For the remaining
range $q\in\left(2,\infty\right)$, we also obtain sufficient criteria
and necessary criteria, but these two types of conditions do not coincide.
Investigating this gap is a valuable topic for future research.

\subsection{Our results}

The precise formulation of the main result of this paper needs the
notion of a \textbf{regular covering} $\mathcal{Q}=\left(Q_{i}\right)_{i\in I}=\left(T_{i}Q_{i}'+b_{i}\right)_{i\in I}$,
which will be fully explained in Section \ref{sec:DecompositionSpaces}.
At the moment, we only remark that this means that the sets $Q_{i}=T_{i}Q_{i}'+b_{i}$
are obtained from the \textbf{normalized sets} $Q_{i}'$ by means
of the (invertible) affine maps $x\mapsto T_{i}x+b_{i}$. Furthermore,
we require the normalized sets $Q_{i}'$ to be uniformly bounded,
since without an assumption of this kind there would not be any meaningful
relationship between the set $Q_{i}$ and the affine map $x\mapsto T_{i}x+b_{i}$.

In addition to the properties from the previous paragraph, a regular
covering has to satisfy certain additional technical assumptions,
which are explained in detail in Section \ref{sec:DecompositionSpaces}.
These assumptions, however, are fulfilled for any reasonable covering
occuring in practice, in particular for the coverings used to define
(homogeneous and inhomogeneous) Besov spaces, ($\alpha$)-modulation
spaces, Shearlet smoothness spaces and all induced coverings $\mathcal{Q}_{H}$
which are used when interpreting wavelet coorbit spaces as decomposition
spaces (as in equation (\ref{eq:CoorbitDecompositionIsomorphism})).

Finally, the following theorem uses the notion of a \textbf{$\mathcal{Q}$-regular
sequence space} $Y$, which is also introduced in Section \ref{sec:DecompositionSpaces}.
For simplicity, the reader might simply think of the case of a weighted
Lebesgue sequence space $Y=\ell_{v}^{r}\left(I\right)$, as defined
in Subsection \ref{sub:Notation}. Using these notions, our main result
reads (slightly simplified) as follows:
\begin{thm}
\label{thm:IntroductionMainTheorem}Let $\mathcal{Q}=\left(Q_{i}\right)_{i\in I}=\left(T_{i}Q_{i}'+b_{i}\right)_{i\in I}$
be a regular covering of the open set $\emptyset\neq\mathcal{O}\subset\mathbb{R}^{d}$.
Let $k\in\mathbb{N}_{0}$ and $q\in\left(0,\infty\right]$ and let
$Y\leq\mathbb{C}^{I}$ be a $\mathcal{Q}$-regular sequence space
on $I$. Define the weight $u^{\left(k,p,q\right)}$ by
\[
u_{i}^{\left(k,p,q\right)}:=\left|\det T_{i}\right|^{\frac{1}{p}-\frac{1}{q}}\cdot\left(1+\left|b_{i}\right|^{k}+\left\Vert T_{i}\right\Vert ^{k}\right)\qquad\text{ for }i\in I
\]
and let $q^{\triangledown}:=\min\left\{ q,q'\right\} $. Then the
following hold:
\begin{itemize}
\item If $p\leq q$ and if
\[
Y\hookrightarrow\ell_{u^{\left(k,p,q\right)}}^{q^{\triangledown}}\left(I\right),
\]
then
\[
\mathcal{D}\left(\mathcal{Q},L^{p},Y\right)\hookrightarrow W^{k,q}\left(\mathbb{R}^{d}\right).
\]

\item Conversely, if
\[
\mathcal{D}\left(\mathcal{Q},L^{p},Y\right)\hookrightarrow W^{k,q}\left(\mathbb{R}^{d}\right)
\]
holds, then we have $p\leq q$ and
\[
Y\cap\ell_{0}\left(I\right)\hookrightarrow\ell_{u^{\left(k,p,q\right)}}^{q}\left(I\right),
\]
where $\ell_{0}\left(I\right)$ denotes the space of finitely supported
sequences on $I$. In case of $q=\infty$, we also get $Y\cap\ell_{0}\left(I\right)\hookrightarrow\ell_{u^{\left(k,p,q\right)}}^{1}\left(I\right)=\ell_{u^{\left(k,p,q\right)}}^{q^{\triangledown}}\left(I\right)$.\qedhere
\end{itemize}
\end{thm}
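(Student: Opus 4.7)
The plan for the sufficient direction is to use the partition of unity $\left(\varphi_{i}\right)_{i\in I}$ subordinate to $\mathcal{Q}$ to decompose $f=\sum_{i\in I}f_{i}$ with $f_{i}:=\mathcal{F}^{-1}\!\left(\varphi_{i}\widehat{f}\,\right)$, and then to bound $\left\Vert \partial^{\alpha}f\right\Vert _{L^{q}}$ for every $\left|\alpha\right|\le k$ via local estimates on the band-limited pieces $f_{i}$. The analytic heart is a uniform Bernstein--Nikolski inequality
\[
\left\Vert \partial^{\alpha}f_{i}\right\Vert _{L^{q}\left(\mathbb{R}^{d}\right)}\lesssim u_{i}^{\left(k,p,q\right)}\cdot\left\Vert f_{i}\right\Vert _{L^{p}\left(\mathbb{R}^{d}\right)},
\]
in which the Nikolski factor $\left|\det T_{i}\right|^{\frac{1}{p}-\frac{1}{q}}$ arises from the bandlimitation of $f_{i}$ (this is where the hypothesis $p\le q$ is used), while the polynomial factor $1+\left|b_{i}\right|^{k}+\left\Vert T_{i}\right\Vert ^{k}$ comes from $\sup_{\xi\in Q_{i}}\left|\xi^{\alpha}\right|$, using $Q_{i}=T_{i}Q_{i}'+b_{i}$ and the uniform boundedness of the normalized sets $Q_{i}'$. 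Concretely, one factors the operator $\partial^{\alpha}\circ(\text{mult.\ by }\varphi_{i})$ through the affine substitution $\xi=T_{i}\eta+b_{i}$ and verifies a uniform $L^{1}$-bound on the inverse Fourier transform of the resulting multiplier, which is available because the regularity assumptions on $\mathcal{Q}$ transfer smoothness of the normalized bump functions across the whole family.

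To assemble the individual estimates into one on $\left\Vert \partial^{\alpha}f\right\Vert _{L^{q}}$, one invokes a standard almost-orthogonality lemma for coverings of bounded overlap, and it is precisely here that $q^{\triangledown}=\min\left\{ q,q'\right\} $ enters: for $q\in\left[2,\infty\right)$, Hausdorff--Young followed by bounded-overlap in $L^{q'}$ yields the $\ell^{q'}$-bound; for $q=2$, Plancherel gives $\ell^{2}$; for $q\in\left(0,2\right)$, a Plancherel--P\'olya argument (or the $q$-subadditivity when $q\le1$) gives $\ell^{q}$; and for $q=\infty$, the triangle inequality gives $\ell^{1}$. Composing with the hypothesis $Y\hookrightarrow\ell_{u^{\left(k,p,q\right)}}^{q^{\triangledown}}\!\left(I\right)$ then yields $\left\Vert f\right\Vert _{W^{k,q}}\lesssim\left\Vert f\right\Vert _{\mathcal{D}\left(\mathcal{Q},L^{p},Y\right)}$.

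For the necessary direction, the plan is to construct explicit test functions. Given a finitely supported $c=\left(c_{i}\right)_{i\in F}\in Y\cap\ell_{0}\left(I\right)$, set $f:=\sum_{i\in F}c_{i}g_{i}$ where each atom has the form $g_{i}\left(x\right):=\left|\det T_{i}\right|^{-1/p'}e^{2\pi ib_{i}\cdot x}g_{0}\!\left(T_{i}^{\top}x\right)$ for a fixed Schwartz profile $g_{0}$ with $\widehat{g_{0}}$ supported in a uniform enlargement of all $Q_{i}'$, normalized so that $\left\Vert \partial^{\alpha}g_{0}\right\Vert _{L^{q}}\asymp1$ for some $\left|\alpha\right|=k$. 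The $\mathcal{Q}$-regularity of $Y$ forces $\left\Vert f\right\Vert _{\mathcal{D}\left(\mathcal{Q},L^{p},Y\right)}\asymp\left\Vert c\right\Vert _{Y}$, while translating the $g_{i}$ so that their spatial supports become essentially disjoint produces $\left\Vert f\right\Vert _{W^{k,q}}^{q}\gtrsim\sum_{i\in F}\left|c_{i}\right|^{q}\bigl(u_{i}^{\left(k,p,q\right)}\bigr)^{q}$, which, combined with the hypothesized embedding, yields $Y\cap\ell_{0}\hookrightarrow\ell_{u^{\left(k,p,q\right)}}^{q}$; the condition $p\le q$ then follows from a one-atom test (varying only $T_{i}$ and comparing the $L^{p}$- and $L^{q}$-norms). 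For $q=\infty$ the sharper $\ell^{1}$-conclusion is obtained by exploiting $W^{k,\infty}\hookrightarrow C_{b}^{k}$: one translates and phase-adjusts the atoms so that $\partial^{\alpha}f$ evaluated at a \emph{single} fixed point equals $\sum_{i\in F}\left|c_{i}\right|u_{i}^{\left(k,p,q\right)}$ up to a uniform constant, thereby replacing the supremum over $\mathbb{R}^{d}$ by a genuine sum of absolute values.

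The main technical obstacle I expect is establishing the Bernstein--Nikolski inequality with \emph{exactly} the weight $u^{\left(k,p,q\right)}$, uniformly in $i\in I$; tracking the affine parameters $T_{i},b_{i}$ through the multiplier $\left(2\pi i\xi\right)^{\alpha}\varphi_{i}\left(\xi\right)$ and producing a uniform $L^{1}$-bound on its inverse Fourier transform is the real work, and it is what forces the regularity hypotheses on $\mathcal{Q}$. A secondary subtlety lies in the almost-orthogonality summation step for $q\in\left(1,2\right)$, where neither the triangle inequality nor Plancherel is directly applicable, so an interpolation between the $q=1$ and $q=2$ endpoints of the linear map $\left(f_{i}\right)_{i}\mapsto\sum_{i}f_{i}$ restricted to sequences with Fourier supports of bounded overlap is needed.
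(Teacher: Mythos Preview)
Your outline matches the paper's approach closely: the sufficient direction via a local Bernstein--Nikolski estimate on the pieces $f_i$ (the paper's Lemma~\ref{lem:LocalDerivativeEstimate} plus Corollary~\ref{cor:BandlimitedLpEmbeddingSemiStructured}) followed by an almost-orthogonality summation lemma (Lemma~\ref{lem:SufficiencyWithoutDerivativesMainLemma}), and the necessary direction via affinely adapted test atoms pushed far apart in space (Lemma~\ref{lem:AsymptoticTranslationNorm} and Theorem~\ref{thm:SequenceSpaceEmbeddingIsNecessary}), with the $q=\infty$ refinement obtained by evaluating at a single point after phase-aligning the atoms.

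There is one technical slip. For the summation step with $q\in(2,\infty)$ you propose Hausdorff--Young followed by bounded overlap in $L^{q'}$; the first step indeed gives $\lVert\sum_i f_i\rVert_{L^q}\le\lVert\sum_i\widehat{f_i}\rVert_{L^{q'}}$ and bounded overlap controls the latter by $(\sum_i\lVert\widehat{f_i}\rVert_{L^{q'}}^{q'})^{1/q'}$, but there is no way back from $\lVert\widehat{f_i}\rVert_{L^{q'}}$ to $\lVert f_i\rVert_{L^q}$ when $q>2$ (Hausdorff--Young goes the wrong direction in that range). The paper handles $q\in[2,\infty]$ exactly the way you handle $q\in(1,2)$: by complex interpolation of the linear map $(g_i)_i\mapsto\sum_i\mathcal{F}^{-1}(\varphi_i^{(k+1)*}\widehat{g_i})$, now between the endpoints $q=2$ (Plancherel plus bounded overlap) and $q=\infty$ (triangle inequality), which produces the $\ell^{q'}=\ell^{q^{\triangledown}}$ bound. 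So your ``secondary subtlety'' is in fact the mechanism on \emph{both} sides of $q=2$.

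A smaller point: the derivation of $p\le q$ in the paper is not via ``varying $T_i$'' (the $T_i$ are fixed by the covering) but by restricting to functions with Fourier support in a single fixed $Q_{i_0}$, observing that on this subspace the embedding acts as a translation-commuting operator $L^p\to L^q$, and invoking the classical fact that nontrivial such operators exist only for $p\le q$ (Corollary~\ref{cor:SmallExponentsOnTheLeftForTranslationInvariantOperators}).
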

As noted above, for $q\in\left(0,2\right]\cup\left\{ \infty\right\} $,
we thus obtain a \emph{complete characterization} of the existence
of an embedding of the decomposition space $\mathcal{D}\left(\mathcal{Q},L^{p},Y\right)$
into the Sobolev space $W^{k,q}\left(\mathbb{R}^{d}\right)$, at least
if we ignore the slight difference between the spaces $Y$ and $Y\cap\ell_{0}\left(I\right)$.
Note that the restriction $q\in\left(0,2\right]\cup\left\{ \infty\right\} $
(which is crucial for sharpness) is a restriction on the ``target
space'' $W^{k,q}\left(\mathbb{R}^{d}\right)$, not on the ``source
space'' $\mathcal{D}\left(\mathcal{Q},L^{p},Y\right)$.

As a further simplification, for the case of a weighted Lebesgue space
$Y=\ell_{v}^{r}\left(I\right)$, we establish the equivalence 
\begin{align}
 & \ell_{v}^{r}\left(I\right)\cap\ell_{0}\left(I\right)\hookrightarrow\ell_{u}^{s}\left(I\right)\nonumber \\
\Longleftrightarrow & \ell_{v}^{r}\left(I\right)\hookrightarrow\ell_{u}^{s}\left(I\right)\nonumber \\
\Longleftrightarrow & \left(u_{i}/v_{i}\right)_{i\in I}\in\ell^{s\cdot\left(r/s\right)'}\left(I\right).\label{eq:IntroductionSequenceEmbeddingCharacterization}
\end{align}
Thus, verification of the embedding $Y=\ell_{v}^{r}\left(I\right)\hookrightarrow\ell_{u}^{q}\left(I\right)$
reduces to verifying finiteness of a certain $\ell^{t}\left(I\right)$-norm
\emph{of a single sequence}.

Finally, we remark that to prove the above result, we actually establish
a stronger statement which might be of independent interest: Given
$n\in\mathbb{N}_{0}$, we give sufficient criteria and also necessary
criteria for boundedness of the family of all (suitably defined) partial
derivative operators $\partial_{\ast}^{\alpha}:\mathcal{D}\left(\mathcal{Q},L^{p},Y\right)\to L^{q}\left(\mathbb{R}^{d}\right)$
with $\left|\alpha\right|=n$. We will see that a sufficient criterion
is $p\leq q$ and $Y\hookrightarrow\ell_{v^{\left(n,p,q\right)}}^{q^{\triangledown}}\left(I\right)$
with
\[
v_{i}^{\left(n,p,q\right)}=\left|\det T_{i}\right|^{\frac{1}{p}-\frac{1}{q}}\cdot\left(\left|b_{i}\right|^{n}+\left\Vert T_{i}\right\Vert ^{n}\right).
\]
Similarly, a necessary criterion is $p\leq q$ and $Y\cap\ell_{0}\left(I\right)\hookrightarrow\ell_{v^{\left(n,p,q\right)}}^{q}\left(I\right)$.
The theorem above is then a corollary by applying these criteria simultaneously
for $n=0,1,\dots,k$.

\subsection{Comparison with earlier results}

\label{sub:ComparisonAndBesovDetour}There are only two types of published
results of which I am aware in which embeddings into Sobolev spaces
of some kind of decomposition space are considered.

The first result is the \emph{complete characterization} of the embeddings
$M_{s,0}^{p,q}\left(\mathbb{R}^{d}\right)\hookrightarrow W^{k,p}\left(\mathbb{R}^{d}\right)$
of modulation spaces into Sobolev spaces (or more precisely, the
Bessel potential spaces $L_{s}^{p}\left(\mathbb{R}^{d}\right)$ which
coincide with $W^{k,p}\left(\mathbb{R}^{d}\right)$ for $p\in\left(1,\infty\right)$
and $s=k\in\mathbb{N}_{0}$) given in \cite{KobayashiSugimotoModulationSobolevInclusion}
by Kobayashi and Sugimoto. Compared to the present results, we note
the following differences:
\begin{itemize}
\item Kobayashi and Sugimoto obtain a \emph{complete characterization},
even for $p\in\left(2,\infty\right)$. Using our results (cf.\@ also
example \ref{exa:AlphaModulationSpaces} below), we obtain some sufficient
and some necessary conditions, but these only yield a complete characterization
for $p\in\left(0,2\right]\cup\left\{ \infty\right\} $. Furthermore,
the approach in \cite{KobayashiSugimotoModulationSobolevInclusion}
also characterizes embeddings into the spaces $L_{s}^{p}\left(\mathbb{R}^{d}\right)$
for non-integer values $s\in\mathbb{R}\setminus\mathbb{N}_{0}$.
\item Our results apply to arbitrary embeddings $M_{s,0}^{p,q}\left(\mathbb{R}^{d}\right)\hookrightarrow W^{k,r}\left(\mathbb{R}^{d}\right)$,
while in \cite{KobayashiSugimotoModulationSobolevInclusion}, only
the case $r=p$ is considered. Furthermore, the present results can
be applied to general $\alpha$-modulation spaces $M_{s,\alpha}^{p,q}\left(\mathbb{R}^{d}\right)$,
not only to modulation spaces (i.e.\@ not only for $\alpha=0$).
\end{itemize}
The second known result of which I am aware are the embeddings of
(inhomogeneous) Besov spaces $\mathcal{B}_{s}^{p,q}\left(\mathbb{R}^{d}\right)$
into the Triebel-Lizorkin spaces $F_{s}^{p,q}\left(\mathbb{R}^{d}\right)$.
This yields embeddings into Sobolev spaces as follows: As shown in
\cite[Section 2.3.5, equation (2)]{TriebelTheoryOfFunctionSpaces},
we have
\[
H^{s,p}\left(\mathbb{R}^{d}\right)=F_{s}^{p,2}\left(\mathbb{R}^{d}\right)\text{ for }p\in\left(1,\infty\right)\text{ and }s\in\mathbb{R},
\]
where $H^{s,p}\left(\mathbb{R}^{d}\right)$ is a Bessel potential
space. Furthermore, because of $H^{s,p}\left(\mathbb{R}^{d}\right)=W^{s,p}\left(\mathbb{R}^{d}\right)$
for $s\in\mathbb{N}_{0}$ (cf.\@ \cite[Section 2.3.1, Definition 1(7) and Remark 2]{TriebelInterpolation}),
we finally get $W^{s,p}\left(\mathbb{R}^{d}\right)=F_{s}^{p,2}\left(\mathbb{R}^{d}\right)$
for $p\in\left(1,\infty\right)$ and $s\in\mathbb{N}_{0}$.

Now, in \cite[Section 2.3.2, Proposition 2(iii)]{TriebelTheoryOfFunctionSpaces},
the embedding
\[
\mathcal{B}_{s}^{p,\min\left\{ p,q\right\} }\left(\mathbb{R}^{d}\right)\hookrightarrow F_{s}^{p,q}\left(\mathbb{R}^{d}\right)\hookrightarrow\mathcal{B}_{s}^{p,\max\left\{ p,q\right\} }\left(\mathbb{R}^{d}\right)
\]
is established for $p\in\left(0,\infty\right)$, $q\in\left(0,\infty\right]$
and $s\in\mathbb{R}$. Hence,
\begin{equation}
\mathcal{B}_{s}^{p,\min\left\{ p,2\right\} }\left(\mathbb{R}^{d}\right)\hookrightarrow W^{s,p}\left(\mathbb{R}^{d}\right)=F_{s}^{p,2}\left(\mathbb{R}^{d}\right)\hookrightarrow\mathcal{B}_{s}^{p,\max\left\{ p,2\right\} }\left(\mathbb{R}^{d}\right)\label{eq:SobolevBesovInclusionHard}
\end{equation}
for $p\in\left(1,\infty\right)$ and $s\in\mathbb{N}_{0}$. Finally,
\cite[Section 2.3, Proposition 2(ii)]{TriebelTheoryOfFunctionSpaces}
yields
\[
\mathcal{B}_{s+\varepsilon}^{p,q_{0}}\left(\mathbb{R}^{d}\right)\hookrightarrow\mathcal{B}_{s}^{p,q_{1}}\left(\mathbb{R}^{d}\right)\text{ for all }s\in\mathbb{R},\:\varepsilon>0\text{ and }p,q_{0},q_{1}\in\left(0,\infty\right],
\]
so that we also get
\begin{equation}
\mathcal{B}_{s+\varepsilon}^{p,q}\left(\mathbb{R}^{d}\right)\hookrightarrow\mathcal{B}_{s}^{p,\min\left\{ p,2\right\} }\left(\mathbb{R}^{d}\right)\hookrightarrow W^{s,p}\left(\mathbb{R}^{d}\right)\hookrightarrow\mathcal{B}_{s}^{p,\max\left\{ p,2\right\} }\left(\mathbb{R}^{d}\right)\hookrightarrow\mathcal{B}_{s-\varepsilon}^{p,q}\left(\mathbb{R}^{d}\right)\label{eq:SobolevBesovInclusionSoft}
\end{equation}
for arbitrary $p\in\left(1,\infty\right)$, $s\in\mathbb{N}_{0}$,
$\varepsilon>0$ and $q\in\left(0,\infty\right]$.

Apart from establishing inclusion relations between Besov spaces
and Sobolev spaces, the above embeddings can also be used to derive
sufficient and necessary criteria for embeddings of decomposition
spaces into Sobolev spaces. Indeed, if $\mathcal{D}\left(\mathcal{Q},L^{p},Y\right)\hookrightarrow W^{s,q}\left(\mathbb{R}^{d}\right)$
for some $q\in\left(1,\infty\right)$, equation (\ref{eq:SobolevBesovInclusionSoft})
yields
\[
\mathcal{D}\left(\mathcal{Q},L^{p},Y\right)\hookrightarrow\mathcal{B}_{s-\varepsilon}^{q,r}\left(\mathbb{R}^{d}\right)\text{ for all }r\in\left(0,\infty\right]\text{ and }\varepsilon>0.
\]
Conversely, if $\mathcal{D}\left(\mathcal{Q},L^{p},Y\right)\hookrightarrow\mathcal{B}_{s+\varepsilon}^{q,r}\left(\mathbb{R}^{d}\right)$
holds for some $s\in\mathbb{N}_{0}$, $q\in\left(1,\infty\right)$,
$r\in\left(0,\infty\right]$ and $\varepsilon>0$, then also $\mathcal{D}\left(\mathcal{Q},L^{p},Y\right)\hookrightarrow W^{s,q}\left(\mathbb{R}^{d}\right)$.

In particular, since the Besov spaces are decomposition spaces, i.e.\@
$\mathcal{B}_{s}^{p,q}\left(\mathbb{R}^{d}\right)=\mathcal{D}\left(\mathcal{P},L^{p},\ell_{u^{\left(s\right)}}^{q}\right)$
for a certain dyadic covering $\mathcal{P}=\left(P_{j}\right)_{j\in\mathbb{N}_{0}}$
of $\mathbb{R}^{d}$ (cf. example \ref{exa:InhomogeneousBesovSpaces}),
many of the results for embeddings of the form
\[
\mathcal{D}\left(\mathcal{Q},L^{p_{1}},\ell_{u}^{q_{1}}\right)\hookrightarrow\mathcal{D}\left(\mathcal{P},L^{p_{2}},\ell_{u^{\left(s\right)}}^{q_{2}}\right)=\mathcal{B}_{s}^{p_{2},q_{2}}\left(\mathbb{R}^{d}\right)
\]
from \cite{VoigtlaenderPhDThesis} can be used to establish embeddings
between decomposition spaces and Sobolev spaces. In the following,
we will summarize arguments of this type under the term ``\textbf{Besov
detour}'', since to establish necessary/sufficient conditions for
embedding into Sobolev spaces, we are making a ``detour'' through
Besov spaces.

In comparison to the results in this paper, the ``Besov detour''
differs as follows:
\begin{itemize}
\item The identity $F_{k}^{q,2}\left(\mathbb{R}^{d}\right)=W^{k,q}\left(\mathbb{R}^{d}\right)$
can fail for $q\notin\left(1,\infty\right)$, so that the ``Besov
detour'' is not applicable.
\item As seen above, the ``detour'' via Besov spaces can yield sufficient
criteria and necessary criteria for existence of the embedding $\mathcal{D}\left(\mathcal{Q},L^{p},Y\right)\hookrightarrow W^{k,q}\left(\mathbb{R}^{d}\right)$,
but these criteria will not be sharp in general. For example, usage
of equation (\ref{eq:SobolevBesovInclusionSoft}) causes a loss of
an (arbitrary) $\varepsilon>0$ in the smoothness parameter $s$.
Thus, embedding results obtained in this way will -- at least for
$q\in\left(0,2\right]\cup\left\{ \infty\right\} $ -- be inferior
to those obtained using the results in the present paper. For $q\in\left(2,\infty\right)$,
however, we will see some examples where the ``Besov detour'' yields
better results than those from the present paper.
\item For the results from \cite{VoigtlaenderPhDThesis} to be applicable,
the covering $\mathcal{Q}$ has to fulfill certain geometric properties.
Roughly speaking, $\mathcal{Q}$ has to be finer than the dyadic covering
$\mathcal{P}$ (or vice versa). More precisely, $\mathcal{Q}$ has
to be almost subordinate to $\mathcal{P}$ or vice versa, see \cite[Definition 3.3.1]{VoigtlaenderPhDThesis}.
But this is not fulfilled in all cases: For example, the covering
$\mathcal{Q}_{H^{\left(c\right)}}$ -- which is induced by the shearlet-type
group $H^{\left(c\right)}$ from equation (\ref{eq:ShearletTypeGroup})
-- is \emph{not} almost subordinate to the dyadic covering $\mathcal{P}$
for $c\in\mathbb{R}\setminus\left[0,1\right]$. Since $\mathcal{P}$
is also not almost subordinate to $\mathcal{Q}_{H^{\left(c\right)}}$
for this range of $c$, the embedding results from \cite{VoigtlaenderPhDThesis}
are not applicable in this case.

In contrast to the results from \cite{VoigtlaenderPhDThesis}, in
the present paper, no ``compatibility'' between the covering $\mathcal{Q}$
and the dyadic covering $\mathcal{P}$ is required.

\item Even if $\mathcal{Q}=\left(Q_{i}\right)_{i\in I}$ is almost subordinate
to the dyadic covering $\mathcal{P}$, for the verification of the
criteria given in \cite{VoigtlaenderPhDThesis}, one first has to
compute the so-called \textbf{intersection sets}
\[
I_{j}=\left\{ i\in I\with Q_{i}\cap P_{j}\neq\emptyset\right\} 
\]
for each $j\in\mathbb{N}_{0}$. Then, one has to check finiteness
of an expression of the form
\[
\left\Vert \left(\left\Vert \left(w_{i}\right)_{i\in I_{j}}\right\Vert _{\ell^{t}\left(I_{j}\right)}\right)_{j\in J}\right\Vert _{\ell^{r}\left(\mathbb{N}_{0}\right)}\text{ for certain }r,t\in\left(0,\infty\right]\text{ and a certain weight }\left(w_{i}\right)_{i\in I}.
\]
In comparison, the criteria developed in the present paper will be
\emph{much} more convenient to verify.
\end{itemize}

\subsection{Structure of the paper}

We begin our exposition in Section \ref{sec:DecompositionSpaces}
by clarifying the assumptions on the covering $\mathcal{Q}$ and reviewing
the definitions of decomposition spaces and Sobolev spaces. In particular,
we formally introduce the class of \textbf{regular coverings} which
was already used in Theorem \ref{thm:IntroductionMainTheorem}. We
remark that for the Quasi-Banach regime $q\in\left(0,1\right)$, it
seems that there is no general consensus on how the Sobolev spaces
$W^{k,q}\left(\mathbb{R}^{d}\right)$ (for $k\geq1$) should be defined.
Thus, readers interested in this case should pay close attention
to the definition of these spaces which we adopt.

The development of criteria for embeddings of decomposition spaces
into Sobolev spaces begins in Section \ref{sec:SufficientConditions},
where we show that the embedding $Y\hookrightarrow\ell_{u^{\left(k,p,q\right)}}^{q^{\triangledown}}\left(I\right)$
indeed suffices for the existence of such an embedding. Necessity
of the (slightly different) embedding $Y\cap\ell_{0}\left(I\right)\hookrightarrow\ell_{u^{\left(k,p,q\right)}}^{q}\left(I\right)$
is established in Section \ref{sec:NecessaryConditions}.

In view of these criteria, it is desirable to have a painless way
of deciding whether an embedding of the form $Y\hookrightarrow\ell_{u}^{s}\left(I\right)$
is actually true. For the case of a weighted Lebesgue sequence space
$Y=\ell_{v}^{r}\left(I\right)$, this problem is solved completely
in Section \ref{sec:SimplifiedConditions}, where we show (cf.\@
also equation (\ref{eq:IntroductionSequenceEmbeddingCharacterization})
above) that all one has to check is finiteness of a certain $\ell^{t}\left(I\right)$-norm
of a \emph{single} sequence.

We complete our abstract results in Section \ref{sec:EmbeddingsIntoBV},
where we show that a decomposition space embeds into a BV space if
and only if it embeds into $W^{1,1}\left(\mathbb{R}^{d}\right)$.
Thus, our previous criteria yield a complete characterization of when
this is true.

Finally, we illustrate our results by considering embeddings into
Sobolev spaces of several classes of decomposition spaces, namely
of (homogeneous and inhomogeneous) Besov spaces, ($\alpha$)-modulation
spaces, Shearlet smoothness spaces, Shearlet-type coorbit spaces and
coorbit spaces of the diagonal group.

\subsection{Notation}

\label{sub:Notation}In this paper, we use the convention
\[
\mathcal{F}f\left(\xi\right):=\widehat{f}\left(\xi\right):=\int_{\mathbb{R}^{d}}f\left(x\right)\cdot e^{-2\pi i\left\langle x,\xi\right\rangle }\,{\rm d}x
\]
for the \textbf{Fourier transform} of a function $f\in L^{1}\left(\mathbb{R}^{d}\right)$.
As is well known (see \cite[Theorem 8.29]{FollandRA}), with this
normalization, the Fourier transform extends to a unitary automorphism
of $L^{2}\left(\mathbb{R}^{d}\right)$, where the inverse is the unique
extension to $L^{2}\left(\mathbb{R}^{d}\right)$ of the inverse Fourier
transform given by
\[
\mathcal{F}^{-1}f\left(x\right):=f^{\vee}\left(x\right)=\widehat{f}\left(-x\right)
\]
for $f\in L^{1}\left(\mathbb{R}^{d}\right)$.

For $n\in\mathbb{N}_{0}$, we write $\underline{n}:=\left\{ k\in\mathbb{N}\with k\leq n\right\} $.
In particular, $\underline{0}=\emptyset$. We denote the usual standard
basis of $\mathbb{R}^{d}$ by $e_{1},\dots,e_{d}$. For a matrix $A\in\mathbb{R}^{d\times d}$,
we write
\[
\left\Vert A\right\Vert :=\max_{\left|x\right|=1}\left|Ax\right|,
\]
where (as in the remainder of the paper), we write $\left|x\right|$
for the usual euclidean norm of a vector $x\in\mathbb{R}^{d}$.

For an integrability exponent $p\in\left(0,\infty\right]$, we define
its \textbf{conjugate exponent} $p'\in\left[1,\infty\right]$ by
\[
p':=\begin{cases}
p', & \text{if }p\in\left[1,\infty\right],\\
\infty, & \text{if }p\in\left(0,1\right),
\end{cases}
\]
where for $p\in\left[1,\infty\right]$, $p'$ satisfies $\frac{1}{p}+\frac{1}{p'}=1$.
Furthermore, we define the \textbf{lower conjugate exponent} of $p$
by
\[
p^{\triangledown}:=\min\left\{ p,p'\right\} .
\]

For a function $f:\mathbb{R}^{d}\to\mathbb{C}$ and $x,\omega\in\mathbb{R}^{d}$,
we define
\[
\left\Vert f\right\Vert _{\sup}=\sup_{y\in\mathbb{R}^{d}}\left|f\left(y\right)\right|
\]
and
\begin{align*}
L_{x}f & :\mathbb{R}^{d}\to\mathbb{C},y\mapsto f\left(y-x\right),\\
M_{\omega}f & :\mathbb{R}^{d}\to\mathbb{C},y\mapsto e^{2\pi i\left\langle \omega,y\right\rangle }\cdot f\left(y\right).
\end{align*}

We denote by $\mathcal{S}\left(\mathbb{R}^{d}\right)$ the space of
\textbf{Schwartz functions} on $\mathbb{R}^{d}$. Its topological
dual space $\mathcal{S}'\left(\mathbb{R}^{d}\right)$ is the space
of \textbf{tempered distributions} on $\mathbb{R}^{d}$. Furthermore,
we denote by $\mathcal{D}\left(\mathcal{O}\right):=C_{c}^{\infty}\left(\mathcal{O}\right)$
the space of $C^{\infty}$ functions $f:\mathcal{O}\to\mathbb{C}$
with compact support in the open set $\emptyset\neq\mathcal{O}\subset\mathbb{R}^{d}$.
With a suitable topology (cf.\@ \cite[Definition 6.3]{RudinFA}),
this space becomes a locally convex topological vector space such
that its topological dual coincides with the space of \textbf{distributions}
$\mathcal{D}'\left(\mathcal{O}\right)$ on $\mathcal{O}$ as defined
(without introducing the topology just mentioned) for example in \cite[Section 9.1]{FollandRA}.
We generally equip $\mathcal{D}'\left(\mathcal{O}\right)$ with the
weak-$\ast$-topology, i.e.\@ with the topology of pointwise convergence
on $\mathcal{D}\left(\mathcal{O}\right)$.

Finally, if $I\neq\emptyset$ is an index set, $q\in\left(0,\infty\right]$
and if $u=\left(u_{i}\right)_{i\in I}$ with $u_{i}>0$ for all $i\in I$
is a weight on $I$, we define the \textbf{weighted Lebesgue sequence
space} $\ell_{u}^{q}\left(I\right)$ as
\[
\ell_{u}^{q}\left(I\right):=\left\{ \left(x_{i}\right)_{i\in I}\in\mathbb{C}^{I}\with\left(u_{i}x_{i}\right)_{i\in I}\in\ell^{q}\left(I\right)\right\} ,
\]
with $\left\Vert \left(x_{i}\right)_{i\in I}\right\Vert _{\ell_{u}^{q}}=\left\Vert \left(u_{i}x_{i}\right)_{i\in I}\right\Vert _{\ell^{q}}$.

\section{Decomposition Spaces and Sobolev spaces}

\label{sec:DecompositionSpaces}In this section, we introduce the
notion of decomposition spaces and Sobolev spaces that we will use.
In Subsection \ref{sub:SobolevSpaces}, we begin by explaining our
convention regarding the Sobolev spaces $W^{k,q}\left(\mathbb{R}^{d}\right)$.
For $q\in\left[1,\infty\right]$, this definition is entirely standard,
but for $q\in\left(0,1\right)$, the situation changes dramatically.
Readers only interested in the case $q\in\left[1,\infty\right]$ can
safely skip this first subsection if they are familiar with the usual
definition of Sobolev spaces.

As we will see in Subsection \ref{sub:DecompositionSpaces}, in order
to obtain well-defined decomposition spaces $\mathcal{D}\left(\mathcal{Q},L^{p},Y\right)$,
we have to impose certain assumptions on the covering $\mathcal{Q}$.
These different assumptions are discussed in detail in Subsection
\ref{sub:Coverings}. In particular, we introduce the new notion of
\textbf{regular coverings} with which we will mainly work in the remainder
of the paper.

As mentioned above, in Subsection \ref{sub:DecompositionSpaces},
we recall the definition of the decomposition space $\mathcal{D}\left(\mathcal{Q},L^{p},Y\right)$.
Our definition -- which is based on \cite{VoigtlaenderPhDThesis}
and \cite{DecompositionEmbeddings} -- is slightly different from
the usual one, e.g.\@ as in \cite{BorupNielsenDecomposition}. The
main difference is that we use a reservoir different from the space
$\mathcal{S}'\left(\mathbb{R}^{d}\right)$ to define our decomposition
spaces. The reason for this is twofold: First, we want to allow coverings
$\mathcal{Q}$ which cover a proper subset $\mathcal{O}\subsetneq\mathbb{R}^{d}$
and second, with the usual definition, it can happen that the resulting
decomposition space is \emph{not} complete.

In the final subsection, we recall from \cite{VoigtlaenderPhDThesis,DecompositionEmbeddings}
and \cite{TriebelTheoryOfFunctionSpaces} some results concerning
convolution in the Quasi-Banach regime $q\in\left(0,1\right)$ which
we will need. In a nutshell, the problem is that Young's convolution
relation $L^{1}\ast L^{q}\hookrightarrow L^{q}$ fails completely
for $q\in\left(0,1\right)$. Instead, we get an estimate of the form
\[
\left\Vert f\ast g\right\Vert _{L^{q}}\leq C\cdot\left\Vert f\right\Vert _{L^{q}}\cdot\left\Vert g\right\Vert _{L^{q}},
\]
but only under the additional assumption that the Fourier supports
${\rm supp}\,\widehat{f}\subset Q_{1}$ and ${\rm supp}\,\widehat{g}\subset Q_{2}$
are compact. Furthermore, the constant $C$ will depend in a nontrivial
way on the sets $Q_{1},Q_{2}$. Again, readers who are only interested
in the case $q\in\left[1,\infty\right]$ may safely skip this subsection,
apart from Corollary \ref{cor:BandlimitedLpEmbeddingSemiStructured}.
Note that for $p\in\left[1,\infty\right]$, the proof of this Corollary
is independent of the rest of Subsection \ref{sub:QuasiBanachConvolution}.

\subsection{The Sobolev spaces $W^{k,q}\left(\mathbb{R}^{d}\right)$}

\label{sub:SobolevSpaces}The definition of the Sobolev spaces $W^{k,q}\left(\mathbb{R}^{d}\right)$
for $q\in\left[1,\infty\right]$ is entirely standard, i.e.\@ we
define
\[
W^{k,q}\left(\mathbb{R}^{d}\right):=\left\{ f\in L^{q}\left(\mathbb{R}^{d}\right)\with\partial^{\alpha}f\in L^{q}\left(\mathbb{R}^{d}\right)\text{ for all }\alpha\in\mathbb{N}_{0}^{d}\text{ with }\left|\alpha\right|\leq k\right\} .
\]
Here, the partial derivative $\partial^{\alpha}f$ denotes the distributional
derivative of $f$, which is well-defined, since every $f\in L^{q}\left(\mathbb{R}^{d}\right)\subset\mathcal{D}'\left(\mathbb{R}^{d}\right)$
defines a distribution -- in fact even a tempered distribution. Note
that we crucially use $q\in\left[1,\infty\right]$ for the inclusion
$L^{q}\left(\mathbb{R}^{d}\right)\subset\mathcal{D}'\left(\mathbb{R}^{d}\right)$
to be true. To be sure, $\partial^{\alpha}f\in L^{q}\left(\mathbb{R}^{d}\right)$
means that there is a (uniquely determined) function $f_{\alpha}\in L^{q}\left(\mathbb{R}^{d}\right)$
such that
\begin{align*}
\left(-1\right)^{\left|\alpha\right|}\int_{\mathbb{R}^{d}}f\left(x\right)\cdot\partial^{\alpha}\varphi\left(x\right)\,{\rm d}x & =\left(-1\right)^{\left|\alpha\right|}\left\langle f,\partial^{\alpha}\varphi\right\rangle \\
 & =\left\langle \partial^{\alpha}f,\varphi\right\rangle \\
 & =\int_{\mathbb{R}^{d}}f_{\alpha}\left(x\right)\cdot\varphi\left(x\right)\,{\rm d}x
\end{align*}
holds for all $\varphi\in C_{c}^{\infty}\left(\mathbb{R}^{d}\right)$.
In this case, we simply write $\partial^{\alpha}f$ instead of $f_{\alpha}$.

Finally, we equip $W^{k,q}\left(\mathbb{R}^{d}\right)$ with the norm
\[
\left\Vert f\right\Vert _{W^{k,q}}:=\sum_{\substack{\alpha\in\mathbb{N}_{0}^{d}\\
\left|\alpha\right|\leq k
}
}\left\Vert \partial^{\alpha}f\right\Vert _{L^{q}}
\]
which makes it a Banach space. Note that since the weak partial derivatives
$\partial^{\alpha}f$ are uniquely determined by $f$, the inclusion
map
\[
\iota:W^{k,q}\left(\mathbb{R}^{d}\right)\to L^{q}\left(\mathbb{R}^{d}\right),f\mapsto f
\]
is injective.

In case of $q\in\left(0,1\right)$, we can \emph{not} proceed as above,
since in this case $L^{q}\left(\mathbb{R}^{d}\right)\nsubseteq L_{{\rm loc}}^{1}\left(\mathbb{R}^{d}\right)$,
so that a function $f\in L^{q}\left(\mathbb{R}^{d}\right)$ does not
define a distribution in general. Hence, we cannot define the (weak)
derivative using distribution theory. Instead, we proceed as follows:
For $q\in\left(0,1\right)$, we define $W^{k,q}\left(\mathbb{R}^{d}\right)$
as the closure of
\[
W_{\ast}^{k,q}\left(\mathbb{R}^{d}\right):=\left\{ \left(\partial^{\alpha}f\right)_{\alpha\in\mathbb{N}_{0}^{d},\left|\alpha\right|\leq k}\with f\in C^{\infty}\left(\mathbb{R}^{d}\right)\text{ with }\partial^{\alpha}f\in L^{q}\left(\mathbb{R}^{d}\right)\text{ for all }\alpha\in\mathbb{N}_{0}^{d}\text{ with }\left|\alpha\right|\leq k\right\} 
\]
in the product $\prod_{\alpha\in\mathbb{N}_{0}^{d},\left|\alpha\right|\leq k}L^{q}\left(\mathbb{R}^{d}\right)$.
Here, $\partial^{\alpha}f$ denotes the classical derivative of $f\in C^{\infty}\left(\mathbb{R}^{d}\right)$.
Note that if we were to adopt the same definition of $W^{k,q}\left(\mathbb{R}^{d}\right)$
also for $q\in\left[1,\infty\right)$, we would obtain the same spaces
as defined above (up to obvious identifications).

We finally remark that the space $W^{k,q}\left(\mathbb{R}^{d}\right)$
for $q\in\left(0,1\right)$ -- as defined above -- behaves quite pathologically
in some respects. Indeed, in \cite{PeetreSobolevQuasiBanach} and
in the related paper \cite{SobolevQuasiBanachCorrectionToPeetre},
it is shown that the ``inclusion'' map
\[
W^{k,q}\left(\mathbb{R}^{d}\right)\to L^{q}\left(\mathbb{R}^{d}\right),\left(f_{\alpha}\right)_{\alpha\in\mathbb{N}_{0}^{d},\left|\alpha\right|\leq k}\mapsto f_{0}
\]
is \emph{not} injective for $k\geq1$. Furthermore, the dual space
of $W^{k,q}\left(\mathbb{R}^{d}\right)$ is trivial. There are other
(\emph{nonequivalent}) conventions for defining $W^{k,q}\left(\mathbb{R}^{d}\right)$
for $q\in\left(0,1\right)$, but the present formulation will turn
out to be most convenient for the results in this paper.

In any case, for $k=0$, we have $W^{k,q}\left(\mathbb{R}^{d}\right)=L^{q}\left(\mathbb{R}^{d}\right)$
for all $q\in\left(0,\infty\right]$. For $q\in\left[1,\infty\right]$,
this is clear and for $q\in\left(0,1\right)$, we use that the set
of simple functions of the form $\sum_{i=1}^{n}\alpha_{i}\chi_{A_{i}}$
with measurable, bounded sets $A_{i}\subset\mathbb{R}^{d}$ is dense
in $L^{q}\left(\mathbb{R}^{d}\right)$. Now, for any indicator function
$\chi_{A}$ with measurable bounded $A\subset\mathbb{R}^{d}$, we
can find (e.g.\@ by density of $C_{c}^{\infty}\left(\mathbb{R}^{d}\right)$
in $L^{1}\left(\mathbb{R}^{d}\right)$) a sequence $\left(f_{n}\right)_{n\in\mathbb{N}}$
in $C_{c}^{\infty}\left(\mathbb{R}^{d}\right)$ with $f_{n}\left(x\right)\to\chi_{A}\left(x\right)$
almost everywhere and such that $-10\cdot\chi_{B}\leq f_{n}\leq10\cdot\chi_{B}$
for all $n\in\mathbb{N}$ and some fixed bounded set $B$. Using the
dominated convergence theorem, this implies $f_{n}\to\chi_{A}$ in
$L^{q}\left(\mathbb{R}^{d}\right)$.

We had to use this somewhat involved argument, since Young's convolution
relation $L^{1}\ast L^{q}\hookrightarrow L^{q}$ fails for $q\in\left(0,1\right)$.
Hence, approximating $L^{q}$ functions by convolution with an approximate
identity fails for $q\in\left(0,1\right)$. A more detailed discussion
of the failure of Young's inequality for $q\in\left(0,1\right)$ will
be given in Subsection \ref{sub:QuasiBanachConvolution}.

\subsection{Structured, semi-structured and regular coverings}

\label{sub:Coverings}In this subsection, we introduce several different
classe of coverings. All our coverings will always be of the form
\[
\mathcal{Q}=\left(Q_{i}\right)_{i\in I}=\left(T_{i}Q_{i}'+b_{i}\right)_{i\in I}
\]
for suitable subsets $Q_{i}'\subset\mathbb{R}^{d}$, invertible matrices
$T_{i}\in{\rm GL}\left(\mathbb{R}^{d}\right)$ and shifts $b_{i}\in\mathbb{R}^{d}$.
Furthermore, we always assume that the set $\mathcal{O}=\bigcup_{i\in I}Q_{i}\subset\mathbb{R}^{d}$
is fixed, i.e.\@ $\mathcal{Q}$ will always be a covering of the
set $\mathcal{O}$. For the sake of brevity, we will not repeat these
assumptions every time.

It is most important to keep in mind that the covering $\mathcal{Q}$
is a covering of (the subset $\mathcal{O}$ of) the \emph{frequency
domain} $\mathbb{R}^{d}$, and not of the space domain $\mathbb{R}^{d}$.

The type of covering which is easiest to understand is that of a \textbf{structured
admissible covering}, essentially as introduced by Borup and Nielsen
in \cite{BorupNielsenDecomposition}. The only difference between
their definition and ours is that we allow coverings of proper subsets
$\mathcal{O}\subsetneq\mathbb{R}^{d}$, whereas Borup and Nielsen
only consider coverings of the whole frequency space $\mathbb{R}^{d}$.
\begin{defn}
\label{def:StructuredAdmissibleCovering}The covering $\mathcal{Q}$
is called an \textbf{admissible covering} of $\mathcal{O}$ if $Q_{i}\neq\emptyset$
for all $i\in I$ and if the constant
\[
N_{\mathcal{Q}}:=\sup_{i\in I}\left|i^{\ast}\right|
\]
is finite, where
\[
i^{\ast}:=\left\{ j\in I\with Q_{i}\cap Q_{j}\neq\emptyset\right\} 
\]
denotes the \textbf{set of $\mathcal{Q}$-neighbors} of the index
$i\in I$.

An admissible covering $\mathcal{Q}$ is called a \textbf{structured
admissible covering} of $\mathcal{O}$, if the following hold:
\begin{enumerate}
\item We have $Q_{i}'=Q$ for all $i\in I$, where $Q\subset\mathbb{R}^{d}$
is a fixed open, bounded set.
\item There is a an open set $P\subset Q$ with $\overline{P}\subset Q$
and with
\[
\bigcup_{i\in I}\left(T_{i}P+b_{i}\right)=\mathcal{O}.
\]

\item We have
\begin{equation}
C_{\mathcal{Q}}:=\sup_{i\in I}\sup_{j\in i^{\ast}}\left\Vert T_{i}^{-1}T_{j}\right\Vert <\infty.\qedhere\label{eq:NormalizationConstantDefinition}
\end{equation}

\end{enumerate}
\end{defn}
\begin{rem*}
The following notation related to the set of $\mathcal{Q}$-neighbors
will be frequently convenient: For $M\subset I$, we define
\[
M^{\ast}:=\bigcup_{\ell\in M}\ell^{\ast}\subset I.
\]
Now, we inductively define $M^{0\ast}:=M$ and $M^{\left(n+1\right)\ast}:=\left(M^{n\ast}\right)^{\ast}$
for $n\in\mathbb{N}_{0}$. Finally, we set $i^{n\ast}:=\left\{ i\right\} ^{n\ast}$
and 
\[
Q_{i}^{n\ast}:=\bigcup_{\ell\in i^{n\ast}}Q_{\ell}
\]
for $i\in I$ and $n\in\mathbb{N}_{0}$.

Now, in words, admissibility of a covering means that the number of
neighbors of a set $Q_{i}$ of the covering $\mathcal{Q}$ is uniformly
bounded. For a structured admissible covering, we additionally assume
all sets $Q_{i}$ to be ``of a similar shape/form'', in the sense
that every set $Q_{i}$ is of the form $Q_{i}=T_{i}Q+b_{i}$ for a
fixed set $Q\subset\mathbb{R}^{d}$. Furthermore, we assume that we
can shrink the set $Q$ slightly, while still covering all of $\mathcal{O}$.
This assumption -- together with the technical condition $C_{\mathcal{Q}}<\infty$
-- ensures existence of suitable partitions of unity subordinate to
$\mathcal{Q}$, see Theorem \ref{thm:StructuredAdmissibleCoveringsAreRegular}
below.
\end{rem*}
In some cases, the notion of a structured admissible covering turns
out to be too restrictive. Thus, in \cite[Definition 3.8]{VoigtlaenderPhDThesis},
I introduced the notion of a \emph{semi}-structured admissible covering
for which the assumption $Q_{i}'=Q$ for all $i\in I$ is dropped:
\begin{defn}
\label{def:SemiStructuredCovering}The covering $\mathcal{Q}$ is
called a \textbf{semi-structured covering} of $\mathcal{O}$ if the
following conditions hold:
\begin{enumerate}
\item $\mathcal{Q}$ is admissible,
\item The set $\bigcup_{i\in I}Q_{i}'\subset\mathbb{R}^{d}$ is bounded,
\item The constant $C_{\mathcal{Q}}$ as defined in equation (\ref{eq:NormalizationConstantDefinition})
is finite.
\end{enumerate}
Finally, we say that the semi-structured covering $\mathcal{Q}$ is
\textbf{tight} if we additionally have the following:
\begin{enumerate}[resume]
\item There is some $\varepsilon>0$ such that for each $i\in I$, there
is some $c_{i}\in\mathbb{R}^{d}$ with $B_{\varepsilon}\left(c_{i}\right)\subset Q_{i}'$.\qedhere
\end{enumerate}
\end{defn}
\begin{rem*}
Note that every structured admissible covering is a tight semi-structured
admissible covering.
\end{rem*}
As noted above, the definition of a structured admissible covering
ensures existence of certain partitions of unity subordinate to the
covering. The special type of partitions of unity which we introduce
now will turn out to be suitable for defining decomposition spaces,
cf.\@ Subsection \ref{sub:DecompositionSpaces}.
\begin{defn}
\label{def:LpBAPUs}(cf.\@ \cite[Definition 2]{BorupNielsenDecomposition})

Let $\mathcal{Q}$ be a semi-structured admissible covering of $\mathcal{O}$.
We say that $\Phi=\left(\varphi_{i}\right)_{i\in I}$ is a partition
of unity subordinate to $\mathcal{Q}$ if the following hold:
\begin{enumerate}
\item $\varphi_{i}\in C_{c}^{\infty}\left(\mathcal{O}\right)$ for all $i\in I$,
\item $\varphi_{i}\equiv0$ on $\mathbb{R}^{d}\setminus Q_{i}$ for all
$i\in I$,
\item $\sum_{i\in I}\varphi_{i}\equiv1$ on $\mathcal{O}$.
\end{enumerate}
Furthermore, for $p\in\left[1,\infty\right]$, we say that $\Phi$
is an \textbf{$L^{p}$-BAPU} (bounded admissible partition of unity)
for $\mathcal{Q}$, if $\Phi$ is a partition of unity subordinate
to $\mathcal{O}$ for which the constant
\[
C_{\Phi,p}:=\sup_{i\in I}\left\Vert \mathcal{F}^{-1}\varphi_{i}\right\Vert _{L^{1}}
\]
is finite. For $p\in\left(0,1\right)$, we instead require finiteness
of
\[
C_{\Phi,p}:=\sup_{i\in I}\left|\det T_{i}\right|^{\frac{1}{p}-1}\cdot\left\Vert \mathcal{F}^{-1}\varphi_{i}\right\Vert _{L^{p}}.
\]

Finally, we say that $\mathcal{Q}$ is an \textbf{$L^{p}$-decomposition
covering} if there is an $L^{p}$-BAPU for $\mathcal{Q}$.\end{defn}
\begin{rem*}
The term ``$L^{p}$-BAPU'' does \emph{not} refer to the fact that
the $L^{p}$ norm of $\varphi_{i}$ or $\mathcal{F}^{-1}\varphi_{i}$
is uniformly bounded. Instead, the point is that the $\left(\varphi_{i}\right)_{i\in I}$
define a uniformly bounded family of $L^{p}$ Fourier multipliers.
For $p\in\left[1,\infty\right]$, this is a direct consequence of
Young's inequality, whereas for $p\in\left(0,1\right)$, this statement
needs to be taken with a grain of salt, as explained in Subsection
\ref{sub:QuasiBanachConvolution}.
\end{rem*}
While existence of an $L^{p}$-BAPU is sufficient for obtaining well-defined
decomposition spaces (cf.\@ Subsection \ref{sub:DecompositionSpaces}),
we will need to impose more restrictive conditions on the partition
of unity $\Phi$ in order to establish embeddings into Sobolev spaces.
Our next definition explains exactly which properties $\Phi$ needs
to have.
\begin{defn}
\label{def:RegularPartitionOfUnity}Let $\mathcal{Q}$ be a semi-structured
covering of $\mathcal{O}$ and let $\Phi=\left(\varphi_{i}\right)_{i\in I}$
be a partition of unity subordinate to $\mathcal{Q}$. For $i\in I$,
define the \textbf{normalized version} of $\varphi_{i}$ by
\[
\varphi_{i}^{\#}:\mathbb{R}^{d}\to\mathbb{C},\xi\mapsto\varphi_{i}\left(T_{i}\xi+b_{i}\right).
\]

We say that $\Phi$ is a \textbf{regular partition of unity} subordinate
to $\mathcal{Q}$ if $\varphi_{i}\in C_{c}^{\infty}\left(Q_{i}\right)$
for all $i\in I$ and if additionaly
\[
C_{\Phi,\alpha}:=\sup_{i\in I}\left\Vert \partial^{\alpha}\varphi_{i}^{\#}\right\Vert _{\sup}
\]
is finite for \emph{all} $\alpha\in\mathbb{N}_{0}^{d}$.

The covering $\mathcal{Q}$ is called a \textbf{regular covering}
of $\mathcal{O}$ if there exists a regular partition of unity $\Phi$
subordinate to $\mathcal{Q}$.
\end{defn}
We will now show that every regular partition of unity is also an
$L^{p}$-BAPU for all $p\in\left(0,\infty\right]$. In fact, we establish
a slightly stronger claim.
\begin{lem}
\label{lem:SufficientConditionDerivativeEstimate}Let $\mathcal{Q}$
be a semi-structured covering of $\mathcal{O}\subset\mathbb{R}^{d}$
and let $\left(\gamma_{i}\right)_{i\in I}$ be a family in $C_{c}^{\infty}\left(\mathcal{O}\right)$
with $\gamma_{i}\equiv0$ on $\mathcal{O}\setminus Q_{i}$ for all
$i\in I$ and so that the normalized family $\left(\smash{\gamma_{i}^{\#}}\right)_{i\in I}$
given by
\[
\gamma_{i}^{\#}:\mathbb{R}^{d}\to\mathbb{C},\xi\mapsto\gamma_{i}\left(T_{i}\xi+b_{i}\right)
\]
satisfies
\begin{equation}
C_{\alpha}:=\sup_{i\in I}\left\Vert \partial^{\alpha}\gamma_{i}^{\#}\right\Vert _{\sup}<\infty\label{eq:UniformControlOverDerivativesConstant}
\end{equation}
for all $\alpha\in\mathbb{N}_{0}^{d}$.

Then we have
\[
\left\Vert \partial^{\alpha}\left[\mathcal{F}^{-1}\gamma_{i}\right]\right\Vert _{L^{p}}\leq K_{\alpha}\cdot\left|\det T_{i}\right|^{1-\frac{1}{p}}\cdot\left(\left\Vert T_{i}\right\Vert ^{\left|\alpha\right|}+\left|b_{i}\right|^{\left|\alpha\right|}\right)
\]
for all $\alpha\in\mathbb{N}_{0}^{d}$, $p\in\left(0,\infty\right]$
and $i\in I$, where the constant $K_{\alpha}=K_{\alpha}\left(d,p,\mathcal{Q},\left(\gamma_{i}\right)_{i\in I}\right)$
is independent of $i\in I$.\end{lem}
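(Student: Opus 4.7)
My plan is to reduce the estimate for $\gamma_i$ to a uniform estimate for the normalized profiles $\gamma_i^\#$ via a change of variables in frequency, and then to exploit that the hypotheses force the family $\{\gamma_i^\#\}_{i\in I}$ to be bounded in every Schwartz seminorm, with a common compact support. First I would write $\gamma_i(\xi) = \gamma_i^\#(T_i^{-1}(\xi-b_i))$ and substitute $\xi = T_i \eta + b_i$ in $\mathcal{F}^{-1}\gamma_i$ to obtain
\[
\mathcal{F}^{-1}\gamma_i(x) \;=\; |\det T_i|\cdot e^{2\pi i \langle b_i, x\rangle} \cdot (\mathcal{F}^{-1}\gamma_i^\#)(T_i^{T} x).
\]
Combined with the standard rule $\partial^\alpha \mathcal{F}^{-1}f = (2\pi i)^{|\alpha|}\,\mathcal{F}^{-1}[\xi^\alpha f]$, the same substitution gives
\[
\partial^\alpha \mathcal{F}^{-1}\gamma_i(x) \;=\; (2\pi i)^{|\alpha|}\cdot|\det T_i|\cdot e^{2\pi i \langle b_i, x\rangle} \cdot \mathcal{F}^{-1}\!\bigl[(T_i(\cdot)+b_i)^\alpha \gamma_i^\#\bigr](T_i^{T} x).
\]

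Next I would expand $(T_i\eta + b_i)^\alpha$ multinomially into a finite sum $\sum_{|\beta|\leq|\alpha|} c_{\alpha,\beta}(T_i,b_i)\,\eta^\beta$ in which each coefficient $c_{\alpha,\beta}(T_i,b_i)$ is a product of exactly $|\alpha|$ factors, each being either an entry of $T_i$ or a component of $b_i$. This gives the crude but sufficient estimate $|c_{\alpha,\beta}(T_i,b_i)| \leq C_\alpha(\|T_i\|^{|\alpha|}+|b_i|^{|\alpha|})$ with $C_\alpha$ depending only on $\alpha$ and $d$. Applying the rule $\mathcal{F}^{-1}[\eta^\beta \gamma_i^\#] = (2\pi i)^{-|\beta|}\partial^\beta \mathcal{F}^{-1}\gamma_i^\#$ to each term and the dilation identity $\|g(T_i^{T}\,\cdot\,)\|_{L^p} = |\det T_i|^{-1/p}\|g\|_{L^p}$ (invoking the $p$-subadditive inequality when $p<1$, whose combinatorial overhead depends only on $|\alpha|$, $d$, $p$), the bound is reduced to proving
\[
\sup_{i\in I}\,\|\partial^\beta \mathcal{F}^{-1}\gamma_i^\#\|_{L^p} \;<\; \infty
\qquad \text{for every }\beta\in\mathbb{N}_0^d\text{ and every }p\in(0,\infty].
\]

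For this last estimate I would use that the set $K := \overline{\bigcup_{i\in I} Q_i'}$ is bounded by the semi-structured hypothesis, and since $\gamma_i$ vanishes off $Q_i = T_iQ_i'+b_i$, the function $\gamma_i^\#$ is supported in $Q_i' \subset K$. Together with the assumption (\ref{eq:UniformControlOverDerivativesConstant}), this makes $\{\gamma_i^\#\}_{i\in I}$ a bounded subset of $\mathcal{S}(\mathbb{R}^d)$ with respect to every Schwartz seminorm. Continuity of $\mathcal{F}^{-1}$ on $\mathcal{S}(\mathbb{R}^d)$ transfers this to the family $\{\mathcal{F}^{-1}\gamma_i^\#\}_{i\in I}$, so for every $N\in\mathbb{N}$ and every $\beta$ we obtain
\[
\sup_{i\in I}\,\sup_{y\in\mathbb{R}^d}\, (1+|y|)^N\,|\partial^\beta \mathcal{F}^{-1}\gamma_i^\#(y)| \;<\; \infty;
\]
choosing $N > d/p$ then delivers the required uniform $L^p$ bound.

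The main subtlety I anticipate is exactly the quasi-Banach range $p\in(0,1)$: Minkowski's inequality fails, and even worse $L^p\not\subset L^1_{\mathrm{loc}}$, so there is no Young-type shortcut from $\gamma_i^\# \in C_c^\infty$ to a uniform $L^p$-bound on $\mathcal{F}^{-1}\gamma_i^\#$. The polynomial-decay argument above is what bypasses this, and its uniformity in $i$ hinges crucially on the semi-structured property that guarantees a single bounded set $K$ containing every support. With that reservoir of Schwartz boundedness in hand, the rest is bookkeeping from the multinomial expansion.
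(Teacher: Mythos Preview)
Your proposal is correct and follows essentially the same route as the paper: both arguments rest on the substitution identity $\mathcal{F}^{-1}\gamma_i = |\det T_i|\, M_{b_i}\bigl((\mathcal{F}^{-1}\gamma_i^\#)\circ T_i^{T}\bigr)$, the Schwartz-boundedness of $\{\gamma_i^\#\}$ (from the uniform derivative bounds and the common support in a bounded set), and the elementary inequality $\|T_i\|^{|\beta|}|b_i|^{|\alpha|-|\beta|}\lesssim \|T_i\|^{|\alpha|}+|b_i|^{|\alpha|}$. The only cosmetic difference is that you push the derivative to the frequency side as multiplication by $(T_i\eta+b_i)^\alpha$ and expand multinomially, whereas the paper differentiates on the space side via Leibniz's rule and the chain rule; the resulting term-by-term estimates are identical.
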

\begin{rem*}
Actually, the proof establishes the stronger estimate
\[
\left|\left(\partial^{\alpha}\left[\mathcal{F}^{-1}\gamma_{i}\right]\right)\left(x\right)\right|\leq C_{\alpha,N}\cdot\left|\det T_{i}\right|\left(1+\left|T_{i}^{T}x\right|\right)^{-N}\cdot\left(\left|b_{i}\right|^{\left|\alpha\right|}+\left\Vert T_{i}\right\Vert ^{\left|\alpha\right|}\right)\qquad\text{ for all }x\in\mathbb{R}^{d}
\]
for all $i\in I$ and arbitrary $N\in\mathbb{N}$ for some constant
$C_{\alpha,N}=C_{\alpha,N}\left(\mathcal{Q},\left(\gamma_{i}\right)_{i\in I}\right)$
which is independent of $i\in I$.
\end{rem*}

\begin{proof}
For brevity, we define $\varrho_{i}:=\mathcal{F}^{-1}\gamma_{i}^{\#}$
and $\theta_{i}:=\mathcal{F}^{-1}\gamma_{i}$ for $i\in I$. We begin
with showing that for every $N\in\mathbb{N}$ and $\gamma\in\mathbb{N}_{0}^{d}$,
there is a constant $K_{\gamma,N}=K_{\gamma,N}\left(\mathcal{Q},\left(\gamma_{i}\right)_{i\in I}\right)>0$
with
\begin{equation}
\left|\partial^{\gamma}\varrho_{i}\left(x\right)\right|\leq K_{\gamma,N}\cdot\left(1+\left|x\right|\right)^{-N}\text{ for all }x\in\mathbb{R}^{d}\text{ and }i\in I.\label{eq:NormalizedBAPUWithDerivativesOnSpaceSide}
\end{equation}
Here it is crucial that the constant $K_{\gamma,N}$ is independent
of $i\in I$ and $x\in\mathbb{R}^{d}$. A high-level proof of this
estimate is as follows: Since ${\rm supp}\,\gamma_{i}\subset Q_{i}=T_{i}Q_{i}'+b_{i}$,
we get the uniform inclusion ${\rm supp}\,\gamma_{i}^{\#}\subset Q_{i}'\subset B_{R}\left(0\right)$
for some fixed $R=R\left(\mathcal{Q}\right)>0$ and all $i\in I$.
In conjunction  with the prerequisite from equation (\ref{eq:UniformControlOverDerivativesConstant}),
we conclude that the family $\left(\smash{\gamma_{i}^{\#}}\right)_{i\in I}$
is bounded with respect to each of the norms
\[
\varrho_{N}\left(f\right):=\max_{\substack{\alpha\in\mathbb{N}_{0}^{d}\\
\left|\alpha\right|\leq N
}
}\sup_{x\in\mathbb{R}^{d}}\left[\left(1+\left|x\right|\right)^{N}\cdot\left|\partial^{\alpha}f\left(x\right)\right|\right].
\]
Thus, $\left\{ \smash{\gamma_{i}^{\#}}\with i\in I\right\} \subset\mathcal{S}\left(\mathbb{R}^{d}\right)$
is a bounded subset of the topological vector space $\mathcal{S}\left(\mathbb{R}^{d}\right)$
(see \cite[Section 1.6]{RudinFA} for the relevant definition and
also \cite[Theorem 1.37]{RudinFA} for the equivalent characterization
which we use here). Since $\mathcal{F}:\mathcal{S}\left(\mathbb{R}^{d}\right)\to\mathcal{S}\left(\mathbb{R}^{d}\right)$
is a homeomorphism, we conclude that 
\[
\left\{ \varrho_{i}\with i\in I\right\} =\left\{ \mathcal{F}^{-1}\smash{\gamma_{i}^{\#}}\with i\in I\right\} \subset\mathcal{S}\left(\mathbb{R}^{d}\right)\hookrightarrow L^{p}\left(\mathbb{R}^{d}\right)
\]
is also bounded for each $p\in\left(0,\infty\right]$. In the next
two paragraphs (about $1+\frac{1}{2}$ pages), we provide a more direct
proof of estimate (\ref{eq:NormalizedBAPUWithDerivativesOnSpaceSide}).
Readers who are willing to take this estimate on faith or who are
satisfied with the preceding abstract argument should thus skip these
two paragraphs.

For the explicit proof of equation (\ref{eq:NormalizedBAPUWithDerivativesOnSpaceSide}),
recall that standard properties of the Fourier transform (cf.\@ e.g.\@
\cite[Theorem 8.22]{FollandRA}) yield for each multiindex $\kappa\in\mathbb{N}_{0}^{d}$
that
\begin{align*}
x^{\kappa}\cdot\partial^{\gamma}\varrho_{i}\left(x\right) & =x^{\kappa}\cdot\partial^{\gamma}\left[\mathcal{F}^{-1}\gamma_{i}^{\#}\right]\left(x\right)\\
 & =x^{\kappa}\cdot\mathcal{F}^{-1}\left[\xi\mapsto\left(2\pi j\xi\right)^{\gamma}\cdot\gamma_{i}^{\#}\left(\xi\right)\right]\left(x\right)\\
 & =\left(\frac{j}{2\pi}\right)^{\kappa}\cdot\mathcal{F}^{-1}\left(\xi\mapsto\partial_{\xi}^{\kappa}\left[\left(2\pi j\xi\right)^{\gamma}\cdot\gamma_{i}^{\#}\left(\xi\right)\right]\right)\left(x\right).
\end{align*}
Here, we used the notation $j$ instead of $i$ for the imaginary
unit to avoid possible confusion with the index $i\in I$. Since
$\mathcal{F}^{-1}:L^{1}\left(\mathbb{R}^{d}\right)\to C_{0}\left(\mathbb{R}^{d}\right)$
is bounded, we conclude
\begin{align*}
\left\Vert x^{\kappa}\cdot\partial^{\gamma}\varrho_{i}\right\Vert _{\sup} & \leq\left\Vert \xi\mapsto\partial_{\xi}^{\kappa}\left[\left(2\pi j\xi\right)^{\gamma}\cdot\gamma_{i}^{\#}\left(\xi\right)\right]\right\Vert _{L^{1}\left(\mathbb{R}^{d}\right)}\\
\left(\text{by Leibniz's formula}\right) & =\left\Vert \xi\mapsto\sum_{\substack{\lambda\in\mathbb{N}_{0}^{d}\\
\lambda\leq\kappa
}
}\binom{\kappa}{\lambda}\cdot\left[\partial_{\xi}^{\lambda}\left(2\pi j\xi\right)^{\gamma}\cdot\left(\partial^{\kappa-\lambda}\gamma_{i}^{\#}\right)\left(\xi\right)\right]\right\Vert _{L^{1}\left(\mathbb{R}^{d}\right)}.
\end{align*}

Now, note that we have $\gamma_{i}\left(\xi\right)=0$ for all $\xi\in\mathbb{R}^{d}\setminus Q_{i}=\mathbb{R}^{d}\setminus\left[T_{i}Q_{i}'+b_{i}\right]$
and hence 
\[
\gamma_{i}^{\#}\left(\xi\right)=\gamma_{i}\left(T_{i}\xi+b_{i}\right)=0\text{ for all }\xi\in\mathbb{R}^{d}\setminus Q_{i}'.
\]
Here, we implicitly used that $T_{i}\in{\rm GL}\left(\mathbb{R}^{d}\right)$
is invertible. Since $\mathcal{Q}$ is a semi-structured covering,
there is some $R\geq1$ with $Q_{i}'\subset B_{R}\left(0\right)$
for all $i\in I$ and we conclude ${\rm supp}\,\gamma_{i}^{\#}\subset\overline{B_{R}}\left(0\right)$
and thus also ${\rm supp}\,\partial^{\gamma}\gamma_{i}^{\#}\subset\overline{B_{R}}\left(0\right)$
for all $i\in I$. Now, observe
\[
\left|\partial_{\xi}^{\lambda}\left(2\pi j\xi\right)^{\gamma}\right|=\left|\left(2\pi j\right)^{\gamma}\cdot\partial_{\xi}^{\lambda}\xi^{\gamma}\right|=\begin{cases}
K_{\lambda,\gamma}\cdot\left|\xi^{\gamma-\lambda}\right|\leq K_{\lambda,\gamma}R^{\left|\gamma-\lambda\right|}\leq K_{\gamma}'\cdot R^{\left|\gamma\right|}, & \text{if }\lambda\leq\gamma,\\
0, & \text{else}
\end{cases}
\]
for $\left|\xi\right|\leq R$. Here, $K_{\gamma}'>0$ is an absolute
constant only depending on $\gamma\in\mathbb{N}_{0}^{d}$. Altogether,
we conclude
\begin{align*}
\left\Vert x^{\kappa}\cdot\partial^{\gamma}\varrho_{i}\right\Vert _{\sup} & \leq\left\Vert \xi\mapsto\sum_{\substack{\lambda\in\mathbb{N}_{0}^{d}\\
\lambda\leq\kappa
}
}\binom{\kappa}{\lambda}\cdot\left[\partial_{\xi}^{\lambda}\left(2\pi j\xi\right)^{\gamma}\cdot\left(\partial^{\kappa-\lambda}\gamma_{i}^{\#}\right)\left(\xi\right)\right]\right\Vert _{L^{1}\left(\mathbb{R}^{d}\right)}\\
 & \leq K_{\gamma}'\cdot R^{\left|\gamma\right|}\cdot\sum_{\substack{\lambda\in\mathbb{N}_{0}^{d}\\
\lambda\leq\kappa\text{ and }\lambda\leq\gamma
}
}\left[\binom{\kappa}{\lambda}\cdot\left\Vert \partial^{\kappa-\lambda}\gamma_{i}^{\#}\right\Vert _{L^{1}\left(\overline{B_{R}}\left(0\right)\right)}\right]\\
 & \leq K_{\gamma}'\cdot R^{\left|\gamma\right|}\cdot\sum_{\substack{\lambda\in\mathbb{N}_{0}^{d}\\
\lambda\leq\kappa\text{ and }\lambda\leq\gamma
}
}\left[\binom{\kappa}{\lambda}\cdot\lambda\left(\overline{B_{R}}\left(0\right)\right)\cdot\left\Vert \partial^{\kappa-\lambda}\gamma_{i}^{\#}\right\Vert _{\sup}\right]\\
\left(\text{cf. eq. }\eqref{eq:UniformControlOverDerivativesConstant}\right) & \leq K_{\gamma}'\cdot R^{\left|\gamma\right|}\cdot\lambda\left(\overline{B_{R}}\left(0\right)\right)\cdot\sum_{\substack{\lambda\in\mathbb{N}_{0}^{d}\\
\lambda\leq\kappa\text{ and }\lambda\leq\gamma
}
}\left[\binom{\kappa}{\lambda}\cdot C_{\kappa-\lambda}\right]\\
 & \leq K_{\gamma,\kappa,R},
\end{align*}
where the constant $K_{\gamma,\kappa,R}=K\left(\gamma,\kappa,\mathcal{Q},\left(\gamma_{i}\right)_{i\in I}\right)>0$
is \emph{independent} of $i\in I$. But because of
\begin{align*}
\left(1+\left|x\right|\right)^{N} & \leq\left(1+d\left\Vert x\right\Vert _{\infty}\right)^{N}\\
\left(\text{with suitable }\ell=\ell\left(x\right)\in\underline{d}\right) & \leq d^{N}\cdot\left(1+\left|x_{\ell}\right|\right)^{N}\\
 & =d^{N}\cdot\sum_{m=0}^{N}\binom{N}{m}\left|x_{\ell}\right|^{m}\\
 & =d^{N}\cdot\sum_{m=0}^{N}\binom{N}{m}\left|x^{\ell e_{m}}\right|\\
 & \leq d^{N}\cdot\sum_{m=0}^{N}\binom{N}{m}\sum_{t=1}^{d}\left|x^{te_{m}}\right|,
\end{align*}
this finally implies
\begin{align*}
\left\Vert \left(1+\left|x\right|\right)^{N}\cdot\partial^{\gamma}\varrho_{i}\right\Vert _{\sup} & \leq d^{N}\cdot\sum_{m=0}^{N}\binom{N}{m}\sum_{t=1}^{d}\left\Vert x^{te_{m}}\cdot\partial^{\gamma}\varrho_{i}\right\Vert _{\sup}\\
 & \leq d^{N}\cdot\sum_{m=0}^{N}\binom{N}{m}\sum_{t=1}^{d}K_{\gamma,te_{m},R}\\
 & =:K_{\gamma,N}
\end{align*}
with $K_{\gamma,N}=K\left(\gamma,N,\mathcal{Q},\left(\gamma_{i}\right)_{i\in I}\right)$.
This completes the explicit proof of equation (\ref{eq:NormalizedBAPUWithDerivativesOnSpaceSide}).

Now, note
\[
\gamma_{i}\left(\xi\right)=\gamma_{i}^{\#}\left(T_{i}^{-1}\left(\xi-b_{i}\right)\right)=\left[L_{b_{i}}\left(\gamma_{i}^{\#}\circ T_{i}^{-1}\right)\right]\left(\xi\right)
\]
for all $\xi\in\mathbb{R}^{d}$, where $L_{x}f\left(y\right)=f\left(y-x\right)$
denotes the left-translation of $f$ by $x$.

By standard properties of the Fourier transform (cf.\@ e.g.\@ \cite[Theorem 8.22]{FollandRA}),
this implies
\begin{align}
\theta_{i}=\mathcal{F}^{-1}\gamma_{i} & =M_{b_{i}}\left[\mathcal{F}^{-1}\left(\gamma_{i}^{\#}\circ T_{i}^{-1}\right)\right]\nonumber \\
 & =\left|\det T_{i}\right|\cdot M_{b_{i}}\left[\left(\mathcal{F}^{-1}\gamma_{i}^{\#}\right)\circ T_{i}^{T}\right]\nonumber \\
 & =\left|\det T_{i}\right|\cdot M_{b_{i}}\left(\varrho_{i}\circ T_{i}^{T}\right),\label{eq:OriginalBAPUIntoNormalizedVersionSpaceSide}
\end{align}
where we  recall $\varrho_{i}=\mathcal{F}^{-1}\gamma_{i}^{\#}$ 
and  where $M_{b}f\left(y\right)=e^{2\pi j\left\langle b,y\right\rangle }\cdot f\left(y\right)$
denotes the modulation of $f$ by $b$. Here, we used as above the
notation $j$ for the imaginary unit. Using Leibniz's formula, we
derive
\begin{align}
\partial^{\alpha}\theta_{i} & =\left|\det T_{i}\right|\cdot\partial^{\alpha}\left(e^{2\pi j\left\langle b_{i},\cdot\right\rangle }\cdot\left[\varrho_{i}\circ T_{i}^{T}\right]\right)\nonumber \\
 & =\left|\det T_{i}\right|\cdot\sum_{\beta\leq\alpha}\left[\binom{\alpha}{\beta}\cdot\left(2\pi j\cdot b_{i}\right)^{\alpha-\beta}\cdot e^{2\pi j\left\langle b_{i},\cdot\right\rangle }\cdot\partial^{\beta}\left[\varrho_{i}\circ T_{i}^{T}\right]\right].\label{eq:BAPUDerivativeLpEstimateLeibnizApplication}
\end{align}
We will now estimate each of the summands individually.

To this end, fix $\beta\leq\alpha$. We first consider the case $k:=\left|\beta\right|>0$.
In this case, there are $i_{1},\dots,i_{k}\in\underline{d}$ with
$\beta=\sum_{\ell=1}^{k}e_{i_{\ell}}$, so that formula (\ref{eq:ChainRuleForLinearTransformation})
from Lemma \ref{lem:ChainRuleForLinearTransformations} below yields
\begin{align}
\left|\partial^{\beta}\left[\varrho_{i}\circ T_{i}^{T}\right]\right| & =\left|\sum_{\ell_{1},\dots,\ell_{k}\in\underline{d}}\left(T_{i}^{T}\right)_{\ell_{1},i_{1}}\cdots\left(T_{i}^{T}\right)_{\ell_{k},i_{k}}\cdot\left(\partial_{\ell_{1}}\cdots\partial_{\ell_{k}}\varrho_{i}\right)\left(T_{i}^{T}\cdot\right)\right|\nonumber \\
 & \leq\sum_{\ell_{1},\dots,\ell_{k}\in\underline{d}}\left|\left(T_{i}\right)_{i_{1},\ell_{1}}\cdots\left(T_{i}\right)_{i_{k},\ell_{k}}\right|\cdot\left|\left(\partial_{\ell_{1}}\cdots\partial_{\ell_{k}}\varrho_{i}\right)\left(T_{i}^{T}\cdot\right)\right|\nonumber \\
\left(\text{by eq. }\eqref{eq:NormalizedBAPUWithDerivativesOnSpaceSide}\right) & \leq\sum_{\ell\in\underline{d}^{k}}\left\Vert T_{i}\right\Vert ^{k}\cdot C_{\ell,N}\cdot\left(1+\left|T_{i}^{T}\cdot\right|\right)^{-N}\nonumber \\
 & \leq C_{\left|\beta\right|,N}'\cdot\left\Vert T_{i}\right\Vert ^{\left|\beta\right|}\cdot\left(1+\left|T_{i}^{T}\cdot\right|\right)^{-N}\label{eq:BAPUDerivativeLpEstimateChainRuleApplication}
\end{align}
for some constant $C_{\left|\beta\right|,N}'=C_{\left|\beta\right|,N}'\left(\mathcal{Q},\left(\gamma_{i}\right)_{i\in I}\right)$
which is independent of $i\in I$.  In case of $k=\left|\beta\right|=0$,
i.e.\@ for $\beta=0$, an analogous argument yields
\begin{align*}
\left|\partial^{\beta}\left[\varrho_{i}\circ T_{i}^{T}\right]\right| & =\left|\varrho_{i}\circ T_{i}^{T}\right|\\
\left(\text{by eq. }\eqref{eq:NormalizedBAPUWithDerivativesOnSpaceSide}\right) & \leq C_{0,N}\cdot\left(1+\left|T_{i}^{T}\cdot\right|\right)^{-N}\\
 & =C_{0,N}'\cdot\left\Vert T_{i}\right\Vert ^{\left|\beta\right|}\cdot\left(1+\left|T_{i}^{T}\cdot\right|\right)^{-N},
\end{align*}
so that estimate (\ref{eq:BAPUDerivativeLpEstimateChainRuleApplication})
also holds for $\left|\beta\right|=0$.

In conjunction  with equation (\ref{eq:BAPUDerivativeLpEstimateLeibnizApplication}),
we conclude
\begin{align*}
\left|\partial^{\alpha}\theta_{i}\right| & \leq\left|\det T_{i}\right|\cdot\sum_{\beta\leq\alpha}\left[\binom{\alpha}{\beta}\cdot\left|\left(2\pi j\cdot b_{i}\right)^{\alpha-\beta}\right|\cdot\left|\partial^{\beta}\left[\varrho_{i}\circ T_{i}^{T}\right]\right|\right].\\
 & \leq\left(2\pi\right)^{\left|\alpha\right|}\cdot\left|\det T_{i}\right|\left(1+\left|T_{i}^{T}\cdot\right|\right)^{-N}\cdot\sum_{\beta\leq\alpha}\left[\binom{\alpha}{\beta}C_{\left|\beta\right|,N}'\cdot\left|b_{i}\right|^{\left|\alpha\right|-\left|\beta\right|}\left\Vert T_{i}\right\Vert ^{\left|\beta\right|}\right].
\end{align*}
Now, Young's inequality $ab\leq\frac{a^{r}}{r}+\frac{b^{s}}{s}$ for
$r,s>0$ with $\frac{1}{r}+\frac{1}{s}=1$ implies for $0\neq\beta<\alpha$
and $r=r_{\alpha,\beta}=\frac{\left|\alpha\right|}{\left|\alpha\right|-\left|\beta\right|}$,
as well as $s=s_{\alpha,\beta}=\frac{\left|\alpha\right|}{\left|\beta\right|}$
that
\[
\left|b_{i}\right|^{\left|\alpha\right|-\left|\beta\right|}\left\Vert T_{i}\right\Vert ^{\left|\beta\right|}\leq\frac{\left|b_{i}\right|^{\left|\alpha\right|}}{r}+\frac{\left\Vert T_{i}\right\Vert ^{\left|\alpha\right|}}{s}\leq C_{\alpha}'\cdot\left(\left|b_{i}\right|^{\left|\alpha\right|}+\left\Vert T_{i}\right\Vert ^{\left|\alpha\right|}\right),
\]
where the last step used that for each fixed $\alpha\in\mathbb{N}_{0}^{d}$,
there are only finitely many possible values of $r_{\alpha,\beta}$
and $s_{\alpha,\beta}$ where $\beta$ runs through all $0\neq\beta<\alpha$.

In case of $\beta=0$, we trivially have $\left|b_{i}\right|^{\left|\alpha\right|-\left|\beta\right|}\left\Vert T_{i}\right\Vert ^{\left|\beta\right|}=\left|b_{i}\right|^{\left|\alpha\right|}\leq\left|b_{i}\right|^{\left|\alpha\right|}+\left\Vert T_{i}\right\Vert ^{\left|\alpha\right|}$
and in case of $\beta=\alpha$, we also have $\left|b_{i}\right|^{\left|\alpha\right|-\left|\beta\right|}\left\Vert T_{i}\right\Vert ^{\left|\beta\right|}=\left\Vert T_{i}\right\Vert ^{\left|\alpha\right|}\leq\left|b_{i}\right|^{\left|\alpha\right|}+\left\Vert T_{i}\right\Vert ^{\left|\alpha\right|}$,
so that all in all, we arrive at
\[
\left|\partial^{\alpha}\theta_{i}\right|\leq C_{\alpha,N}''\cdot\left|\det T_{i}\right|\left(1+\left|T_{i}^{T}\cdot\right|\right)^{-N}\cdot\left(\left|b_{i}\right|^{\left|\alpha\right|}+\left\Vert T_{i}\right\Vert ^{\left|\alpha\right|}\right)
\]
for all $i\in I$, where $C_{\alpha,N}''=C_{\alpha,N}\left(\mathcal{Q},\left(\gamma_{i}\right)_{i\in I}\right)$
is independent of $i\in I$. 

We have thus established the estimate which was claimed in the remark.
It remains to show that this implies the desired estimate for the
$L^{p}$ norm claimed in the statement of the lemma. For $p=\infty$,
this is clear. For $p\in\left(0,\infty\right)$, choose $N=N\left(p,d\right)\geq\frac{d+1}{p}$.
Then, the change of variables formula yields
\begin{align*}
\left\Vert \left(1+\left|T_{i}^{T}\cdot\right|\right)^{-N}\right\Vert _{L^{p}}^{p} & =\left|\det T_{i}\right|^{-1}\cdot\int_{\mathbb{R}^{d}}\left(1+\left|T_{i}^{T}x\right|\right)^{-Np}\cdot\left|\det T_{i}^{T}\right|\,{\rm d}x\\
 & =\left|\det T_{i}\right|^{-1}\cdot\int_{\mathbb{R}^{d}}\left(1+\left|y\right|\right)^{-Np}\,{\rm d}y\\
 & \leq\left|\det T_{i}\right|^{-1}\cdot\int_{\mathbb{R}^{d}}\left(1+\left|y\right|\right)^{-\left(d+1\right)}\,{\rm d}y\\
 & =C_{d}\cdot\left|\det T_{i}\right|^{-1}
\end{align*}
and hence $\left\Vert \left(1+\left|T_{i}^{T}\cdot\right|\right)^{-N}\right\Vert _{L^{p}}\leq C_{p,d}\cdot\left|\det T_{i}\right|^{-1/p}$.
This yields
\[
\left\Vert \partial^{\alpha}\theta_{i}\right\Vert _{L^{p}}\leq C_{\alpha,N\left(p\right),d,p}\cdot\left|\det T_{i}\right|^{1-\frac{1}{p}}\cdot\left(\left|b_{i}\right|^{\left|\alpha\right|}+\left\Vert T_{i}\right\Vert ^{\left|\alpha\right|}\right),
\]
where $C_{\alpha,N\left(p\right),d,p}>0$ is independent of $i\in I$,
as desired.
\end{proof}
In the preceding proof, we used a special form of the chain rule to
compute higher derivatives of a function composed with a linear transformation.
The following lemma formally establishes this elementary result.
\begin{lem}
\label{lem:ChainRuleForLinearTransformations}Let $A\in\mathbb{R}^{d\times d}$
be arbitrary and $f\in C^{k}\left(\mathbb{R}^{d}\right)$ for some
$k\in\mathbb{N}$. Let $i_{1},\dots,i_{k}\in\underline{d}$ be arbitrary
and let $\alpha=\sum_{m=1}^{k}e_{i_{m}}\in\mathbb{N}_{0}^{d}$, where
$\left(e_{1},\dots,e_{d}\right)$ is the standard basis of $\mathbb{R}^{d}$.

Then $\left|\alpha\right|=k$ and
\begin{equation}
\left(\partial^{\alpha}\left[f\circ A\right]\right)\left(x\right)=\sum_{\ell_{1},\dots,\ell_{k}\in\underline{d}}\left[A_{\ell_{1},i_{1}}\cdots A_{\ell_{k},i_{k}}\cdot\left(\partial_{\ell_{1}}\cdots\partial_{\ell_{k}}f\right)\left(Ax\right)\right]\label{eq:ChainRuleForLinearTransformation}
\end{equation}
for every $x\in\mathbb{R}^{d}$.\end{lem}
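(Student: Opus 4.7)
The identity $|\alpha| = k$ is immediate from the definition: writing $\alpha_j = \#\{m \in \underline{k} : i_m = j\}$ for $j \in \underline{d}$, we have $|\alpha| = \sum_{j=1}^d \alpha_j = k$. Moreover, since $A$ is linear (hence $C^\infty$) and $f \in C^k(\mathbb{R}^d)$, the composition $f \circ A$ belongs to $C^k(\mathbb{R}^d)$, so Schwarz's theorem on equality of mixed partials yields
\[
\partial^\alpha [f\circ A] \;=\; \partial_{i_1}\partial_{i_2}\cdots\partial_{i_k}[f\circ A],
\]
independently of the order in which the $i_m$'s are arranged. Thus the whole statement reduces to iterating the one-variable chain rule $k$ times.

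The plan is an induction on $k$. For the base case $k=1$, writing $(Ax)_{\ell_1} = \sum_{s=1}^d A_{\ell_1,s}\, x_s$, the ordinary chain rule gives
\[
\partial_{i_1}[f\circ A](x) \;=\; \sum_{\ell_1=1}^d (\partial_{\ell_1} f)(Ax)\cdot\partial_{i_1}(Ax)_{\ell_1} \;=\; \sum_{\ell_1=1}^d A_{\ell_1,i_1}\cdot(\partial_{\ell_1} f)(Ax),
\]
which is \eqref{eq:ChainRuleForLinearTransformation} for $k=1$. For the inductive step, assume the formula for a given $k$ and let $i_{k+1} \in \underline{d}$ and $\alpha' := \alpha + e_{i_{k+1}}$. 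Using the preceding remark on commutation of partials, we write $\partial^{\alpha'}[f\circ A] = \partial_{i_{k+1}}\bigl(\partial^\alpha[f\circ A]\bigr)$, substitute the inductive hypothesis for the inner expression, and differentiate the resulting finite sum term-by-term with respect to $x_{i_{k+1}}$. Applying the base case to each function $\partial_{\ell_1}\cdots\partial_{\ell_k} f$ in place of $f$, each summand produces the extra factor $A_{\ell_{k+1},i_{k+1}}$ together with a new summation index $\ell_{k+1}\in\underline{d}$, yielding exactly \eqref{eq:ChainRuleForLinearTransformation} at order $k+1$.

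There is essentially no obstacle here: the result is a bookkeeping exercise combining the one-variable chain rule with the symmetry of mixed partials. The only points requiring minimal care are (i) checking that the $C^k$ hypothesis on $f$ is enough to freely reorder the factors $\partial_{i_m}$ (so that the expression $\partial^\alpha[f\circ A]$ depends only on the multi-index $\alpha$ and not on the particular ordering $i_1,\dots,i_k$ that realises it), and (ii) interchanging differentiation with the finite sum in the inductive step, which is legal because the sum is finite.
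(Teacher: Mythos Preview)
Your proof is correct and follows essentially the same approach as the paper: both argue by induction on $k$, handle the base case $k=1$ via the ordinary chain rule, and in the inductive step split off one derivative and combine the base case with the inductive hypothesis. The only cosmetic difference is the order in which the two ingredients are applied in the inductive step: the paper writes $\partial^{\alpha}[f\circ A]=\partial^{\beta}\bigl(\partial_{i_{k+1}}[f\circ A]\bigr)$ and applies the base case first (to $\partial_{i_{k+1}}$) and then the inductive hypothesis to $f_{\ell}:=\partial_{\ell}f\in C^{k}$, whereas you apply the inductive hypothesis first and then differentiate once more; both orderings are justified by Schwarz's theorem, which you explicitly invoke.
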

\begin{proof}
We show the claim by induction on $k\in\mathbb{N}$. For $k=1$, we
have $\alpha=e_{i_{1}}$. Now, the chain rule implies
\begin{align*}
\left(\partial^{\alpha}\left[f\circ A\right]\right)\left(x\right) & =\left(D\left(f\circ A\right)\right)_{1,i_{1}}\left(x\right)\\
 & =\left(\left(Df\right)\left(Ax\right)\cdot A\right)_{1,i_{1}}\\
 & =\sum_{\ell=1}^{d}\left(Df\left(Ax\right)\right)_{1,\ell}\cdot A_{\ell,i_{1}}\\
 & =\sum_{\ell_{1}=1}^{d}\left[A_{\ell_{1},i_{1}}\cdot\left(\partial_{\ell_{1}}f\right)\left(Ax\right)\right],
\end{align*}
so that the claim holds for $k=1$.

Now, assume that the claim holds for some $k\in\mathbb{N}$ and let
$f\in C^{k+1}\left(\mathbb{R}^{d}\right)$ and $i_{1},\dots,i_{k+1}\in\underline{d}$.
By using the case $k=1$ and setting $\beta:=\sum_{m=1}^{k}e_{i_{m}}$,
we get
\begin{align*}
\left(\partial^{\alpha}\left[f\circ A\right]\right)\left(x\right) & =\left(\partial^{\beta}\left(\partial_{i_{k+1}}\left[f\circ A\right]\right)\right)\left(x\right)\\
\left(\text{case }k=1\right) & =\partial^{\beta}\left(\sum_{\ell_{k+1}=1}^{d}A_{\ell_{k+1},i_{k+1}}\cdot\left(\partial_{\ell_{k+1}}f\right)\left(Ax\right)\right)\\
\left(\text{with }f_{\ell}:=\partial_{\ell}f\in C^{k}\left(\mathbb{R}^{d}\right)\right) & =\sum_{\ell_{k+1}=1}^{d}\left(A_{\ell_{k+1},i_{k+1}}\cdot\left(\partial^{\beta}\left[f_{\ell_{k+1}}\circ A\right]\right)\left(x\right)\right)\\
\left(\text{by induction}\right) & =\sum_{\ell_{k+1}\in\underline{d}}\left(A_{\ell_{k+1},i_{k+1}}\sum_{\ell_{1},\dots,\ell_{k}\in\underline{d}}\left[A_{\ell_{1},i_{1}}\cdots A_{\ell_{k},i_{k}}\cdot\left(\partial_{\ell_{1}}\cdots\partial_{\ell_{k}}f_{\ell_{k+1}}\right)\left(Ax\right)\right]\right)\\
 & =\sum_{\ell_{1},\dots,\ell_{k+1}\in\underline{d}}\left[A_{\ell_{1}i_{1}}\cdots A_{\ell_{k+1},i_{k+1}}\cdot\left(\partial_{\ell_{1}}\cdots\partial_{\ell_{k+1}}f\right)\left(Ax\right)\right],
\end{align*}
so that the claim also holds for $k+1$ instead of $k$.
\end{proof}
As a consequence of Lemma \ref{lem:SufficientConditionDerivativeEstimate},
it is now straightforward to show that indeed every regular partition
of unity is an $L^{p}$-partition of unity, for every $p\in\left(0,\infty\right]$.
\begin{cor}
\label{cor:RegularPartitionsAreBAPUs}Let $\Phi=\left(\varphi_{i}\right)_{i\in I}$
be a regular partition of unity subordinate to $\mathcal{Q}$. Then
$\Phi$ is an $L^{p}$-BAPU for $\mathcal{Q}$ for every $p\in\left(0,\infty\right]$.

In particular, every regular covering is also an $L^{p}$-decomposition
covering for all $p\in\left(0,\infty\right]$.\end{cor}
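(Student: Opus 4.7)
The plan is to derive the corollary by a direct application of Lemma \ref{lem:SufficientConditionDerivativeEstimate} with $\gamma_i := \varphi_i$ and the multiindex $\alpha = 0$. By Definition \ref{def:RegularPartitionOfUnity}, since $\Phi$ is a regular partition of unity, the functions $\varphi_i$ lie in $C_c^\infty(Q_i) \subset C_c^\infty(\mathcal{O})$ with $\varphi_i \equiv 0$ on $\mathcal{O} \setminus Q_i$, and the normalized versions $\varphi_i^\#(\xi) = \varphi_i(T_i \xi + b_i)$ satisfy the uniform derivative bounds $\sup_i \|\partial^\alpha \varphi_i^\#\|_{\sup} = C_{\Phi,\alpha} < \infty$ for every $\alpha \in \mathbb{N}_0^d$. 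These are exactly the hypotheses required by Lemma \ref{lem:SufficientConditionDerivativeEstimate}.

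Taking $\alpha = 0$ in the conclusion of that lemma yields $\|T_i\|^{|\alpha|} + |b_i|^{|\alpha|} = 2$, so that for every $p \in (0, \infty]$ and every $i \in I$,
\[
\left\Vert \mathcal{F}^{-1}\varphi_i \right\Vert_{L^p} \;\leq\; 2 K_0 \cdot \left|\det T_i\right|^{1 - \frac{1}{p}},
\]
where the constant $K_0 = K_0(d, p, \mathcal{Q}, \Phi)$ is independent of $i \in I$.

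It remains to verify each clause of Definition \ref{def:LpBAPUs}. For $p \in [1, \infty]$, the BAPU condition requires uniform boundedness of $\|\mathcal{F}^{-1}\varphi_i\|_{L^1}$; applying the displayed estimate with exponent $1$ in place of $p$ gives $\|\mathcal{F}^{-1}\varphi_i\|_{L^1} \leq 2 K_0$ uniformly in $i$. For $p \in (0, 1)$, the BAPU condition requires uniform boundedness of $|\det T_i|^{\frac{1}{p} - 1} \cdot \|\mathcal{F}^{-1}\varphi_i\|_{L^p}$, and multiplying the displayed estimate through by $|\det T_i|^{\frac{1}{p} - 1}$ gives precisely $|\det T_i|^{\frac{1}{p} - 1} \cdot \|\mathcal{F}^{-1}\varphi_i\|_{L^p} \leq 2 K_0$. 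In both cases $\Phi$ is an $L^p$-BAPU for $\mathcal{Q}$.

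The final ``in particular'' assertion follows immediately: if $\mathcal{Q}$ is regular, then by Definition \ref{def:RegularPartitionOfUnity} there exists a regular partition of unity $\Phi$ subordinate to $\mathcal{Q}$, and by what we just showed $\Phi$ is an $L^p$-BAPU for every $p \in (0, \infty]$, so $\mathcal{Q}$ is an $L^p$-decomposition covering in the sense of Definition \ref{def:LpBAPUs}. There is no real obstacle here; the content of the corollary is entirely encapsulated in the $\alpha = 0$ case of the preceding lemma, and the only thing to check is that the two pieces of Definition \ref{def:LpBAPUs} (with their different normalizations for $p \geq 1$ versus $p < 1$) are both recovered from the single estimate above.
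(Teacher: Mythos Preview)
Your proof is correct and follows essentially the same approach as the paper: apply Lemma~\ref{lem:SufficientConditionDerivativeEstimate} with $\gamma_i=\varphi_i$ and $\alpha=0$, then specialize the resulting estimate to the exponent $1$ for the case $p\in[1,\infty]$ and to the exponent $p$ for the case $p\in(0,1)$. The only difference is cosmetic---you spell out the hypothesis verification and the constant $2K_0$ more explicitly than the paper does.
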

\begin{proof}
Lemma \ref{lem:SufficientConditionDerivativeEstimate} (with $\alpha=0$)
yields
\[
\left\Vert \mathcal{F}^{-1}\varphi_{i}\right\Vert _{L^{q}}\leq C_{d,q,\mathcal{Q},\Phi}\cdot\left|\det T_{i}\right|^{1-\frac{1}{q}}
\]
for all $i\in I$ and $q\in\left(0,\infty\right]$. For $p\in\left[1,\infty\right]$,
taking $q=1$ shows that $\Phi$ is an $L^{p}$-decomposition covering.
For $p\in\left(0,1\right)$, we get the same conclusion by choosing
$q=p$.
\end{proof}
It is an important fact established by Borup and Nielsen in \cite[Proposition 1]{BorupNielsenDecomposition}that
every structured admissible covering is an $L^{p}$-decomposition
covering. In fact, their \emph{proof} even shows that every structured
admissible covering is a regular covering. Our next result slightly
generalizes this.
\begin{thm}
\label{thm:StructuredAdmissibleCoveringsAreRegular}Every structured
admissible covering is a regular covering.

More generally, the following holds: If $\mathcal{Q}=\left(Q_{i}\right)_{i\in I}=\left(T_{i}Q_{i}'+b_{i}\right)_{i\in I}$
is a semi-structured \emph{open} covering of the open set $\emptyset\neq\mathcal{O}\subset\mathbb{R}^{d}$,
and if for each $i\in I$, there is an open set $P_{i}'$ with $\overline{P_{i}'}\subset Q_{i}'$
and with $\mathcal{O}=\bigcup_{i\in I}\left(T_{i}P_{i}'+b_{i}\right)$
and such that the sets $\left\{ P_{i}'\with i\in I\right\} $ and
$\left\{ Q_{i}'\with i\in I\right\} $ are finite, then $\mathcal{Q}$
is a tight regular covering of $\mathcal{O}$. Furthermore, $I$ is
countably infinite.\end{thm}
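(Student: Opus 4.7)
The goal is to construct a regular partition of unity subordinate to $\mathcal{Q}$; the second, more general assertion then contains the first, since in a structured admissible covering $\{Q_i'\} = \{Q\}$ and $\{P_i'\} = \{P\}$ are singletons. The approach is the classical one of Borup--Nielsen, mildly adapted to the setting where the normalized sets vary through a finite family: pick smooth cutoffs at the normalized level (where finiteness forces all estimates to be uniform), push them forward via the affine maps $\xi \mapsto T_i\xi + b_i$, and normalize the resulting locally finite sum.

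Concretely, for each pair $(Q', P')$ arising as $(Q_i', P_i')$ for some $i \in I$, fix once and for all a function $\gamma_{Q',P'} \in C_c^\infty(Q')$ with $0 \leq \gamma_{Q',P'} \leq 1$ and $\gamma_{Q',P'} \equiv 1$ on $\overline{P'}$; set $\gamma_i := \gamma_{Q_i',P_i'}$. Since only finitely many pairs occur, $\sup_{i \in I}\|\partial^\alpha \gamma_i\|_{\sup} < \infty$ for every $\alpha \in \mathbb{N}_0^d$. Define $\psi_i(\xi) := \gamma_i\bigl(T_i^{-1}(\xi - b_i)\bigr) \in C_c^\infty(Q_i)$, so that $\psi_i \equiv 1$ on $T_i \overline{P_i'} + b_i$. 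The sum $\sigma := \sum_{i \in I} \psi_i$ is locally finite by admissibility (at each point only the at most $N_{\mathcal{Q}}$ indices in some $\mathcal{Q}$-star contribute) and satisfies $\sigma \geq 1$ on $\mathcal{O}$, because $\mathcal{O} = \bigcup_i (T_i P_i' + b_i)$. Hence $\varphi_i := \psi_i / \sigma$, extended by zero outside $\mathcal{O}$, is a smooth partition of unity subordinate to $\mathcal{Q}$.

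The crux is to verify $\sup_{i \in I}\|\partial^\alpha \varphi_i^\#\|_{\sup} < \infty$ for every $\alpha$. A direct substitution gives
\[
\varphi_i^\#(\xi) \;=\; \frac{\gamma_i(\xi)}{\widetilde\sigma_i(\xi)}, \qquad \widetilde\sigma_i(\xi) \;=\; \sum_{j \in i^\ast} \gamma_j\bigl(T_j^{-1} T_i \xi + T_j^{-1}(b_i - b_j)\bigr),
\]
a sum of at most $N_{\mathcal{Q}}$ nonzero terms. The chain rule for linear substitutions (Lemma \ref{lem:ChainRuleForLinearTransformations}), combined with the bound $\|T_j^{-1} T_i\| \leq C_{\mathcal{Q}}$ for $j \in i^\ast$ and the uniform control on derivatives of the $\gamma_j$, yields a uniform bound on every $\partial^\beta \widetilde\sigma_i$. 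Since $\widetilde\sigma_i \geq 1$ on the open set $Q_i' \supset \mathrm{supp}\,\gamma_i$, Leibniz's rule together with Fa\`a di Bruno applied to $1/\widetilde\sigma_i$ expresses $\partial^\alpha \varphi_i^\#$ as a polynomial in uniformly bounded derivatives of $\gamma_i$, uniformly bounded derivatives of $\widetilde\sigma_i$, and powers of $1/\widetilde\sigma_i \leq 1$, giving the desired bound. This combinatorial bookkeeping of chain rule, Leibniz and quotient rule is the main, and essentially only, obstacle.

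The remaining claims are easy. For tightness, each of the finitely many sets $P_k'$ is open and nonempty, hence contains some ball $B_{\varepsilon_k}(c_k)$; taking $\varepsilon := \min_k \varepsilon_k$ and assigning $c_i := c_{k(i)}$ through the equivalence-class map produces the required uniform ball $B_\varepsilon(c_i) \subset P_i' \subset Q_i'$. For countability of $I$, the open set $\mathcal{O} \subset \mathbb{R}^d$ is second countable; fixing a countable base $\mathcal{B}$ and choosing for each $i \in I$ some $B_i \in \mathcal{B}$ with $B_i \subset Q_i$ defines a map $i \mapsto B_i$ whose fibers have cardinality at most $N_{\mathcal{Q}}$ (any two indices in a common fiber satisfy $Q_i \cap Q_{i'} \neq \emptyset$, hence lie in a common $\mathcal{Q}$-star). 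Thus $|I| \leq N_{\mathcal{Q}} \cdot |\mathcal{B}| \leq \aleph_0$.
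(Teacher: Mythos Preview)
Your construction is correct and follows the Borup--Nielsen normalization route $\varphi_i = \psi_i/\sigma$, which is \emph{not} the argument the paper gives. The paper instead linearly orders $I$ (after showing $I$ is countable) and uses the telescoping product
\[
\varphi_n \;=\; \gamma_n \cdot \prod_{j=1}^{n-1}(1-\gamma_j),
\]
so that $\sum_{j\leq n}\varphi_j = 1 - \prod_{j\leq n}(1-\gamma_j)$; the regularity estimate then comes from Leibniz on a product with at most $N_{\mathcal Q}$ nontrivial factors, avoiding the quotient rule and Fa\`a di Bruno. Your approach has the advantage that no ordering of $I$ is needed; the paper's has the advantage that only Leibniz (not Fa\`a di Bruno) is required, and one never has to worry about where $\sigma$ is bounded below. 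Both are standard and both work.

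Two small points. First, the statement asserts that $I$ is countably \emph{infinite}, and you only argue countability. The paper supplies the missing half: if $I$ were finite then $\mathcal O = \bigcup_i (T_iP_i'+b_i) \subset \bigcup_i \overline{T_iP_i'+b_i} \subset \mathcal O$ would be compact (each $\overline{P_i'} \subset Q_i'$ bounded), hence open and closed in $\mathbb R^d$, forcing $\mathcal O \in \{\emptyset,\mathbb R^d\}$, a contradiction. Second, for tightness you place balls in the $P_i'$, but the hypotheses do not guarantee each $P_i'$ is nonempty; it is cleaner (and is what the paper does) to use the finitely many nonempty open sets $Q_i'$ directly.
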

\begin{rem*}
All in all, we have the following inclusions/implications between
the different types of coverings:
\[
\text{structured admissible}\subset\text{regular}\subset\text{semi-structured}.\qedhere
\]
\end{rem*}
\begin{proof}
Essentially, a proof of this result is contained in the proof (but
not in the statement) of \cite[Proposition 1]{BorupNielsenDecomposition}.
Here, we give a different proof which is (at least in spirit) close
to that of \cite[Theorem 3.2.17]{VoigtlaenderPhDThesis}.

For brevity, set $P_{i}:=T_{i}P_{i}'+b_{i}$. Note that $P_{i}'\subset Q_{i}'$
is bounded, so that $\overline{P_{i}}\subset Q_{i}\subset\mathcal{O}$
is compact. Thus, $I$ has to be infinite; indeed, if $I$ was finite,
then $\mathcal{O}$ would be compact, because of 
\[
\mathcal{O}=\bigcup_{i\in I}P_{i}\subset\bigcup_{i\in I}\overline{P_{i}}\subset\bigcup_{i\in I}Q_{i}=\mathcal{O},
\]
where the set $\bigcup_{i\in I}\overline{P_{i}}$ is compact if $I$
is finite. But this would imply that $\mathcal{O}$ is open and closed,
so that $\mathcal{O}\in\left\{ \emptyset,\mathbb{R}^{d}\right\} $,
which contradicts compactness of $\mathcal{O}\neq\emptyset$.

Furthermore, $I$ is countable. To see this, note that since $\mathcal{O}\subset\mathbb{R}^{d}$
is second countable and $\left(P_{i}\right)_{i\in I}$ is an open
cover of $\mathcal{O}$, there is a countable subcover $\left(P_{i_{n}}\right)_{n\in\mathbb{N}}$.
But this implies that $I=\bigcup_{n\in\mathbb{N}}i_{n}^{\ast}$ is
countable as a countable union of finite sets. To see the equality,
note that for $i\in I$ and arbitrary $x\in Q_{i}\subset\mathcal{O}$,
there is $n\in\mathbb{N}$ with $x\in P_{i_{n}}\subset Q_{i_{n}}$,
so that we get $Q_{i}\cap Q_{i_{n}}\neq\emptyset$ and hence $i\in i_{n}^{\ast}$.

Tightness of $\mathcal{Q}$ is immediate: Since all $Q_{i}'\subset\mathbb{R}^{d}$
are nonempty and open, and since $\left\{ Q_{i}'\with i\in I\right\} $
is \emph{finite}, there is some $\varepsilon>0$ and for each $i\in I$
some $c_{i}\in\mathbb{R}^{d}$ with $B_{\varepsilon}\left(c_{i}\right)\subset Q_{i}'$.

It remains to show that $\mathcal{Q}$ is regular. To this end, we
can assume $I=\mathbb{N}$, since $I$ is countably infinite. By assumption%
\footnote{The claim made here is not completely obvious. To see it, note that
if we set $P_{i}'':=\bigcup\left\{ \smash{P_{j}'}\with\smash{\overline{P_{j}'}}\subset Q_{i}'\right\} $,
then the set $\left\{ P_{i}''\with i\in I\right\} $ is finite, since
$\left\{ P_{i}'\with i\in I\right\} $ is. Furthermore, $P_{i}'\subset P_{i}''\subset\overline{P_{i}''}\subset Q_{i}'$.
By suitably numbering the $\left(Q_{i}'\right)_{i\in I}$ and $\left(P_{i}''\right)_{i\in I}$
as $U_{1},\dots,U_{N}$ and $V_{1},\dots,V_{N}$ (possibly with repititions),
we obtain the claim. %
}, there are certain open sets $U_{1},\dots,U_{N},V_{1},\dots,V_{N}\subset\mathbb{R}^{d}$
with $\overline{V_{m}}\subset U_{m}$ for all $m\in\underline{N}$
and such that for each $i\in I$, there is some $m_{i}\in\underline{N}$
with $P_{i}'\subset V_{m_{i}}\subset\overline{V_{m_{i}}}\subset U_{m_{i}}=Q_{i}'$.
Now, for every $m\in\underline{N}$, choose some $\psi_{m}\in C_{c}^{\infty}\left(U_{m}\right)$
with $\psi_{m}\equiv1$ on $V_{m}$.

Finally, for $i\in I$, define $\gamma_{i}:=L_{b_{i}}\left(\psi_{m_{i}}\circ T_{i}^{-1}\right)$,
i.e.\@
\[
\gamma_{i}:\mathbb{R}^{d}\to\mathbb{C},\xi\mapsto\psi_{m_{i}}\left(T_{i}^{-1}\left(\xi-b_{i}\right)\right)
\]
and note $\gamma_{i}\in C_{c}^{\infty}\left(Q_{i}\right)$ with $\gamma_{i}\equiv1$
on $P_{i}$. In view of our assumption $I=\mathbb{N}$ from above,
we can now define
\[
\varphi_{n}:=\gamma_{n}\cdot\prod_{j=1}^{n-1}\left(1-\gamma_{j}\right)\text{ for }n\in\mathbb{N}.
\]
With this definition, a straightforward induction yields $\sum_{j=1}^{n}\varphi_{j}=1-\prod_{j=1}^{n}\left(1-\gamma_{j}\right)$
for all $n\in\mathbb{N}$. Now, for arbitrary $x\in\mathcal{O}$,
there is some $n\in\mathbb{N}=I$ with $x\in P_{n}$. Hence, $1-\gamma_{n}\left(x\right)=0$,
so that we get
\[
\sum_{j=1}^{\infty}\varphi_{j}\left(x\right)=1-\prod_{j=1}^{\infty}\left(1-\gamma_{j}\left(x\right)\right)=1-0=1.
\]
All in all, we have shown that $\left(\varphi_{j}\right)_{j\in\mathbb{N}}$
is a smooth partition of unity on $\mathcal{O}$, subordinate to $\mathcal{Q}$.

It remains to show that for each $\alpha\in\mathbb{N}_{0}^{d}$, the
constant
\[
C_{\alpha}:=\sup_{n\in\mathbb{N}}\left\Vert \partial^{\alpha}\varphi_{n}^{\#}\right\Vert _{\sup}
\]
is finite, where $\varphi_{n}^{\#}\left(\xi\right)=\varphi_{n}\left(T_{n}\xi+b_{n}\right)$.
To see this, note that for arbitrary $n\in\mathbb{N}$ and $m\in\mathbb{N}\setminus n^{\ast}$,
we have
\[
Q_{n}\cap{\rm supp}\,\gamma_{m}\subset Q_{n}\cap Q_{m}=\emptyset
\]
and hence $1-\gamma_{m}\equiv1$ on $Q_{n}$. Because of ${\rm supp}\,\gamma_{n}\subset Q_{n}$,
this implies
\[
\varphi_{n}=\gamma_{n}\cdot\prod_{j=1}^{n-1}\left(1-\gamma_{j}\right)=\gamma_{n}\cdot\prod_{j\in\underline{n-1}\cap n^{\ast}}\left(1-\gamma_{j}\right).
\]
Note that (by admissibility of $\mathcal{Q}$) the number of factors
in the product is uniformly bounded, independent of $n\in\mathbb{N}$.
By the Leibniz rule, it thus suffices to show that the constant
\[
C_{\alpha}':=\sup_{n\in\mathbb{N}}\sup_{j\in n^{\ast}}\left\Vert \partial^{\alpha}\left(\xi\mapsto\gamma_{j}\left(T_{n}\xi+b_{n}\right)\right)\right\Vert _{\sup}
\]
is finite for all $\alpha\in\mathbb{N}_{0}^{d}$. But this easily
follows from the assumptions on a semi-structured admissible covering:
Indeed, for $n\in\mathbb{N}$ and $j\in n^{\ast}$, we have
\begin{align*}
\gamma_{j}\left(T_{n}\xi+b_{n}\right) & =\psi_{m_{j}}\left(T_{j}^{-1}\left(\left[T_{n}\xi+b_{n}\right]-b_{j}\right)\right)\\
 & =\psi_{m_{j}}\left(T_{j}^{-1}T_{n}\xi+T_{j}^{-1}\left(b_{n}-b_{j}\right)\right)
\end{align*}
and thus
\[
\left\Vert \partial^{\alpha}\left(\xi\mapsto\gamma_{j}\left(T_{n}\xi+b_{n}\right)\right)\right\Vert _{\sup}=\left\Vert \partial^{\alpha}\left(\xi\mapsto\psi_{m_{j}}\left(T_{j}^{-1}T_{n}\xi\right)\right)\right\Vert _{\sup}.
\]
But using Lemma \ref{lem:ChainRuleForLinearTransformations}, we easily
get
\begin{align*}
\left|\left[\partial^{\alpha}\left(\psi_{m_{j}}\circ T_{j}^{-1}T_{n}\right)\right]\left(\xi\right)\right| & \lesssim_{\alpha}\left\Vert T_{j}^{-1}T_{n}\right\Vert ^{\left|\alpha\right|}\cdot\max_{\substack{\beta\in\mathbb{N}_{0}^{d}\\
\left|\beta\right|=\left|\alpha\right|
}
}\left|\left(\partial^{\beta}\psi_{m_{j}}\right)\left(T_{j}^{-1}T_{n}\xi\right)\right|\\
 & \lesssim_{\alpha,\mathcal{Q}}\max_{m\in\underline{N}}\max_{\substack{\beta\in\mathbb{N}_{0}^{d}\\
\left|\beta\right|=\left|\alpha\right|
}
}\left\Vert \partial^{\beta}\psi_{m}\right\Vert _{\sup}
\end{align*}
for all $\xi\in\mathbb{R}^{d}$. Here, we used $\left\Vert T_{j}^{-1}T_{n}\right\Vert \leq C_{\mathcal{Q}}$,
because of $j\in n^{\ast}$.
\end{proof}

\subsection{Decomposition spaces}

\label{sub:DecompositionSpaces}Using the notion of $L^{p}$-decomposition
coverings from the previous subsection, we are almost ready to define
decomposition spaces. We only need one more definition which pertains
to the sequence space $Y$ used to define $\mathcal{D}\left(\mathcal{Q},L^{p},Y\right)$.
\begin{defn}
\label{def:QRegularSequenceSpace}(cf.\@ \cite[Definition 2.5]{DecompositionSpaces1})
Let $\mathcal{Q}=\left(Q_{i}\right)_{i\in I}$ be an admissible covering.
We say that a quasi-normed vector space $\left(Y,\left\Vert \cdot\right\Vert _{Y}\right)$
is a \textbf{$\mathcal{Q}$-regular sequence space} if the following
hold:
\begin{enumerate}
\item $Y$ is a \textbf{sequence space} over $I$, i.e.\@ $Y$ is a subspace
of the space $\mathbb{C}^{I}$ of all (complex) sequences over $I$.
\item $Y$ is a Quasi-Banach space, i.e.\@ $\left(Y,\left\Vert \cdot\right\Vert _{Y}\right)$
is complete.
\item $Y$ is \textbf{solid}, i.e.\@ if $x=\left(x_{i}\right)_{i\in I}\in\mathbb{C}^{I}$
is a sequence with $\left|x_{i}\right|\leq\left|y_{i}\right|$ for
all $i\in I$ and some sequence $y=\left(y_{i}\right)_{i\in I}\in Y$,
then $x\in Y$ with $\left\Vert x\right\Vert _{Y}\leq\left\Vert y\right\Vert _{Y}$.
\item $Y$ is \textbf{$\mathcal{Q}$-regular}, i.e., the \textbf{clustering
map}
\[
\Psi:Y\to Y,x=\left(x_{i}\right)_{i\in I}\mapsto x^{\ast}=\left(\sum_{\ell\in i^{\ast}}x_{\ell}\right)_{i\in I}
\]
is well-defined and bounded.\qedhere
\end{enumerate}
\end{defn}
\begin{rem*}
That $\left(Y,\left\Vert \cdot\right\Vert _{Y}\right)$ is a \textbf{quasi-normed
vector space} means that $\left\Vert \cdot\right\Vert _{Y}$ is a
norm on $Y$, with the exception that the usual triangle inequality
is replaced by
\[
\left\Vert x+y\right\Vert \leq C\cdot\left(\left\Vert x\right\Vert +\left\Vert y\right\Vert \right)
\]
for all $x,y\in Y$ and some fixed constant $C\geq1$.

The most important class of $\mathcal{Q}$-regular sequence spaces
is given by weighted Lebesgue spaces $Y=\ell_{u}^{r}\left(I\right)$,
if the weight $u$ is \textbf{$\mathcal{Q}$-moderate}, i.e.\@ if
\[
C_{u,\mathcal{Q}}:=\sup_{i\in I}\sup_{j\in i^{\ast}}\frac{u_{i}}{u_{j}}
\]
is finite. That this indeed yields a $\mathcal{Q}$-regular sequence
is shown e.g.\@ in \cite[Lemma 3.4.2]{VoigtlaenderPhDThesis}, see
also \cite[Lemma 3.2]{DecompositionSpaces1}.

Finally, using completeness of $Y$ and a variant of the closed graph
theorem (see e.g.\@ \cite[Theorem 2.1.5]{RudinFA}), one can show
that the clustering map $\Psi$ is bounded if and only if it is well-defined.
This claim uses admissibility of $\mathcal{Q}$, since this implies
that the set $i^{\ast}\subset I$ is finite for all $i\in I$, so
that $\Psi$ has a closed graph.
\end{rem*}
It turns out to be easiest to first define decomposition spaces \emph{on
the Fourier side} and to introduce the \emph{space-side} versions
of these spaces only afterwards.
\begin{defn}
\label{def:FourierSideDecompositionSpace}Let $p\in\left(0,\infty\right]$
and assume that $\mathcal{Q}=\left(Q_{i}\right)_{i\in I}=\left(T_{i}Q_{i}'+b_{i}\right)_{i\in I}$
is an $L^{p}$-decomposition covering of the open set $\emptyset\neq\mathcal{O}\subset\mathbb{R}^{d}$.
Finally, let $Y\leq\mathbb{C}^{I}$ be a $\mathcal{Q}$-regular sequence
space and let $\Phi=\left(\varphi_{i}\right)_{i\in I}$ be an $L^{p}$-BAPU
for $\mathcal{Q}$.

For a distribution $f\in\mathcal{D}'\left(\mathcal{O}\right)$, define
the \textbf{Fourier-side decomposition space (quasi)-norm} of $f$
(with respect to $\mathcal{Q},L^{p},Y$) as
\begin{equation}
\left\Vert f\right\Vert _{\mathcal{D}_{\mathcal{F}}\left(\mathcal{Q},L^{p},Y\right)}:=\left\Vert \left(\left\Vert \mathcal{F}^{-1}\left(\varphi_{i}f\right)\right\Vert _{L^{p}}\right)_{i\in I}\right\Vert _{Y}\;,\label{eq:FourierSideDecompositionSpaceNorm}
\end{equation}
with the convention that $\left\Vert \left(c_{i}\right)_{i\in I}\right\Vert _{Y}=\infty$
if $c_{i}=\infty$ for some $i\in I$ and furthermore $\left\Vert \mathcal{F}^{-1}\left(\varphi_{i}f\right)\right\Vert _{L^{p}}=\infty$
if $\mathcal{F}^{-1}\left(\varphi_{i}f\right)\notin L^{p}\left(\mathbb{R}^{d}\right)$.

Finally, define the \textbf{Fourier-side decomposition space }(with
respect to $\mathcal{Q},L^{p},Y$) as
\[
\mathcal{D}_{\mathcal{F}}\left(\mathcal{Q},L^{p},Y\right):=\left\{ f\in\mathcal{D}'\left(\mathcal{O}\right)\with\left\Vert f\right\Vert _{\mathcal{D}_{\mathcal{F}}\left(\mathcal{Q},L^{p},Y\right)}<\infty\right\} .\qedhere
\]
\end{defn}
\begin{rem*}
Note that $\varphi_{i}f$ is a distribution on $\mathcal{O}$ with
compact support in $\mathcal{O}$ because of $\varphi_{i}\in C_{c}^{\infty}\left(\mathcal{O}\right)$.
Thus, $\varphi_{i}f$ is actually a compactly supported (tempered)
distribution on all of $\mathbb{R}^{d}$. Hence, thanks to the Paley
Wiener theorem (cf.\@ \cite[Proposition 9.11]{FollandRA}), the inverse
Fourier transform $\mathcal{F}^{-1}\left(\varphi_{i}f\right)\in\mathcal{S}'\left(\mathbb{R}^{d}\right)$
is given by (integration against) a smooth function with polynomially
bounded derivatives of all orders. Thus, it makes sense to write $\left\Vert \mathcal{F}^{-1}\left(\varphi_{i}f\right)\right\Vert _{L^{p}}$,
with the caveat that we might have $\left\Vert \mathcal{F}^{-1}\left(\varphi_{i}f\right)\right\Vert _{L^{p}}=\infty$.
In the remainder of the paper, we will always identify the (tempered)
distribution $\mathcal{F}^{-1}\left(\varphi_{i}f\right)$ with the
smooth function obtained from the Paley Wiener theorem. Hence,
\[
\left[\mathcal{F}^{-1}\left(\varphi_{i}f\right)\right]\left(x\right)=\left\langle \varphi_{i}f,\, e^{2\pi i\left\langle x,\cdot\right\rangle }\right\rangle =\left\langle f,\,\varphi_{i}\cdot e^{2\pi i\left\langle x,\cdot\right\rangle }\right\rangle 
\]
for all $x\in\mathbb{R}^{d}$.

Finally, note that the right-hand side of equation (\ref{eq:FourierSideDecompositionSpaceNorm})
uses the $L^{p}$-BAPU $\Phi$, whereas $\Phi$ is not mentioned on
the left-hand side. This is justified by \cite[Corollary 3.4.11]{VoigtlaenderPhDThesis},
where it is shown that different choices of $\Phi$ yield the same
(Fourier side) decomposition space with equivalent quasi-norms. Furthermore,
\cite[Theorem 3.4.13]{VoigtlaenderPhDThesis} shows that the resulting
space $\mathcal{D}_{\mathcal{F}}\left(\mathcal{Q},L^{p},Y\right)$
is complete and satisfies $\mathcal{D}_{\mathcal{F}}\left(\mathcal{Q},L^{p},Y\right)\hookrightarrow\mathcal{D}'\left(\mathcal{O}\right)$.
\end{rem*}
Now that we have introduced the Fourier-side version of decomposition
spaces, we also want to define their space-side counterpart. To this
end, we first have to define a suitable reservoir which takes the
role of the space of distributions $\mathcal{D}'\left(\mathcal{O}\right)$
from above. We remark that our notation is heavily influenced by Triebel,
in particular by \cite{TriebelFourierAnalysisAndFunctionSpaces}.
\begin{defn}
\label{def:SpaceSideTriebelReservoir}For an open set $\emptyset\neq\mathcal{O}\subset\mathbb{R}^{d}$,
we set
\[
Z\left(\mathcal{O}\right):=\mathcal{F}\left(C_{c}^{\infty}\left(\mathcal{O}\right)\right)=\left\{ \widehat{f}\with f\in C_{c}^{\infty}\left(\mathcal{O}\right)\right\} \leq\mathcal{S}\left(\mathbb{R}^{d}\right)
\]
and endow this space with the unique topology which makes the Fourier
transform
\[
\mathcal{F}:C_{c}^{\infty}\left(\mathcal{O}\right)=\mathcal{D}\left(\mathcal{O}\right)\to Z\left(\mathcal{O}\right)
\]
a homeomorphism.

We equip the topological dual space $Z'\left(\mathcal{O}\right):=\left[Z\left(\mathcal{O}\right)\right]'$
of $Z\left(\mathcal{O}\right)$ with the weak-$\ast$-topology, i.e.\@
with the topology of pointwise convergence on $Z\left(\mathcal{O}\right)$.
Finally, as on the Schwartz space, we extend the Fourier transform
by duality to $Z'\left(\mathcal{O}\right)$, i.e.\@ we define
\[
\mathcal{F}:Z'\left(\mathcal{O}\right)\to\mathcal{D}'\left(\mathcal{O}\right),f\mapsto f\circ\mathcal{F}.
\]
For $f\in Z'\left(\mathcal{O}\right)$, we also write $\widehat{f}:=\mathcal{F}f\in\mathcal{D}'\left(\mathcal{O}\right)$.

Since the Fourier transform $\mathcal{F}:C_{c}^{\infty}\left(\mathcal{O}\right)\to Z\left(\mathcal{O}\right)$
is bijective (even a homeomorphism), it is easily seen that $\mathcal{F}:Z'\left(\mathcal{O}\right)\to\mathcal{D}'\left(\mathcal{O}\right)$
is also a homeomorphism.
\end{defn}
Using the reservoir $Z'\left(\mathcal{O}\right)$ that we just introduced,
we can finally define the space-side decomposition spaces.
\begin{defn}
\label{def:SpaceSideDecompositionSpaces}Under the general assumptions
of Definition \ref{def:FourierSideDecompositionSpace}, we define
the \textbf{(space-side) Decomposition space} (with respect to $\mathcal{Q},L^{p},Y$)
as
\[
\mathcal{D}\left(\mathcal{Q},L^{p},Y\right):=\left\{ f\in Z'\left(\mathcal{O}\right)\with\mathcal{F}f\in\mathcal{D}_{\mathcal{F}}\left(\mathcal{Q},L^{p},Y\right)\right\} ,
\]
with (quasi)-norm
\[
\left\Vert f\right\Vert _{\mathcal{D}\left(\mathcal{Q},L^{p},Y\right)}:=\left\Vert \smash{\widehat{f}}\right\Vert _{\mathcal{D}_{\mathcal{F}}\left(\mathcal{Q},L^{p},Y\right)}=\left\Vert \left(\left\Vert \mathcal{F}^{-1}\left(\smash{\varphi_{i}\widehat{f}}\right)\right\Vert _{L^{p}}\right)_{i\in I}\right\Vert _{Y}\:,
\]
where $\Phi=\left(\varphi_{i}\right)_{i\in I}$ is an $L^{p}$-BAPU
for $\mathcal{Q}$.\end{defn}
\begin{rem*}
From the properties of the Fourier-side decomposition spaces, it is
immediate that $\mathcal{D}\left(\mathcal{Q},L^{p},Y\right)$ is a
Quasi-Banach space with $\mathcal{D}\left(\mathcal{Q},L^{p},Y\right)\hookrightarrow Z'\left(\mathcal{O}\right)$
which is independent of the choice of $\Phi$, with equivalent quasi-norms
for different choices.

Readers familiar with the work of Borup and Nielsen (e.g.\@ \cite{BorupNielsenDecomposition})
might object to the seemingly overcomplicated choice of the reservoir
$Z'\left(\mathcal{O}\right)$, when one could simply use $\mathcal{S}'\left(\mathbb{R}^{d}\right)$.
This choice, however, has two serious limitations:
\begin{enumerate}
\item We want to allow for the case that $\mathcal{Q}$ only covers a proper
subset $\mathcal{O}\subsetneq\mathbb{R}^{d}$ of the frequency space
$\mathbb{R}^{d}$. In this case, the expression $\left\Vert \cdot\right\Vert _{\mathcal{D}\left(\mathcal{Q},L^{p},Y\right)}$
does \emph{not} define a (quasi)-norm on $\mathcal{S}'\left(\mathbb{R}^{d}\right)$,
since it is not positive definite. For example, the inverse Fourier
transform of any Dirac delta distribution, $f=\mathcal{F}^{-1}\delta_{x_{0}}$
with $x_{0}\in\mathbb{R}^{d}\setminus\mathcal{O}$, would satisfy
$\left\Vert f\right\Vert _{\mathcal{D}\left(\mathcal{Q},L^{p},Y\right)}=0$,
although $f\neq0$.

Thus, to obtain a proper norm, one would have to factor out a certain
subspace of $\mathcal{S}'\left(\mathbb{R}^{d}\right)$. For example,
the homogeneous Besov space $\dot{\mathcal{B}}_{s}^{p,q}\left(\mathbb{R}^{d}\right)$
is usually defined as a subspace of $\mathcal{S}'\left(\mathbb{R}^{d}\right)/\mathcal{P}$,
where $\mathcal{P}$ denotes the space of polynomials. Note that $\mathcal{P}$
is exactly the set of inverse Fourier transforms of distributions
supported at the origin. Instead of factoring out a subspace of $\mathcal{S}'\left(\mathbb{R}^{d}\right)$
depending on $\mathcal{O}$, we prefer to work with the space $Z'\left(\mathcal{O}\right)$.

\item Even in case of $\mathcal{O}=\mathbb{R}^{d}$, the space
\[
\mathcal{D}_{\mathcal{S}'}\left(\mathcal{Q},L^{p},Y\right):=\left\{ f\in\mathcal{S}'\left(\mathbb{R}^{d}\right)\with\left\Vert f\right\Vert _{\mathcal{D}\left(\mathcal{Q},L^{p},Y\right)}<\infty\right\} 
\]
is in general \emph{not} complete, as observed in \cite[Example 3.4.14]{VoigtlaenderPhDThesis}
and \cite[Example after Definition 21]{FuehrVoigtlaenderCoorbitSpacesAsDecompositionSpaces}.
In contrast, the space $\mathcal{D}\left(\mathcal{Q},L^{p},Y\right)$
defined above \emph{is} complete. Thus, the choice of the reservoir
$Z'\left(\mathcal{O}\right)$ turns out to be superior to $\mathcal{S}'\left(\mathbb{R}^{d}\right)$
even for $\mathcal{O}=\mathbb{R}^{d}$.\qedhere
\end{enumerate}
\end{rem*}
Having properly introduced the classes of Sobolev spaces and decomposition
spaces, the next section is dedicated to deriving sufficient conditions
for an embedding of the form $\mathcal{D}\left(\mathcal{Q},L^{p},Y\right)\hookrightarrow W^{k,q}\left(\mathbb{R}^{d}\right)$.
But beforehand, we briefly gather some facts concerning convolution
relations for the Lebesgue spaces $L^{q}\left(\mathbb{R}^{d}\right)$
in the Quasi-Banach regime $q\in\left(0,1\right)$.

\subsection{Convolution in the Quasi-Banach regime}

\label{sub:QuasiBanachConvolution}As observed above, the convolution
relation $L^{1}\ast L^{q}\subset L^{q}$, which is valid for $q\in\left[1,\infty\right]$,
fails for $q\in\left(0,1\right)$. In this subsection, we recall from
\cite{TriebelTheoryOfFunctionSpaces} and \cite{VoigtlaenderPhDThesis}
some alternative convolution relations which are valid in the Quasi-Banach
regime. All of these theorems, however, will require the ``factors''
of the convolution product to be bandlimited.

We begin with the following result, which shows that bandlimited functions
in $L^{q}$ are automatically contained in $L^{r}$ for all $r\geq q$.
Intuitively, this reflects the fact that bandlimited functions are
locally well behaved, so that the only obstruction to membership in
$L^{q}$ is insufficient decay at infinity. We first state the result
for $p\in\left(0,2\right]$. Afterwards, we present a  version 
for general $p\in\left(0,\infty\right]$ for the special case where
the frequency support ${\rm supp}\,\widehat{f}$ is contained in $Q_{i}$
for a semi-structured covering $\mathcal{Q}$.
\begin{lem}
\label{lem:BandlimitedLpEmbeddingGeneral}Let $\emptyset\neq\Omega\subset\mathbb{R}^{d}$
be compact and assume that $f\in\mathcal{S}'\left(\mathbb{R}^{d}\right)$
is a tempered distribution with compact Fourier support ${\rm supp}\,\widehat{f}\subset\Omega$.

If $f\in L^{p}\left(\mathbb{R}^{d}\right)$ for some $p\in\left(0,2\right]$,
then
\[
\left\Vert f\right\Vert _{L^{q}}\leq\left[\lambda\left(\Omega\right)\right]^{\frac{1}{p}-\frac{1}{q}}\cdot\left\Vert f\right\Vert _{L^{p}}
\]
holds for all $q\in\left[p,\infty\right]$.\end{lem}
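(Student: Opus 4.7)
The plan is to reduce the general case $q \in [p, \infty]$ to the extremal case $q = \infty$, and then prove the $q = \infty$ case by Hausdorff--Young together with H\"older on the compact set $\Omega$.

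For the reduction, I would use the pointwise identity $|f|^q = |f|^p \cdot |f|^{q-p}$, valid whenever $q \geq p$. Bounding $|f|^{q-p} \leq \|f\|_{L^\infty}^{q-p}$ and integrating gives $\|f\|_{L^q}^q \leq \|f\|_{L^\infty}^{q-p} \cdot \|f\|_{L^p}^p$, hence
\[
\|f\|_{L^q} \leq \|f\|_{L^\infty}^{1 - p/q} \cdot \|f\|_{L^p}^{p/q}.
\]
If the bound $\|f\|_{L^\infty} \leq [\lambda(\Omega)]^{1/p}\|f\|_{L^p}$ has been established, plugging it in yields exactly $\|f\|_{L^q} \leq [\lambda(\Omega)]^{1/p - 1/q}\|f\|_{L^p}$, as required.

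It remains to prove the $q = \infty$ case. For $p \in [1,2]$ this is straightforward: Hausdorff--Young gives $\|\widehat{f}\|_{L^{p'}} \leq \|f\|_{L^p}$. Since $\widehat{f}$ is supported in $\Omega$, H\"older's inequality yields
\[
\|\widehat{f}\|_{L^1} = \|\widehat{f} \cdot \chi_\Omega\|_{L^1} \leq \|\widehat{f}\|_{L^{p'}} \cdot [\lambda(\Omega)]^{1/p} \leq [\lambda(\Omega)]^{1/p} \cdot \|f\|_{L^p},
\]
and Fourier inversion gives $\|f\|_{L^\infty} \leq \|\widehat{f}\|_{L^1}$. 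Combining these produces the desired bound.

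The main obstacle is the range $p \in (0,1)$, where Hausdorff--Young fails. My approach here is a bootstrap: choose $\psi \in C_c^\infty(\mathbb{R}^d)$ with $\psi \equiv 1$ on $\Omega$, so that $\widehat{f} = \psi \widehat{f}$ and hence $f = f * \mathcal{F}^{-1}\psi$; this identity (already guaranteed by Paley--Wiener) makes $f$ smooth and, combined with the Schwartz decay of $\mathcal{F}^{-1}\psi$, can be used to verify that $\|f\|_{L^\infty} < \infty$ a priori whenever $f \in L^p$. Once this is known, the pointwise estimate $|f| = |f|^p \cdot |f|^{1-p} \leq \|f\|_{L^\infty}^{1-p} \cdot |f|^p$ gives $\|f\|_{L^1} \leq \|f\|_{L^\infty}^{1-p} \cdot \|f\|_{L^p}^p$, and applying the already-proven $p = 1$ case yields
\[
\|f\|_{L^\infty} \leq \lambda(\Omega) \cdot \|f\|_{L^1} \leq \lambda(\Omega) \cdot \|f\|_{L^\infty}^{1-p} \cdot \|f\|_{L^p}^p,
\]
which rearranges (using finiteness of $\|f\|_{L^\infty}$) to the required $\|f\|_{L^\infty} \leq [\lambda(\Omega)]^{1/p}\|f\|_{L^p}$. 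The delicate point is justifying that a priori $\|f\|_{L^\infty}$ is finite for a general distributional $f \in L^p$ with $p < 1$: one likely has to argue via the representation $f(x) = (f * \mathcal{F}^{-1}\psi)(x)$ and a quantitative $L^p$-$L^\infty$ estimate for convolution with a Schwartz function, possibly at the cost of a non-sharp constant at this intermediate step, since the sharp constant $[\lambda(\Omega)]^{1/p}$ only emerges after the rearrangement.
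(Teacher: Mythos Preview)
The paper does not give a self-contained proof; it cites Triebel's \emph{Theory of Function Spaces}, Section~1.4.1, and the author's thesis. Your outline---reduce to $q=\infty$ by log-convexity of the $L^q$-norms, handle $p\in[1,2]$ by Hausdorff--Young plus H\"older on $\Omega$, then for $p<1$ bootstrap through $L^1$---is the standard route and is correct.

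On your flagged ``delicate point'' (a~priori finiteness of $\|f\|_{L^\infty}$ for $p<1$): this is indeed required before you may divide by $\|f\|_{L^\infty}^{1-p}$, and here is one concrete way to close it. Paley--Wiener gives a polynomial bound $|f(x)|\le A(1+|x|)^M$ for some $A,M$. Insert $|f(y)|\le A^{1-p}(1+|y|)^{M(1-p)}|f(y)|^p$ into the identity $f=f*\mathcal{F}^{-1}\psi$; using $(1+|y|)\le(1+|x|)(1+|x-y|)$ and the Schwartz decay of $\mathcal{F}^{-1}\psi$ to absorb the weight, one obtains
\[
|f(x)|\;\le\;C\,A^{1-p}\,\|f\|_{L^p}^{p}\,(1+|x|)^{M(1-p)}.
\]
Thus the growth exponent drops from $M$ to $M(1-p)$. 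Iterating drives it to zero and yields $\|f\|_{L^\infty}<\infty$ with a non-sharp constant depending on $\psi$; your bootstrap then upgrades this to the sharp constant $[\lambda(\Omega)]^{1/p}$, exactly as you anticipated.
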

\begin{proof}
For a proof, see \cite[Corollary 3.1.3]{VoigtlaenderPhDThesis}. The
proof given there is strongly based on that of \cite[1.4.1(3)]{TriebelTheoryOfFunctionSpaces},
where the same statement is shown, but with an unspecified constant
$C_{\Omega}$ instead of $\left[\lambda\left(\Omega\right)\right]^{\frac{1}{p}-\frac{1}{q}}$.
This constant -- which can be extracted from the proof -- will be
important for us below.
\end{proof}
Now, we specialize the above result to functions which are bandlimited
to sets $Q_{i}$ of a semi-structured covering $\mathcal{Q}$. Note
that the proof for $p\in\left[1,\infty\right]$ is self-contained,
independent of Lemma \ref{lem:BandlimitedLpEmbeddingGeneral}.
\begin{cor}
\label{cor:BandlimitedLpEmbeddingSemiStructured}Let $\mathcal{Q}=\left(Q_{i}\right)_{i\in I}=\left(T_{i}Q_{i}'+b_{i}\right)_{i\in I}$
be a semi-structured covering of the open set $\emptyset\neq\mathcal{O}\subset\mathbb{R}^{d}$.

Let $p,q\in\left(0,\infty\right]$ with $p\leq q$. Then there is
a constant $C=C\left(\mathcal{Q},d,p,q\right)$ such that for each
$i\in I$, we have
\[
\left\Vert \mathcal{F}^{-1}f\right\Vert _{L^{q}}\leq C\cdot\left|\det T_{i}\right|^{\frac{1}{p}-\frac{1}{q}}\cdot\left\Vert \mathcal{F}^{-1}f\right\Vert _{L^{p}}
\]
for all $f\in\mathcal{D}'\left(\mathcal{O}\right)$ with compact support
${\rm supp}\, f\subset Q_{i}$.\end{cor}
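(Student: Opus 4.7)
The plan is to reduce the estimate to a convolution with a ``cutoff'' function $\gamma_i$ that equals $1$ on a neighborhood of $Q_i$, and then split into two cases depending on whether $p\geq 1$ or $p<1$. Since $\mathcal{Q}$ is semi-structured, the set $\bigcup_{i\in I}Q_i'$ is bounded, so there is some $R>0$ with $Q_i'\subset B_R(0)$ for every $i\in I$. Fix $\psi\in C_c^\infty(\mathbb{R}^d)$ with $\psi\equiv 1$ on $B_{R+1}(0)$, and define $\gamma_i(\xi):=\psi\bigl(T_i^{-1}(\xi-b_i)\bigr)$. Then $\gamma_i\in C_c^\infty(\mathbb{R}^d)$, and $\gamma_i\equiv 1$ on the open set $T_iB_{R+1}(0)+b_i\supset T_i\overline{Q_i'}+b_i\supset Q_i$. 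Moreover the normalized version $\gamma_i^{\#}(\eta)=\gamma_i(T_i\eta+b_i)=\psi(\eta)$ is independent of $i$, so the constants $C_\alpha=\sup_{i\in I}\|\partial^\alpha\gamma_i^{\#}\|_{\sup}=\|\partial^\alpha\psi\|_{\sup}$ are finite for every $\alpha\in\mathbb{N}_0^d$.

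Now take $f\in\mathcal{D}'(\mathcal{O})$ with ${\rm supp}\, f\subset Q_i$; we may assume $\mathcal{F}^{-1}f\in L^p(\mathbb{R}^d)$, as otherwise the claim is trivial. Since $Q_i=T_iQ_i'+b_i$ is bounded, $f$ is a compactly supported distribution on $\mathbb{R}^d$, and because $\gamma_i\equiv 1$ on the open neighborhood $T_iB_{R+1}(0)+b_i$ of ${\rm supp}\, f$, we have the distributional identity $\gamma_i f=f$. Applying $\mathcal{F}^{-1}$ and using the convolution theorem for compactly supported distributions gives
\[
\mathcal{F}^{-1}f=(\mathcal{F}^{-1}\gamma_i)\ast(\mathcal{F}^{-1}f),
\]
which is the key identity on which both cases rest.

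For $p\in[1,\infty]$, we apply Young's inequality: with $r\in[1,\infty]$ determined by $\tfrac{1}{r}=1+\tfrac{1}{q}-\tfrac{1}{p}$ (well-defined because $p\leq q$), we obtain
\[
\bigl\|\mathcal{F}^{-1}f\bigr\|_{L^q}\leq\bigl\|\mathcal{F}^{-1}\gamma_i\bigr\|_{L^r}\cdot\bigl\|\mathcal{F}^{-1}f\bigr\|_{L^p}.
\]
Lemma \ref{lem:SufficientConditionDerivativeEstimate} (applied with $\alpha=0$) then yields $\|\mathcal{F}^{-1}\gamma_i\|_{L^r}\leq K\cdot|\det T_i|^{1-\frac{1}{r}}=K\cdot|\det T_i|^{\frac{1}{p}-\frac{1}{q}}$ with $K$ independent of $i$, which gives the asserted bound. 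This argument makes no reference to anything else in Subsection \ref{sub:QuasiBanachConvolution}, matching the remark in the statement.

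For $p\in(0,1)$, Young's inequality is unavailable, so we instead invoke Lemma \ref{lem:BandlimitedLpEmbeddingGeneral} directly with $\Omega:=\overline{Q_i}=T_i\overline{Q_i'}+b_i$, which is compact and contains ${\rm supp}\, f$. This yields
\[
\bigl\|\mathcal{F}^{-1}f\bigr\|_{L^q}\leq\bigl[\lambda(\Omega)\bigr]^{\frac{1}{p}-\frac{1}{q}}\cdot\bigl\|\mathcal{F}^{-1}f\bigr\|_{L^p},
\]
and $\lambda(\Omega)=|\det T_i|\cdot\lambda(\overline{Q_i'})\leq|\det T_i|\cdot\lambda(\overline{B_R(0)})\leq C_{\mathcal{Q},d}\cdot|\det T_i|$ by semi-structuredness, giving the claim with a constant depending only on $\mathcal{Q},d,p,q$. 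The only delicate point in the whole argument is ensuring that the cutoff $\gamma_i$ genuinely acts as the identity on $f$ at the distributional level, which is the reason for enlarging $B_R(0)$ to $B_{R+1}(0)$; everything else is a bookkeeping exercise combining Young's inequality (or Lemma \ref{lem:BandlimitedLpEmbeddingGeneral}) with the uniform estimate from Lemma \ref{lem:SufficientConditionDerivativeEstimate}.
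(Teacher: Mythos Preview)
Your proof follows the same two-case strategy as the paper (Lemma~\ref{lem:BandlimitedLpEmbeddingGeneral} for $p<1$, cutoff plus Young's inequality for $p\geq 1$), but there is a small technical slip in the $p\geq 1$ case. You invoke Lemma~\ref{lem:SufficientConditionDerivativeEstimate} to bound $\|\mathcal{F}^{-1}\gamma_i\|_{L^r}$, yet that lemma requires $\gamma_i\equiv 0$ on $\mathcal{O}\setminus Q_i$, whereas your $\gamma_i$ is supported in the strictly larger set $T_i\,{\rm supp}\,\psi+b_i$ (and need not even lie in $C_c^\infty(\mathcal{O})$). The paper avoids this by computing $\|\mathcal{F}^{-1}\gamma_i\|_{L^r}$ directly: since $\gamma_i=L_{b_i}(\psi\circ T_i^{-1})$, standard Fourier identities give $\mathcal{F}^{-1}\gamma_i=|\det T_i|\cdot M_{b_i}\bigl[(\mathcal{F}^{-1}\psi)\circ T_i^T\bigr]$, and a change of variables yields $\|\mathcal{F}^{-1}\gamma_i\|_{L^r}=|\det T_i|^{1-1/r}\|\mathcal{F}^{-1}\psi\|_{L^r}$. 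This one-line computation replaces your appeal to Lemma~\ref{lem:SufficientConditionDerivativeEstimate} and makes the $p\geq 1$ argument genuinely self-contained, as the remark following the corollary advertises. (Alternatively, one can note that the \emph{proof} of that lemma only uses uniform boundedness of ${\rm supp}\,\gamma_i^{\#}$, which holds here since $\gamma_i^{\#}=\psi$.)

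A minor expository point: you describe the convolution identity as ``the key identity on which both cases rest,'' but your $p<1$ argument does not use it at all.
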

\begin{rem*}
Since ${\rm supp}\, f\subset Q_{i}\subset\mathcal{O}$ is compact,
$f$ is actually a (tempered) distribution on all of $\mathbb{R}^{d}$,
so that the tempered distribution $\mathcal{F}^{-1}f$ is given by
(integration against) a smooth function, by the Paley Wiener theorem.
In particular, Lemma \ref{lem:BandlimitedLpEmbeddingGeneral} is applicable
to $\mathcal{F}^{-1}f$.\end{rem*}
\begin{proof}
The proof given here is essentially that of \cite[Lemma 5.1.3]{VoigtlaenderPhDThesis}.

By definition of a semi-structured covering, there is some $R>0$
with $\overline{Q_{i}'}\subset B_{R}\left(0\right)$ for all $i\in I$.
Now, we distinguish the two cases $p\in\left(0,1\right)$ and $p\in\left[1,\infty\right]$.

For $p\in\left(0,1\right)$, we have 
\begin{align*}
\lambda\left(\overline{Q_{i}}\right) & \leq\lambda\left(T_{i}\overline{Q_{i}'}+b_{i}\right)\\
 & \leq\lambda\left(T_{i}\overline{B_{R}\left(0\right)}+b_{i}\right)\\
 & =\lambda\left(B_{R}\left(0\right)\right)\cdot\left|\det T_{i}\right|
\end{align*}
and hence
\[
\left[\lambda\left(\overline{Q_{i}}\right)\right]^{\frac{1}{p}-\frac{1}{q}}\leq C_{R,p,q}\cdot\left|\det T_{i}\right|^{\frac{1}{p}-\frac{1}{q}},
\]
since $p\leq q$ implies $\frac{1}{p}-\frac{1}{q}\geq0$. Thus, Lemma
\ref{lem:BandlimitedLpEmbeddingGeneral} yields $\mathcal{F}^{-1}f\in L^{q}\left(\mathbb{R}^{d}\right)$
with
\[
\left\Vert \mathcal{F}^{-1}f\right\Vert _{L^{q}}\leq\left[\lambda\left(\overline{Q_{i}}\right)\right]^{\frac{1}{p}-\frac{1}{q}}\cdot\left\Vert \mathcal{F}^{-1}f\right\Vert _{L^{p}}\leq C_{R,p,q}\cdot\left|\det T_{i}\right|^{\frac{1}{p}-\frac{1}{q}}\cdot\left\Vert \mathcal{F}^{-1}f\right\Vert _{L^{p}},
\]
as desired.

For $p\in\left[1,\infty\right]$, we give a self-contained proof.
Choose $\psi\in C_{c}^{\infty}\left(\mathbb{R}^{d}\right)$ with $\psi\equiv1$
on $B_{R}\left(0\right)$ and let $\psi_{i}:\mathbb{R}^{d}\to\mathbb{C},\xi\mapsto\psi\left(T_{i}^{-1}\left(\xi-b_{i}\right)\right)$,
i.e.\@ $\psi_{i}=L_{b_{i}}\left(\psi\circ T_{i}^{-1}\right)$. Note
$\psi_{i}\equiv1$ on a neighborhood of $Q_{i}$, so that $f=\psi_{i}f$,
because of ${\rm supp}\, f\subset Q_{i}$.

Using the general form of Young's inequality for convolutions (cf.\@
\cite[Proposition 8.9]{FollandRA}), we get
\begin{align*}
\left\Vert \mathcal{F}^{-1}f\right\Vert _{L^{q}} & =\left\Vert \mathcal{F}^{-1}\left(\psi_{i}f\right)\right\Vert _{L^{q}}\\
 & =\left\Vert \left(\mathcal{F}^{-1}\psi_{i}\right)\ast\left(\mathcal{F}^{-1}f\right)\right\Vert _{L^{q}}\\
 & \leq\left\Vert \mathcal{F}^{-1}\psi_{i}\right\Vert _{L^{r}}\cdot\left\Vert \mathcal{F}^{-1}f\right\Vert _{L^{p}},
\end{align*}
where $r\in\left[1,\infty\right]$ has to be chosen such that $1+\frac{1}{q}=\frac{1}{r}+\frac{1}{p}$,
i.e.\@ $\frac{1}{r}=1+\frac{1}{q}-\frac{1}{p}$. This is indeed possible,
since $1\leq p\leq q$, which yields $0\leq1-\frac{1}{p}\leq1+\frac{1}{q}-\frac{1}{p}\leq1$.
Finally, standard properties of the Fourier transform (cf.\@ \cite[Theorem 8.22]{FollandRA})
imply
\[
\mathcal{F}^{-1}\psi_{i}=\left|\det T_{i}\right|\cdot M_{b_{i}}\left[\left(\mathcal{F}^{-1}\psi\right)\circ T_{i}^{T}\right]
\]
and hence (using the change-of-variables formula)
\begin{align*}
\left\Vert \mathcal{F}^{-1}\psi_{i}\right\Vert _{L^{r}} & =\left|\det T_{i}\right|\cdot\left|\det T_{i}^{T}\right|^{-\frac{1}{r}}\cdot\left\Vert \mathcal{F}^{-1}\psi\right\Vert _{L^{r}}\\
 & =\left|\det T_{i}\right|^{1-\left(1+\frac{1}{q}-\frac{1}{p}\right)}\cdot\left\Vert \mathcal{F}^{-1}\psi\right\Vert _{L^{r}}\\
 & =C\cdot\left|\det T_{i}\right|^{\frac{1}{p}-\frac{1}{q}}.
\end{align*}
Altogether, this yields the desired estimate for the case $p\in\left[1,\infty\right]$.
\end{proof}
Now that we have established embeddings of bandlimited $L^{p}$-functions
into $L^{q}$ for $p\leq q$, we give an overview over the alternatives
to the convolution relation $L^{1}\ast L^{q}\hookrightarrow L^{q}$
in case of $q\in\left(0,1\right)$. As above, we first state a general
result and then specialize to the case in which the factors of the
convolution product are supported in sets $Q_{i}$ of a semi-structured
covering $\mathcal{Q}$.
\begin{thm}
\label{thm:QuasiBanachConvolution}(cf.\@ \cite[Proposition 1.5.1]{TriebelTheoryOfFunctionSpaces})

Let $Q_{1},Q_{2}\subset\mathbb{R}^{d}$ be compact and let $p\in\left(0,1\right]$.
Furthermore, assume $\psi\in L^{1}\left(\mathbb{R}^{d}\right)$ with
${\rm supp}\,\psi\subset Q_{1}$ and such that $\mathcal{F}^{-1}\psi\in L^{p}\left(\mathbb{R}^{d}\right)$.
Then, for $f\in L^{p}\left(\mathbb{R}^{d}\right)\cap\mathcal{S}'\left(\mathbb{R}^{d}\right)$
with ${\rm supp}\,\widehat{f}\subset Q_{2}$, we have
\[
\mathcal{F}^{-1}\left(\psi\cdot\widehat{f}\right)=\left(\mathcal{F}^{-1}\psi\right)\ast f\in L^{p}\left(\mathbb{R}^{d}\right)
\]
with
\[
\left\Vert \mathcal{F}^{-1}\left(\smash{\psi\cdot\widehat{f}}\right)\right\Vert _{L^{p}}\leq\left[\lambda\left(Q_{1}-Q_{2}\right)\right]^{\frac{1}{p}-1}\cdot\left\Vert \mathcal{F}^{-1}\psi\right\Vert _{L^{p}}\cdot\left\Vert f\right\Vert _{L^{p}},
\]
where $\lambda$ is the usual Lebesgue measure on $\mathbb{R}^{d}$
and where
\[
Q_{1}-Q_{2}:=\left\{ q_{1}-q_{2}\with q_{1}\in Q_{1},\, q_{2}\in Q_{2}\right\} 
\]
is the \textbf{difference set} of $Q_{1},Q_{2}$, which is compact
and hence measurable, since $Q_{1},Q_{2}$ are compact.\end{thm}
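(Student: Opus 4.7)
The plan has two parts: establishing the identity $\mathcal{F}^{-1}(\psi\cdot\widehat{f})=(\mathcal{F}^{-1}\psi)\ast f$ as well-defined objects, and then proving the quantitative $L^p$-bound. For the identity, since $\widehat{f}$ is a compactly supported distribution in $Q_2$, the Paley--Wiener theorem shows $f$ is smooth of polynomial growth, and Lemma~\ref{lem:BandlimitedLpEmbeddingGeneral} upgrades $f\in L^p$ to $f\in L^1\cap L^\infty$; an analogous application upgrades $\mathcal{F}^{-1}\psi\in L^p$ to $\mathcal{F}^{-1}\psi\in L^1$ via its Fourier support in $Q_1$. Choosing $\chi\in C_c^\infty(\mathbb{R}^d)$ with $\chi\equiv 1$ on a neighborhood of $Q_2$ gives $\psi\widehat{f}=(\psi\chi)\widehat{f}$ with $\psi\chi\in L^1$ compactly supported, so both $\psi\widehat{f}$ and its inverse Fourier transform are well-defined tempered distributions; a routine duality argument (testing against Schwartz functions and applying Fubini) then establishes the identity pointwise.

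For the quantitative bound, the main issue is that Young's inequality $L^1\ast L^p\hookrightarrow L^p$ fails for $p\in(0,1)$, and one must exploit the bandlimited nature of both factors. The geometric observation driving the bound is that for fixed $x$, the integrand $y\mapsto(\mathcal{F}^{-1}\psi)(x-y)\cdot f(y)$ has Fourier support (in $y$) inside $Q_2-Q_1$, a set of the same Lebesgue measure as $Q_1-Q_2$. Following \cite[Proposition 1.5.1]{TriebelTheoryOfFunctionSpaces}, I would discretize the convolution via a sampling theorem adapted to this set: approximate $g(x)=\int(\mathcal{F}^{-1}\psi)(x-y)f(y)\,dy$ by a Riemann-type sum $s^d\sum_k(\mathcal{F}^{-1}\psi)(x-y_k)f(y_k)$ with lattice spacing $s\sim[\lambda(Q_1-Q_2)]^{-1/d}$, then invoke the $p$-subadditivity $\|\sum_k g_k\|_{L^p}^p\leq\sum_k\|g_k\|_{L^p}^p$ (which replaces Young in the quasi-Banach regime) together with a discrete Plancherel--Polya estimate $\sum_k|f(y_k)|^p\lesssim s^{-d}\|f\|_{L^p}^p$ for bandlimited $f$. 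Collecting exponents gives $\|g\|_{L^p}^p\lesssim s^{d(p-1)}\|\mathcal{F}^{-1}\psi\|_{L^p}^p\|f\|_{L^p}^p=[\lambda(Q_1-Q_2)]^{1-p}\|\mathcal{F}^{-1}\psi\|_{L^p}^p\|f\|_{L^p}^p$, and the $p$-th root recovers the claimed exponent $1/p-1$.

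The main obstacle is the careful bookkeeping of constants during the sampling/limiting step, in particular the extraction of the precise exponent $1-p$ from the discrete Plancherel--Polya bound and the justification that the Riemann sums converge to the convolution in $L^p$. Since this is exactly the bookkeeping carried out in \cite[Proposition 1.5.1]{TriebelTheoryOfFunctionSpaces}, the cleanest path is to invoke that result essentially verbatim, after verifying that our hypotheses ($\psi\in L^1$ with $\mathcal{F}^{-1}\psi\in L^p$, rather than the $\psi\in\mathcal{S}$ case) are admissible in Triebel's framework.
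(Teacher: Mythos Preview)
The paper itself does not give a proof; it simply cites Triebel's Proposition 1.5.1 and the author's thesis for the explicit constant. So there is no detailed argument in the paper to match against, and your plan to defer to Triebel is in line with what the paper does.

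That said, your proposed route through discretization and Plancherel--Polya is more elaborate than necessary, and the step where you pick lattice spacing $s\sim[\lambda(Q_1-Q_2)]^{-1/d}$ is suspect: sampling theorems are governed by the \emph{diameter} of the frequency support, not its measure, so for general compact $Q_1,Q_2$ your bookkeeping of exponents would not close up as written. You have, however, isolated exactly the right observation---that for each fixed $x$ the function $G_x(y):=(\mathcal{F}^{-1}\psi)(x-y)\,f(y)$ has Fourier support (in $y$) contained in $Q_2-Q_1$---and this observation already finishes the proof without any discretization. Simply apply Lemma~\ref{lem:BandlimitedLpEmbeddingGeneral} with $q=1$ to $G_x$:
\[
\bigl|(\mathcal{F}^{-1}\psi\ast f)(x)\bigr|\le\|G_x\|_{L^1}\le[\lambda(Q_2-Q_1)]^{\frac{1}{p}-1}\,\|G_x\|_{L^p}.
\]
Raising to the $p$-th power, integrating in $x$, and applying Fubini gives
\[
\|(\mathcal{F}^{-1}\psi)\ast f\|_{L^p}^p\le[\lambda(Q_2-Q_1)]^{1-p}\int\!\!\int|(\mathcal{F}^{-1}\psi)(x-y)|^p|f(y)|^p\,dy\,dx=[\lambda(Q_1-Q_2)]^{1-p}\|\mathcal{F}^{-1}\psi\|_{L^p}^p\|f\|_{L^p}^p,
\]
which is precisely the claim. (That $G_x\in L^p\cap\mathcal{S}'$ follows from your own first paragraph: both factors are in $L^\infty$ by Lemma~\ref{lem:BandlimitedLpEmbeddingGeneral}, and $f\in L^p$.) This two-line argument is, in fact, how the constant $[\lambda(Q_1-Q_2)]^{1/p-1}$ emerges from Triebel's proof; the sampling machinery you propose is not needed here.
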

\begin{rem*}
Observe that $\psi\cdot\widehat{f}\in\mathcal{S}'\left(\mathbb{R}^{d}\right)$
is well-defined, even though $\psi$ might not be smooth. Indeed,
since $p\leq1$, Lemma \ref{lem:BandlimitedLpEmbeddingGeneral} shows
$f\in L^{1}\left(\mathbb{R}^{d}\right)$ and hence $\widehat{f}\in C_{0}\left(\mathbb{R}^{d}\right)\subset L^{\infty}\left(\mathbb{R}^{d}\right)$
by the Riemann-Lebesgue lemma, so that $\psi\cdot\widehat{f}\in L^{1}\left(\mathbb{R}^{d}\right)\subset\mathcal{S}'\left(\mathbb{R}^{d}\right)$,
because of $\psi\in L^{1}\left(\mathbb{R}^{d}\right)$.\end{rem*}
\begin{proof}
A proof of this statement can be found in the proof of \cite[Proposition 1.5.1]{TriebelTheoryOfFunctionSpaces}.
Note, however, that the constant $\left[\lambda\left(Q_{1}-Q_{2}\right)\right]^{\frac{1}{p}-1}$
becomes apparent from the proof, but is not stated explicitly in the
statement of \cite[Proposition 1.5.1]{TriebelTheoryOfFunctionSpaces}.

Another proof (stating the constant explicitly) can be found in \cite[Theorem 3.1.4]{VoigtlaenderPhDThesis}.
\end{proof}
We close this section by specializing the above result to a more convenient
version which applies if the sets $Q_{1},Q_{2}$ from above are in
fact members of a semi-structured covering $\mathcal{Q}$. We remark
that a version of the following convolution relation is implicitly
used repeatedly in \cite{BorupNielsenDecomposition} and \cite{HanWangAlphaModulationEmbeddings},
withouth stating it explicitly.
\begin{cor}
\label{cor:SemiStructuredQuasiBanachConvolution}Let $p\in\left(0,1\right]$
and let $\mathcal{Q}=\left(Q_{i}\right)_{i\in I}=\left(T_{i}Q_{i}'+b_{i}\right)_{i\in I}$
be an $L^{p}$-decomposition covering of the open set $\emptyset\neq\mathcal{O}\subset\mathbb{R}^{d}$.

For arbitrary $n\in\mathbb{N}$, there is a constant $C=C\left(\mathcal{Q},n,d,p\right)>0$
with the following property: If $i\in I$ and
\begin{itemize}
\item if $\psi\in L^{1}\left(\mathbb{R}^{d}\right)$ with ${\rm supp}\,\psi\subset Q_{i}^{n\ast}$
and $\mathcal{F}^{-1}\psi\in L^{p}\left(\mathbb{R}^{d}\right)$ and
\item if $f\in\mathcal{D}'\left(\mathcal{O}\right)$ with ${\rm supp}\, f\subset Q_{i}^{n\ast}$
and $\mathcal{F}^{-1}f\in L^{p}\left(\mathbb{R}^{d}\right)$,
\end{itemize}
then $\mathcal{F}^{-1}\left(\psi f\right)\in L^{p}\left(\mathbb{R}^{d}\right)$
with
\[
\left\Vert \mathcal{F}^{-1}\left(\psi f\right)\right\Vert _{L^{p}}\leq C\cdot\left|\det T_{i}\right|^{\frac{1}{p}-1}\cdot\left\Vert \mathcal{F}^{-1}\psi\right\Vert _{L^{p}}\cdot\left\Vert \mathcal{F}^{-1}f\right\Vert _{L^{p}}.\qedhere
\]
\end{cor}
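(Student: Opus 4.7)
The plan is to reduce to Theorem \ref{thm:QuasiBanachConvolution} applied with $Q_1 = Q_2 = \overline{Q_i^{n\ast}}$, and then to control the resulting geometric constant $[\lambda(Q_1 - Q_2)]^{1/p - 1}$ by $C \cdot |\det T_i|^{1/p - 1}$. The key observation is that, thanks to the semi-structured hypothesis $\sup_{j \in i^\ast} \|T_i^{-1} T_j\| \le C_{\mathcal{Q}}$, the entire $n$-star $Q_i^{n\ast}$ sits inside an affine image of $T_i$ applied to a \emph{uniformly bounded} set, so that its Lebesgue measure scales like $|\det T_i|$.

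The first step is the geometric lemma: there exists $R_n = R_n(\mathcal{Q}, n) > 0$ with $Q_i^{n\ast} - Q_i^{n\ast} \subset T_i \cdot B_{R_n}(0)$ for every $i \in I$. Iterating $\|T_i^{-1} T_j\| \le C_{\mathcal{Q}}$ along any chain of $\mathcal{Q}$-neighbors $i_0 = i, i_1, \dots, i_n = j$ gives $\|T_i^{-1} T_j\| \le C_{\mathcal{Q}}^n$ for every $j \in i^{n\ast}$. Fix any $y_0 \in Q_i$; for $z \in Q_j$, choose intersection points $y_k \in Q_{i_{k-1}} \cap Q_{i_k}$ and telescope
\[
T_i^{-1}(z - y_0) = T_i^{-1}(z - y_n) + \sum_{k=1}^n T_i^{-1}(y_k - y_{k-1}).
\]
Each summand is a difference of two elements of a single set $Q_{i_{k-1}}$ (respectively $Q_j$), hence lies in $T_{i_{k-1}}(Q_{i_{k-1}}' - Q_{i_{k-1}}') \subset T_{i_{k-1}} \cdot B_{2R}(0)$, with $R > 0$ uniformly bounding $\bigcup_{\ell \in I} Q_\ell'$. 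Applying $T_i^{-1}$ yields a vector of norm at most $2R \cdot C_{\mathcal{Q}}^{k-1}$, and summing the $n+1$ contributions gives $|T_i^{-1}(z - y_0)| \le R_n/2$ uniformly in $i$, $j \in i^{n\ast}$, and $z \in Q_j$. Therefore $Q_i^{n\ast} - y_0 \subset T_i \cdot B_{R_n/2}(0)$, and the claim follows by taking differences.

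The second step applies Theorem \ref{thm:QuasiBanachConvolution}. The lemma above shows $\overline{Q_i^{n\ast}}$ is compact. Since ${\rm supp}\, f$ is then a compact subset of $\mathbb{R}^d$, the distribution $f \in \mathcal{D}'(\mathcal{O})$ extends canonically to a compactly supported tempered distribution on $\mathbb{R}^d$, and $g := \mathcal{F}^{-1} f \in L^p(\mathbb{R}^d) \cap \mathcal{S}'(\mathbb{R}^d)$ satisfies ${\rm supp}\, \widehat{g} \subset \overline{Q_i^{n\ast}}$. Applying Theorem \ref{thm:QuasiBanachConvolution} to the pair $(\psi, g)$ with $Q_1 = Q_2 = \overline{Q_i^{n\ast}}$ yields
\[
\left\|\mathcal{F}^{-1}(\psi f)\right\|_{L^p} \le \bigl[\lambda\bigl(\overline{Q_i^{n\ast}} - \overline{Q_i^{n\ast}}\bigr)\bigr]^{\frac{1}{p}-1} \cdot \left\|\mathcal{F}^{-1}\psi\right\|_{L^p} \cdot \left\|\mathcal{F}^{-1} f\right\|_{L^p}.
\]
The geometric lemma bounds $\lambda(\overline{Q_i^{n\ast}} - \overline{Q_i^{n\ast}}) \le \lambda(B_{R_n}(0)) \cdot |\det T_i|$, and since $\tfrac{1}{p} - 1 \ge 0$, raising to the power $\tfrac{1}{p}-1$ preserves this estimate and produces a uniform constant $C = C(\mathcal{Q}, n, d, p)$ multiplying $|\det T_i|^{\frac{1}{p}-1}$, as required.

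The main obstacle is the geometric lemma, specifically ensuring that the length scale of $Q_i^{n\ast}$ is controlled by $T_i$ itself — so that the resulting measure factor is $|\det T_i|$ — rather than merely by the scalar $\|T_i\|$, which would spuriously yield $\|T_i\|^d$. This hinges on the \emph{multiplicative} bound $\|T_i^{-1} T_j\| \le C_{\mathcal{Q}}^n$ iterated along the chain, combined with the observation that each segment $y_k - y_{k-1}$ lies in a single set $Q_{i_{k-1}}$, whose shape is exactly $T_{i_{k-1}}$ applied to the uniformly bounded normalized set $Q_{i_{k-1}}'$.
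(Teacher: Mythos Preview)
Your proof is correct and follows essentially the same route as the paper: apply Theorem~\ref{thm:QuasiBanachConvolution} with $Q_1 = Q_2 = \overline{Q_i^{n\ast}}$ and control $\lambda(Q_1 - Q_2)$ via a uniform containment $Q_i^{n\ast} \subset b_i + T_i\, B_L(0)$. The paper simply cites this geometric fact from \cite[Lemma 3.2.13]{VoigtlaenderPhDThesis}, whereas you supply a self-contained proof of it via the telescoping chain argument through intersection points $y_k \in Q_{i_{k-1}} \cap Q_{i_k}$; this is exactly the idea behind the cited lemma, so your argument is both correct and slightly more informative than what the paper records.
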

\begin{proof}
A complete proof of this result is given in \cite[Corollary 3.2.15]{VoigtlaenderPhDThesis}.
Here, we use without proof the result of \cite[Lemma 3.2.13]{VoigtlaenderPhDThesis}
which implies that there is a constant $L=L\left(n,\mathcal{Q}\right)>0$
such that
\[
Q_{i}^{n\ast}\subset T_{i}\left(B_{L}\left(0\right)\right)+b_{i}
\]
holds for all $i\in I$. Hence, we can apply Theorem \ref{thm:QuasiBanachConvolution}
with $Q_{1}=Q_{2}=\overline{Q_{i}^{n\ast}}$, since we have
\[
Q_{1}-Q_{2}\subset T_{i}\left(\overline{B_{2L}\left(0\right)}\right)
\]
and hence $\lambda\left(Q_{1}-Q_{2}\right)\leq\left|\det T_{i}\right|\cdot\lambda\left(B_{2L}\left(0\right)\right)$.
\end{proof}
Now, we are equipped with a solid definition of the decomposition
space $\mathcal{D}\left(\mathcal{Q},L^{p},Y\right)$ and certain convolution
relations for $L^{p}$ in case of $p\in\left(0,1\right)$. These will
be put to use in the next section, where we derive sufficient conditions
for existence of an embedding of a decomposition space into a Sobolev
space. Actually, we will derive sufficient conditions for boundedness
of (suitable defined) derivative operators $\partial_{\ast}^{\alpha}:\mathcal{D}\left(\mathcal{Q},L^{p},Y\right)\to L^{q}\left(\mathbb{R}^{d}\right)$.

\section{Sufficient Conditions}

\label{sec:SufficientConditions}In this section, we will show that
the two conditions $p\leq q$ and $Y\hookrightarrow\ell_{u^{\left(k,p,q\right)}}^{q^{\triangledown}}\left(I\right)$
for a suitable weight $u^{\left(k,p,q\right)}$ are sufficient for
the existence of the embedding
\[
\mathcal{D}\left(\mathcal{Q},L^{p},Y\right)\hookrightarrow W^{k,q}\left(\mathbb{R}^{d}\right).
\]
The main ingredient for the proof of this result is the following
lemma which allows to sum a sequence of functions $f_{i}$, each bandlimited
to the set $Q_{i}$. The important fact is that the $L^{p}$-norm
of the sum $\sum_{i\in I}f_{i}$ can be controlled by the $\ell^{p^{\triangledown}}$-norm
of the individual norms $\left\Vert f_{i}\right\Vert _{L^{p}}$. In
most cases, this is a huge improvement over the obvious estimate obtained
by the triangle inequality, which would yield an estimate in terms
of the $\ell^{1}$-norm of the individual functions. In a second step,
we will then estimate the $L^{q}$-norms of the derivatives of the
pieces $f_{i}$ in terms of the norms $\left\Vert f_{i}\right\Vert _{L^{p}}$.
This is possible, since each of the ``pieces'' $f_{i}$ is bandlimited
to the set $Q_{i}$, cf.\@ Lemma \ref{lem:LocalDerivativeEstimate}.

We remark that the proof heavily relies on Plancherel's theorem (for
the case $p=2$) and interpolation. A different (more complicated)
proof of a very similar result was given in \cite[Lemma 5.1.2]{VoigtlaenderPhDThesis}.
\begin{lem}
\label{lem:SufficiencyWithoutDerivativesMainLemma}Let $\mathcal{Q}=\left(Q_{i}\right)_{i\in I}=\left(T_{i}Q_{i}'+b_{i}\right)_{i\in I}$
be an $L^{1}$-decomposition covering of the open set $\emptyset\neq\mathcal{O}\subset\mathbb{R}^{d}$.
Furthermore, let $p\in\left(0,\infty\right]$ and $k\in\mathbb{N}_{0}$
and assume that for each $i\in I$, we are given $f_{i}\in\mathcal{S}'\left(\mathbb{R}^{d}\right)\cap L^{p}\left(\mathbb{R}^{d}\right)$
with Fourier support ${\rm supp}\,\widehat{f_{i}}\subset Q_{i}^{k\ast}$
and such that
\[
\left\Vert \left(\left\Vert f_{i}\right\Vert _{L^{p}}\right)_{i\in I}\right\Vert _{\ell^{p^{\triangledown}}}<\infty,
\]
where $p^{\triangledown}:=\min\left\{ p,p'\right\} $.

Then $\sum_{i\in I}f_{i}\in L^{p}\left(\mathbb{R}^{d}\right)$ with
unconditional convergence of the series in $L^{p}\left(\mathbb{R}^{d}\right)$.
Here, we identify $f_{i}$ with its continuous (even smooth) version
which exists by the Paley-Wiener theorem (cf. \cite[Proposition 9.11]{FollandRA}).
Finally, we have
\begin{equation}
\left\Vert \sum_{i\in I}f_{i}\right\Vert _{L^{p}\left(\mathbb{R}^{d}\right)}\leq C\cdot\left\Vert \left(\left\Vert f_{i}\right\Vert _{L^{p}}\right)_{i\in I}\right\Vert _{\ell^{p^{\triangledown}}}\label{eq:SufficientConditionWithoutDerivativesMainEstimate}
\end{equation}
for some constant $C=C\left(\mathcal{Q},k\right)>0$.\end{lem}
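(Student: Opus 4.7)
The geometric input I will use is that the enlarged sets $(Q_i^{k\ast})_{i\in I}$ have uniformly bounded overlap: there is $N_{0}=N_{0}(k,\mathcal{Q})$ with $\#\{i\in I \;:\; \xi\in Q_i^{k\ast}\}\le N_{0}$ for every $\xi\in\mathcal{O}$. This follows by induction on $k$ from admissibility of $\mathcal{Q}$, using the symmetric equivalence $j\in i^{n\ast}\Leftrightarrow i\in j^{n\ast}$ (which comes from symmetry of $Q_{i}\cap Q_{j}\ne\emptyset$) and the uniform bound $|i^{\ast}|\le N_{\mathcal{Q}}$; concretely one gets $N_{0}\le N_{\mathcal{Q}}^{k+1}$. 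With this in hand, my plan is to establish the estimate first for finite partial sums at the endpoint values of $p$, interpolate to cover the intermediate ranges, and finally pass to the limit to obtain unconditional convergence.

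The endpoint cases are essentially immediate. For $p=2$, Plancherel combined with Cauchy--Schwarz applied pointwise to the at most $N_{0}$ nonzero summands of $\sum_{i}\widehat{f_{i}}(\xi)$ yields $\|\sum_{i}f_{i}\|_{L^{2}}^{2}\le N_{0}\sum_{i}\|f_{i}\|_{L^{2}}^{2}$. For $p\in\{1,\infty\}$, the ordinary triangle inequality gives the $\ell^{1}$-estimate, which is the $\ell^{p^{\triangledown}}$-estimate since $p^{\triangledown}=1$ at both endpoints. For $p\in(0,1)$, the $p$-subadditive quasi-norm inequality $\|f+g\|_{L^{p}}^{p}\le\|f\|_{L^{p}}^{p}+\|g\|_{L^{p}}^{p}$ (valid for arbitrary $L^{p}$-functions, no bandlimited hypothesis needed) gives the $\ell^{p}$-estimate directly, matching $\ell^{p^{\triangledown}}$ since $p^{\triangledown}=p$ there.

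For the remaining ranges $p\in(1,2)$ and $p\in(2,\infty)$ I would apply complex interpolation to the synthesis operator $T\colon (f_{i})_{i\in I}\mapsto\sum_{i}f_{i}$, restricted to the closed subspace of sequences satisfying $\operatorname{supp}\widehat{f_{i}}\subset Q_{i}^{k\ast}$. For $p\in[1,2]$, identifying $\ell^{p}(L^{p})$ with $L^{p}(I\times\mathbb{R}^{d},\mu_{\#}\otimes\lambda)$ lets Riesz--Thorin interpolate the endpoint bounds $T\colon\ell^{1}(L^{1})\to L^{1}$ and $T\colon\ell^{2}(L^{2})\to L^{2}$ into $T\colon\ell^{p}(L^{p})\to L^{p}$, which is what is wanted since $p^{\triangledown}=p$ there. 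For $p\in[2,\infty]$, the standard mixed-norm interpolation formula $(\ell^{p_{0}}(L^{q_{0}}),\ell^{p_{1}}(L^{q_{1}}))_{\theta}=\ell^{p_{\theta}}(L^{q_{\theta}})$ applied to the endpoints $\ell^{2}(L^{2})\to L^{2}$ and $\ell^{1}(L^{\infty})\to L^{\infty}$ produces $T\colon\ell^{p'}(L^{p})\to L^{p}$, matching $\ell^{p^{\triangledown}}$ since $p^{\triangledown}=p'$ for $p\ge 2$. As a concrete alternative for $p\in[1,2]$ avoiding abstract interpolation: bound $|\widehat{S}(\xi)|^{p'}\le N_{0}^{p'-1}\sum_{i}|\widehat{f_{i}}(\xi)|^{p'}$ pointwise, integrate, apply Hausdorff--Young twice to get $\|S\|_{L^{p}}\le\|\widehat{S}\|_{L^{p'}}\le C\,\|(\|f_{i}\|_{L^{p}})\|_{\ell^{p'}}$, and finish via the embedding $\ell^{p}\hookrightarrow\ell^{p'}$.

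Once the estimate holds for finite subsums, unconditional convergence is automatic: for any finite $F\subset F'\subset I$ we get $\|\sum_{i\in F'\setminus F}f_{i}\|_{L^{p}}\le C\,\|(\|f_{i}\|_{L^{p}})_{i\in F'\setminus F}\|_{\ell^{p^{\triangledown}}}$, which tends to zero as $F$ exhausts $I$ because the right-hand side is a tail of a summable sequence; partial sums thus form a Cauchy net in $L^{p}$ (respectively a uniformly Cauchy family for $p=\infty$), and the limiting value is independent of the enumeration. I expect the main technical obstacle to be the interpolation step in the range $p>2$: one must justify that the mixed-norm complex interpolation is compatible with restricting to the closed subspace of bandlimited sequences, and must verify that the indices line up to land on $\ell^{p^{\triangledown}}(L^{p})$ on the source side rather than on some other mixed-norm space. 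For $p\le 2$ the Hausdorff--Young trick above bypasses this subtlety entirely, but no such pedestrian route is available for $p>2$, so a genuine interpolation step (or a careful duality argument reducing to $p\le 2$) seems to be necessary.
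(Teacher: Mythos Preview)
Your overall architecture (endpoints plus interpolation) matches the paper's, but the interpolation step has a real gap, and the paper's proof contains exactly the device needed to close it.

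The difficulty you flag only for $p>2$ is in fact already present for $p\in(1,2)$. At $p=2$ the synthesis map $T\colon(f_i)\mapsto\sum_i f_i$ is bounded \emph{only} on the subspace of sequences with $\operatorname{supp}\widehat{f_i}\subset Q_i^{k\ast}$; on the full space $\ell^2(L^2)\cong L^2(I\times\mathbb{R}^d)$ it is unbounded (take all $f_i$ equal to a fixed bump). So your Riesz--Thorin step between $p=1$ and $p=2$ is being applied to an operator that is not bounded on both full endpoint spaces, and restricting to the bandlimited subspace requires knowing that this subspace is preserved under complex interpolation, which is not automatic. The paper circumvents this by \emph{extending} rather than restricting: using the $L^1$-BAPU, it defines
\[
\Phi_p\bigl((g_i)_i\bigr)\;=\;\sum_i\mathcal{F}^{-1}\!\bigl(\varphi_i^{(k+1)\ast}\,\widehat{g_i}\bigr)
\]
on the \emph{entire} mixed-norm space $\ell^p(I;L^p)$ (respectively $\ell^{p'}(I;L^p)$ for $p\ge 2$). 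Because each $\varphi_i^{(k+1)\ast}$ is a uniformly bounded $L^p$-multiplier, $\Phi_p$ is genuinely bounded at all three endpoints $p=1,2,\infty$ on the full spaces, so standard vector-valued complex interpolation applies cleanly. Finally $\varphi_i^{(k+1)\ast}\equiv 1$ on $Q_i^{k\ast}$, so $\Phi_p$ agrees with $T$ on the bandlimited sequences. This BAPU-based extension is the missing idea in your sketch.

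Separately, your Hausdorff--Young alternative for $p\in[1,2]$ is incorrect: the inequality $\|S\|_{L^p}\le\|\widehat{S}\|_{L^{p'}}$ that you invoke is Hausdorff--Young applied to $\widehat{S}$ with exponent $p'$, which requires $p'\in[1,2]$, i.e.\ $p\ge 2$. For $p<2$ the inequality goes the wrong way, so this route does not bypass the interpolation issue.
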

\begin{proof}
Note that we always have $p^{\triangledown}\leq2<\infty$, as can
be seen by distinguishing the cases $p\leq2$ and $p\geq2$. In particular,
since $\left\Vert \left(\left\Vert f_{i}\right\Vert _{L^{p}}\right)_{i\in I}\right\Vert _{\ell^{p^{\triangledown}}}<\infty$,
we easily get $\left\Vert f_{i}\right\Vert _{L^{p}}=0$ for all $i\in I\setminus I_{0}$
for an at most countable set $I_{0}\subset I$. Now, since $f_{i}$
is continuous for each $i\in I$ by assumption, $\left\Vert f_{i}\right\Vert _{L^{p}}=0$
actually implies $f_{i}\equiv0$ everywhere (and not only almost everywhere).
Thus, we can assume in the following that $I=I_{0}$ is countable%
\footnote{Actually, if each $Q_{i}$ is open, then $I$ is necessary countable,
as seen in the proof of Theorem \ref{thm:StructuredAdmissibleCoveringsAreRegular}.
But using the argument from the present proof, we can avoid assuming
openness of the $Q_{i}$.%
}.

Let us first handle the (easier) case $p\in\left(0,1\right]$. In
this case, we have $p^{\triangledown}=p$. Furthermore, it is well-known
that $\left\Vert \cdot\right\Vert _{L^{p}}$ is a $p$-norm, i.e.\@
we have $\left\Vert f+g\right\Vert _{L^{p}}^{p}\leq\left\Vert f\right\Vert _{L^{p}}^{p}+\left\Vert g\right\Vert _{L^{p}}^{p}$
for all measurable $f,g$. Indeed, this is an immediate consequence
of the estimate 
\[
\left|a+b\right|^{p}\leq\left(\left|a\right|+\left|b\right|\right)^{p}\leq\left|a\right|^{p}+\left|b\right|^{p}
\]
which is valid for $a,b\in\mathbb{C}$ since $p\in\left(0,1\right]$.

In view of the above and since $I=I_{0}$ is countable, the monotone
convergence theorem yields
\[
\left\Vert \sum_{i\in I}\left|f_{i}\right|\right\Vert _{L^{p}}^{p}\leq\sum_{i\in I}\left\Vert f_{i}\right\Vert _{L^{p}}^{p}=\left\Vert \left(\left\Vert f_{i}\right\Vert _{L^{p}}\right)_{i\in I}\right\Vert _{\ell^{p}}^{p}=\left\Vert \left(\left\Vert f_{i}\right\Vert _{L^{p}}\right)_{i\in I}\right\Vert _{\ell^{p^{\triangledown}}}^{p}<\infty.
\]
By solidity of $L^{p}\left(\mathbb{R}^{d}\right)$, this implies the
claim. In addition to unconditional convergence in $L^{p}$, we even
get absolute convergence almost everywhere of the series $\sum_{i\in I}f_{i}$.

Now, we handle the case $p\in\left[1,2\right]$. To this end, let
$\Phi=\left(\varphi_{i}\right)_{i\in I}$ be an $L^{1}$-BAPU for
$\mathcal{Q}$. Such a family exists by assumption. For $i\in I$,
let $\varphi_{i}^{\left(k+1\right)\ast}:=\sum_{\ell\in i^{\left(k+1\right)\ast}}\varphi_{\ell}$.
Now, for $p\in\left[1,2\right]$, define the map
\[
\Phi_{p}:\ell^{p}\left(I_{0};L^{p}\left(\mathbb{R}^{d}\right)\right)\to L^{p}\left(\mathbb{R}^{d}\right),\left(g_{i}\right)_{i\in I_{0}}\mapsto\sum_{i\in I_{0}}\mathcal{F}^{-1}\left(\varphi_{i}^{\left(k+1\right)\ast}\widehat{g_{i}}\right).
\]
We will show that this map is well-defined and bounded (with unconditional
convergence of the series in $L^{p}\left(\mathbb{R}^{d}\right)$)
for $p=1$ and $p=2$. By complex interpolation (for vector-valued
$L^{p}$-spaces, cf.\@ \cite[Theorems 5.1.1 and 5.1.2]{BerghLoefstroemInterpolationSpaces}),
it then follows%
\footnote{Complex interpolation shows at least that the series $\Phi_{p}\left(g\right)=\sum_{i\in I_{0}}\mathcal{F}^{-1}\left(\varphi_{i}^{\left(k+1\right)\ast}\widehat{g_{i}}\right)$
is a well-defined element of $L^{1}+L^{2}$ for $g=\left(g_{i}\right)_{i\in I_{0}}\in\ell^{p}\left(I_{0};L^{p}\left(\mathbb{R}^{d}\right)\right)$.
But we also get $\left\Vert \Phi_{p}\left(g\right)\right\Vert _{L^{p}}\lesssim\left\Vert \left(\left\Vert g_{i}\right\Vert _{L^{p}}\right)_{i\in I_{0}}\right\Vert _{\ell^{p}}$.
Because of $p<\infty$, this easily yields unconditional convergence
of the series, since for $\varepsilon>0$, there is a finite set $J_{\varepsilon}\subset I_{0}$
with $\left\Vert \left(\left\Vert g_{i}\right\Vert _{L^{p}}\right)_{i\in I_{0}\setminus J_{\varepsilon}}\right\Vert _{\ell^{p}}<\varepsilon$.%
} that this indeed holds for all $p\in\left[1,2\right]$. Note that
each summand of the series defining $\Phi_{p}\left(\left(g_{i}\right)_{i\in I_{0}}\right)$
is a well-defined (even smooth) function in $L^{p}\left(\mathbb{R}^{d}\right)$,
since $\varphi_{i}^{\left(k+1\right)\ast}\in\mathcal{F}L^{1}\left(\mathbb{R}^{d}\right)$
is an $L^{p}$-Fourier multiplier by Young's inequality (cf.\@ \cite[Theorem 8.7]{FollandRA}).
Here, we used that $\mathcal{F}^{-1}\varphi_{i}\in L^{1}\left(\mathbb{R}^{d}\right)$
for all $i\in I$, by definition of an $L^{p}$-BAPU for $p\in\left[1,\infty\right]$.

For $p=1$, boundedness of $\Phi_{p}$ is easy: By definition of an
$L^{1}$-BAPU, the constant $K:=\sup_{i\in I}\left\Vert \mathcal{F}^{-1}\varphi_{i}\right\Vert _{L^{1}}$
is finite, as is $N:=\sup_{i\in I}\left|i^{\left(k+2\right)\ast}\right|$.
Hence, $\left\Vert \mathcal{F}^{-1}\varphi_{i}^{\left(k+1\right)\ast}\right\Vert _{L^{1}}\leq\sum_{\ell\in i^{\left(k+1\right)\ast}}\left\Vert \mathcal{F}^{-1}\varphi_{\ell}\right\Vert _{L^{1}}\leq NK$
for all $i\in I$, so that we get
\begin{align*}
\left\Vert \sum_{i\in I_{0}}\mathcal{F}^{-1}\left(\varphi_{i}^{\left(k+1\right)\ast}\widehat{g_{i}}\right)\right\Vert _{L^{1}} & \leq\sum_{i\in I_{0}}\left\Vert \mathcal{F}^{-1}\left(\varphi_{i}^{\left(k+1\right)\ast}\widehat{g_{i}}\right)\right\Vert _{L^{1}}\\
 & \leq\sum_{i\in I_{0}}\left\Vert \mathcal{F}^{-1}\varphi_{i}^{\left(k+1\right)\ast}\right\Vert _{L^{1}}\left\Vert g_{i}\right\Vert _{L^{1}}\\
 & \leq NK\cdot\sum_{i\in I_{0}}\left\Vert g_{i}\right\Vert _{L^{1}}\\
 & =NK\cdot\left\Vert \left(g_{i}\right)_{i\in I_{0}}\right\Vert _{\ell^{1}\left(I_{0};L^{1}\left(\mathbb{R}^{d}\right)\right)}.
\end{align*}
This even establishes ``absolute'' -- and hence unconditional --
convergence of the series.

For $p=2$, we employ Plancherel's theorem to get for arbitrary finite
subsets $J\subset I_{0}$
\begin{align*}
\left\Vert \sum_{i\in J}\mathcal{F}^{-1}\left(\varphi_{i}^{\left(k+1\right)\ast}\widehat{g_{i}}\right)\right\Vert _{L^{2}}^{2} & =\left\Vert \sum_{i\in J}\varphi_{i}^{\left(k+1\right)\ast}\widehat{g_{i}}\right\Vert _{L^{2}}^{2}\\
 & =\int_{\mathbb{R}^{d}}\left|\sum_{i\in J}\varphi_{i}^{\left(k+1\right)\ast}\left(\xi\right)\cdot\widehat{g_{i}}\left(\xi\right)\right|^{2}\,{\rm d}\xi.
\end{align*}
But for arbitrary $\xi\in\mathcal{O}$, there is some $i_{\xi}\in I$
with $\xi\in Q_{i_{\xi}}$. For arbitrary $i\in J$ with $\varphi_{i}^{\left(k+1\right)\ast}\left(\xi\right)\neq0$,
this implies $\xi\in Q_{i}^{\left(k+1\right)\ast}\cap Q_{i_{\xi}}\neq\emptyset$
and hence $i\in J\cap i_{\xi}^{\left(k+2\right)\ast}$. By Cauchy-Schwarz,
we conclude
\begin{align*}
\left|\sum_{i\in J}\varphi_{i}^{\left(k+1\right)\ast}\left(\xi\right)\cdot\widehat{g_{i}}\left(\xi\right)\right|^{2} & \leq\sum_{i\in J\cap i_{\xi}^{\left(k+2\right)\ast}}\left|\varphi_{i}^{\left(k+1\right)\ast}\left(\xi\right)\right|^{2}\cdot\sum_{i\in J\cap i_{\xi}^{\left(k+2\right)\ast}}\left|\widehat{g_{i}}\left(\xi\right)\right|^{2}\\
 & \leq\left|J\cap i_{\xi}^{\left(k+2\right)\ast}\right|\cdot\left\Vert \varphi_{i}^{\left(k+1\right)\ast}\right\Vert _{\sup}^{2}\cdot\sum_{i\in J}\left|\widehat{g_{i}}\left(\xi\right)\right|^{2}\\
 & \leq N^{3}K^{2}\cdot\sum_{i\in J}\left|\widehat{g_{i}}\left(\xi\right)\right|^{2}.
\end{align*}
Here, the last step used the easily verifiable estimates $\left|i_{\xi}^{\left(k+2\right)\ast}\right|\leq\sup_{i\in I}\left|i^{\left(k+2\right)\ast}\right|=N$
and 
\[
\left\Vert \varphi_{i}^{\left(k+1\right)\ast}\right\Vert _{\sup}=\left\Vert \mathcal{F}\mathcal{F}^{-1}\varphi_{i}^{\left(k+1\right)\ast}\right\Vert _{\sup}\leq\left\Vert \mathcal{F}^{-1}\varphi_{i}^{\left(k+1\right)\ast}\right\Vert _{L^{1}}\leq NK.
\]

If $\xi\in\mathbb{R}^{d}\setminus\mathcal{O}$, then $\varphi_{i}^{\left(k+1\right)\ast}\left(\xi\right)=0$,
so that the above estimate trivially holds in this case. Altogether,
another application of Plancherel's theorem yields
\begin{align*}
\left\Vert \sum_{i\in J}\mathcal{F}^{-1}\left(\varphi_{i}^{\left(k+1\right)\ast}\widehat{g_{i}}\right)\right\Vert _{L^{2}}^{2} & \leq N^{3}K^{2}\cdot\sum_{i\in J}\int_{\mathbb{R}^{d}}\left|\widehat{g_{i}}\left(\xi\right)\right|^{2}\,{\rm d}\xi\\
 & \leq\left(N^{2}K\right)^{2}\cdot\sum_{i\in J}\left\Vert g_{i}\right\Vert _{L^{2}}^{2}\\
 & =\left(N^{2}K\right)^{2}\cdot\left\Vert \left(g_{i}\cdot\chi_{J}\right)_{i\in I_{0}}\right\Vert _{\ell^{2}\left(I_{0};L^{2}\left(\mathbb{R}^{d}\right)\right)}^{2}\\
 & \leq\left(N^{2}K\right)^{2}\cdot\left\Vert \left(g_{i}\right)_{i\in I_{0}}\right\Vert _{\ell^{2}\left(I_{0};L^{2}\left(\mathbb{R}^{d}\right)\right)}^{2}.
\end{align*}
Now, since $\left(g_{i}\right)_{i\in I_{0}}\in\ell^{2}\left(I_{0};L^{2}\left(\mathbb{R}^{d}\right)\right)$,
we can choose for arbitrary $\varepsilon>0$ a finite subset $J_{0}\subset I_{0}$
with $\left\Vert \left(g_{i}\cdot\chi_{J_{0}^{c}}\right)_{i\in I_{0}}\right\Vert _{\ell^{2}\left(I_{0};L^{2}\left(\mathbb{R}^{d}\right)\right)}<\varepsilon$.
Together with the above estimate and since $I_{0}$ is countable,
this easily entails that the series $\Phi_{2}\left(\left(g_{i}\right)_{i\in I_{0}}\right)=\sum_{i\in I_{0}}\mathcal{F}^{-1}\left(\varphi_{i}^{\left(k+1\right)\ast}\widehat{g_{i}}\right)$
converges unconditionally in $L^{2}\left(\mathbb{R}^{d}\right)$,
with 
\[
\left\Vert \Phi_{2}\left(\left(g_{i}\right)_{i\in I_{0}}\right)\right\Vert _{L^{2}\left(\mathbb{R}^{d}\right)}\leq N^{2}K\cdot\left\Vert \left(g_{i}\right)_{i\in I_{0}}\right\Vert _{\ell^{2}\left(I_{0};L^{2}\left(\mathbb{R}^{d}\right)\right)}.
\]
Note that the constant $N^{2}K$ only depends on $N=N\left(\mathcal{Q},k\right)$
and on $K=K\left(\left(\varphi_{i}\right)_{i\in I}\right)=K\left(\mathcal{Q}\right)$.

Because of $\left\Vert \Phi_{1}\right\Vert \leq NK\leq N^{2}K$ and
$\left\Vert \Phi_{2}\right\Vert \leq N^{2}K$, complex interpolation
implies that each map $\Phi_{p}$ is well-defined and bounded with
$\left\Vert \Phi_{p}\right\Vert \leq N^{2}K=:C'=C'\left(\mathcal{Q},k\right)$
for $p\in\left[1,2\right]$. To complete the proof for $p\in\left[1,2\right]$,
it remains to show that boundedness of $\Phi_{p}$ implies validity
of equation (\ref{eq:SufficientConditionWithoutDerivativesMainEstimate})
(with unconditional convergence of the series). But since ${\rm supp}\,\widehat{f_{i}}\subset Q_{i}^{k\ast}$
holds for all $i\in I$ by assumption and because of $\varphi_{i}^{\left(k+1\right)\ast}\equiv1$
on $Q_{i}^{k\ast}$, we get 
\[
f_{i}=\mathcal{F}^{-1}\widehat{f_{i}}=\mathcal{F}^{-1}\left(\varphi_{i}^{\left(k+1\right)\ast}\widehat{f_{i}}\right)
\]
for all $i\in I$, so that 
\[
\sum_{i\in I}f_{i}=\sum_{i\in I_{0}}f_{i}=\sum_{i\in I_{0}}\mathcal{F}^{-1}\left(\varphi_{i}^{\left(k+1\right)\ast}\widehat{f_{i}}\right)=\Phi_{p}\left(\left(f_{i}\right)_{i\in I_{0}}\right).
\]
Hence,
\[
\left\Vert \sum_{i\in I}f_{i}\right\Vert _{L^{p}\left(\mathbb{R}^{d}\right)}\leq\left\Vert \Phi_{p}\right\Vert \cdot\left\Vert \left(f_{i}\right)_{i\in I_{0}}\right\Vert _{\ell^{p}\left(I_{0};L^{p}\left(\mathbb{R}^{d}\right)\right)}\leq C'\cdot\left\Vert \left(f_{i}\right)_{i\in I}\right\Vert _{\ell^{p^{\triangledown}}\left(I;L^{p}\left(\mathbb{R}^{d}\right)\right)}<\infty,
\]
since $p\in\left[1,2\right]$ implies $p^{\triangledown}=p$.

Thus, it remains to consider the case $p\in\left[2,\infty\right]$.
Here, instead of the map $\Phi_{p}$ from above, we consider
\[
\Psi_{p}:\ell^{p'}\left(I_{0};L^{p}\left(\mathbb{R}^{d}\right)\right)\to L^{p}\left(\mathbb{R}^{d}\right),\left(g_{i}\right)_{i\in I_{0}}\mapsto\sum_{i\in I_{0}}\mathcal{F}^{-1}\left(\varphi_{i}^{\left(k+1\right)\ast}\widehat{g_{i}}\right).
\]
Note that for $p=2$, we have $p'=p$, so that $\Psi_{2}=\Phi_{2}$
is bounded with unconditional convergence of the defining series.
Thus, another complex interpolation argument shows that it suffices
to prove boundedness of $\Psi_{\infty}$ (with unconditional convergence
of the series). Once this is done, validity of (\ref{eq:SufficientConditionWithoutDerivativesMainEstimate})
for $p\in\left[2,\infty\right]$ follows exactly as for $p\in\left[1,2\right]$,
since we have $p^{\triangledown}=p'<\infty$ for $p\in\left[2,\infty\right]$.
Note that the complex interpolation argument uses that taking the
conjugate exponent ``commutes'' with interpolation.

To show boundedness of $\Psi_{\infty}$, note that $\infty'=1$. Hence,
we can argue as for $p=1$. Indeed,
\begin{align*}
\left\Vert \sum_{i\in I_{0}}\mathcal{F}^{-1}\left(\varphi_{i}^{\left(k+1\right)\ast}\widehat{g_{i}}\right)\right\Vert _{L^{\infty}} & \leq\sum_{i\in I_{0}}\left\Vert \mathcal{F}^{-1}\left(\varphi_{i}^{\left(k+1\right)\ast}\widehat{g_{i}}\right)\right\Vert _{L^{\infty}}\\
 & \leq\sum_{i\in I_{0}}\left\Vert \mathcal{F}^{-1}\varphi_{i}^{\left(k+1\right)\ast}\right\Vert _{L^{1}}\left\Vert g_{i}\right\Vert _{L^{\infty}}\\
 & \leq NK\cdot\sum_{i\in I_{0}}\left\Vert g_{i}\right\Vert _{L^{\infty}}\\
 & =NK\cdot\left\Vert \left(g_{i}\right)_{i\in I_{0}}\right\Vert _{\ell^{\infty'}\left(I_{0};L^{\infty}\left(\mathbb{R}^{d}\right)\right)}.
\end{align*}
Here, we get ``absolute'' and hence unconditional convergence of
the series. This completes the proof.
\end{proof}
Now, to derive a sufficient condition for boundedness of the derivative
operator 
\[
\partial_{\ast}^{\alpha}:\mathcal{D}\left(\mathcal{Q},L^{p},Y\right)\to L^{q}\left(\mathbb{R}^{d}\right),
\]
we need a way to estimate $\left\Vert \partial^{\alpha}\left(\mathcal{F}^{-1}\left(\varphi_{i}\widehat{g}\right)\right)\right\Vert _{L^{q}}$
in terms of $\left\Vert \mathcal{F}^{-1}\left(\varphi_{i}\widehat{g}\right)\right\Vert _{L^{p}}$.
Such an estimate is established in our next lemma.
\begin{lem}
\label{lem:LocalDerivativeEstimate}Let $\mathcal{Q}=\left(Q_{i}\right)_{i\in I}=\left(T_{i}Q_{i}'+b_{i}\right)_{i\in I}$
be a regular covering of the open set $\emptyset\neq\mathcal{O}\subset\mathbb{R}^{d}$.
Let $p\in\left(0,\infty\right]$ and assume that $\Phi=\left(\varphi_{i}\right)_{i\in I}$
is a regular partition of unity subordinate to $\mathcal{Q}$.

Then
\begin{equation}
\left\Vert \partial^{\alpha}\left[\mathcal{F}^{-1}\left(\varphi_{i}f\right)\right]\right\Vert _{L^{p}}\leq C_{\alpha,p,\mathcal{Q},\Phi}\cdot\left(\left|b_{i}\right|^{\left|\alpha\right|}+\left\Vert T_{i}\right\Vert ^{\left|\alpha\right|}\right)\cdot\left\Vert \mathcal{F}^{-1}\left[\varphi_{i}^{\ast}f\right]\right\Vert _{L^{p}}\label{eq:LocalDerivativeLpEstimate}
\end{equation}
holds for each $f\in\mathcal{D}'\left(\mathcal{O}\right)$ and each
$i\in I$.\end{lem}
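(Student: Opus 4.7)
The plan is to reduce the estimate to the conjunction of two already available ingredients: the bound on $\|\partial^{\alpha}\mathcal{F}^{-1}\varphi_{i}\|_{L^{r}}$ from Lemma \ref{lem:SufficientConditionDerivativeEstimate}, and a convolution inequality, the form of which depends on whether $p\geq 1$ or $p<1$.

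The starting point is the observation that $\varphi_{i}^{\ast}:=\sum_{\ell\in i^{\ast}}\varphi_{\ell}\equiv 1$ on $Q_{i}$. Indeed, for $\xi\in Q_{i}\subset\mathcal{O}$ one has $\sum_{j\in I}\varphi_{j}(\xi)=1$, and any index $j$ with $\varphi_{j}(\xi)\neq 0$ necessarily lies in $i^{\ast}$ because $\xi\in Q_{i}\cap Q_{j}$. Since $\varphi_{i}$ is supported in $Q_{i}$, this gives the factorization $\varphi_{i}=\varphi_{i}\cdot\varphi_{i}^{\ast}$, hence $\varphi_{i}f=\varphi_{i}\cdot(\varphi_{i}^{\ast}f)$. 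Because $\varphi_{i}^{\ast}f$ is a compactly supported distribution on $\mathbb{R}^{d}$ and $\mathcal{F}^{-1}\varphi_{i}\in\mathcal{S}(\mathbb{R}^{d})$, the Paley--Wiener theorem yields
\[
\mathcal{F}^{-1}(\varphi_{i}f)=(\mathcal{F}^{-1}\varphi_{i})\ast\mathcal{F}^{-1}(\varphi_{i}^{\ast}f),
\]
and the derivative may be placed on the Schwartz factor:
\[
\partial^{\alpha}\bigl[\mathcal{F}^{-1}(\varphi_{i}f)\bigr]=\bigl(\partial^{\alpha}\mathcal{F}^{-1}\varphi_{i}\bigr)\ast\mathcal{F}^{-1}(\varphi_{i}^{\ast}f).
\]
If the right-hand side of \eqref{eq:LocalDerivativeLpEstimate} is infinite (i.e.\ $\mathcal{F}^{-1}(\varphi_{i}^{\ast}f)\notin L^{p}$) the estimate is vacuous, so we may assume $\mathcal{F}^{-1}(\varphi_{i}^{\ast}f)\in L^{p}(\mathbb{R}^{d})$.

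For $p\in[1,\infty]$, Young's inequality immediately yields
\[
\bigl\|\partial^{\alpha}\mathcal{F}^{-1}(\varphi_{i}f)\bigr\|_{L^{p}}\leq\bigl\|\partial^{\alpha}\mathcal{F}^{-1}\varphi_{i}\bigr\|_{L^{1}}\cdot\bigl\|\mathcal{F}^{-1}(\varphi_{i}^{\ast}f)\bigr\|_{L^{p}},
\]
and Lemma \ref{lem:SufficientConditionDerivativeEstimate} (applied with $\gamma_{i}=\varphi_{i}$ and exponent $1$) bounds the first factor by $K_{\alpha}\cdot(|b_{i}|^{|\alpha|}+\|T_{i}\|^{|\alpha|})$, since the $|\det T_{i}|^{1-1/1}$ collapses to $1$. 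For $p\in(0,1)$ Young's inequality is unavailable; here one applies Corollary \ref{cor:SemiStructuredQuasiBanachConvolution} with $n=1$. Its hypotheses are met because $\mathcal{F}(\partial^{\alpha}\mathcal{F}^{-1}\varphi_{i})=(2\pi i\xi)^{\alpha}\varphi_{i}$ is supported in $Q_{i}\subset Q_{i}^{\ast}$, while $\varphi_{i}^{\ast}f=\sum_{\ell\in i^{\ast}}\varphi_{\ell}f$ is supported in $\bigcup_{\ell\in i^{\ast}}Q_{\ell}=Q_{i}^{\ast}$. This gives
\[
\bigl\|\partial^{\alpha}\mathcal{F}^{-1}(\varphi_{i}f)\bigr\|_{L^{p}}\leq C\cdot|\det T_{i}|^{\frac{1}{p}-1}\cdot\bigl\|\partial^{\alpha}\mathcal{F}^{-1}\varphi_{i}\bigr\|_{L^{p}}\cdot\bigl\|\mathcal{F}^{-1}(\varphi_{i}^{\ast}f)\bigr\|_{L^{p}},
\]
and Lemma \ref{lem:SufficientConditionDerivativeEstimate} now contributes the factor $K_{\alpha}\cdot|\det T_{i}|^{1-\frac{1}{p}}\cdot(|b_{i}|^{|\alpha|}+\|T_{i}\|^{|\alpha|})$, whose $|\det T_{i}|$-power cancels exactly the one from the convolution estimate. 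In both regimes the combined constant depends only on $\alpha,p,\mathcal{Q},\Phi$, yielding \eqref{eq:LocalDerivativeLpEstimate}.

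The only non-mechanical point is the factorization $\varphi_{i}=\varphi_{i}\cdot\varphi_{i}^{\ast}$, which is what makes the convolution meaningful and, more importantly, justifies having $\varphi_{i}^{\ast}f$ on the right-hand side rather than $f$ itself (crucial since $f$ is only a distribution, but $\varphi_{i}^{\ast}f$ is compactly supported and hence $\mathcal{F}^{-1}(\varphi_{i}^{\ast}f)$ is a bona fide smooth function to which $L^{p}$-norms may be applied). The quasi-Banach case is where the interplay of constants is most delicate, but as shown above the determinantal factors cancel perfectly.
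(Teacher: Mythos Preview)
Your proof is correct and follows essentially the same approach as the paper's own proof: the factorization $\varphi_{i}=\varphi_{i}\varphi_{i}^{\ast}$, the convolution identity with the derivative placed on the Schwartz factor, Young's inequality for $p\geq 1$, Corollary~\ref{cor:SemiStructuredQuasiBanachConvolution} for $p<1$, and in both cases Lemma~\ref{lem:SufficientConditionDerivativeEstimate} to handle $\|\partial^{\alpha}\mathcal{F}^{-1}\varphi_{i}\|$ with the determinantal factors cancelling in the quasi-Banach regime. There is no substantive difference.
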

\begin{proof}
Let us first handle the case $p\in\left[1,\infty\right]$. Clearly,
we may suppose that the right-hand side of equation (\ref{eq:LocalDerivativeLpEstimate})
is finite. But $\varphi_{i}^{\ast}\equiv1$ on $Q_{i}$, whereas $\varphi_{i}$
vanishes outside of $Q_{i}$. Hence, $\varphi_{i}=\varphi_{i}^{\ast}\varphi_{i}$
and thus
\begin{align*}
\left\Vert \partial^{\alpha}\left[\mathcal{F}^{-1}\left(\varphi_{i}f\right)\right]\right\Vert _{L^{p}} & =\left\Vert \partial^{\alpha}\left[\mathcal{F}^{-1}\left(\varphi_{i}\varphi_{i}^{\ast}f\right)\right]\right\Vert _{L^{p}}\\
 & =\left\Vert \left[\partial^{\alpha}\left(\mathcal{F}^{-1}\varphi_{i}\right)\right]\ast\mathcal{F}^{-1}\left(\varphi_{i}^{\ast}f\right)\right\Vert _{L^{p}}\\
\left(\text{by Young's inequality}\right) & \leq\left\Vert \partial^{\alpha}\left(\mathcal{F}^{-1}\varphi_{i}\right)\right\Vert _{L^{1}}\cdot\left\Vert \mathcal{F}^{-1}\left(\varphi_{i}^{\ast}f\right)\right\Vert _{L^{p}}\\
\left(\text{by Lemma }\ref{lem:SufficientConditionDerivativeEstimate}\text{ with }p=1\right) & \leq C_{\alpha}\cdot\left(\left\Vert T_{i}\right\Vert ^{\left|\alpha\right|}+\left|b_{i}\right|^{\left|\alpha\right|}\right)\cdot\left\Vert \mathcal{F}^{-1}\left(\varphi_{i}^{\ast}f\right)\right\Vert _{L^{p}}
\end{align*}
for a constant $C_{\alpha}=C_{\alpha}\left(\mathcal{Q},\Phi,d\right)>0$
which is independent of $i\in I$. Here, it is worth noting that $\varphi_{i}^{\ast}\in C_{c}^{\infty}\left(\mathcal{O}\right)$,
so that $\varphi_{i}^{\ast}f$ is a well-defined (tempered) distribution
on $\mathbb{R}^{d}$ with compact support.

It remains to consider $p\in\left(0,1\right)$. The argument for
proving equation (\ref{eq:LocalDerivativeLpEstimate}) is analogous
to the one before, with the important exception that Young's inequality
$L^{1}\ast L^{p}\hookrightarrow L^{p}$ fails for $p\in\left(0,1\right)$.
But Corollary \ref{cor:SemiStructuredQuasiBanachConvolution} shows
that for $f\in C_{c}^{\infty}\left(\mathbb{R}^{d}\right)$ with ${\rm supp}\, f\subset Q_{i}^{k\ast}$
and a tempered distribution $g\in\mathcal{S}'\left(\mathbb{R}^{d}\right)$
with ${\rm supp}\, g\subset Q_{i}^{k\ast}$ and with $\mathcal{F}^{-1}g\in L^{p}\left(\mathbb{R}^{d}\right)$,
we have
\begin{equation}
\left\Vert \mathcal{F}^{-1}\left(fg\right)\right\Vert _{L^{p}}\lesssim_{k,p,\mathcal{Q}}\left|\det T_{i}\right|^{\frac{1}{p}-1}\cdot\left\Vert \mathcal{F}^{-1}f\right\Vert _{L^{p}}\cdot\left\Vert \mathcal{F}^{-1}g\right\Vert _{L^{p}}.\label{eq:LpConvolutionRelation}
\end{equation}
Note that this means that the Fourier multiplier $f$ acts boundedly
on those $L^{p}$ functions with Fourier support in $Q_{i}^{k\ast}$
for each fixed $k\in\mathbb{N}_{0}$, but not in general on arbitrary
$L^{p}$ functions. Furthermore, the operator norm of the resulting
Fourier multiplier depends in a nontrivial way on $\left|\det T_{i}\right|\approx\lambda\left(Q_{i}\right)$.

In the present case, this convolution relation implies (similar to
the argument above)
\begin{align*}
\left\Vert \partial^{\alpha}\left[\mathcal{F}^{-1}\left(\varphi_{i}f\right)\right]\right\Vert _{L^{p}} & =\left\Vert \left[\partial^{\alpha}\left(\mathcal{F}^{-1}\varphi_{i}\right)\right]\ast\mathcal{F}^{-1}\left(\varphi_{i}^{\ast}f\right)\right\Vert _{L^{p}}\\
 & \leq C_{\mathcal{Q},p}\cdot\left|\det T_{i}\right|^{\frac{1}{p}-1}\cdot\left\Vert \partial^{\alpha}\left(\mathcal{F}^{-1}\varphi_{i}\right)\right\Vert _{L^{p}}\cdot\left\Vert \mathcal{F}^{-1}\left(\varphi_{i}^{\ast}f\right)\right\Vert _{L^{p}}\\
\left(\text{by Lemma }\ref{lem:SufficientConditionDerivativeEstimate}\right) & \leq C_{\alpha,p,\mathcal{Q},\Phi}\cdot\left(\left\Vert T_{i}\right\Vert ^{\left|\alpha\right|}+\left|b_{i}\right|^{\left|\alpha\right|}\right)\cdot\left\Vert \mathcal{F}^{-1}\left(\varphi_{i}^{\ast}f\right)\right\Vert _{L^{p}},
\end{align*}
as desired. Here, we used ${\rm supp}\left(\varphi_{i}^{\ast}f\right)\subset Q_{i}^{\ast}$
and ${\rm supp}\,\varphi_{i}\subset Q_{i}\subset Q_{i}^{\ast}$ to
justify application of the convolution relation from equation (\ref{eq:LpConvolutionRelation}).
\end{proof}
Before we continue the development of our sufficient condition for
boundedness of certain partial derivative operators, we first introduce
a convenient notation.
\begin{defn}
\label{def:EmbeddingDefinition}Let $\mathcal{Q}=\left(Q_{i}\right)_{i\in I}$
be an $L^{p}$-decomposition covering of an open set $\emptyset\neq\mathcal{O}\subset\mathbb{R}^{d}$
and let $Y\leq\mathbb{C}^{I}$ be a $\mathcal{Q}$-regular sequence
space. We define
\[
\mathcal{S}_{\mathcal{O}}^{p,Y}\left(\mathbb{R}^{d}\right):=\mathcal{S}_{\mathcal{O}}\left(\mathbb{R}^{d}\right)\cap\mathcal{D}\left(\mathcal{Q},L^{p},Y\right),
\]
where we use the notation
\[
\mathcal{S}_{\mathcal{O}}\left(\mathbb{R}^{d}\right):=\left\{ f\in\mathcal{S}\left(\mathbb{R}^{d}\right)\with\widehat{f}\in C_{c}^{\infty}\left(\mathcal{O}\right)\right\} .
\]

We say that the decomposition space $\mathcal{D}\left(\mathcal{Q},L^{p},Y\right)$
embeds into a function space $Z$ on $\mathbb{R}^{d}$, written
\[
\mathcal{D}\left(\mathcal{Q},L^{p},Y\right)\hookrightarrow Z,
\]
if there is a bounded linear map $\iota:\mathcal{D}\left(\mathcal{Q},L^{p},Y\right)\to Z$
which satisfies $\iota f=f$ for all $f\in\mathcal{S}_{\mathcal{O}}^{p,Y}\left(\mathbb{R}^{d}\right)$.
In case of $Z=W^{k,q}\left(\mathbb{R}^{d}\right)$ with $q\in\left(0,1\right)$,
we instead require $\iota f=\left(\partial^{\alpha}f\right)_{\left|\alpha\right|\leq k}$
for all $f\in\mathcal{S}_{\mathcal{O}}^{p,Y}\left(\mathbb{R}^{d}\right)$.

We say that $\mathcal{D}\left(\mathcal{Q},L^{p},Y\right)$ embeds
injectively into $Z$, written $\mathcal{D}\left(\mathcal{Q},L^{p},Y\right)\overset{{\rm inj.}}{\hookrightarrow}Z$,
if the map $\iota$ can be chosen to be injective.
\end{defn}
Now, we finally state and prove our sufficient conditions for boundedness
of the partial derivative maps $\partial_{\ast}^{\alpha}:\mathcal{D}\left(\mathcal{Q},L^{p},Y\right)\to L^{q}\left(\mathbb{R}^{d}\right)$.
We use the notation $\partial_{\ast}^{\alpha}$ instead of $\partial^{\alpha}$
to distinguish the map which we define in the following theorem from
the usual (weak) partial derivatives $\partial^{\alpha}$.
\begin{thm}
\label{thm:SufficientConditionsForBoundednessOfDerivative}Let $\mathcal{Q}=\left(Q_{i}\right)_{i\in I}=\left(T_{i}Q_{i}'+b_{i}\right)_{i\in I}$
be a regular covering of the open set $\emptyset\neq\mathcal{O}\subset\mathbb{R}^{d}$
and let $p,q\in\left(0,\infty\right]$ with $p\leq q$ and $k\in\mathbb{N}_{0}$.
Define the weight
\[
u^{\left(k,p,q\right)}:=\left(\left|\det T_{i}\right|^{\frac{1}{p}-\frac{1}{q}}\cdot\left(\left|b_{i}\right|^{k}+\left\Vert T_{i}\right\Vert ^{k}\right)\right)_{i\in I}.
\]

Assume that $Y\leq\mathbb{C}^{I}$ is a $\mathcal{Q}$-regular sequence
space on $I$ satisfying $Y\hookrightarrow\ell_{u^{\left(k,p,q\right)}}^{q^{\triangledown}}\left(I\right)$
with $q^{\triangledown}=\min\left\{ q,q'\right\} $. Let $\Phi=\left(\varphi_{i}\right)_{i\in I}$
be a regular partition of unity for $\mathcal{Q}$. Then for each
$\alpha\in\mathbb{N}_{0}^{d}$ with $\left|\alpha\right|=k$, the
map
\begin{eqnarray*}
\partial_{\ast}^{\alpha}: & \mathcal{D}\left(\mathcal{Q},L^{p},Y\right) & \to L^{q}\left(\mathbb{R}^{d}\right),\\
 & f & \mapsto\sum_{i\in I}\partial^{\alpha}\left[\mathcal{F}^{-1}\left(\varphi_{i}\cdot\smash{\widehat{f}}\:\right)\right]
\end{eqnarray*}
is well-defined and bounded, with unconditional convergence of the
series in $L^{q}\left(\mathbb{R}^{d}\right)$.

Furthermore, we have $\partial_{\ast}^{\alpha}f=\partial^{\alpha}f$
for all $f\in\mathcal{S}_{\mathcal{O}}^{p,Y}\left(\mathbb{R}^{d}\right)$.\end{thm}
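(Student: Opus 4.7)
The plan is to write $\partial_\ast^\alpha f$ as the series $\sum_{i\in I} f_i$ with $f_i := \partial^\alpha[\mathcal{F}^{-1}(\varphi_i\widehat{f}\,)]$, to control each $\|f_i\|_{L^q}$ pointwise in $i$ by the decomposition-space coefficients $g_i := \|\mathcal{F}^{-1}(\varphi_i\widehat{f}\,)\|_{L^p}$, and then to sum using Lemma \ref{lem:SufficiencyWithoutDerivativesMainLemma}. Since each $f_i$ has Fourier support contained in $Q_i = Q_i^{0\ast}$, that lemma (applied with $q$ in place of $p$ and $k=0$) will produce unconditional convergence of the series in $L^q(\mathbb{R}^d)$ together with a bound of the form $\|\sum_i f_i\|_{L^q}\le C\,\|(\|f_i\|_{L^q})_{i\in I}\|_{\ell^{q^{\triangledown}}}$. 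The remaining task is then purely to dominate this right-hand side by $\|f\|_{\mathcal{D}(\mathcal{Q},L^p,Y)}$.

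For the pointwise estimate on $\|f_i\|_{L^q}$, I would first apply Corollary \ref{cor:BandlimitedLpEmbeddingSemiStructured} to $f_i$ (whose Fourier transform has compact support in $Q_i$); using $p\le q$ this yields $\|f_i\|_{L^q}\lesssim |\det T_i|^{1/p-1/q}\,\|f_i\|_{L^p}$. I would then invoke Lemma \ref{lem:LocalDerivativeEstimate} to extract the derivative weight, giving
\[
\|f_i\|_{L^p}\lesssim \bigl(|b_i|^k+\|T_i\|^k\bigr)\cdot\|\mathcal{F}^{-1}(\varphi_i^\ast\widehat{f}\,)\|_{L^p}.
\]
Chaining the two inequalities produces precisely $\|f_i\|_{L^q}\lesssim u_i^{(k,p,q)}\cdot\|\mathcal{F}^{-1}(\varphi_i^\ast\widehat{f}\,)\|_{L^p}$.

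Writing $\varphi_i^\ast=\sum_{j\in i^\ast}\varphi_j$ and applying the (quasi-)triangle inequality together with the uniform bound $|i^\ast|\le N_{\mathcal{Q}}$, one bounds $\|\mathcal{F}^{-1}(\varphi_i^\ast\widehat{f}\,)\|_{L^p}\lesssim g_i^\ast := \sum_{j\in i^\ast} g_j$. Feeding this back into the summing lemma gives
\[
\Bigl\|\sum_{i\in I} f_i\Bigr\|_{L^q}\lesssim \bigl\|(g_i^\ast)_{i\in I}\bigr\|_{\ell^{q^{\triangledown}}_{u^{(k,p,q)}}}.
\]
The assumed embedding $Y\hookrightarrow\ell^{q^{\triangledown}}_{u^{(k,p,q)}}$ bounds this by $\|(g_i^\ast)_{i\in I}\|_Y$, and boundedness of the clustering map $\Psi\colon Y\to Y$ (part of the $\mathcal{Q}$-regularity of $Y$) bounds this further by $\|(g_i)_{i\in I}\|_Y=\|f\|_{\mathcal{D}(\mathcal{Q},L^p,Y)}$. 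This completes the boundedness assertion.

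For the final claim $\partial_\ast^\alpha f=\partial^\alpha f$ on $\mathcal{S}^{p,Y}_\mathcal{O}(\mathbb{R}^d)$, observe that if $\widehat{f}\in C_c^\infty(\mathcal{O})$ has compact support $K$, then a finite subcover $K\subset Q_{i_1}\cup\dots\cup Q_{i_N}$ together with admissibility of $\mathcal{Q}$ forces $\{i\in I : Q_i\cap K\ne\emptyset\}\subset\bigcup_\ell i_\ell^\ast$ to be finite, so the defining series reduces to a finite sum equal to $f$ on which $\partial^\alpha$ acts in the usual classical sense. The principal obstacle I anticipate is the bookkeeping in the quasi-Banach regime $\min\{p,q\}<1$: the step $\|\mathcal{F}^{-1}(\varphi_i^\ast\widehat{f}\,)\|_{L^p}\lesssim g_i^\ast$ requires the $p$-(quasi-)triangle inequality plus the admissibility bound $|i^\ast|\le N_{\mathcal{Q}}$, and Lemma \ref{lem:LocalDerivativeEstimate} itself relies internally on Corollary \ref{cor:SemiStructuredQuasiBanachConvolution} rather than classical Young. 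A second subtle point is that Lemma \ref{lem:SufficiencyWithoutDerivativesMainLemma} outputs the exponent $q^{\triangledown}$ instead of $q$---a loss coming from Plancherel plus complex interpolation---and it is exactly this loss that forces the hypothesis to be phrased with $\ell^{q^{\triangledown}}_{u^{(k,p,q)}}$, which is the sharpness-critical feature of the whole result.
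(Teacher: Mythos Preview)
Your proposal is correct and follows essentially the same route as the paper: bound each $\|f_i\|_{L^q}$ using Lemma~\ref{lem:LocalDerivativeEstimate} and Corollary~\ref{cor:BandlimitedLpEmbeddingSemiStructured}, then sum via Lemma~\ref{lem:SufficiencyWithoutDerivativesMainLemma} and close with the embedding $Y\hookrightarrow\ell_{u^{(k,p,q)}}^{q^\triangledown}$ plus $\mathcal{Q}$-regularity. The only cosmetic difference is that the paper applies Lemma~\ref{lem:LocalDerivativeEstimate} in $L^q$ first and then Corollary~\ref{cor:BandlimitedLpEmbeddingSemiStructured} to $\mathcal{F}^{-1}(\varphi_i^\ast\widehat{f}\,)$, whereas you apply the bandlimited embedding to $f_i$ first (support exactly in $Q_i$) and then the derivative lemma in $L^p$; your ordering is arguably cleaner since it matches the support hypothesis of Corollary~\ref{cor:BandlimitedLpEmbeddingSemiStructured} verbatim, and for the final identity on $\mathcal{S}_{\mathcal{O}}^{p,Y}$ you should take the finite subcover from the open sets $\varphi_i^{-1}(\mathbb{C}\setminus\{0\})$ rather than the $Q_i$, which need not be open.
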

\begin{rem*}
We will see in Lemma \ref{lem:ShiftedBallsFitIntoSets} that 
\[
\left|b_{i}\right|+\left\Vert T_{i}\right\Vert \asymp\sup_{x\in Q_{i}}\left|x\right|
\]
is (for a given family $\mathcal{Q}=\left(Q_{i}\right)_{i\in I}$)
(asymptotically) independent of the specific choice of $T_{i},b_{i},Q_{i}'$,
as long as the resulting semi-structured covering $\mathcal{Q}=\left(T_{i}Q_{i}'+b_{i}\right)_{i\in I}$
is tight.\end{rem*}
\begin{proof}
Since $\mathcal{Q}$ is admissible, the constant $N:=\sup_{i\in I}\left|i^{\ast}\right|$
is finite. By Corollary \ref{cor:RegularPartitionsAreBAPUs}, $\Phi$
is an $L^{p}$-BAPU for $\mathcal{Q}$. Fix $\alpha\in\mathbb{N}_{0}^{d}$
with $\left|\alpha\right|=k$.

Let $f\in\mathcal{D}\left(\mathcal{Q},L^{p},Y\right)$ and note $g:=\widehat{f}\in\mathcal{D}_{\mathcal{F}}\left(\mathcal{Q},L^{p},Y\right)\subset\mathcal{D}'\left(\mathcal{O}\right)$.
Set $g_{i}:=\partial^{\alpha}\left[\mathcal{F}^{-1}\left(\varphi_{i}g\right)\right]$
for $i\in I$.

Note that $g_{i}\in C^{\infty}\left(\mathbb{R}^{d}\right)\cap\mathcal{S}'\left(\mathbb{R}^{d}\right)$
is a well-defined polynomially bounded function by the Paley Wiener
theorem (cf.\@ \cite[Proposition 9.11]{FollandRA}), since $\varphi_{i}g$
is compactly supported. Furthermore, Lemma \ref{lem:LocalDerivativeEstimate}
(with $q$ instead of $p$) implies -- with $u_{i}^{\left(k\right)}:=\left|b_{i}\right|^{k}+\left\Vert T_{i}\right\Vert ^{k}$
for $i\in I$ -- that
\begin{align}
\left\Vert g_{i}\right\Vert _{L^{q}} & =\left\Vert \partial^{\alpha}\left[\mathcal{F}^{-1}\left(\varphi_{i}g\right)\right]\right\Vert _{L^{q}}\nonumber \\
\left(\text{by Lemma }\ref{lem:LocalDerivativeEstimate}\right) & \leq C_{\alpha,p,\mathcal{Q},\Phi}\cdot\left(\left|b_{i}\right|^{\left|\alpha\right|}+\left\Vert T_{i}\right\Vert ^{\left|\alpha\right|}\right)\cdot\left\Vert \mathcal{F}^{-1}\left[\varphi_{i}^{\ast}g\right]\right\Vert _{L^{q}}\nonumber \\
\left(\text{by Corollary }\ref{cor:BandlimitedLpEmbeddingSemiStructured}\text{ since }p\leq q\right) & \leq C_{k,p,q,\mathcal{Q},\Phi}\cdot\left|\det T_{i}\right|^{\frac{1}{p}-\frac{1}{q}}\cdot u_{i}^{\left(k\right)}\cdot\left\Vert \mathcal{F}^{-1}\left[\varphi_{i}^{\ast}g\right]\right\Vert _{L^{p}}\nonumber \\
 & =C_{k,p,q,\mathcal{Q},\Phi}\cdot u_{i}^{\left(k,p,q\right)}\cdot\left\Vert \mathcal{F}^{-1}\left[\varphi_{i}^{\ast}g\right]\right\Vert _{L^{p}}\label{eq:DerivativeBoundedness}
\end{align}
for all $i\in I$.

Note that taking partial derivatives can not increase the Fourier
support, so that
\[
{\rm supp}\,\widehat{g_{i}}={\rm supp}\left[\mathcal{F}\left(\partial^{\alpha}\left[\mathcal{F}^{-1}\left(\varphi_{i}g\right)\right]\right)\right]\subset{\rm supp}\left(\mathcal{F}\left[\mathcal{F}^{-1}\left(\varphi_{i}g\right)\right]\right)\subset Q_{i}.
\]
Hence, Lemma \ref{lem:SufficiencyWithoutDerivativesMainLemma} yields
unconditional convergence in $L^{q}\left(\mathbb{R}^{d}\right)$ of
the series
\[
\sum_{i\in I}g_{i}=\sum_{i\in I}\partial^{\alpha}\left[\mathcal{F}^{-1}\left(\varphi_{i}\cdot\smash{\widehat{f}}\:\right)\right]=\partial_{\ast}^{\alpha}f,
\]
with
\begin{align*}
\left\Vert \partial_{\ast}^{\alpha}f\right\Vert _{L^{q}} & =\left\Vert \sum_{i\in I}g_{i}\right\Vert _{L^{q}}\\
 & \lesssim\left\Vert \left(\left\Vert g_{i}\right\Vert _{L^{q}}\right)_{i\in I}\right\Vert _{\ell^{q^{\triangledown}}}\\
\left(\text{by equation }\eqref{eq:DerivativeBoundedness}\right) & \leq C_{k,p,q,\mathcal{Q},\Phi}\cdot\left\Vert \left(u_{i}^{\left(k,p,q\right)}\cdot\left\Vert \mathcal{F}^{-1}\left[\varphi_{i}^{\ast}g\right]\right\Vert _{L^{p}}\right)_{i\in I}\right\Vert _{\ell^{q^{\triangledown}}}\\
\left(\text{since }\left|i^{\ast}\right|\leq N\text{ and }L^{p}\text{ is quasi-normed}\right) & \lesssim\left\Vert \left(u_{i}^{\left(k,p,q\right)}\cdot\sum_{\ell\in i^{\ast}}\left\Vert \mathcal{F}^{-1}\left[\varphi_{\ell}g\right]\right\Vert _{L^{p}}\right)_{i\in I}\right\Vert _{\ell^{q^{\triangledown}}}\\
 & =\left\Vert \left(\sum_{\ell\in i^{\ast}}\cdot\left\Vert \mathcal{F}^{-1}\left[\varphi_{\ell}g\right]\right\Vert _{L^{p}}\right)_{i\in I}\right\Vert _{\ell_{u^{\left(k,p,q\right)}}^{q^{\triangledown}}}\\
\left(\text{since }Y\hookrightarrow\ell_{u^{\left(k,p,q\right)}}^{q^{\triangledown}}\left(I\right)\right) & \lesssim\left\Vert \left(\sum_{\ell\in i^{\ast}}\cdot\left\Vert \mathcal{F}^{-1}\left[\varphi_{\ell}g\right]\right\Vert _{L^{p}}\right)_{i\in I}\right\Vert _{Y}\\
\left(\text{since }Y\text{ is }\mathcal{Q}\text{-regular}\right) & \lesssim\left\Vert \left(\left\Vert \mathcal{F}^{-1}\left[\varphi_{i}g\right]\right\Vert _{L^{p}}\right)_{i\in I}\right\Vert _{Y}\\
 & =\left\Vert g\right\Vert _{\mathcal{D}_{\mathcal{F}}\left(\mathcal{Q},L^{p},Y\right)}=\left\Vert f\right\Vert _{\mathcal{D}\left(\mathcal{Q},L^{p},Y\right)}<\infty.
\end{align*}

It remains to show $\partial_{\ast}^{\alpha}f=\partial^{\alpha}f$
for $f\in\mathcal{S}_{\mathcal{O}}^{p,Y}\left(\mathbb{R}^{d}\right)$.
Note that $\sum_{i\in I}\varphi_{i}\equiv1$ on $\mathcal{O}$ implies
$\mathcal{O}\subset\bigcup_{i\in I}U_{i}$ with $U_{i}:=\varphi_{i}^{-1}\left(\mathbb{C}\setminus\left\{ 0\right\} \right)$.
Since $K:={\rm supp}\,\widehat{f}\subset\mathcal{O}$ is compact,
there is thus a finite subset $I_{0}\subset I$ such that $U:=\bigcup_{i\in I_{0}}U_{i}\supset K$.
Now, $\varphi_{i}\equiv0$ on $U\supset K$ if $i\notin I_{0}^{\ast}$.
Hence, $\varphi_{I_{0}^{\ast}}:=\sum_{\ell\in I_{0}^{\ast}}\varphi_{\ell}$
satisfies $\varphi_{I_{0}^{\ast}}\equiv1$ on $U\supset K={\rm supp}\,\widehat{f}$
and hence $\widehat{f}=\varphi_{I_{0}^{\ast}}\cdot\widehat{f}$. Furthermore,
$\varphi_{i}\widehat{f}\equiv0$ for all $i\in I\setminus I_{0}^{\ast}$.
All in all, this yields
\begin{align*}
\partial_{\ast}^{\alpha}f & =\sum_{i\in I}\partial^{\alpha}\left[\mathcal{F}^{-1}\left(\varphi_{i}\cdot\smash{\widehat{f}}\,\right)\right]\\
 & =\sum_{i\in I_{0}^{\ast}}\partial^{\alpha}\left[\mathcal{F}^{-1}\left(\varphi_{i}\cdot\smash{\widehat{f}}\,\right)\right]\\
 & =\partial^{\alpha}\left[\mathcal{F}^{-1}\left(\varphi_{I_{0}^{\ast}}\cdot\smash{\widehat{f}}\,\right)\right]=\partial^{\alpha}\left[\mathcal{F}^{-1}\widehat{f}\right]=\partial^{\alpha}f,
\end{align*}
as claimed. Note that this calculation is justified, since $I_{0}^{\ast}\subset I$
is finite.
\end{proof}
As a corollary of the preceding theorem, we derive sufficient conditions
for embeddings of decomposition spaces into Sobolev spaces.
\begin{cor}
\label{cor:SufficientConditionsForSobolevEmbeddings}Let $\mathcal{Q}=\left(Q_{i}\right)_{i\in I}=\left(T_{i}Q_{i}'+b_{i}\right)_{i\in I}$
be a regular covering of the open set $\emptyset\neq\mathcal{O}\subset\mathbb{R}^{d}$
and let $p,q\in\left(0,\infty\right]$ and $k\in\mathbb{N}_{0}$.

For $i\in I$, define
\begin{align*}
v_{i} & :=\left|\det T_{i}\right|^{\frac{1}{p}-\frac{1}{q}},\\
w_{i} & :=\left|\det T_{i}\right|^{\frac{1}{p}-\frac{1}{q}}\left(\left|b_{i}\right|^{k}+\left\Vert T_{i}\right\Vert ^{k}\right).
\end{align*}
If we have $p\leq q$ and if the $\mathcal{Q}$-regular sequence space
$Y\leq\mathbb{C}^{I}$ satisfies 
\[
Y\hookrightarrow\ell_{v}^{q^{\triangledown}}\left(I\right)\qquad\text{ and }\qquad Y\hookrightarrow\ell_{w}^{q^{\triangledown}}\left(I\right),
\]
then $\mathcal{D}\left(\mathcal{Q},L^{p},Y\right)\hookrightarrow W^{k,q}\left(\mathbb{R}^{d}\right)$.

More precisely, we have the following:
\begin{enumerate}
\item For $q\geq1$, the map $\iota_{q}^{\left(k\right)}:=\partial_{\ast}^{0}$
from Theorem \ref{thm:SufficientConditionsForBoundednessOfDerivative}
is injective and bounded as a map
\[
\iota_{q}^{\left(k\right)}:\mathcal{D}\left(\mathcal{Q},L^{p},Y\right)\to W^{k,q}\left(\mathbb{R}^{d}\right).
\]
Furthermore, we have the following:

\begin{enumerate}
\item $\iota_{q}^{\left(k\right)}f=f$ for all $f\in\mathcal{S}_{\mathcal{O}}^{p,Y}\left(\mathbb{R}^{d}\right)$
and
\item $\partial^{\alpha}\left(\iota_{q}^{\left(k\right)}f\right)=\partial_{\ast}^{\alpha}f$
(with $\partial_{\ast}^{\alpha}$ as in Theorem \ref{thm:SufficientConditionsForBoundednessOfDerivative})
for all $f\in\mathcal{D}\left(\mathcal{Q},L^{p},Y\right)$ and $\left|\alpha\right|\leq k$.
\item If $q=\infty$, then $\iota_{\infty}^{\left(k\right)}$ is even well-defined
and bounded as a map into
\[
C_{b}^{k}\left(\mathbb{R}^{d}\right):=\left\{ f\in C^{k}\left(\mathbb{R}^{d}\right)\with\left\Vert f\right\Vert _{C_{b}^{k}}:=\sum_{\left|\alpha\right|\leq k}\left\Vert \partial^{\alpha}f\right\Vert _{\sup}<\infty\right\} .
\]

\end{enumerate}
\item For $q<1$, the map 
\[
\iota_{q}^{\left(k\right)}:\mathcal{D}\left(\mathcal{Q},L^{p},Y\right)\to W^{k,q}\left(\mathbb{R}^{d}\right),f\mapsto\left(\partial_{\ast}^{\alpha}f\right)_{\left|\alpha\right|\leq k}
\]
is well-defined and bounded, with the $\partial_{\ast}^{\alpha}$
as in Theorem \ref{thm:SufficientConditionsForBoundednessOfDerivative}.
Furthermore, $\iota_{q}^{\left(k\right)}f=\left(\partial^{\alpha}f\right)_{\left|\alpha\right|\leq k}$
for all $f\in\mathcal{S}_{\mathcal{O}}^{p,Y}\left(\mathbb{R}^{d}\right)$.
\item If $q<1$ and if there are $r\geq1$ and $\ell\in\mathbb{N}_{0}$
such that $\iota_{r}^{\left(\ell\right)}$ is bounded (with unconditional
convergence of the series), then $\iota_{q}^{\left(k\right)}$ is
injective with 
\[
\iota_{q}^{\left(k\right)}f\bigg|_{\left|\alpha\right|\leq\min\left\{ k,\ell\right\} }=\left(\partial^{\alpha}\left(\iota_{r}^{\left(\ell\right)}f\right)\right)_{\left|\alpha\right|\leq\min\left\{ k,\ell\right\} }\qquad\text{ for all }f\in\mathcal{D}\left(\mathcal{Q},L^{p},Y\right).\qedhere
\]

\end{enumerate}
\end{cor}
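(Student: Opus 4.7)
The plan is to deduce everything from Theorem \ref{thm:SufficientConditionsForBoundednessOfDerivative} applied once for each multiindex $\alpha$ with $|\alpha| \le k$. For $n = |\alpha|$, the weight appearing there is $u_i^{(n,p,q)} = |\det T_i|^{1/p-1/q}(|b_i|^n + \|T_i\|^n)$, and the elementary estimate $a^n \le 1 + a^k$ for $a \ge 0$ and $0 \le n \le k$ gives the pointwise bound $u_i^{(n,p,q)} \le 2 v_i + w_i$. By solidity and the quasi-triangle inequality of $\ell^{q^\triangledown}$, the two hypotheses $Y \hookrightarrow \ell_v^{q^\triangledown}(I)$ and $Y \hookrightarrow \ell_w^{q^\triangledown}(I)$ therefore imply $Y \hookrightarrow \ell_{u^{(n,p,q)}}^{q^\triangledown}(I)$. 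Hence Theorem \ref{thm:SufficientConditionsForBoundednessOfDerivative} supplies, for each $|\alpha| \le k$, a bounded map $\partial_*^\alpha : \mathcal{D}(\mathcal{Q}, L^p, Y) \to L^q(\mathbb{R}^d)$ that agrees with the classical derivative on $\mathcal{S}_{\mathcal{O}}^{p,Y}(\mathbb{R}^d)$.

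For $q \ge 1$, I would set $\iota_q^{(k)} := \partial_*^0$, which lands boundedly in $L^q(\mathbb{R}^d) \hookrightarrow \mathcal{S}'(\mathbb{R}^d)$. The distributional identity $\partial^\alpha(\iota_q^{(k)} f) = \partial_*^\alpha f$ of (b) follows by pairing with $\phi \in C_c^\infty(\mathbb{R}^d)$: unconditional $L^q$-convergence of the series defining $\partial_*^0 f$ lets one interchange $\sum_i$ with the pairing, and integrating by parts termwise on each smooth summand $\mathcal{F}^{-1}(\varphi_i \widehat{f})$ reconstructs $\langle \partial_*^\alpha f, \phi\rangle$ by unconditional $L^q$-convergence of the series for $\partial_*^\alpha f$. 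This puts $\iota_q^{(k)} f$ in $W^{k,q}(\mathbb{R}^d)$ with controlled norm, and (a) is immediate. For injectivity, I use the partial-sum argument from the end of the proof of Theorem \ref{thm:SufficientConditionsForBoundednessOfDerivative}: for every $\psi \in C_c^\infty(\mathcal{O})$ only finitely many terms $\varphi_i \widehat{f}$ ``see'' $\psi$, and $\sum_{i \in I_0^*} \varphi_i \equiv 1$ near $\mathrm{supp}\,\psi$ yields $\langle \mathcal{F}(\iota_q^{(k)} f), \psi\rangle = \langle \widehat{f}, \psi\rangle$; thus $\iota_q^{(k)} f = f$ in $Z'(\mathcal{O})$, so $\iota_q^{(k)} f = 0$ forces $f = 0$. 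For (c) with $q = \infty$, the $p = \infty$ case of Lemma \ref{lem:SufficiencyWithoutDerivativesMainLemma} gives absolute, hence uniform, convergence of each series defining $\partial_*^\alpha f$, so each is a continuous bounded function and $\iota_\infty^{(k)} f \in C_b^k(\mathbb{R}^d)$.

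For $q < 1$, define $\iota_q^{(k)} f := (\partial_*^\alpha f)_{|\alpha| \le k}$; boundedness into $\prod_{|\alpha| \le k} L^q(\mathbb{R}^d)$ is immediate from the first step. To show that the image in fact lies in $W^{k,q}(\mathbb{R}^d)$, the closure of $W_*^{k,q}$, I approximate by the smooth truncations $f_F := \mathcal{F}^{-1}(\sum_{i \in F} \varphi_i \widehat{f})$ for finite $F \subset I$: by Paley--Wiener each $f_F$ is in $C^\infty(\mathbb{R}^d)$, while Lemma \ref{lem:LocalDerivativeEstimate} together with Corollary \ref{cor:BandlimitedLpEmbeddingSemiStructured} ensures $\partial^\alpha f_F \in L^q(\mathbb{R}^d)$ for all $|\alpha| \le k$; hence $(\partial^\alpha f_F)_{|\alpha|\le k} \in W_*^{k,q}(\mathbb{R}^d)$, and these approximants converge to $\iota_q^{(k)} f$ in $\prod L^q$ as $F$ exhausts $I$. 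For (3), given a bounded $\iota_r^{(\ell)}$ with $r \ge 1$, the same truncations $f_F$ converge to $\iota_r^{(\ell)} f$ in $L^r$ and to the $0$-th component of $\iota_q^{(k)} f$ in $L^q$; extracting a common a.e.\ convergent subsequence identifies the two limits as the same measurable function, so $\iota_q^{(k)} f = 0$ forces $\iota_r^{(\ell)} f = 0$, whence $f = 0$ by part (1). The asserted compatibility across components $|\alpha| \le \min\{k,\ell\}$ follows by applying this a.e.\ identification componentwise and invoking (1)(b). I expect the main nuisance to be this bookkeeping of reconciling $L^q$- and $L^r$-limits in part (3); once subsequence a.e.\ convergence is invoked, everything reduces to careful identifications.
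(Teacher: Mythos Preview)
Your proof is correct and follows the paper's overall structure: reduce to Theorem \ref{thm:SufficientConditionsForBoundednessOfDerivative} via the pointwise bound $u_i^{(n,p,q)}\le 2v_i+w_i$, then assemble the maps $\partial_*^\alpha$ into the Sobolev embedding by integration by parts (for $q\ge 1$) or by approximation with finite partial sums (for $q<1$), and identify $L^q$- and $L^r$-limits a.e.\ for part (3).

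There is one genuine difference worth noting: your injectivity argument for $q\ge 1$ is not the paper's. The paper shows injectivity by convolving $\iota_q^{(0)}f$ with $\mathcal{F}^{-1}\varphi_j$ (bounded on $L^q$ by Young) and using unconditional convergence to recover $\mathcal{F}^{-1}(\varphi_j\widehat f)=0$ for each $j$, whence $\|f\|_{\mathcal{D}(\mathcal{Q},L^p,Y)}=0$. You instead pair $\iota_q^{(k)}f$ with $\mathcal{F}\psi$ for $\psi\in C_c^\infty(\mathcal{O})$, use local finiteness of the partition of unity to collapse the series, and conclude $\iota_q^{(k)}f=f$ in $Z'(\mathcal{O})$. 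Your route is arguably cleaner and yields the slightly stronger statement that the embedding is a genuine left inverse on the level of $Z'(\mathcal{O})$, not just on $\mathcal{S}_{\mathcal{O}}^{p,Y}$. A second, minor difference is your treatment of $q=\infty$: you use absolute (hence uniform) convergence of the derivative series directly, while the paper observes that the finite partial sums lie in $C_b^k$ and that $C_b^k$ is closed in $W^{k,\infty}$; both arguments are fine.
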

\begin{proof}
Let $\Phi=\left(\varphi_{i}\right)_{i\in I}$ be the regular partition
of unity subordinate to $\mathcal{Q}$ which is used in Theorem \ref{thm:SufficientConditionsForBoundednessOfDerivative}
to define the maps $\partial_{\ast}^{\alpha}$. Using the weights
$u^{\left(n,p,q\right)}$ for $n\in\mathbb{N}_{0}$ which were defined
in Theorem \ref{thm:SufficientConditionsForBoundednessOfDerivative},
we have $v=u^{\left(0,p,q\right)}$, so that the map 
\[
\iota_{q}^{\left(0\right)}=\partial_{\ast}^{0}:\mathcal{D}\left(\mathcal{Q},L^{p},Y\right)\to L^{q}\left(\mathbb{R}^{d}\right),f\mapsto\sum_{i\in I}\mathcal{F}^{-1}\left(\varphi_{i}\cdot\widehat{f}\,\right)
\]
is well-defined with unconditional convergence of the series in $L^{q}\left(\mathbb{R}^{d}\right)$
and with $\iota_{q}^{\left(0\right)}f=f$ for $f\in\mathcal{S}_{\mathcal{O}}^{p,Y}\left(\mathbb{R}^{d}\right)$.

We first show that $\iota_{q}^{\left(0\right)}$ is injective for
$q\geq1$. To this end, let $f\in\mathcal{D}\left(\mathcal{Q},L^{p},Y\right)$
with $\iota_{q}^{\left(0\right)}f=0$. Since convolution with the
Schwartz function $\mathcal{F}^{-1}\varphi_{j}$ for arbitrary $j\in I$
is a bounded linear operator on $L^{q}\left(\mathbb{R}^{d}\right)$
(this crucially uses $q\geq1$), we conclude (using unconditional
convergence of the series) that
\begin{align*}
0=\left(\mathcal{F}^{-1}\varphi_{j}\right)\ast\left(\iota_{q}^{\left(0\right)}f\right) & =\sum_{i\in I}\left[\left(\mathcal{F}^{-1}\varphi_{j}\right)\ast\mathcal{F}^{-1}\left(\varphi_{i}\widehat{f}\right)\right]\\
 & =\sum_{i\in I}\mathcal{F}^{-1}\left(\varphi_{j}\cdot\varphi_{i}\widehat{f}\right)\\
 & =\sum_{i\in j^{\ast}}\mathcal{F}^{-1}\left(\varphi_{j}\cdot\varphi_{i}\widehat{f}\right)\\
 & =\mathcal{F}^{-1}\left(\left[\sum_{i\in j^{\ast}}\varphi_{i}\right]\cdot\varphi_{j}\cdot\widehat{f}\right)\\
 & =\mathcal{F}^{-1}\left(\varphi_{j}\widehat{f}\right),
\end{align*}
where the last step used $\sum_{i\in j^{\ast}}\varphi_{i}\equiv1$
on $Q_{j}\supset{\rm supp}\,\varphi_{j}$. But this easily entails
$\left\Vert f\right\Vert _{\mathcal{D}\left(\mathcal{Q},L^{p},Y\right)}=0$
and thus $f=0$, so that $\iota_{q}^{\left(0\right)}$ is injective.

Now, we simultaneously prove well-definedness and boundedness of $\iota_{q}^{\left(k\right)}$
for arbitrary $q\in\left(0,\infty\right]$ and also establish (for
$q\geq1$) that $\partial_{\ast}^{\alpha}f=\partial^{\alpha}\left(\iota_{q}^{\left(0\right)}f\right)$
for $f\in\mathcal{D}\left(\mathcal{Q},L^{p},Y\right)$ and $\alpha\in\mathbb{N}_{0}^{d}$
with $\left|\alpha\right|\leq k$. To this end, let $\alpha\in\mathbb{N}_{0}^{d}\setminus\left\{ 0\right\} $
be arbitrary with $\ell:=\left|\alpha\right|\leq k$. Now, for $a\geq0$,
there are two cases:
\begin{casenv}
\item If $a\leq1$, then $a^{\ell}\leq1\leq1+a^{k}$.
\item If $a\geq1$, then $a^{\ell}\leq a^{k}\leq1+a^{k}$.
\end{casenv}
Together, these considerations show  that $u_{i}^{\left(\ell,p,q\right)}\leq2v_{i}+w_{i}$
for all $i\in I$ and hence for $\left(x_{i}\right)_{i\in I}\in Y$
that
\begin{align*}
\left\Vert \left(x_{i}\right)_{i\in I}\right\Vert _{\ell_{u^{\left(\ell,p,q\right)}}^{q^{\triangledown}}} & \lesssim\left\Vert \left(x_{i}\right)_{i\in I}\right\Vert _{\ell_{v+w}^{q^{\triangledown}}}\\
 & =\left\Vert \left(v_{i}\cdot x_{i}\right)_{i\in I}+\left(w_{i}\cdot x_{i}\right)_{i\in I}\right\Vert _{\ell^{q^{\triangledown}}}\\
\left(\text{by the quasi triangle inequality}\right) & \lesssim\left\Vert \left(v_{i}\cdot x_{i}\right)_{i\in I}\right\Vert _{\ell^{q^{\triangledown}}}+\left\Vert \left(w_{i}\cdot x_{i}\right)_{i\in I}\right\Vert _{\ell^{q^{\triangledown}}}\\
\left(\text{since }Y\hookrightarrow\ell_{v}^{q^{\triangledown}}\text{ and }Y\hookrightarrow\ell_{w}^{q^{\triangledown}}\right) & \lesssim\left\Vert \left(x_{i}\right)_{i\in I}\right\Vert _{Y}+\left\Vert \left(x_{i}\right)_{i\in I}\right\Vert _{Y}\\
 & \lesssim\left\Vert \left(x_{i}\right)_{i\in I}\right\Vert _{Y}<\infty.
\end{align*}
Thus, an application of Theorem \ref{thm:SufficientConditionsForBoundednessOfDerivative}
shows that the map
\[
\partial_{\ast}^{\alpha}:\mathcal{D}\left(\mathcal{Q},L^{p},Y\right)\to L^{q}\left(\mathbb{R}^{d}\right),f\mapsto\sum_{i\in I}\partial^{\alpha}\left(\mathcal{F}^{-1}\left[\varphi_{i}\widehat{f}\right]\right)
\]
is well-defined and bounded with unconditional convergence of the
series in $L^{q}\left(\mathbb{R}^{d}\right)$. Since $\alpha\in\mathbb{N}_{0}^{d}\setminus\left\{ 0\right\} $
with $\left|\alpha\right|\leq k$ was arbitrary, we get unconditional
convergence of the series $\left(\sum_{i\in I}\partial^{\alpha}\left(\mathcal{F}^{-1}\left[\varphi_{i}\widehat{f}\right]\right)\right)_{\left|\alpha\right|\leq k}$
in $\prod_{\left|\alpha\right|\leq k}L^{q}\left(\mathbb{R}^{d}\right)$.
Note that the partial sums of the series $\sum_{i\in I}\mathcal{F}^{-1}\left[\varphi_{i}\widehat{f}\right]$
are elements of $L^{q}\left(\mathbb{R}^{d}\right)$ as well as of
$C^{\infty}\left(\mathbb{R}^{d}\right)$, by the Paley Wiener theorem.
In conclusion, for $q\in\left(0,1\right)$, we get
\[
\iota_{q}^{\left(k\right)}f=\left(\partial_{\ast}^{\alpha}f\right)_{\left|\alpha\right|\leq k}=\left(\sum_{i\in I}\partial^{\alpha}\left(\mathcal{F}^{-1}\left[\varphi_{i}\widehat{f}\right]\right)\right)_{\left|\alpha\right|\leq k}\in W^{k,q}\left(\mathbb{R}^{d}\right)
\]
for every $f\in\mathcal{D}\left(\mathcal{Q},L^{p},Y\right)$, and
also a quasi-norm estimate of the form
\[
\left\Vert \iota_{q}^{\left(k\right)}f\right\Vert _{W^{k,q}}\asymp\sum_{\left|\alpha\right|\leq k}\left\Vert \partial_{\ast}^{\alpha}f\right\Vert _{L^{q}}\leq\left(\sum_{\left|\alpha\right|\leq k}\left\Vert \partial_{\ast}^{\alpha}\right\Vert \right)\cdot\left\Vert f\right\Vert _{\mathcal{D}\left(\mathcal{Q},L^{p},Y\right)}.
\]
Furthermore, for $f\in\mathcal{S}_{\mathcal{O}}^{p,Y}\left(\mathbb{R}^{d}\right)$,
Theorem \ref{thm:SufficientConditionsForBoundednessOfDerivative}
shows $\iota_{q}^{\left(k\right)}f=\left(\partial_{\ast}^{\alpha}f\right)_{\left|\alpha\right|\leq k}=\left(\partial^{\alpha}f\right)_{\left|\alpha\right|\leq k}$,
as claimed.

Now, assume $q\geq1$ and let $\phi\in C_{c}^{\infty}\left(\mathbb{R}^{d}\right)$
be arbitrary. We have $\partial^{\beta}\phi\in L^{q'}\left(\mathbb{R}^{d}\right)\subset\left[L^{q}\left(\mathbb{R}^{d}\right)\right]'$
for all $\beta\in\mathbb{N}_{0}^{d}$, since $q\geq1$. Thus, using
the unconditional convergence in $L^{q}\left(\mathbb{R}^{d}\right)$
of the series defining $\iota_{q}^{\left(0\right)}f$ and $\partial_{\ast}^{\alpha}f$,
we conclude for $\alpha\in\mathbb{N}_{0}^{d}$ with $\left|\alpha\right|\leq k$
that
\begin{align*}
\int_{\mathbb{R}^{d}}\partial^{\alpha}\phi\cdot\iota_{q}^{\left(0\right)}f\,{\rm d}x & =\sum_{i\in I}\int_{\mathbb{R}^{d}}\partial^{\alpha}\phi\cdot\mathcal{F}^{-1}\left(\varphi_{i}\widehat{f}\right)\,{\rm d}x\\
 & =\left(-1\right)^{\left|\alpha\right|}\cdot\sum_{i\in I}\int_{\mathbb{R}^{d}}\phi\cdot\partial^{\alpha}\left[\mathcal{F}^{-1}\left(\varphi_{i}\widehat{f}\right)\right]\,{\rm d}x\\
 & =\left(-1\right)^{\left|\alpha\right|}\cdot\int_{\mathbb{R}^{d}}\phi\cdot\partial_{\ast}^{\alpha}f\,{\rm d}x,
\end{align*}
so that the weak derivative $\partial^{\alpha}\left(\iota_{q}^{\left(0\right)}f\right)$
is given by $\partial_{\ast}^{\alpha}f\in L^{q}\left(\mathbb{R}^{d}\right)$.
Since this holds for all $\alpha\in\mathbb{N}_{0}^{d}\setminus\left\{ 0\right\} $
with $\left|\alpha\right|\leq k$, we finally get $\iota_{q}^{\left(k\right)}f=\iota_{q}^{\left(0\right)}f\in W^{k,q}\left(\mathbb{R}^{d}\right)$
with
\[
\left\Vert \iota_{q}f\right\Vert _{W^{k,q}}\lesssim\left\Vert \iota_{q}^{\left(0\right)}f\right\Vert _{L^{q}}+\sum_{\substack{\alpha\in\mathbb{N}_{0}^{d}\setminus\left\{ 0\right\} \\
\left|\alpha\right|\leq k
}
}\left\Vert \partial_{\ast}^{\alpha}f\right\Vert _{L^{q}}\lesssim\left\Vert f\right\Vert _{\mathcal{D}\left(\mathcal{Q},L^{p},Y\right)}<\infty.
\]

Now, we show that for $q=\infty$, the map $\iota_{q}^{\left(k\right)}$
is actually well-defined and bounded as a map into $C_{b}^{k}\left(\mathbb{R}^{d}\right)$.
To see this, simply note that by the Paley-Wiener theorem, each of
the functions $\mathcal{F}^{-1}\left(\varphi_{i}\smash{\widehat{f}}\,\right)$
is smooth, so that the (finite) partial sums of the series $\sum_{i\in I}\mathcal{F}^{-1}\left(\varphi_{i}\smash{\widehat{f}}\,\right)=\iota_{q}^{\left(k\right)}f$
lie in $W^{k,\infty}\left(\mathbb{R}^{d}\right)\cap C^{\infty}\left(\mathbb{R}^{d}\right)\subset C_{b}^{k}\left(\mathbb{R}^{d}\right)$.
Since the series converges unconditionally in $W^{k,\infty}\left(\mathbb{R}^{d}\right)$
and since $C_{b}^{k}\left(\mathbb{R}^{d}\right)\leq W^{k,\infty}\left(\mathbb{R}^{d}\right)$
is closed (with $\left\Vert \cdot\right\Vert _{C_{b}^{k}}\asymp\left\Vert \cdot\right\Vert _{W^{k,\infty}}$
on $C_{b}^{k}\left(\mathbb{R}^{d}\right)$), we easily see that $\iota_{q}^{\left(k\right)}:\mathcal{D}\left(\mathcal{Q},L^{p},Y\right)\to C_{b}^{k}\left(\mathbb{R}^{d}\right)$
is indeed well-defined and bounded.

It remains to prove the third statement of the corollary. By assumption,
the series $\sum_{i\in I}\mathcal{F}^{-1}\left(\varphi_{i}\cdot\smash{\widehat{f}}\,\right)$
converges to $\left(\iota_{q}^{\left(k\right)}f\right)_{0}$ in $L^{q}\left(\mathbb{R}^{d}\right)$
and to $\iota_{r}^{\left(0\right)}f$ in $L^{r}\left(\mathbb{R}^{d}\right)$.
Since convergence in $L^{s}\left(\mathbb{R}^{d}\right)$ for arbitrary
$s\in\left(0,\infty\right]$ implies convergence in measure, we get
$\left(\iota_{q}^{\left(k\right)}f\right)_{0}=\iota_{r}^{\left(0\right)}f$
(almost everywhere). In particular, we see that $\iota_{q}^{\left(k\right)}f=0$
yields $\iota_{r}^{\left(0\right)}f=\left(\iota_{q}^{\left(k\right)}f\right)_{0}=0$
and hence $f=0$ (by injectivity of $\iota_{r}^{\left(0\right)}$),
so that $\iota_{q}^{\left(k\right)}$ is injective. A completely analogous
argument using unconditional convergence of the series $\sum_{i\in I}\partial^{\alpha}\left[\mathcal{F}^{-1}\left(\varphi_{i}\cdot\smash{\widehat{f}}\,\right)\right]$
shows $\iota_{q}^{\left(k\right)}f\bigg|_{\left|\alpha\right|\leq\min\left\{ k,\ell\right\} }=\left(\partial^{\alpha}\left(\iota_{r}^{\left(\ell\right)}f\right)\right)_{\left|\alpha\right|\leq\min\left\{ k,\ell\right\} }$
for all $f\in\mathcal{D}\left(\mathcal{Q},L^{p},Y\right)$, as claimed.
\end{proof}
Now that we have obtained sufficient criteria for the embedding $\mathcal{D}\left(\mathcal{Q},L^{p},Y\right)\hookrightarrow W^{k,q}\left(\mathbb{R}^{d}\right)$,
it is natural to ask whether these criteria are sharp. This is the
goal of the next section.

\section{Necessary Conditions}

\label{sec:NecessaryConditions}In this section, we will assume that
we have the following, for some $k\in\mathbb{N}_{0}$: For every multiindex
$\alpha\in\mathbb{N}_{0}^{d}$ with $\left|\alpha\right|=k$, the
map
\[
\iota_{\alpha}:\mathcal{S}_{\mathcal{O}}^{p,Y}\left(\mathbb{R}^{d}\right)\to L^{q}\left(\mathbb{R}^{d}\right),f\mapsto\partial^{\alpha}f
\]
is bounded, where the space $\mathcal{S}_{\mathcal{O}}^{p,Y}\left(\mathbb{R}^{d}\right)$
is as in Definition \ref{def:EmbeddingDefinition}. Our general goal
is to show that this implies conditions very similar to the sufficient
conditions given in Theorem \ref{thm:SufficientConditionsForBoundednessOfDerivative}.
In case of $q\in\left(0,2\right]\cup\left\{ \infty\right\} $, we
will even see that the necessary and sufficient criteria will coincide.

Our first aim is to show that the assumption $p\leq q$ from Theorem
\ref{thm:SufficientConditionsForBoundednessOfDerivative} is necessary.

The proofs of \emph{all} our necessary conditions (not only of $p\leq q$)
will be based on the following -- relatively elementary -- \emph{arbitrage}
result\cite{TerryTaoAmplificationArbitrageAndTensorPower}. Since
it is so central to our approach, we provide a proof, even though
the result is probably well known.
\begin{lem}
\label{lem:AsymptoticTranslationNorm}Let $n\in\mathbb{N}$, $p\in\left(0,\infty\right]$
and let $f_{1},\dots,f_{n}\in L^{p}\left(\mathbb{R}^{d}\right)$.
For $p=\infty$, assume additionally that
\[
f_{i}\in\overline{\left\{ {\displaystyle f\in L^{\infty}\left(\mathbb{R}^{d}\right)}\with{\rm supp}\, f\text{ compact}\right\} }\qquad\text{ for all }i\in\underline{n}\:,
\]
where the closure is taken in $L^{\infty}\left(\mathbb{R}^{d}\right)$.

Then we have
\[
\left\Vert \sum_{i=1}^{n}L_{x_{i}}f_{i}\right\Vert _{L^{p}}\xrightarrow[\min_{i\neq j}\left|x_{i}-x_{j}\right|\to\infty]{}\left\Vert \left(\left\Vert f_{i}\right\Vert _{L^{p}}\right)_{i\in\underline{n}}\right\Vert _{\ell^{p}}.
\]
In particular, there exists $R=R\left(p,\smash{\left(f_{i}\right)_{i\in\underline{n}}}\right)>0$
with
\[
\frac{1}{2}\cdot\left\Vert \left(\left\Vert f_{i}\right\Vert _{L^{p}}\right)_{i\in\underline{n}}\right\Vert _{\ell^{p}}\leq\left\Vert \sum_{i=1}^{n}L_{x_{i}}f_{i}\right\Vert _{L^{p}}\leq2\cdot\left\Vert \left(\left\Vert f_{i}\right\Vert _{L^{p}}\right)_{i\in\underline{n}}\right\Vert _{\ell^{p}}
\]
for all $x_{1},\dots,x_{n}\in\mathbb{R}^{d}$ with $\left|x_{i}-x_{j}\right|\geq R$
for all $i,j\in\underline{n}$ with $i\neq j$.\end{lem}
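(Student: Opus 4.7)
The plan is to reduce to the case of compactly supported functions, where exact equality holds once the translates have disjoint supports. Concretely, if each $g_i$ is supported in $\overline{B_{R_0}(0)}$ and $\min_{i\neq j}|x_i-x_j| > 2R_0$, then the translated supports $x_i + \overline{B_{R_0}(0)}$ are pairwise disjoint, so the pointwise identity
\[
\Bigl|\sum_{i=1}^n L_{x_i} g_i(y)\Bigr|^p = \sum_{i=1}^n \bigl|L_{x_i} g_i(y)\bigr|^p
\]
holds almost everywhere. Integrating and using translation-invariance of Lebesgue measure yields the exact equality $\bigl\|\sum_i L_{x_i} g_i\bigr\|_{L^p} = \bigl\|(\|g_i\|_{L^p})_i\bigr\|_{\ell^p}$ for each $p \in (0,\infty)$, while for $p=\infty$ pairwise disjointness of supports similarly gives $\bigl\|\sum_i L_{x_i} g_i\bigr\|_{L^\infty} = \max_i \|g_i\|_{L^\infty}$.

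From here I would pass to the given $f_i$ by approximation. For $p \in (0,\infty)$ the compactly supported $L^p$-functions are dense in $L^p(\mathbb{R}^d)$, and for $p=\infty$ the analogous density is exactly the hypothesis imposed in the lemma. Given $\varepsilon > 0$, choose compactly supported $g_i$ with $\|f_i - g_i\|_{L^p} < \varepsilon$ and let $R_0$ be a common support radius. For $\min_{i\neq j}|x_i-x_j| > 2R_0$, the exact identity above for $(g_i)$ combined with the $p$-subadditivity of $\|\cdot\|_{L^p}^{\min\{p,1\}}$ (applied both at the level of $L^p(\mathbb{R}^d)$ and at the level of $\ell^p(\underline{n})$) bounds
\[
\Bigl|\,\bigl\|\textstyle\sum_i L_{x_i} f_i\bigr\|_{L^p}^{\min\{p,1\}} - \bigl\|(\|f_i\|_{L^p})_i\bigr\|_{\ell^p}^{\min\{p,1\}}\,\Bigr| \;\leq\; 2n\,\varepsilon^{\min\{p,1\}}.
\]
Letting $\varepsilon \downarrow 0$ yields the claimed asymptotic identity. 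The ``in particular'' statement is then immediate by taking $\varepsilon := \tfrac{1}{2}\bigl\|(\|f_i\|_{L^p})_i\bigr\|_{\ell^p}$, the degenerate case $f_1 = \dots = f_n = 0$ being trivial.

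The only real subtlety is bookkeeping in the quasi-Banach regime: for $p \in (0,1)$ the space $L^p$ is only quasi-normed, so the triangle inequality must be replaced throughout by the genuine subadditivity of $\|\cdot\|_{L^p}^p$, which is precisely what the $\min\{p,1\}$ exponent above is encoding. For $p=\infty$ the explicit closure hypothesis is essential, since a bounded measurable function need not be approximable in $L^\infty$ by compactly supported ones (a nonzero constant, for instance, is not), so without it the approximation step in the plan would simply fail.
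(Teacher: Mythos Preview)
Your proof is correct and follows essentially the same approach as the paper's: establish exact equality for compactly supported functions with separated translates, then pass to the general case by approximation, using subadditivity of $\|\cdot\|_{L^p}^{\min\{p,1\}}$ to control the error. The paper treats the cases $p\geq 1$ and $p\in(0,1)$ separately rather than unifying them via the exponent $\min\{p,1\}$, but this is a purely cosmetic difference.
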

\begin{proof}
If $\left\Vert \left(\left\Vert f_{i}\right\Vert _{L^{p}}\right)_{i\in\underline{n}}\right\Vert _{\ell^{p}}>0$,
then the second part of the lemma is a trivial consequence of the
first part. If $\left\Vert \left(\left\Vert f_{i}\right\Vert _{L^{p}}\right)_{i\in\underline{n}}\right\Vert _{\ell^{p}}=0$,
then all quantities in the last part of the lemma vanish, so that
the claim is trivial. Thus, it remains to prove the first part.

To this end, let us first assume that $f_{1},\dots,f_{n}$ are compactly
supported. Since there are only finitely many $f_{i}$, there is some
$R>0$ with ${\rm supp}\, f_{i}\subset B_{R/2}\left(0\right)$ for
all $i\in I$. For $x_{1},\dots,x_{n}\in\mathbb{R}^{d}$ with $\left|x_{i}-x_{j}\right|\geq R$
for $i\neq j$, we then have
\[
{\rm supp}\left(L_{x_{i}}f_{i}\right)\cap{\rm supp}\left(L_{x_{j}}f_{j}\right)\subset B_{R/2}\left(x_{i}\right)\cap B_{R/2}\left(x_{j}\right)=\emptyset\qquad\text{ if }i\neq j\,.
\]
For $p\in\left(0,\infty\right)$, this implies
\begin{align*}
\left\Vert \sum_{i=1}^{n}L_{x_{i}}f_{i}\right\Vert _{L^{p}}^{p} & =\int_{\mathbb{R}^{d}}\left|\sum_{i=1}^{n}\left(L_{x_{i}}f_{i}\right)\left(x\right)\right|^{p}\,{\rm d}x\\
 & \overset{\left(\ast\right)}{=}\int_{\mathbb{R}^{d}}\sum_{i=1}^{n}\left|\left(L_{x_{i}}f_{i}\right)\left(x\right)\right|^{p}\,{\rm d}x\\
 & =\left\Vert \left(\left\Vert f_{i}\right\Vert _{L^{p}}\right)_{i\in\underline{n}}\right\Vert _{\ell^{p}}^{p},
\end{align*}
where the step marked with $\left(\ast\right)$ used that at most
one summand of the sum does not vanish for each $x\in\mathbb{R}^{d}$.
For $p=\infty$, we similarly have
\[
\left\Vert \sum_{i=1}^{n}L_{x_{i}}f_{i}\right\Vert _{L^{\infty}}=\esssup_{x\in\mathbb{R}^{d}}\left|\sum_{i=1}^{n}\left(L_{x_{i}}f_{i}\right)\left(x\right)\right|=\max_{i\in\underline{n}}\esssup_{x\in\mathbb{R}^{d}}\left|\left(L_{x_{i}}f_{i}\right)\left(x\right)\right|=\left\Vert \left(\left\Vert f_{i}\right\Vert _{L^{\infty}}\right)_{i\in\underline{n}}\right\Vert _{\ell^{\infty}}
\]
for $x_{1},\dots,x_{n}\in\mathbb{R}^{d}$ with $\left|x_{i}-x_{j}\right|\geq R$
for $i\neq j$, since for each $x\in\mathbb{R}^{d}$, at most one
summand of the sum does not vanish.

For the general case, we use an approximation argument. Let us first
consider the case $p\geq1$. For $\varepsilon>0$, there are compactly
supported $g_{1},\dots,g_{n}\in L^{p}\left(\mathbb{R}^{d}\right)$
with $\left\Vert f_{i}-g_{i}\right\Vert _{L^{p}}<\varepsilon$ for
all $i\in\underline{n}$. For $p<\infty$, this follows by density
of $C_{c}\left(\mathbb{R}^{d}\right)$ in $L^{p}\left(\mathbb{R}^{d}\right)$;
for $p=\infty$, it is a consequence of our additional assumptions
on the $f_{i}$. Hence, for $R>0$ large enough and $\left|x_{i}-x_{j}\right|\geq R$
for $i\neq j$, the considerations from above yield
\[
\left\Vert \sum_{i=1}^{n}L_{x_{i}}g_{i}\right\Vert _{L^{p}}=\left\Vert \left(\left\Vert g_{i}\right\Vert _{L^{p}}\right)_{i\in\underline{n}}\right\Vert _{\ell^{p}}.
\]
Using the (second) triangle inequality for $L^{p}$ and $\ell^{p}$,
this yields
\begin{align*}
 & \left|\left\Vert \sum_{i=1}^{n}L_{x_{i}}f_{i}\right\Vert _{L^{p}}-\left\Vert \left(\left\Vert f_{i}\right\Vert _{L^{p}}\right)_{i\in\underline{n}}\right\Vert _{\ell^{p}}\right|\\
 & \leq\left|\left\Vert \sum_{i=1}^{n}L_{x_{i}}f_{i}\right\Vert _{L^{p}}-\left\Vert \sum_{i=1}^{n}L_{x_{i}}g_{i}\right\Vert _{L^{p}}\right|+\left|\left\Vert \left(\left\Vert g_{i}\right\Vert _{L^{p}}\right)_{i\in\underline{n}}\right\Vert _{\ell^{p}}-\left\Vert \left(\left\Vert f_{i}\right\Vert _{L^{p}}\right)_{i\in\underline{n}}\right\Vert _{\ell^{p}}\right|\\
 & \leq\left\Vert \sum_{i=1}^{n}L_{x_{i}}\left(f_{i}-g_{i}\right)\right\Vert _{L^{p}}+\left\Vert \left(\left\Vert g_{i}\right\Vert _{L^{p}}-\left\Vert f_{i}\right\Vert _{L^{p}}\right)_{i\in\underline{n}}\right\Vert _{\ell^{p}}\\
 & \leq\sum_{i=1}^{n}\left\Vert L_{x_{i}}\left(f_{i}-g_{i}\right)\right\Vert _{L^{p}}+\left\Vert \left(\varepsilon\right)_{i\in\underline{n}}\right\Vert _{\ell^{p}}\\
 & \leq2n\varepsilon.
\end{align*}
Here, the last step used translation invariance of $\left\Vert \cdot\right\Vert _{L^{p}}$
and the embedding $\ell^{1}\hookrightarrow\ell^{p}$.

For the case $p\in\left(0,1\right)$, the above argument is invalid,
since $\left\Vert \cdot\right\Vert _{L^{p}}$ and $\left\Vert \cdot\right\Vert _{\ell^{p}}$
do not satisfy the triangle inequality anymore. Instead, we note that
we have the $p$-triangle inequality $\left\Vert f+g\right\Vert _{L^{p}}^{p}\leq\left\Vert f\right\Vert _{L^{p}}^{p}+\left\Vert g\right\Vert _{L^{p}}^{p}$
which implies that $d\left(f,g\right):=\left\Vert f-g\right\Vert _{L^{p}}^{p}$
defines a metric on $L^{p}\left(\mathbb{R}^{d}\right)$. The same
also holds for $\ell^{p}$. In particular, $\left\Vert \cdot\right\Vert _{L^{p}}$
is continuous. By density of $C_{c}\left(\mathbb{R}^{d}\right)\subset L^{p}\left(\mathbb{R}^{d}\right)$,
this allows us to choose $g_{1},\dots,g_{n}\in C_{c}\left(\mathbb{R}^{d}\right)$
with $\left|\left\Vert f_{i}\right\Vert _{L^{p}}-\left\Vert g_{i}\right\Vert _{L^{p}}\right|<\varepsilon$
and $\left\Vert f_{i}-g_{i}\right\Vert _{L^{p}}<\varepsilon$ for
all $i\in\underline{n}$. For $R>0$ large enough and $\left|x_{i}-x_{j}\right|\geq R$
for $i\neq j$, this yields
\begin{align*}
 & \left|\left\Vert \sum_{i=1}^{n}L_{x_{i}}f_{i}\right\Vert _{L^{p}}^{p}-\left\Vert \left(\left\Vert f_{i}\right\Vert _{L^{p}}\right)_{i\in\underline{n}}\right\Vert _{\ell^{p}}^{p}\right|\\
 & \leq\left|\left\Vert \sum_{i=1}^{n}L_{x_{i}}f_{i}\right\Vert _{L^{p}}^{p}-\left\Vert \sum_{i=1}^{n}L_{x_{i}}g_{i}\right\Vert _{L^{p}}^{p}\right|+\left|\left\Vert \left(\left\Vert g_{i}\right\Vert _{L^{p}}\right)_{i\in\underline{n}}\right\Vert _{\ell^{p}}^{p}-\left\Vert \left(\left\Vert f_{i}\right\Vert _{L^{p}}\right)_{i\in\underline{n}}\right\Vert _{\ell^{p}}^{p}\right|\\
 & \leq\left\Vert \sum_{i=1}^{n}L_{x_{i}}\left(f_{i}-g_{i}\right)\right\Vert _{L^{p}}^{p}+\left\Vert \left(\left\Vert g_{i}\right\Vert _{L^{p}}-\left\Vert f_{i}\right\Vert _{L^{p}}\right)_{i\in\underline{n}}\right\Vert _{\ell^{p}}^{p}\\
 & \leq\sum_{i=1}^{n}\left\Vert L_{x_{i}}\left(f_{i}-g_{i}\right)\right\Vert _{L^{p}}^{p}+\left\Vert \left(\varepsilon\right)_{i\in\underline{n}}\right\Vert _{\ell^{p}}^{p}\\
 & \leq2n\cdot\varepsilon^{p}.
\end{align*}
Since $\varepsilon>0$ was arbitrary and because the map $\left[0,\infty\right)\to\left[0,\infty\right),x\mapsto x^{1/p}$
is continuous, this implies the claim also for $p\in\left(0,1\right)$.
\end{proof}
As a consequence, we obtain the following corollary which provides
a generalization of the fact that bounded, translation invariant operators
from $L^{p}\left(\mathbb{R}^{d}\right)\to L^{q}\left(\mathbb{R}^{d}\right)$
can only be nontrivial if $p\leq q$. Note that this is decidedly
false if $\mathbb{R}^{d}$ is replaced by any compact topological
group. No originality regarding this result is claimed. A slightly
less general form of the present result can be found in \cite[around equation (11)]{TerryTaoAmplificationArbitrageAndTensorPower}.
\begin{cor}
\label{cor:SmallExponentsOnTheLeftForTranslationInvariantOperators}Let
$p,q\in\left(0,\infty\right]$ and $x_{0}\in\mathbb{R}^{d}\setminus\left\{ 0\right\} $.
Assume that the (not necessarily closed) subspace $V\leq L^{p}\left(\mathbb{R}^{d}\right)$
is invariant under the translation $L_{x_{0}}$ and that
\[
T:\left(V,\left\Vert \cdot\right\Vert _{L^{p}}\right)\to L^{q}\left(\mathbb{R}^{d}\right)
\]
is linear and bounded with $T\left(L_{x_{0}}f\right)=L_{x_{0}}\left(Tf\right)$
for all $f\in V$.

Finally, assume that
\begin{equation}
\left\{ f\in V\with Tf\neq0\right\} \cap\overline{\left\{ f\in{\displaystyle L^{p}\left(\mathbb{R}^{d}\right)}\with{\rm supp}\, f\text{ compact}\right\} }\neq\emptyset,\label{eq:NonDegeneratenessAssumptionTranslationInvariantOp}
\end{equation}
where the closure is taken in $L^{p}\left(\mathbb{R}^{d}\right)$.
Then we have $p\leq q$.\end{cor}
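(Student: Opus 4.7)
The plan is to invoke the arbitrage lemma (Lemma \ref{lem:AsymptoticTranslationNorm}) to manufacture a contradiction from $p>q$ by ``mass-producing'' widely separated translates of a single nontrivial vector and comparing the $n^{1/p}$ growth on the source side with the $n^{1/q}$ growth on the target side.

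First I would pick, using hypothesis \eqref{eq:NonDegeneratenessAssumptionTranslationInvariantOp}, a vector $f\in V$ with $Tf\neq 0$ that in addition lies in the $L^{p}$-closure of the compactly supported $L^{p}$-functions. Since $V\leq L^{p}$ is a subspace invariant under $L_{x_{0}}$, it is invariant under $L_{x_{0}}^{k}=L_{kx_{0}}$ for every $k\in\mathbb{N}$. Hence, for any $n,m\in\mathbb{N}$, the vector
\[
g_{n}:=\sum_{k=1}^{n}L_{kmx_{0}}f
\]
belongs to $V$, and the intertwining assumption $T L_{x_{0}}=L_{x_{0}}T$ yields $Tg_{n}=\sum_{k=1}^{n}L_{kmx_{0}}Tf$.

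Next I would apply Lemma \ref{lem:AsymptoticTranslationNorm} twice. On the source side, the tuple $(f,\dots,f)\in (L^{p})^{n}$ fulfils the hypothesis of the lemma (trivially if $p<\infty$, and by the closure condition on $f$ if $p=\infty$). On the target side, I apply it to $(Tf,\dots,Tf)\in (L^{q})^{n}$; provided $q<\infty$, no additional hypothesis on $Tf$ is required. Choosing $m\in\mathbb{N}$ large enough that $m|x_{0}|$ exceeds the threshold $R$ furnished by Lemma \ref{lem:AsymptoticTranslationNorm} in both applications, the pairwise distances $|kmx_{0}-\ell mx_{0}|\geq R$ for $k\neq\ell$ give
\[
\|g_{n}\|_{L^{p}}\leq 2\cdot n^{1/p}\|f\|_{L^{p}}
\qquad\text{and}\qquad
\|Tg_{n}\|_{L^{q}}\geq \tfrac{1}{2}\cdot n^{1/q}\|Tf\|_{L^{q}}\quad(\text{if }q<\infty).
\]
Combining with $\|Tg_{n}\|_{L^{q}}\leq\|T\|\cdot\|g_{n}\|_{L^{p}}$ yields, for all $n\in\mathbb{N}$,
\[
n^{1/q-1/p}\;\leq\;\frac{4\|T\|\cdot\|f\|_{L^{p}}}{\|Tf\|_{L^{q}}}<\infty,
\]
which forces $1/q\leq 1/p$, i.e.\ $p\leq q$. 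The remaining case $q=\infty$ is automatic since $p\in(0,\infty]$.

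The only delicate point is the $p=\infty$ situation, where Lemma \ref{lem:AsymptoticTranslationNorm} requires the compact-support closure condition on the source-side tuple; this is precisely what hypothesis \eqref{eq:NonDegeneratenessAssumptionTranslationInvariantOp} is designed to guarantee. No analogous hypothesis is needed on the target side, because we only use the target-side lower bound in the nontrivial range $q<\infty$. Everything else is bookkeeping: checking that $V$ is closed under iterated translation by $x_{0}$ (immediate from linearity and $L_{x_{0}}$-invariance), and that the intertwining extends from $L_{x_{0}}$ to $L_{kmx_{0}}=L_{x_{0}}^{km}$ (immediate by iteration).
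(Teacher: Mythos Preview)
Your proof is correct and follows essentially the same approach as the paper's own proof: both dispose of $q=\infty$ trivially, pick $f$ from hypothesis \eqref{eq:NonDegeneratenessAssumptionTranslationInvariantOp}, apply Lemma \ref{lem:AsymptoticTranslationNorm} to $n$ copies of $f$ on the source side and $n$ copies of $Tf$ on the target side, then choose the spacing along $\mathbb{Z}x_{0}$ large enough to make both estimates kick in and deduce $n^{1/q-1/p}\leq 4\|T\|\,\|f\|_{L^{p}}/\|Tf\|_{L^{q}}$ for all $n$. The only cosmetic difference is notation (the paper writes $N$ for your $m$ and organises the two thresholds $R_{1},R_{2}$ separately before taking $R=\max\{R_{1},R_{2}\}$).
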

\begin{rem*}
Note that assumption (\ref{eq:NonDegeneratenessAssumptionTranslationInvariantOp})
is always satisfied for $p<\infty$ if $T\not\equiv0$, since in this
case, we have 
\[
\overline{\left\{ f\in{\displaystyle L^{p}\left(\mathbb{R}^{d}\right)}\with{\rm supp}\, f\text{ compact}\right\} }=L^{p}\left(\mathbb{R}^{d}\right).\qedhere
\]
\end{rem*}
\begin{proof}
In case of $q=\infty$, the claim is trivial, so that we can assume
$q<\infty$ in the following.

By assumption, there is some 
\[
f\in\overline{\left\{ g\in{\displaystyle L^{p}\left(\mathbb{R}^{d}\right)}\with{\rm supp}\, f\text{ compact}\right\} }\cap V
\]
with $Tf\neq0$. In particular, $f\neq0$.

Let $n\in\mathbb{N}$ be arbitrary. Applying Lemma \ref{lem:AsymptoticTranslationNorm}
to the family $\left(f_{1},\dots,f_{n}\right):=\left(f,\dots,f\right)$,
we obtain some $R_{1}>0$ such that
\[
\left\Vert \sum_{i=1}^{n}L_{x_{i}}f\right\Vert _{L^{p}}\leq2\cdot\left\Vert \left(\left\Vert f\right\Vert _{L^{p}}\right)_{i\in\underline{n}}\right\Vert _{\ell^{p}}=2\left\Vert f\right\Vert _{L^{p}}\cdot n^{1/p}
\]
for all $x_{1},\dots,x_{n}\in\mathbb{R}^{d}$ satisfying $\left|x_{i}-x_{j}\right|\geq R_{1}$
for all $i,j\in\underline{n}$ with $i\neq j$.

Note that because of $q<\infty$, we have 
\[
g:=Tf\in\overline{\left\{ {\displaystyle h\in L^{q}\left(\mathbb{R}^{d}\right)}\with{\rm supp}\, h\text{ compact}\right\} },
\]
so that we can apply Lemma \ref{lem:AsymptoticTranslationNorm} to
the family $\left(g_{1},\dots,g_{n}\right)=\left(g,\dots,g\right)$.
This yields $R_{2}>0$ such that
\[
\left\Vert \sum_{i=1}^{n}L_{x_{i}}g\right\Vert _{L^{q}}\geq\frac{1}{2}\cdot\left\Vert \left(\left\Vert g\right\Vert _{L^{q}}\right)_{i\in\underline{n}}\right\Vert _{\ell^{q}}=\frac{\left\Vert g\right\Vert _{L^{q}}}{2}\cdot n^{1/q}
\]
holds for all $x_{1},\dots,x_{n}\in\mathbb{R}^{d}$ satisfying $\left|x_{i}-x_{j}\right|\geq R_{2}$
for all $i,j\in\underline{n}$ with $i\neq j$.

Let $R:=\max\left\{ R_{1},R_{2}\right\} >0$. Because of $x_{0}\neq0$,
there is $N\in\mathbb{N}$ with $\left|Nx_{0}\right|>R$. If we set
$x_{\ell}:=\ell\cdot Nx_{0}$ for $\ell\in\underline{n}$, this implies
\[
\left|x_{i}-x_{j}\right|=\left|\left(i-j\right)\cdot Nx_{0}\right|=\left|i-j\right|\cdot\left|Nx_{0}\right|\geq\left|Nx_{0}\right|\geq R\geq R_{k}
\]
for $k\in\left\{ 1,2\right\} $ and $i,j\in\underline{n}$ with $i\neq j$.
Hence,
\begin{align*}
\frac{\left\Vert g\right\Vert _{L^{q}}}{2}\cdot n^{1/q} & \leq\left\Vert \sum_{i=1}^{n}L_{x_{i}}g\right\Vert _{L^{q}}\\
 & =\left\Vert \sum_{i=1}^{n}L_{\ell N\cdot x_{0}}\left(Tf\right)\right\Vert _{L^{q}}\\
 & =\left\Vert T\left(\sum_{i=1}^{n}L_{\ell N\cdot x_{0}}f\right)\right\Vert _{L^{q}}\\
 & =\left\Vert T\left(\sum_{i=1}^{n}L_{x_{i}}f\right)\right\Vert _{L^{q}}\\
 & \leq\left\Vert T\right\Vert \cdot\left\Vert \sum_{i=1}^{n}L_{x_{i}}f\right\Vert _{L^{p}}\\
 & \leq2\left\Vert T\right\Vert \left\Vert f\right\Vert _{L^{p}}\cdot n^{1/p},
\end{align*}
where we used that $T$ commutes with $L_{nx_{0}}$ for all $n\in\mathbb{N}$,
since it commutes with $L_{x_{0}}$. All in all, we get
\[
n^{\frac{1}{q}-\frac{1}{p}}\leq4\left\Vert T\right\Vert \cdot\frac{\left\Vert f\right\Vert _{L^{p}}}{\left\Vert g\right\Vert _{L^{q}}}
\]
for all $n\in\mathbb{N}$, where the right-hand side is independent(!)
of $n\in\mathbb{N}$. Thus, $\frac{1}{q}-\frac{1}{p}\leq0$ which
easily implies the claim $p\leq q$.
\end{proof}
For later use, we will also need the following fact about ``richness''
of $C_{c}^{\infty}\left(U\right)$ for arbitrary open $U\subset\mathbb{R}^{d}$.
\begin{lem}
\label{lem:RichnessOfDerivativeOfFunctionsWithFourierSupport}Let
$\emptyset\neq U\subset\mathbb{R}^{d}$ be open and let $k\in\mathbb{N}$
be arbitrary. Then, for each $\alpha\in\mathbb{N}_{0}^{d}$ with $\left|\alpha\right|\leq k$,
there is a function $f_{\alpha,k}\in C_{c}^{\infty}\left(U\right)$
with
\[
\left(\partial^{\beta}\left[\mathcal{F}^{-1}f_{\alpha,k}\right]\right)\left(0\right)=\delta_{\alpha,\beta}\qquad\text{ for all }\beta\in\mathbb{N}_{0}^{d}\text{ with }\left|\beta\right|\leq k.\qedhere
\]
\end{lem}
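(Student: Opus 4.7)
My plan is to reduce the statement to a linear-algebraic surjectivity claim about the moment map on $C_c^\infty(U)$, which then follows from an elementary argument about polynomials vanishing on open sets.

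The starting point is the elementary identity
\[
\left(\partial^{\beta}\left[\mathcal{F}^{-1}f\right]\right)(0) = (2\pi i)^{|\beta|} \int_{\mathbb{R}^d} \xi^{\beta} f(\xi)\,\mathrm{d}\xi
\]
for $f\in C_c^\infty(U)$, obtained by differentiating under the integral sign in $\mathcal{F}^{-1}f(x)=\int f(\xi)e^{2\pi i\langle x,\xi\rangle}\,\mathrm{d}\xi$ and setting $x=0$. Thus the problem becomes: for each fixed $\alpha$ with $|\alpha|\leq k$, find $f_{\alpha,k}\in C_c^\infty(U)$ such that the vector of polynomial moments
\[
\Bigl(\int_{\mathbb{R}^d} \xi^{\beta} f_{\alpha,k}(\xi)\,\mathrm{d}\xi\Bigr)_{|\beta|\leq k}
\]
equals the prescribed target $\bigl((2\pi i)^{-|\alpha|}\,\delta_{\alpha,\beta}\bigr)_{|\beta|\leq k}\in\mathbb{C}^{N}$, where $N:=|\{\beta\in\mathbb{N}_0^d : |\beta|\leq k\}|$.

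Equivalently, I would show that the linear moment map
\[
T:C_c^\infty(U)\to\mathbb{C}^{N},\qquad f\mapsto\Bigl(\int_{\mathbb{R}^d}\xi^{\beta} f(\xi)\,\mathrm{d}\xi\Bigr)_{|\beta|\leq k}
\]
is \emph{surjective}; once this is known, choosing $f_{\alpha,k}$ as a preimage of the appropriate vector finishes the proof. To establish surjectivity, I argue by contradiction: if the range of $T$ were a proper subspace of $\mathbb{C}^N$, there would exist nonzero coefficients $(c_\beta)_{|\beta|\leq k}$ such that $\sum_{\beta}c_\beta\int \xi^{\beta} f(\xi)\,\mathrm{d}\xi=0$ for every $f\in C_c^\infty(U)$. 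Setting $P(\xi):=\sum_{|\beta|\leq k}c_\beta\,\xi^{\beta}$, this says $\int_U P(\xi)\,f(\xi)\,\mathrm{d}\xi=0$ for all $f\in C_c^\infty(U)$, whence $P\equiv 0$ on $U$ by the fundamental lemma of the calculus of variations (or by density arguments).

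The only remaining step---and the only place where one actually uses that $U$ is a nonempty open subset of $\mathbb{R}^d$---is to observe that a polynomial vanishing on a nonempty open set must be the zero polynomial, contradicting $(c_\beta)\neq 0$. I expect no genuine obstacle: the moment identity is a standard Fourier-analytic computation, and the only subtlety is this polynomial-identity argument, which is entirely routine. Thus the claim follows.
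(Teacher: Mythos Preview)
Your proof is correct and rests on the same core idea as the paper: the surjectivity of the moment map is established by observing that any linear functional annihilating its range corresponds to a polynomial vanishing on the open set $U$, which must therefore be zero. The paper, however, takes a slightly more roundabout route: it first proves the analogous one-dimensional surjectivity claim (for an interval $I\subset\mathbb{R}$) by exactly your duality argument, and then builds the $d$-dimensional function $f_{\alpha,k}$ as a tensor product $f_1\otimes\cdots\otimes f_d$ of one-variable functions supported in a product of intervals $\prod_\ell I_\ell\subset U$. Your approach is more direct, handling the $d$-dimensional case in one stroke and avoiding the tensor-product construction; the paper's approach is somewhat more constructive in that it exhibits $f_{\alpha,k}$ explicitly as a separable function, but this extra structure is not actually used anywhere else in the paper.
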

\begin{proof}
We first show the following intermediate result:
\begin{claim*}
Let $\emptyset\neq I\subset\mathbb{R}$ be an open, bounded interval
and let $N\in\mathbb{N}$ be arbitrary. Then the map
\[
\Phi:C_{c}^{\infty}\left(I\right)\to\mathbb{C}^{N},f\mapsto\left(\left[\partial^{\ell-1}\left(\mathcal{F}^{-1}f\right)\right]\left(0\right)\right)_{\ell\in\underline{N}}
\]
is surjective.\end{claim*}
\begin{proof}
Assume that the claim fails, so that $V:=\Phi\left(C_{c}^{\infty}\left(I\right)\right)\lneq\mathbb{C}^{N}$
is a strict subspace. This yields some $a\in\mathbb{C}^{N}\setminus\left\{ 0\right\} $
with $\left\langle \Phi\left(f\right),a\right\rangle =0$ for all
$f\in C_{c}^{\infty}\left(I\right)$.

But by standard properties of the Fourier transform (see \cite[Theorem 8.22]{FollandRA}),
we have for $f\in C_{c}^{\infty}\left(I\right)$ that
\[
\left[\partial^{\ell-1}\left(\mathcal{F}^{-1}f\right)\right]\left(0\right)=\int_{\mathbb{R}}f\left(\xi\right)\cdot\left(2\pi i\xi\right)^{\ell-1}\,{\rm d}\xi,
\]
so that we get
\begin{align*}
0 & =\int_{\mathbb{R}}f\left(\xi\right)\cdot\sum_{\ell=1}^{N}\overline{a_{\ell}}\left(2\pi i\xi\right)^{\ell-1}\,{\rm d}\xi\\
 & =\int_{I}f\left(\xi\right)\cdot\sum_{\ell=1}^{N}\overline{a_{\ell}}\left(2\pi i\xi\right)^{\ell-1}\,{\rm d}\xi
\end{align*}
for all $f\in C_{c}^{\infty}\left(I\right)$. Since $I$ is a bounded
interval, we have $g\in L^{2}\left(I\right)$ for 
\[
g:I\to\mathbb{C},\xi\mapsto\sum_{\ell=1}^{N}\left(2\pi i\right)^{\ell-1}\overline{a_{\ell}}\cdot\xi^{\ell-1}.
\]
But since $C_{c}^{\infty}\left(I\right)\subset L^{2}\left(I\right)$
is dense, we conclude $g=0$ as an element of $L^{2}\left(I\right)$.
By continuity of $g$, we get $g\equiv0$ and hence (by uniqueness
of the coefficients of a polynomial) $\left(2\pi i\right)^{\ell-1}\overline{a_{\ell}}=0$
for all $\ell\in\underline{N}$, in contradiction to $a\neq0$. This
proves the claim.
\end{proof}
Now, we return to the actual proof of the lemma. Since $U\neq\emptyset$
is open, there is for each $\ell\in\underline{d}$ an open, bounded
interval $I_{\ell}\subset\mathbb{R}$ with $\prod_{\ell=1}^{d}I_{\ell}\subset U$.
Now, for each $\ell\in\underline{d}$, we have $\alpha_{\ell}\in\left\{ 0\right\} \cup\underline{k}$,
so that the claim yields a function $f_{\ell}\in C_{c}^{\infty}\left(I_{\ell}\right)$
with
\[
\left[\partial^{m}\left(\mathcal{F}^{-1}f_{\ell}\right)\right]\left(0\right)=\delta_{m,\alpha_{\ell}}\qquad\text{ for all }m\in\left\{ 0\right\} \cup\underline{k}.
\]
We have $f:=f_{1}\otimes\cdots\otimes f_{d}\in C_{c}^{\infty}\left(U\right)$
for
\[
\left(f_{1}\otimes\cdots\otimes f_{d}\right)\left(x\right)=f_{1}\left(x_{1}\right)\cdots f_{d}\left(x_{d}\right)\qquad\text{ where }x=\left(x_{1},\dots,x_{d}\right).
\]
But it is easy to see that the Fourier transform commutes with tensor
products, i.e.\@
\[
\mathcal{F}^{-1}f=\left(\mathcal{F}^{-1}f_{1}\right)\otimes\cdots\otimes\left(\mathcal{F}^{-1}f_{d}\right).
\]
From this, we easily get for $\beta\in\mathbb{N}_{0}^{d}$ with $\left|\beta\right|\leq k$
(which implies $\beta_{\ell}\in\left\{ 0\right\} \cup\underline{k}$
for all $\ell\in\underline{d}$) that
\begin{align*}
\left[\partial^{\beta}\left(\mathcal{F}^{-1}f\right)\right]\left(0\right) & =\left[\partial^{\beta_{1}}\left(\mathcal{F}^{-1}f_{1}\right)\right]\left(0\right)\cdots\left[\partial^{\beta_{d}}\left(\mathcal{F}^{-1}f_{d}\right)\right]\left(0\right)\\
 & =\delta_{\beta_{1},\alpha_{1}}\cdots\delta_{\beta_{d},\alpha_{d}}\\
 & =\delta_{\beta,\alpha}.
\end{align*}
This completes the proof.
\end{proof}
We can now finally show that $p\leq q$ is a necessary condition for
boundedness of $\partial^{\alpha}:\mathcal{S}_{\mathcal{O}}^{p,Y}\left(\mathbb{R}^{d}\right)\to L^{q}\left(\mathbb{R}^{d}\right)$.
\begin{thm}
\label{thm:NecessaryExponentRelation}Let $p,q\in\left(0,\infty\right]$
and let $\mathcal{Q}=\left(Q_{i}\right)_{i\in I}$ be an $L^{p}$-decomposition
covering of the open set $\emptyset\neq\mathcal{O}\subset\mathbb{R}^{d}$.
Assume that $\left\{ 0\right\} \neq Y\leq\mathbb{C}^{I}$ is a $\mathcal{Q}$-regular
sequence space on $I$.

Finally, assume that
\[
\iota_{\alpha}:\mathcal{S}_{\mathcal{O}}^{p,Y}\left(\mathbb{R}^{d}\right)\to L^{q}\left(\mathbb{R}^{d}\right),f\mapsto\partial^{\alpha}f
\]
is bounded for some $\alpha\in\mathbb{N}_{0}^{d}$. Then $p\leq q$.\end{thm}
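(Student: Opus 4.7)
The plan is to run an \emph{arbitrage} argument against Lemma \ref{lem:AsymptoticTranslationNorm}. I build a single nonzero test function $g_0\in\mathcal{S}_{\mathcal{O}}^{p,Y}(\mathbb{R}^d)$ with $\partial^\alpha g_0\neq 0$, then for each $n\in\mathbb{N}$ take well-separated translates to form $g_n:=\sum_{j=1}^n L_{x_j}g_0$, and show that the decomposition-space norm of $g_n$ grows at most like $n^{1/p}$ while the $L^q$-norm of $\partial^\alpha g_n$ grows at least like $n^{1/q}$. Boundedness of $\iota_\alpha$ then forces $n^{1/q-1/p}$ to remain bounded as $n\to\infty$, hence $p\leq q$.

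To produce $g_0$, I use $Y\neq\{0\}$ together with solidity to obtain some $i_0\in I$ with $\delta_{i_0}\in Y$. The $\mathcal{Q}$-regularity of $Y$, applied to $\delta_{i_0}$ via the clustering map, yields $\chi_{i_0^*}\in Y$; consequently, by solidity, every bounded sequence supported on the finite set $i_0^*$ belongs to $Y$. If $(\varphi_i)_{i\in I}$ is an $L^p$-BAPU for $\mathcal{Q}$ (which exists by hypothesis), then $U:=\varphi_{i_0}^{-1}(\mathbb{C}\setminus\{0\})\subset Q_{i_0}$ is a nonempty open set. Lemma \ref{lem:RichnessOfDerivativeOfFunctionsWithFourierSupport} supplies $\psi\in C_c^\infty(U)$ with $\partial^\alpha[\mathcal{F}^{-1}\psi](0)=1$; setting $g_0:=\mathcal{F}^{-1}\psi\in\mathcal{S}_{\mathcal{O}}(\mathbb{R}^d)$, the inclusion $\mathrm{supp}\,\widehat{g_0}\subset Q_{i_0}$ makes $(\|\mathcal{F}^{-1}(\varphi_i\widehat{g_0})\|_{L^p})_{i\in I}$ a bounded sequence supported on $i_0^*$, hence an element of $Y$. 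Thus $g_0\in\mathcal{S}_{\mathcal{O}}^{p,Y}(\mathbb{R}^d)$, and $\partial^\alpha g_0\neq 0$ since $[\partial^\alpha g_0](0)=1$.

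Next fix $n\in\mathbb{N}$ and $x_1,\dots,x_n\in\mathbb{R}^d$. Modulation on the Fourier side commutes with pointwise multiplication by $\varphi_i$, so
\[
\mathcal{F}^{-1}\bigl(\varphi_i\widehat{g_n}\bigr)=\sum_{j=1}^n L_{x_j}\mathcal{F}^{-1}\bigl(\varphi_i\widehat{g_0}\bigr)\qquad\text{for each }i\in I,
\]
and this vanishes for $i\notin i_0^*$. Each relevant $\mathcal{F}^{-1}(\varphi_i\widehat{g_0})$ is Schwartz by Paley--Wiener, as is $\partial^\alpha g_0$, so Lemma \ref{lem:AsymptoticTranslationNorm} applies to all of them (including the $p=\infty$ or $q=\infty$ case, since Schwartz functions lie in the closure in $L^\infty$ of the compactly supported $L^\infty$-functions). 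Choosing a separation radius $R$ that works simultaneously for these finitely many functions and taking $x_1,\dots,x_n$ with $\min_{i\neq j}|x_i-x_j|\geq R$, one obtains
\[
\bigl\|\mathcal{F}^{-1}(\varphi_i\widehat{g_n})\bigr\|_{L^p}\leq 2n^{1/p}\,\bigl\|\mathcal{F}^{-1}(\varphi_i\widehat{g_0})\bigr\|_{L^p}\quad(i\in I),\qquad \|\partial^\alpha g_n\|_{L^q}\geq \tfrac{1}{2}n^{1/q}\,\|\partial^\alpha g_0\|_{L^q}.
\]
Solidity of $Y$ converts the first estimate into $\|g_n\|_{\mathcal{D}(\mathcal{Q},L^p,Y)}\leq 2n^{1/p}\|g_0\|_{\mathcal{D}(\mathcal{Q},L^p,Y)}$. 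Combining with the second estimate and with boundedness of $\iota_\alpha$ yields
\[
\tfrac{1}{2}n^{1/q}\|\partial^\alpha g_0\|_{L^q}\leq 2\,\|\iota_\alpha\|\cdot n^{1/p}\|g_0\|_{\mathcal{D}(\mathcal{Q},L^p,Y)}\qquad\text{for all }n\in\mathbb{N},
\]
and since $\|\partial^\alpha g_0\|_{L^q}>0$, this can hold for every $n$ only if $1/q\leq 1/p$, i.e.\ $p\leq q$.

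The main obstacle is the construction of $g_0$: the assumption $Y\neq\{0\}$ is extremely weak, and one must carefully chain solidity, $\mathcal{Q}$-regularity (through the clustering map) and openness of $\varphi_{i_0}^{-1}(\mathbb{C}\setminus\{0\})$ to land the test function in $\mathcal{S}_{\mathcal{O}}^{p,Y}(\mathbb{R}^d)$ while ensuring $\partial^\alpha g_0\neq 0$. Once $g_0$ is in hand, the rest of the argument is a routine double application of Lemma \ref{lem:AsymptoticTranslationNorm}.
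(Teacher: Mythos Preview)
Your proof is correct and follows the same arbitrage strategy as the paper, but the implementation differs in an instructive way. The paper packages the arbitrage into Corollary~\ref{cor:SmallExponentsOnTheLeftForTranslationInvariantOperators}: it introduces the translation-invariant subspace $V=\{f\in\mathcal{S}(\mathbb{R}^d):\mathrm{supp}\,\widehat{f}\subset U\}$ and shows that $T:(V,\|\cdot\|_{L^p})\to L^q$, $f\mapsto\partial^\alpha f$ is bounded by proving $\|f\|_{\mathcal{D}(\mathcal{Q},L^p,Y)}\lesssim\|f\|_{L^p}$ on $V$. That last estimate needs the $L^p$-BAPU Fourier-multiplier bound, and for $p\in(0,1)$ this in turn invokes the quasi-Banach convolution result (Corollary~\ref{cor:SemiStructuredQuasiBanachConvolution}). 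You instead apply Lemma~\ref{lem:AsymptoticTranslationNorm} \emph{piecewise} to the finitely many functions $\mathcal{F}^{-1}(\varphi_i\widehat{g_0})$, $i\in i_0^{\ast}$, using only that translation on the space side is modulation on the Fourier side (hence commutes with multiplication by $\varphi_i$). This sidesteps the multiplier estimate entirely and gives a cleaner argument that treats all $p\in(0,\infty]$ uniformly, at the cost of not isolating the reusable intermediate statement that the paper records as Corollary~\ref{cor:SmallExponentsOnTheLeftForTranslationInvariantOperators}.

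One tiny caveat: the definition of an $L^p$-BAPU does not formally exclude $\varphi_{i_0}\equiv 0$, in which case your set $U=\varphi_{i_0}^{-1}(\mathbb{C}\setminus\{0\})$ would be empty. The paper avoids this by working with $\varphi_{i_0}^{\ast}$, which equals $1$ on $Q_{i_0}\neq\emptyset$; equivalently, since $\chi_{i_0^{\ast}}\in Y$ you may replace $i_0$ by any $j\in i_0^{\ast}$ with $\varphi_j\not\equiv 0$ (such $j$ exists because $\sum_{j\in i_0^{\ast}}\varphi_j\equiv 1$ on $Q_{i_0}$) and your argument goes through verbatim.
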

\begin{proof}
By assumption, $Y\neq\left\{ 0\right\} $ is nontrivial. Since $Y\leq\mathbb{C}^{I}$
is solid, there is thus some $i_{0}\in I$ with $\delta_{i_{0}}\in Y$.
Since $Y$ is $\mathcal{Q}$-regular, this even implies $\chi_{i_{0}^{\ast}}=\delta_{i_{0}}^{\ast}\in Y$
and thus also $\sum_{\ell\in i_{0}^{\ast}}\chi_{\ell^{\ast}}\in Y$,
which -- by solidity -- shows $\chi_{i_{0}^{2\ast}}\in Y$.

Let $\Phi=\left(\varphi_{i}\right)_{i\in I}$ be an $L^{p}$-BAPU
for $\mathcal{Q}$. By admissibility of $\mathcal{Q}$, we have $Q_{i_{0}}\neq\emptyset$.
Since $\varphi_{i_{0}}^{\ast}\equiv1$ on $Q_{i_{0}}$, this implies
that the open(!) set $U:=\left\{ x\in\mathbb{R}^{d}\with\varphi_{i_{0}}^{\ast}\left(x\right)\neq0\right\} \subset Q_{i_{0}}^{\ast}\subset\mathcal{O}$
is nonempty. 

Now, define
\[
V:=\left\{ f\in\mathcal{S}\left(\mathbb{R}^{d}\right)\with{\rm supp}\,\widehat{f}\subset U\right\} \subset\mathcal{S}_{\mathcal{O}}\left(\mathbb{R}^{d}\right).
\]
We claim that the operator
\[
T:\left(V,\left\Vert \cdot\right\Vert _{L^{p}}\right)\to L^{q}\left(\mathbb{R}^{d}\right),f\mapsto\partial^{\alpha}f
\]
is bounded. Indeed, let $f\in V\subset\mathcal{S}_{\mathcal{O}}\left(\mathbb{R}^{d}\right)$
be arbitrary. Because of ${\rm supp}\,\widehat{f}\subset U\subset Q_{i_{0}}^{\ast}$,
we have $\varphi_{i}\widehat{f}\equiv0$ for all $i\in I\setminus i_{0}^{2\ast}$.
But for $i\in i_{0}^{2\ast}$, we have
\[
\left\Vert \mathcal{F}^{-1}\left(\varphi_{i}\cdot\smash{\widehat{f}}\:\right)\right\Vert _{L^{p}}\lesssim\left\Vert \mathcal{F}^{-1}\smash{\widehat{f}}\right\Vert _{L^{p}}=\left\Vert f\right\Vert _{L^{p}}.
\]
For $p\in\left[1,\infty\right]$, this follows from Young's inequality,
since $\left\Vert \mathcal{F}^{-1}\varphi_{i}\right\Vert _{L^{1}}\leq C$
by definition of an $L^{p}$-BAPU. In case of $p\in\left(0,1\right)$,
we use again the definition of an $L^{p}$-BAPU and the support restrictions
${\rm supp}\,\widehat{f}\subset Q_{i_{0}}^{2\ast}\subset Q_{i}^{4\ast}$,
as well as ${\rm supp}\,\varphi_{i}\subset\overline{Q_{i}}\subset Q_{i}^{4\ast}$,
together with Corollary \ref{cor:SemiStructuredQuasiBanachConvolution}.
Note that for $p\in\left(0,1\right)$, the above estimate would fail
in general without an a priori support estimate for $\widehat{f}$.

Altogether, we have shown
\[
\left\Vert \mathcal{F}^{-1}\left(\varphi_{i}\cdot\smash{\widehat{f}}\:\right)\right\Vert _{L^{p}}\leq C\cdot\left\Vert f\right\Vert _{L^{p}}\cdot\chi_{i_{0}^{2\ast}}\left(i\right)\qquad\text{ for all }i\in I,
\]
where $C=C\left(\Phi,p\right)$ is an absolute constant. Thus, solidity
of $Y$ yields
\[
\left\Vert f\right\Vert _{\mathcal{D}\left(\mathcal{Q},L^{p},Y\right)}=\left\Vert \left(\left\Vert \mathcal{F}^{-1}\left(\varphi_{i}\cdot\smash{\widehat{f}}\:\right)\right\Vert _{L^{p}}\right)_{i\in I}\right\Vert _{Y}\leq C\left\Vert \chi_{i_{0}^{2\ast}}\right\Vert _{Y}\cdot\left\Vert f\right\Vert _{L^{p}}<\infty
\]
and hence $f\in\mathcal{S}_{\mathcal{O}}^{p,Y}\left(\mathbb{R}^{d}\right)$.
Since $\iota_{\alpha}$ is bounded by assumption, we conclude
\[
\left\Vert Tf\right\Vert _{L^{q}}=\left\Vert \partial^{\alpha}f\right\Vert _{L^{q}}=\left\Vert \iota_{\alpha}f\right\Vert _{L^{q}}\leq\left\Vert \iota_{\alpha}\right\Vert \left\Vert f\right\Vert _{\mathcal{D}\left(\mathcal{Q},L^{p},Y\right)}\leq C\left\Vert \iota_{\alpha}\right\Vert \left\Vert \chi_{i_{0}^{2\ast}}\right\Vert _{Y}\cdot\left\Vert f\right\Vert _{L^{p}},
\]
so that $T$ is indeed bounded.

Since translation corresponds to modulation on the Fourier side, i.e.\@
$\widehat{L_{x}f}=M_{-x}\widehat{f}$ for $f\in\mathcal{S}\left(\mathbb{R}^{d}\right)$
and since modulation does not change the support, we see that $V$
is invariant under arbitrary translations. Furthermore, it is clear
that $T$ commutes with arbitrary translations. Now, since $U\neq\emptyset$
is open, Lemma \ref{lem:RichnessOfDerivativeOfFunctionsWithFourierSupport}
shows that there is some $g\in C_{c}^{\infty}\left(U\right)$ such
that $f:=\mathcal{F}^{-1}g\in V$ satisfies $\partial^{\alpha}f\left(0\right)=\partial^{\alpha}\left[\mathcal{F}^{-1}g\right]\left(0\right)\neq0$.
Since $\partial^{\alpha}f$ is continuous, this implies $Tf=\partial^{\alpha}f\neq0$
in $L^{q}\left(\mathbb{R}^{d}\right)$. Finally, we have 
\[
f\in\mathcal{S}\left(\mathbb{R}^{d}\right)\subset\overline{\left\{ {\displaystyle h\in L^{p}\left(\mathbb{R}^{d}\right)}\with{\rm supp}\, h\text{ compact}\right\} },
\]
so that all prerequisites of Corollary \ref{cor:SmallExponentsOnTheLeftForTranslationInvariantOperators}
are satisfied. But this corollary implies $p\leq q$ as desired.
\end{proof}
Our next major goal is to show that boundedness of all $\iota_{\alpha}$
for $\left|\alpha\right|=k$ already implies boundedness of the embedding
$Y\cap\ell_{0}\left(I\right)\hookrightarrow\ell_{u^{\left(k,p,q\right)}}^{q}\left(I\right)$,
where $u^{\left(k,p,q\right)}$ is defined as in Theorem \ref{thm:SufficientConditionsForBoundednessOfDerivative}.
Note that (ignoring the intersection with $\ell_{0}\left(I\right)$)
this condition coincides with the one given in Theorem \ref{thm:SufficientConditionsForBoundednessOfDerivative}
if $q=q^{\triangledown}=\min\left\{ q,q'\right\} $, i.e.\@ if $q\leq2$.
 In general, for $q\in\left(2,\infty\right)$, the embedding $Y\cap\ell_{0}\left(I\right)\hookrightarrow\ell_{u^{\left(k,p,q\right)}}^{q^{\triangledown}}\left(I\right)$
is  \emph{not} a necessary condition for boundedness of all $\iota_{\alpha}$
with $\left|\alpha\right|=k$, as we will see in Section \ref{sec:Applications}
using the concrete examples of modulation spaces and Besov spaces.
This raises the question whether one can find less restrictive \emph{sufficient}
conditions for boundedness of the $\iota_{\alpha}$. For modulation
spaces\cite{KobayashiSugimotoModulationSobolevInclusion} and Besov
spaces\cite{TriebelTheoryOfFunctionSpaces}, this is indeed the case.
Nevertheless, I do not know of any results which apply in the full
generality considered here. This is a valuable topic for further research.

To prove necessity of the embedding $Y\cap\ell_{0}\left(I\right)\hookrightarrow\ell_{u^{\left(k,p,q\right)}}^{q}\left(I\right)$,
we will make the technical assumption that $\mathcal{Q}=\left(Q_{i}\right)_{i\in I}=\left(T_{i}Q_{i}'+b_{i}\right)_{i\in I}$
is tight, i.e.\@ that there is some $\varepsilon>0$ such that for
each $i\in I$, some ball $B_{\varepsilon}\left(c_{i}\right)$ is
contained in the ``normalized'' set $Q_{i}'$. In principle, the
points $\left(c_{i}\right)_{i\in I}$ are allowed to vary wildly with
$i\in I$. But the next lemma shows that we can actually choose the
$c_{i}$ only from a fixed finite set.

This technical fact will become important for constructing suitable
functions to ``test'' boundedness of the maps $\iota_{\alpha}$
in order to derive boundedness of the embedding $Y\cap\ell_{0}\left(I\right)\hookrightarrow\ell_{u^{\left(k,p,q\right)}}^{q}\left(I\right)$.
\begin{lem}
\label{lem:FinitelyManyNormalizationsSuffice}Let $\mathcal{Q}=\left(T_{i}Q_{i}'+b_{i}\right)_{i\in I}$
be a tight semi-structured covering of the open set $\emptyset\neq\mathcal{O}\subset\mathbb{R}^{d}$.

Then there is some $\varepsilon>0$ and finitely many points $a_{1},\dots,a_{N}\in\mathbb{R}^{d}$,
such that for each $i\in I$, there is some $\ell_{i}\in\underline{N}$
with $B_{\varepsilon}\left(a_{\ell_{i}}\right)\subset Q_{i}'$.\end{lem}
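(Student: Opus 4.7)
\medskip

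\noindent\textbf{Proof proposal.} The plan is a simple discretization/compactness argument. First I would unpack the hypotheses: tightness of $\mathcal{Q}$ supplies some $\varepsilon_0 > 0$ and, for each $i \in I$, a point $c_i \in \mathbb{R}^d$ with $B_{\varepsilon_0}(c_i) \subset Q_i'$; the semi-structured property supplies some $R > 0$ with $\bigcup_{i \in I} Q_i' \subset B_R(0)$. In particular $c_i \in B_{\varepsilon_0}(c_i) \subset Q_i' \subset B_R(0)$, so \emph{all} of the centers $c_i$ live in the fixed bounded set $B_R(0)$.

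Next I would discretize this set of centers. Since $\overline{B_R(0)} \subset \mathbb{R}^d$ is compact and is covered by the open balls $\{ B_{\varepsilon_0/2}(x) \}_{x \in \overline{B_R(0)}}$, there exist finitely many points $a_1, \ldots, a_N \in \overline{B_R(0)}$ such that
\[
  \overline{B_R(0)} \;\subset\; \bigcup_{\ell = 1}^{N} B_{\varepsilon_0/2}(a_\ell).
\]
For each $i \in I$, pick $\ell_i \in \underline{N}$ with $c_i \in B_{\varepsilon_0/2}(a_{\ell_i})$; such an $\ell_i$ exists because $c_i \in \overline{B_R(0)}$.

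Finally I would verify the desired inclusion with $\varepsilon := \varepsilon_0/2$. By the triangle inequality, for any $y \in B_{\varepsilon_0/2}(a_{\ell_i})$ we have
\[
  |y - c_i| \;\leq\; |y - a_{\ell_i}| + |a_{\ell_i} - c_i| \;<\; \tfrac{\varepsilon_0}{2} + \tfrac{\varepsilon_0}{2} \;=\; \varepsilon_0,
\]
so $B_\varepsilon(a_{\ell_i}) = B_{\varepsilon_0/2}(a_{\ell_i}) \subset B_{\varepsilon_0}(c_i) \subset Q_i'$, as required. There is no real obstacle here; the only thing one has to be a bit careful with is the bookkeeping that the $c_i$ really do land in the \emph{same} bounded set uniformly in $i$, which is exactly the content of the ``uniformly bounded normalized sets'' clause in Definition~\ref{def:SemiStructuredCovering}. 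The constant $\varepsilon$ and the number $N$ depend only on $\varepsilon_0$, $R$ and $d$, i.e.\ only on $\mathcal{Q}$.
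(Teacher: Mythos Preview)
Your proof is correct. Both you and the paper exploit that all centers $c_i$ lie in the fixed bounded set $B_R(0)$, but the discretization step is carried out differently: you invoke compactness of $\overline{B_R(0)}$ to extract a finite subcover by $\varepsilon_0/2$-balls, whereas the paper builds a maximal $\varepsilon/3$-separated subset of $\{c_i\}_{i\in I}$ (via Zorn's lemma or induction) and shows it is finite by a volume/packing count. Your covering argument is slightly cleaner in that it avoids any appeal to Zorn; the paper's packing argument, on the other hand, produces centers $a_\ell$ that are themselves among the original $c_i$'s, though this feature is not used downstream. Either route gives the lemma with no difficulty.
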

\begin{proof}
Fix some $\varepsilon>0$ and for each $i\in I$ some $c_{i}\in\mathbb{R}^{d}$
with $B_{\varepsilon}\left(c_{i}\right)\subset Q_{i}'$. Existence
is ensured by tightness of $\mathcal{Q}$. Furthermore, since $\mathcal{Q}$
is semi-structured, there is some $R>0$ with $Q_{i}'\subset B_{R}\left(0\right)$
for all $i\in I$.

Now, using Zorn's Lemma (or in fact using an induction, since $I$
is countable), we can choose a maximal subset $J\subset I$ with the
property $B_{\varepsilon/3}\left(c_{i}\right)\cap B_{\varepsilon/3}\left(c_{j}\right)=\emptyset$
for all $i,j\in J$ with $i\neq j$. For an arbitrary \emph{finite}
subset $L\subset J$, we now have -- because the sets $B_{\varepsilon/3}\left(c_{i}\right)\subset B_{\varepsilon}\left(c_{i}\right)\subset Q_{i}'\subset B_{R}\left(0\right)$
for $i\in L$ are pairwise disjoint -- that
\begin{align*}
\left|L\right|\cdot\lambda\left(B_{\varepsilon/3}\left(0\right)\right) & =\sum_{i\in L}\lambda\left(B_{\varepsilon/3}\left(c_{i}\right)\right)\\
 & =\lambda\left(\biguplus_{i\in L}B_{\varepsilon/3}\left(c_{i}\right)\right)\\
 & \leq\lambda\left(B_{R}\left(0\right)\right)
\end{align*}
and hence $\left|L\right|\leq\lambda\left(B_{R}\left(0\right)\right)/\lambda\left(B_{\varepsilon/3}\left(0\right)\right)$.
Since this holds for every finite subset $L\subset J$, $J$ must
be finite, say $J=\left\{ i_{1},\dots,i_{N}\right\} $. Define $a_{\ell}:=c_{i_{\ell}}$
for $\ell\in\underline{N}$.

Now, let $i\in I$ be arbitrary. If $i\in J$, i.e.\@ $i=i_{\ell}$
for some $\ell\in\underline{N}$, then 
\[
B_{\varepsilon/3}\left(a_{\ell}\right)=B_{\varepsilon/3}\left(c_{i_{\ell}}\right)\subset B_{\varepsilon}\left(c_{i_{\ell}}\right)\subset Q_{i_{\ell}}'=Q_{i}'.
\]
Otherwise, if $i\notin J$, then by maximality of $J$, we have $B_{\varepsilon/3}\left(c_{i}\right)\cap B_{\varepsilon/3}\left(c_{j}\right)\neq\emptyset$
for some $j\in J$, say $j=i_{\ell}$ for some $\ell\in\underline{N}$.
For $x\in B_{\varepsilon/3}\left(c_{j}\right)$, we now have
\[
\left|x-c_{i}\right|\leq\left|x-c_{j}\right|+\left|c_{j}-y\right|+\left|y-c_{i}\right|<\frac{\varepsilon}{3}+\frac{\varepsilon}{3}+\frac{\varepsilon}{3}=\varepsilon,
\]
where $y\in B_{\varepsilon/3}\left(c_{i}\right)\cap B_{\varepsilon/3}\left(c_{j}\right)\neq\emptyset$
was chosen arbitrarily. This means 
\[
B_{\varepsilon/3}\left(a_{\ell}\right)=B_{\varepsilon/3}\left(c_{i_{\ell}}\right)=B_{\varepsilon/3}\left(c_{j}\right)\subset B_{\varepsilon}\left(c_{i}\right)\subset Q_{i}'.
\]
We have thus shown that for every $i\in I$, there is some $\ell\in\underline{N}$
with $B_{\varepsilon/3}\left(a_{\ell}\right)\subset Q_{i}'$, as desired.
\end{proof}
As a further technical result, we need the following:
\begin{lem}
\label{lem:ShiftedBallsFitIntoSets}Let $\mathcal{Q}=\left(T_{i}Q_{i}'+b_{i}\right)_{i\in I}$
be a tight semi-structured covering. Then the following hold:
\begin{enumerate}
\item We have $\left|b_{i}\right|+\left\Vert T_{i}\right\Vert \asymp\sup_{x\in Q_{i}}\left|x\right|$
uniformly in $i\in I$.
\item If $I^{\left(0\right)}\subset I$ and $M>0$ satisfy $\left\Vert T_{i}^{-1}\right\Vert \leq M$
for all $i\in I^{\left(0\right)}$, then there is some $\delta>0$
and for each $i\in I^{\left(0\right)}$ some $y_{i}\in\mathbb{R}^{d}$
with $B_{\delta}\left(y_{i}\right)\subset Q_{i}$ and with
\[
\left|y_{i}\right|\asymp\left|b_{i}\right|+\left\Vert T_{i}\right\Vert .\qedhere
\]

\end{enumerate}
\end{lem}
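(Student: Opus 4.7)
The plan is to first reduce to finitely many ``normalization'' points by invoking Lemma~\ref{lem:FinitelyManyNormalizationsSuffice}, which provides $\varepsilon>0$, points $a_1,\dots,a_N\in\mathbb{R}^d$, and for each $i\in I$ an index $\ell_i\in\underline{N}$ with $B_\varepsilon(a_{\ell_i})\subset Q_i'$. Writing $c_i:=a_{\ell_i}$ and $C:=\max_\ell|a_\ell|+\varepsilon$, we get a uniform bound $|c_i|\leq C$. Since $\mathcal{Q}$ is semi-structured we also fix $R>0$ with $Q_i'\subset B_R(0)$ for all $i\in I$.

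For (1), the upper bound is immediate: any $x\in Q_i$ is of the form $T_iq+b_i$ with $q\in Q_i'\subset B_R(0)$, so $|x|\leq|b_i|+R\|T_i\|$. For the lower bound, I would pick a unit vector $u_i$ with $|T_iu_i|=\|T_i\|$; then both points $T_i(c_i\pm\varepsilon u_i)+b_i$ lie in $Q_i$, and the triangle inequality applied to their difference $2\varepsilon T_iu_i$ yields
\[
2\sup_{x\in Q_i}|x|\;\geq\;\bigl|T_i(c_i+\varepsilon u_i)+b_i\bigr|+\bigl|T_i(c_i-\varepsilon u_i)+b_i\bigr|\;\geq\;2\varepsilon\|T_i\|.
\]
Combined with $|b_i|\leq|T_ic_i+b_i|+\|T_i\||c_i|\leq\sup_{x\in Q_i}|x|+C\|T_i\|$, this gives $|b_i|+\|T_i\|\lesssim\sup_{x\in Q_i}|x|$.

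For (2), retain the same $c_i$ and $u_i$ and set $\delta:=\varepsilon/(2M)$, $y_i^\pm:=(T_ic_i+b_i)\pm(\varepsilon/2)T_iu_i$. I first check $B_\delta(y_i^\pm)\subset Q_i$ for $i\in I^{(0)}$: for $|w|<\delta$, the assumption $\|T_i^{-1}\|\leq M$ yields $|T_i^{-1}w|<\varepsilon/2$, so $c_i\pm(\varepsilon/2)u_i+T_i^{-1}w\in B_\varepsilon(c_i)\subset Q_i'$ and hence $y_i^\pm+w\in Q_i$. Next, the upper estimate $|y_i^\pm|\leq|b_i|+(C+\tfrac{\varepsilon}{2})\|T_i\|\lesssim|b_i|+\|T_i\|$ is trivial, and the reverse triangle inequality gives $\max(|y_i^+|,|y_i^-|)\geq\tfrac{1}{2}|y_i^+-y_i^-|=\tfrac{\varepsilon}{2}\|T_i\|$.

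It remains to promote this to $|y_i|\gtrsim|b_i|+\|T_i\|$, which is the only delicate point: neither $y_i^+$ nor $y_i^-$ individually need be large when $b_i$ and $T_ic_i$ nearly cancel, so one must select adaptively and argue by case distinction. If $|b_i|\geq 2C\|T_i\|$, then $|y_i^\pm|\geq|b_i|-C\|T_i\|\geq|b_i|/2\gtrsim|b_i|+\|T_i\|$; if $|b_i|\leq 2C\|T_i\|$, then $|b_i|+\|T_i\|\lesssim\|T_i\|\lesssim\max(|y_i^+|,|y_i^-|)$. Choosing $y_i$ to be whichever of $y_i^\pm$ has the larger norm in each case completes the proof, with the $\delta$ from above uniform in $i\in I^{(0)}$. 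The main (minor) obstacle is precisely this adaptive selection together with the dichotomy in the size regimes of $|b_i|$ versus $\|T_i\|$; everything else is bookkeeping.
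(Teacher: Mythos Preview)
Your proof is correct, modulo one trivial point: in part~(1) the endpoints $c_i\pm\varepsilon u_i$ lie on the \emph{boundary} of the open ball $B_\varepsilon(c_i)$, so they need not belong to $Q_i'$. This is harmless, since you can replace $\varepsilon$ by any $t<\varepsilon$ and pass to the supremum; in part~(2) you already use $\varepsilon/2$, so the issue does not arise there.

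The overall architecture matches the paper's: the same upper bound, and the same dichotomy between $|b_i|$ large versus $\|T_i\|$ dominant. The execution differs in the second regime. The paper argues that $\mathrm{diam}\bigl(T_i(B_r(x_i))+b_i\bigr)\geq 2r\|T_i\|$ and then invokes the general fact that a set of diameter $>\alpha$ contains a point of norm $\geq\alpha/2$, obtaining $y_i$ only existentially. Your two-candidate construction $y_i^\pm$ followed by selecting the one of larger norm is more explicit and arguably cleaner; it also makes the ball inclusion $B_\delta(y_i)\subset Q_i$ immediate, whereas the paper has to re-verify it separately in each case. One small simplification: your appeal to Lemma~\ref{lem:FinitelyManyNormalizationsSuffice} is unnecessary here. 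Tightness alone gives centers $c_i\in Q_i'\subset B_R(0)$, hence $|c_i|\leq R$ uniformly, which is all you use; the paper proceeds exactly this way.
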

\begin{proof}
By definition of a semi-structured covering, there is some $R>0$
with $Q_{i}'\subset B_{R}\left(0\right)$ for all $i\in I$. Furthermore,
by tightness, there is some $\varepsilon>0$ and for each $i\in I$
some $x_{i}\in\mathbb{R}^{d}$ with $B_{\varepsilon}\left(x_{i}\right)\subset Q_{i}'$.
We clearly have
\[
\sup_{x\in Q_{i}}\left|x\right|=\sup_{x\in Q_{i}'}\left|T_{i}x+b_{i}\right|\leq\left|b_{i}\right|+\left\Vert T_{i}\right\Vert \cdot\sup_{x\in Q_{i}'}\left|x\right|\leq\left|b_{i}\right|+R\cdot\left\Vert T_{i}\right\Vert ,
\]
so that for the first part of the lemma, it remains to show ``$\lesssim$''.

Now, let $i\in I$ be arbitrary. We distinguish two cases:
\begin{casenv}
\item We have $\left|b_{i}\right|>2R\cdot\left\Vert T_{i}\right\Vert $.
In this case, we choose $y_{i}:=b_{i}+T_{i}x_{i}\in Q_{i}$. Note
$\left|x_{i}\right|\leq R$ because of $x_{i}\in B_{\varepsilon}\left(x_{i}\right)\subset Q_{i}'\subset B_{R}\left(0\right)$.
Hence,
\begin{align*}
\left|b_{i}\right|+\left\Vert T_{i}\right\Vert  & \lesssim\frac{\left|b_{i}\right|}{4}+\frac{R}{2}\left\Vert T_{i}\right\Vert \\
 & \leq\frac{\left|b_{i}\right|}{2}\\
 & \leq\left|b_{i}\right|-R\left\Vert T_{i}\right\Vert \\
 & \leq\left|b_{i}\right|-\left\Vert T_{i}\right\Vert \left|x_{i}\right|\\
 & \leq\left|y_{i}\right|\\
 & \leq\left|b_{i}\right|+\left\Vert T_{i}\right\Vert \left|x_{i}\right|\\
 & \leq\left|b_{i}\right|+R\left\Vert T_{i}\right\Vert \\
 & \lesssim\left|b_{i}\right|+\left\Vert T_{i}\right\Vert .
\end{align*}
Because of $y_{i}\in Q_{i}$, this shows $\sup_{x\in Q_{i}}\left|x\right|\geq\left|y_{i}\right|\gtrsim\left|b_{i}\right|+\left\Vert T_{i}\right\Vert $,
as desired.

Now, assume that $i\in I^{\left(0\right)}$, so that $\left\Vert T_{i}^{-1}\right\Vert \leq M$.
For $\delta<\frac{\varepsilon}{M}$ and $x\in B_{\delta}\left(y_{i}\right)$,
we then have
\begin{align*}
\left|T_{i}^{-1}\left(x-T_{i}x_{i}-b_{i}\right)\right| & \leq\left\Vert T_{i}^{-1}\right\Vert \cdot\left|x-T_{i}x_{i}-b_{i}\right|\\
 & \leq M\cdot\left|x-y_{i}\right|\\
 & <M\cdot\delta<\varepsilon.
\end{align*}
Thus, 
\begin{align*}
x & =T_{i}x_{i}+b_{i}+T_{i}\left[T_{i}^{-1}\left(x-T_{i}x_{i}-b_{i}\right)\right]\\
 & \in T_{i}x_{i}+b_{i}+T_{i}\left(B_{\varepsilon}\left(0\right)\right)\\
 & =T_{i}\left(B_{\varepsilon}\left(x_{i}\right)\right)+b_{i}\\
 & \subset T_{i}Q_{i}'+b_{i}=Q_{i}.
\end{align*}
so that we get $B_{\delta}\left(y_{i}\right)\subset Q_{i}$ as claimed.

\item We have $\left|b_{i}\right|\leq2R\cdot\left\Vert T_{i}\right\Vert $.
Let $r:=\frac{\varepsilon}{2}>0$ and choose $z\in\mathbb{R}^{d}$
with $\left|z\right|=r$ and $\left|T_{i}z\right|=r\cdot\left\Vert T_{i}\right\Vert $.
Then we have
\begin{align*}
{\rm diam}\left(T_{i}\left(B_{r}\left(x_{i}\right)\right)+b_{i}\right) & ={\rm diam}\left(T_{i}\left(B_{r}\left(0\right)\right)\right)\\
 & ={\rm diam}\left(T_{i}\left(\overline{B_{r}}\left(0\right)\right)\right)\\
 & \geq\left|T_{i}z-T_{i}\left(-z\right)\right|=2r\cdot\left\Vert T_{i}\right\Vert .
\end{align*}
Now, if ${\rm diam}\left(A\right)>\alpha>0$ for some $A\subset\mathbb{R}^{d}$,
then there is some $x\in A$ with $\left|x\right|\geq\frac{\alpha}{2}$,
since otherwise we would have $\left|x-y\right|\leq\left|x\right|+\left|y\right|\leq\alpha$
for all $x,y\in A$ and hence ${\rm diam}\left(A\right)\leq\alpha$.

Thus, by the above calculation, there is some $z_{i}\in B_{r}\left(x_{i}\right)$
with $\left|T_{i}z_{i}+b_{i}\right|\geq\frac{r}{2}\cdot\left\Vert T_{i}\right\Vert $.
Recall $B_{r}\left(x_{i}\right)\subset B_{\varepsilon}\left(x_{i}\right)\subset Q_{i}'\subset B_{R}\left(0\right)$.
For $y_{i}:=T_{i}z_{i}+b_{i}\in Q_{i}$, we thus have
\[
\qquad\qquad\left|b_{i}\right|+\left\Vert T_{i}\right\Vert \lesssim\frac{r}{4}\cdot\left(\left\Vert T_{i}\right\Vert +\frac{\left|b_{i}\right|}{2R}\right)\leq\frac{r}{2}\cdot\left\Vert T_{i}\right\Vert \leq\left|y_{i}\right|\leq\left\Vert T_{i}\right\Vert \left|z_{i}\right|+\left|b_{i}\right|\lesssim\left|b_{i}\right|+\left\Vert T_{i}\right\Vert .
\]
As above, this shows $\sup_{x\in Q_{i}}\left|x\right|\gtrsim\left|b_{i}\right|+\left\Vert T_{i}\right\Vert $.

Now, assume again that $i\in I^{\left(0\right)}$, so that $\left\Vert T_{i}^{-1}\right\Vert \leq M$.
For $\delta<\frac{\varepsilon}{2M}$ and arbitrary $x\in B_{\delta}\left(y_{i}\right)$,
this yields
\begin{align*}
\left|T_{i}^{-1}\left(x-T_{i}x_{i}-b_{i}\right)\right| & \leq\left|T_{i}^{-1}\left(x-y_{i}\right)\right|+\left|T_{i}^{-1}\left(y_{i}-T_{i}x_{i}-b_{i}\right)\right|\\
 & \leq M\cdot\left|x-y_{i}\right|+\left|T_{i}^{-1}\left(T_{i}\left(z_{i}-x_{i}\right)\right)\right|\\
 & \leq M\cdot\left|x-y_{i}\right|+\left|z_{i}-x_{i}\right|\\
 & <M\delta+r\\
 & <\varepsilon.
\end{align*}
As in the first case, this implies $B_{\delta}\left(y_{i}\right)\subset T_{i}\left(B_{\varepsilon}\left(x_{i}\right)\right)+b_{i}\subset Q_{i}$.

\end{casenv}
We have thus shown that any $\delta>0$ with $\delta<\frac{\varepsilon}{2M}$
makes the second part of the lemma true.
\end{proof}
Now, we can finally prove necessity of the embedding $Y\cap\ell_{0}\left(I\right)\hookrightarrow\ell_{u^{\left(k,p,q\right)}}^{q}\left(I\right)$.
Note that we will see in the next section that the restriction to
$\ell_{0}\left(I\right)$ is superfluous if $Y$ is a weighted Lebesgue
space.
\begin{thm}
\label{thm:SequenceSpaceEmbeddingIsNecessary}Let $p,q\in\left(0,\infty\right]$
and let $\mathcal{Q}=\left(Q_{i}\right)_{i\in I}=\left(T_{i}Q_{i}'+b_{i}\right)_{i\in I}$
be a tight semi-structured $L^{p}$-decomposition covering of the
open set $\emptyset\neq\mathcal{O}\subset\mathbb{R}^{d}$. Assume
that $Y\leq\mathbb{C}^{I}$ is a $\mathcal{Q}$-regular sequence space.

Finally, let $k\in\mathbb{N}_{0}$ and assume that
\[
\iota_{\alpha}:\mathcal{S}_{\mathcal{O}}^{p,Y}\left(\mathbb{R}^{d}\right)\to L^{q}\left(\mathbb{R}^{d}\right),f\mapsto\partial^{\alpha}f
\]
is bounded for all $\alpha\in\mathbb{N}_{0}^{d}$ with $\left|\alpha\right|=k$.
Define
\[
u^{\left(k,p,q\right)}:=\left(\left|\det T_{i}\right|^{\frac{1}{p}-\frac{1}{q}}\cdot\left(\left|b_{i}\right|^{k}+\left\Vert T_{i}\right\Vert ^{k}\right)\right)_{i\in I}
\]
as in Theorem \ref{thm:SufficientConditionsForBoundednessOfDerivative}.

Then the following hold:
\begin{enumerate}
\item The map
\[
Y\cap\ell_{0}\left(I\right)\hookrightarrow\ell_{u^{\left(k,p,q\right)}}^{q}\left(I\right),\left(c_{i}\right)_{i\in I}\mapsto\left(c_{i}\right)_{i\in I}
\]
is well-defined and bounded, where $\ell_{0}\left(I\right)$ denotes
the space of finitely supported sequences on $I$.
\item If $q=\infty$, then the map
\[
Y\cap\ell_{0}\left(I\right)\hookrightarrow\ell_{u^{\left(k,p,q\right)}}^{q^{\triangledown}}\left(I\right)=\ell_{u^{\left(k,p,q\right)}}^{1}\left(I\right),\left(c_{i}\right)_{i\in I}\mapsto\left(c_{i}\right)_{i\in I}
\]
is well-defined and bounded.
\item For each subset $I^{\left(0\right)}\subset I$ and each $M>0$ with
$\left\Vert T_{i}^{-1}\right\Vert \leq M$ for all $i\in I^{\left(0\right)}$,
the following hold:

\begin{enumerate}
\item If $q\in\left[2,\infty\right)$, then
\[
Y\cap\ell_{0}\left(\smash{I^{\left(0\right)}}\right)\hookrightarrow\ell_{u^{\left(k,p,2\right)}}^{2}\left(I\right).
\]

\item If $q\in\left(0,\infty\right)$, then
\[
Y\cap\ell_{0}\left(\smash{I^{\left(0\right)}}\right)\hookrightarrow\ell_{u^{\left(k,p,p\right)}}^{2}\left(I\right).\qedhere
\]

\end{enumerate}
\end{enumerate}
\end{thm}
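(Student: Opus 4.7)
Plan: I would construct, for each $i \in I$ and each auxiliary multi-index $\gamma_0 \in \mathbb{N}_0^d$ with $|\gamma_0| \leq k$, a test function
\[
h_i^{(\gamma_0)}(x) := |\det T_i|^{1/p} \cdot e^{2\pi i b_i \cdot x} \cdot G_{\ell_i}^{(\gamma_0)}(T_i^T x),
\]
where $G_m^{(\gamma_0)} := \mathcal{F}^{-1} g_m^{(\gamma_0)}$ is obtained from Lemma~\ref{lem:RichnessOfDerivativeOfFunctionsWithFourierSupport} so that $(\partial^\gamma G_m^{(\gamma_0)})(0) = \delta_{\gamma, \gamma_0}$ for all $|\gamma|\leq k$, and the finite list of centers $a_1,\dots,a_N$ together with indices $\ell_i \in \underline{N}$ satisfying $B_\varepsilon(a_{\ell_i}) \subset Q_i'$ come from Lemma~\ref{lem:FinitelyManyNormalizationsSuffice}. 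By construction $\widehat{h_i^{(\gamma_0)}}$ is supported in $T_i B_\varepsilon(a_{\ell_i}) + b_i \subset Q_i$ and $\|h_i^{(\gamma_0)}\|_{L^p} \asymp 1$ uniformly in $i$. For a finite sequence $(c_i)_{i \in I_0}$ in $Y$ and any translations $x_i\in\mathbb{R}^d$, the function $f := \sum_{i\in I_0} c_i L_{x_i} h_i^{(\gamma_0)}$ satisfies $\|f\|_{\mathcal{D}(\mathcal{Q},L^p,Y)} \lesssim \|(c_i)\|_Y$ uniformly in $x_i$: each $\mathcal{F}^{-1}(\varphi_j \widehat{f})$ has at most $|j^*| \leq N_\mathcal{Q}$ nonzero summands by the support restriction, so the (quasi-)triangle inequality on $L^p$ together with Young's inequality (for $p\geq 1$) or Corollary~\ref{cor:SemiStructuredQuasiBanachConvolution} (for $p<1$), and $\mathcal{Q}$-regularity of $Y$, yield the bound.

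The crux is the derivative lower bound. Writing $\partial^\alpha h_i^{(\gamma_0)}(x) = |\det T_i|^{1/p} e^{2\pi i b_i\cdot x} \tilde F(T_i^T x)$, Leibniz together with Lemma~\ref{lem:ChainRuleForLinearTransformations} and the prescribed behaviour of $G_m^{(\gamma_0)}$ isolate $\tilde F(0)$ as a single explicit polynomial in $b_i, T_i$ of total degree $k$. To capture the full weight $|b_i|^k + \|T_i\|^k$, I partition $I_0$ into a \emph{$b$-dominant} subset (where $|b_i|\geq\|T_i\|$; using $\gamma_0=0$ gives $|\tilde F(0)| = (2\pi)^k|b_i^\alpha|$, and summing over $|\alpha|=k$ yields $|b_i|^k$) and a \emph{$T$-dominant} subset (where $\|T_i\|>|b_i|$; using $\gamma_0=ke_{\ell_i^*}$ and $\alpha=ke_{j_i^*}$ with $(j_i^*,\ell_i^*)$ achieving $\max_{j,\ell}|(T_i)_{j,\ell}| \asymp \|T_i\|$, this yields $|\tilde F(0)| = |(T_i)_{j_i^*,\ell_i^*}|^k \asymp \|T_i\|^k$); only finitely many $(\alpha,\gamma_0)$-pairs appear. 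A continuity argument in the normalised variable $y = T_i^T x$ (where $\tilde F(y)$ is a linear combination of the $\partial^\gamma G_m^{(\gamma_0)}$ with coefficients that are $O(|\tilde F(0)|)$ after normalisation, and the $\partial^\gamma G_m^{(\gamma_0)}(0)$ take their prescribed values $\delta_{\gamma,\gamma_0}$) supplies a uniform-in-$i$ neighbourhood $V$ of $0$ on which $|\tilde F(y)| \geq \tfrac{1}{2}|\tilde F(0)|$; changing variables back to $x$ yields a set of measure $\asymp |\det T_i|^{-1}$ and the estimate $\|\partial^\alpha h_i^{(\gamma_0)}\|_{L^q} = |\det T_i|^{1/p - 1/q}\|\tilde F\|_{L^q} \gtrsim |\det T_i|^{1/p-1/q}(|b_i|^k + \|T_i\|^k) = u_i^{(k,p,q)}$ for the selected $(\alpha,\gamma_0)$.

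For part (1) with $q<\infty$, I take the $x_i$ sufficiently separated so that Lemma~\ref{lem:AsymptoticTranslationNorm} gives $\|\partial^\alpha f\|_{L^q}^q \geq \tfrac{1}{2^q}\sum_i |c_i|^q\|\partial^\alpha h_i^{(\gamma_0)}\|_{L^q}^q$; combining with $\|\partial^\alpha f\|_{L^q} \leq \|\iota_\alpha\|\|f\|_\mathcal{D} \lesssim \|(c_i)\|_Y$ and summing over the finitely many $(\alpha,\gamma_0)$-pairs from the partition delivers the $\ell^q_{u^{(k,p,q)}}$-embedding. For part (2) ($q=\infty$, the $\ell^1$-bound), I take $x_i\equiv 0$ instead and adjust the phases $c_i' := c_i \cdot \overline{\partial^\alpha h_i^{(\gamma_0)}(0)}/|\partial^\alpha h_i^{(\gamma_0)}(0)|$; solidity of $Y$ preserves $\|(c_i')\|_Y = \|(c_i)\|_Y$, and direct point evaluation yields $\sum_i |c_i||\partial^\alpha h_i^{(\gamma_0)}(0)| = |\partial^\alpha f(0)| \leq \|\partial^\alpha f\|_{L^\infty} \lesssim \|(c_i)\|_Y$, which after summing over the partition produces the $\ell^1_{u^{(k,p,\infty)}}$-bound. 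For part (3), the extra hypothesis $\|T_i^{-1}\|\leq M$ lets me invoke Lemma~\ref{lem:ShiftedBallsFitIntoSets} to find genuine balls $B_\delta(y_i) \subset Q_i$ with $|y_i| \asymp |b_i| + \|T_i\|$; the simpler test functions $h_i := M_{y_i}\mathcal{F}^{-1}g$ (fixed $g \in C_c^\infty(B_\delta(0))$) have Fourier supports of bounded overlap multiplicity $\leq N_\mathcal{Q}$, so Plancherel's theorem applied to $\|\partial^\alpha f\|_{L^2}^2$ combined with a random-translation decorrelation of the phases $e^{-2\pi i x_i\xi}$ yields $\sum_i |c_i|^2|y_i^\alpha|^2 \lesssim \|\partial^\alpha f\|_{L^2}^2$, and the bandlimited embeddings from Corollary~\ref{cor:BandlimitedLpEmbeddingSemiStructured} convert the $L^q$-bound on $\partial^\alpha f$ into an $L^2$-bound, the conversion path (through $L^2$ in case (a), through $L^p$ in case (b)) determining whether the resulting $\ell^2$-weight is $u^{(k,p,2)}$ or $u^{(k,p,p)}$. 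The main obstacle is the uniform-in-$i$ derivative lower bound in the $T$-dominant regime, where the non-leading Leibniz terms carry coefficients of order $|b_i|^{k-|\beta|}\|T_i\|^{|\beta|}$ for intermediate $|\beta|$ and could in principle overwhelm the leading $\|T_i\|^k$-contribution; the careful choice of $\gamma_0$ relative to $\alpha$ and the normalization by $\tilde F(0)$ must be combined to show the intermediate terms stay controlled by the leading one.
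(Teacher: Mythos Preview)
Your plan for parts (1) and (2) is essentially the paper's own approach: the test functions $\gamma_i = L_{b_i}(f_i\circ T_i^{-1})$ built from the finite family supplied by Lemmas~\ref{lem:FinitelyManyNormalizationsSuffice} and~\ref{lem:RichnessOfDerivativeOfFunctionsWithFourierSupport}, the split into a $T$-dominant regime (the paper's $I_1$, where one takes $\gamma_0 = ke_{\ell^{(i)}}$ and $\alpha^{(i)} = ke_{k^{(i)}}$ with $(k^{(i)},\ell^{(i)})$ maximizing $|(T_i)_{j,\ell}|$) and a $b$-dominant regime (the paper's $I_2$, with $\gamma_0 = 0$), the separated-translation trick via Lemma~\ref{lem:AsymptoticTranslationNorm} for the $\ell^q$-bound, and the phase-alignment at the origin for the $\ell^1$-bound when $q=\infty$ --- all of this matches the paper's proof. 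You have also correctly identified the main technical hurdle (controlling the intermediate Leibniz terms in the $T$-dominant case); the paper handles this by fixing a small threshold $\eta = \eta(d,k)$ in the continuity bound \eqref{eq:MatrixDominatedTestFunctionConditionsOnDelta} and estimating each term explicitly.

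Your plan for part (3), however, has a genuine gap. The step ``the bandlimited embeddings from Corollary~\ref{cor:BandlimitedLpEmbeddingSemiStructured} convert the $L^q$-bound on $\partial^\alpha f$ into an $L^2$-bound'' cannot work: that corollary only yields $\|g\|_{L^{q_2}} \lesssim |\det T_i|^{1/q_1 - 1/q_2}\|g\|_{L^{q_1}}$ for $q_1 \leq q_2$ and $g$ bandlimited to a \emph{single} $Q_i^{n*}$. In case~(a) you have $q \geq 2$, so passing from $L^q$ to $L^2$ would require the reverse direction; and in any case $\partial^\alpha f$ is bandlimited only to the (arbitrarily large) union $\bigcup_{i\in I_0} Q_i$, so the corollary does not apply to it. The ``random-translation decorrelation'' idea is also not sufficient: averaging $e^{-2\pi i x_i\cdot\xi}$ over independent translations does not produce the clean $\ell^2$-orthogonality in $L^q$ for $q\neq 2$ that you need.

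The paper extracts the $\ell^2$-structure from the $L^q$-bound by \emph{Khintchine's inequality}: one inserts independent Rademacher signs $\varepsilon_i$ into the sum, so that $\mathbb{E}_\varepsilon\|\partial^\alpha f_{\varepsilon}\|_{L^q}^q \asymp \int\bigl(\sum_i |c_i\,\partial^{\alpha^{(i)}}[\cdot]\,(x)|^2\bigr)^{q/2}\,dx$. For (3a) one keeps the normalized test functions from the Claim; the pointwise lower bound \eqref{eq:TestFunctionsMainEstimate} shows the integrand is supported in $B_{M\delta}(0)$ (this is where $\|T_i^{-1}\|\leq M$ enters), and since $q/2\geq 1$ Jensen's inequality on that bounded domain pulls the exponent inside, after which integrating each $\chi_{B_\delta(0)}(T_i^Tx)$ yields the factor $|\det T_i|^{-1}$ and hence the weight $u^{(k,p,2)}$. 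For (3b) one uses the unnormalized test functions $M_{y_i}\psi$ that you describe (with $y_i$ from Lemma~\ref{lem:ShiftedBallsFitIntoSets}); here the lower bound $|\partial^{\alpha^{(i)}}(M_{y_i}\psi)(x)|\gtrsim |y_i|^k$ holds on a \emph{fixed} ball $B_\delta(0)$ independent of $i$, so restricting the integral to $B_\delta(0)$ gives $(\sum_i|c_i|^2|y_i|^{2k})^{q/2}$ directly with no det-factors, yielding $u^{(k,p,p)}$. In short, Khintchine --- not Plancherel plus a bandlimited embedding --- is the missing ingredient.
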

\begin{proof}
Our argument is based on the following claim, whose proof we postpone
to the end of the current proof.
\begin{claim}
\label{claim:NecessaryConditionCentralClaim}Let 
\begin{align*}
I_{1} & :=\left\{ i\in I\with\left\Vert T_{i}\right\Vert ^{k}>\left(2\pi\left|b_{i}\right|\right)^{k}\right\} ,\\
I_{2} & :=\left\{ i\in I\with\left\Vert T_{i}\right\Vert ^{k}\leq\left(2\pi\left|b_{i}\right|\right)^{k}\right\} .
\end{align*}
Then there are a \emph{finite} family of functions $\mathscr{F}\subset C_{c}^{\infty}\left(\mathbb{R}^{d}\right)$,
some $\delta>0$ and $C_{1},C_{2}>0$ such that the following hold:
\begin{enumerate}
\item For each $i\in I_{1}$, there is some $\alpha^{\left(i\right)}\in\mathbb{N}_{0}^{d}$
with $\left|\alpha^{\left(i\right)}\right|=k$ and a function $\gamma_{i}\in C_{c}^{\infty}\left(Q_{i}\right)$
with
\begin{equation}
\left|\left[\partial^{\alpha^{\left(i\right)}}\left(\mathcal{F}^{-1}\gamma_{i}\right)\right]\left(x\right)\right|\geq C_{1}\cdot\left|\det T_{i}\right|\cdot\left\Vert T_{i}\right\Vert ^{k}\cdot\chi_{B_{\delta}\left(0\right)}\left(T_{i}^{T}x\right)\qquad\text{ for all }x\in\mathbb{R}^{d}.\label{eq:MatrixDominatedTestFunctionEstimateFromBelow}
\end{equation}

\item For each $i\in I_{2}$, there is some $\alpha^{\left(i\right)}\in\mathbb{N}_{0}^{d}$
with $\left|\alpha^{\left(i\right)}\right|=k$ and a function $\gamma_{i}\in C_{c}^{\infty}\left(Q_{i}\right)$
with
\begin{equation}
\left|\left[\partial^{\alpha^{\left(i\right)}}\left(\mathcal{F}^{-1}\gamma_{i}\right)\right]\left(x\right)\right|\geq C_{2}\cdot\left|\det T_{i}\right|\cdot\left|b_{i}\right|^{k}\cdot\chi_{B_{\delta}\left(0\right)}\left(T_{i}^{T}x\right)\qquad\text{ for all }x\in\mathbb{R}^{d}.\label{eq:VectorDominatedTestFunctionEstimateFromBelow}
\end{equation}

\end{enumerate}
Furthermore, each $\gamma_{i}$ (for $i\in I_{1}$ as well as for
$i\in I_{2}$) is of the form $\gamma_{i}=L_{b_{i}}\left(f_{i}\circ T_{i}^{-1}\right)$,
i.e.\@ 
\begin{equation}
\gamma_{i}\left(\xi\right)=f_{i}\left(T_{i}^{-1}\left(\xi-b_{i}\right)\right)\label{eq:MatrixDominatedTestFunctionNormalization}
\end{equation}
for all $\xi\in\mathbb{R}^{d}$ and some $f_{i}\in\mathscr{F}$.
\end{claim}
Note that equations (\ref{eq:MatrixDominatedTestFunctionEstimateFromBelow})
and (\ref{eq:VectorDominatedTestFunctionEstimateFromBelow}) -- together
with the definitions of $I_{1},I_{2}$ -- imply
\begin{equation}
\left|\left[\partial^{\alpha^{\left(i\right)}}\left(\mathcal{F}^{-1}\gamma_{i}\right)\right]\left(x\right)\right|\geq C\cdot\left|\det T_{i}\right|\cdot\left(\left|b_{i}\right|^{k}+\left\Vert T_{i}\right\Vert ^{k}\right)\cdot\chi_{B_{\delta}\left(0\right)}\left(T_{i}^{T}x\right)\label{eq:TestFunctionsMainEstimate}
\end{equation}
for all $x\in\mathbb{R}^{d}$ and $i\in I$ with $C:=\frac{1}{2}\cdot\min\left\{ C_{1},\left(2\pi\right)^{-k}\cdot C_{2}\right\} >0$.

Now, let $c=\left(c_{i}\right)_{i\in I}\in Y\cap\ell_{0}\left(I\right)$
be arbitrary and let $I_{0}:={\rm supp}\, c\subset I$. For $\alpha\in\mathbb{N}_{0}^{d}$
with $\left|\alpha\right|=k$, set
\[
I^{\left(\alpha\right)}:=\left\{ i\in I\with\alpha^{\left(i\right)}=\alpha\right\} .
\]
Finally, let $S^{1}:=\left\{ z\in\mathbb{C}\with\left|z\right|=1\right\} $
and for arbitrary sequences $x=\left(x_{i}\right)_{i\in I}\in\left(\mathbb{R}^{d}\right)^{I}$,
$\varepsilon=\left(\varepsilon_{i}\right)_{i\in I}\in\left(S^{1}\right)^{I}$
and an arbitrary multiindex $\alpha\in\mathbb{N}_{0}^{d}$ with $\left|\alpha\right|=k$,
define
\begin{align*}
g_{x,\varepsilon,\alpha} & :=\sum_{i\in I^{\left(\alpha\right)}}\left(\varepsilon_{i}c_{i}\left|\det T_{i}\right|^{\frac{1}{p}-1}\cdot M_{-x_{i}}\gamma_{i}\right)\in C_{c}^{\infty}\left(\mathcal{O}\right),\\
f_{x,\varepsilon,\alpha} & :=\mathcal{F}^{-1}g_{x,\varepsilon,\alpha}=\sum_{i\in I^{\left(\alpha\right)}}\left(\varepsilon_{i}c_{i}\left|\det T_{i}\right|^{\frac{1}{p}-1}\cdot L_{x_{i}}\left[\mathcal{F}^{-1}\gamma_{i}\right]\right)\in\mathcal{F}^{-1}\left(C_{c}^{\infty}\left(\mathcal{O}\right)\right)=\mathcal{S}_{\mathcal{O}}\left(\mathbb{R}^{d}\right).
\end{align*}
Let us first show $g_{x,\varepsilon,\alpha}\in\mathcal{D}_{\mathcal{F}}\left(\mathcal{Q},L^{p},Y\right)$
and provide an estimate for the corresponding norm. To this end, we
note that equation (\ref{eq:MatrixDominatedTestFunctionNormalization})
-- together with elementary properties of the Fourier transform --
implies
\begin{align}
\left\Vert \mathcal{F}^{-1}\gamma_{i}\right\Vert _{L^{p}} & =\left|\det T_{i}\right|\cdot\left\Vert M_{b_{i}}\left[\left(\mathcal{F}^{-1}f_{i}\right)\circ T_{i}^{T}\right]\right\Vert _{L^{p}}\nonumber \\
 & =\left|\det T_{i}\right|\cdot\left\Vert \left(\mathcal{F}^{-1}f_{i}\right)\circ T_{i}^{T}\right\Vert _{L^{p}}\nonumber \\
 & =\left|\det T_{i}\right|^{1-\frac{1}{p}}\cdot\left\Vert \mathcal{F}^{-1}f_{i}\right\Vert _{L^{p}}\nonumber \\
 & \leq K\cdot\left|\det T_{i}\right|^{1-\frac{1}{p}},\label{eq:SpecialTestFunctionsLpEstimate}
\end{align}
for the finite(!) constant $K:=\sup_{f\in\mathscr{F}}\left\Vert \mathcal{F}^{-1}f\right\Vert _{L^{p}}$.
For finiteness of $K$, we used that $\mathscr{F}\subset C_{c}^{\infty}\left(\mathbb{R}^{d}\right)$
is a finite set.

Now, let $\Phi=\left(\varphi_{i}\right)_{i\in I}$ be an $L^{p}$-BAPU
for $\mathcal{Q}$ and set $N:=\sup_{i\in I}\left|i^{\ast}\right|$.
For arbitrary $\ell\in I$, we have
\begin{equation}
\varphi_{\ell}\cdot g_{x,\varepsilon,\alpha}=\sum_{i\in I^{\left(\alpha\right)}\cap\ell^{\ast}}\varepsilon_{i}c_{i}\left|\det T_{i}\right|^{\frac{1}{p}-1}\cdot M_{-x_{i}}\left(\varphi_{\ell}\gamma_{i}\right),\label{eq:TestFunctionLocalization}
\end{equation}
since multiplication by $\varphi_{\ell}$ and modulation by $-x_{i}$
commute and because of $\gamma_{i}\in C_{c}^{\infty}\left(Q_{i}\right)$.
Thus, the (quasi)-triangle inequality for $L^{p}\left(\mathbb{R}^{d}\right)$,
together with the uniform estimate $\left|I^{\left(\alpha\right)}\cap\ell^{\ast}\right|\leq\left|\ell^{\ast}\right|\leq N$,
yields
\begin{align}
\left\Vert \mathcal{F}^{-1}\left(\varphi_{\ell}\cdot g_{x,\varepsilon,\alpha}\right)\right\Vert _{L^{p}} & \lesssim\sum_{i\in I^{\left(\alpha\right)}\cap\ell^{\ast}}\left|c_{i}\right|\left|\det T_{i}\right|^{\frac{1}{p}-1}\cdot\left\Vert \mathcal{F}^{-1}\left(M_{-x_{i}}\left(\varphi_{\ell}\gamma_{i}\right)\right)\right\Vert _{L^{p}}\nonumber \\
 & =\sum_{i\in I^{\left(\alpha\right)}\cap\ell^{\ast}}\left|c_{i}\right|\left|\det T_{i}\right|^{\frac{1}{p}-1}\cdot\left\Vert \mathcal{F}^{-1}\left(\varphi_{\ell}\gamma_{i}\right)\right\Vert _{L^{p}}\nonumber \\
 & \overset{\left(\ast\right)}{\lesssim}\sum_{i\in I^{\left(\alpha\right)}\cap\ell^{\ast}}\left|c_{i}\right|\left|\det T_{i}\right|^{\frac{1}{p}-1}\cdot\left\Vert \mathcal{F}^{-1}\gamma_{i}\right\Vert _{L^{p}}\nonumber \\
\left(\text{by equation }\eqref{eq:SpecialTestFunctionsLpEstimate}\right) & \lesssim\sum_{i\in I^{\left(\alpha\right)}\cap\ell^{\ast}}\left|c_{i}\right|\nonumber \\
 & \leq\left(\left|c\right|^{\ast}\right)_{\ell}\label{eq:TestFunctionLocalizedNormEstimate}
\end{align}
for all $\ell\in I$. Here, the sequence $\left|c\right|\in Y$ is
defined by $\left|c\right|_{i}=\left|c_{i}\right|$ and furthermore
$\left(d^{\ast}\right)_{i}=\sum_{\ell\in i^{\ast}}d_{\ell}$ as usual.
Above, the step marked with $\left(\ast\right)$ can be justified
as follows: For $p\in\left[1,\infty\right]$, it is a direct consequence
of Young's convolution inequality $L^{1}\ast L^{p}\hookrightarrow L^{p}$,
since the definition of an $L^{p}$-BAPU implies finiteness of $\sup_{i\in I}\left\Vert \mathcal{F}^{-1}\varphi_{i}\right\Vert _{L^{1}}$.
In case of $p\in\left(0,1\right)$, it is a consequence of Corollary
\ref{cor:SemiStructuredQuasiBanachConvolution} and the definition
of an $L^{p}$-BAPU, since we have the support estimate ${\rm supp}\,\gamma_{i}\subset Q_{i}\subset Q_{\ell}^{\ast}$
for $i\in\ell^{\ast}$ and also ${\rm supp}\,\varphi_{\ell}\subset\overline{Q_{\ell}}\subset Q_{\ell}^{\ast}$.

All in all, we conclude $g_{x,\varepsilon,\alpha}\in\mathcal{D}_{\mathcal{F}}\left(\mathcal{Q},L^{p},Y\right)$,
since the previous estimate yields -- by solidity and $\mathcal{Q}$-regularity
of $Y$ -- that
\begin{align*}
\left\Vert g_{x,\varepsilon,\alpha}\right\Vert _{\mathcal{D}_{\mathcal{F}}\left(\mathcal{Q},L^{p},Y\right)} & =\left\Vert \left(\left\Vert \mathcal{F}^{-1}\left(\varphi_{\ell}\cdot g_{x,\varepsilon,\alpha}\right)\right\Vert _{L^{p}}\right)_{\ell\in I}\right\Vert _{Y}\\
 & \lesssim\left\Vert \left|c\right|^{\ast}\right\Vert _{Y}\lesssim\left\Vert \left|c\right|\right\Vert _{Y}=\left\Vert c\right\Vert _{Y}<\infty.
\end{align*}
Hence, $f_{x,\varepsilon,\alpha}\in\mathcal{S}_{\mathcal{O}}^{p,Y}$.
Recall that we assume $\iota_{\alpha}:\mathcal{S}_{\mathcal{O}}^{p,Y}\to L^{q}\left(\mathbb{R}^{d}\right),f\mapsto\partial^{\alpha}f$
to be bounded. Thus,
\begin{equation}
\left\Vert \partial^{\alpha}f_{x,\varepsilon,\alpha}\right\Vert _{L^{q}}=\left\Vert \iota_{\alpha}f_{x,\varepsilon,\alpha}\right\Vert _{L^{q}}\lesssim\left\Vert f_{x,\varepsilon,\alpha}\right\Vert _{\mathcal{D}\left(\mathcal{Q},L^{p},Y\right)}=\left\Vert g_{x,\varepsilon,\alpha}\right\Vert _{\mathcal{D}_{\mathcal{F}}\left(\mathcal{Q},L^{p},Y\right)}\lesssim\left\Vert c\right\Vert _{Y},\label{eq:NecessaryConditionBoundednessOfDerivativeApplied}
\end{equation}
where the implied constants are independent of $x,\varepsilon$ and
of $c$.

Finally, we will obtain a lower bound on $\left\Vert \partial^{\alpha}f_{x,\varepsilon,\alpha}\right\Vert _{L^{q}}$
which will then imply the claim. Indeed, we have
\begin{align*}
\left\Vert \partial^{\alpha}f_{x,\varepsilon,\alpha}\right\Vert _{L^{q}} & =\left\Vert \sum_{i\in I^{\left(\alpha\right)}}\left(\varepsilon_{i}c_{i}\left|\det T_{i}\right|^{\frac{1}{p}-1}\cdot L_{x_{i}}\left(\partial^{\alpha}\left[\mathcal{F}^{-1}\gamma_{i}\right]\right)\right)\right\Vert _{L^{q}}\\
 & =\left\Vert \sum_{i\in I^{\left(\alpha\right)}\cap I_{0}}L_{x_{i}}\left(\varepsilon_{i}c_{i}\left|\det T_{i}\right|^{\frac{1}{p}-1}\cdot\partial^{\alpha}\left(\mathcal{F}^{-1}\gamma_{i}\right)\right)\right\Vert _{L^{q}}\\
\left(\text{by Lemma }\ref{lem:AsymptoticTranslationNorm}\text{ for suitable }x=\left(x_{i}\right)_{i\in I}\in\left(\mathbb{R}^{d}\right)^{I}\right) & \geq\frac{1}{2}\cdot\left\Vert \left(\left\Vert \varepsilon_{i}c_{i}\left|\det T_{i}\right|^{\frac{1}{p}-1}\cdot\partial^{\alpha}\left(\mathcal{F}^{-1}\gamma_{i}\right)\right\Vert _{L^{q}}\right)_{i\in I^{\left(\alpha\right)}\cap I_{0}}\right\Vert _{\ell^{q}}\\
\left(\alpha=\alpha^{\left(i\right)}\text{ for }i\in I^{\left(\alpha\right)}\text{ and }c_{i}=0\text{ for }i\notin I_{0}\right) & =\frac{1}{2}\cdot\left\Vert \left(\left|c_{i}\right|\left|\det T_{i}\right|^{\frac{1}{p}-1}\cdot\left\Vert \partial^{\alpha^{\left(i\right)}}\left(\mathcal{F}^{-1}\gamma_{i}\right)\right\Vert _{L^{q}}\right)_{i\in I^{\left(\alpha\right)}}\right\Vert _{\ell^{q}}.
\end{align*}
Note that Lemma \ref{lem:AsymptoticTranslationNorm} is indeed applicable
(even for $q=\infty$), since $I^{\left(\alpha\right)}\cap I_{0}\subset I_{0}$
is finite and because of $\partial^{\alpha}\left(\mathcal{F}^{-1}\gamma_{i}\right)\in\mathcal{S}\left(\mathbb{R}^{d}\right)\subset C_{0}\left(\mathbb{R}^{d}\right)$.

Now, we finally employ estimate (\ref{eq:TestFunctionsMainEstimate})
to conclude -- for $q\in\left(0,\infty\right)$ -- that
\begin{align}
\left\Vert \partial^{\alpha^{\left(i\right)}}\left(\mathcal{F}^{-1}\gamma_{i}\right)\right\Vert _{L^{q}} & =\left[\int_{\mathbb{R}^{d}}\left|\left(\partial^{\alpha^{\left(i\right)}}\left[\mathcal{F}^{-1}\gamma_{i}\right]\right)\left(x\right)\right|^{q}\,{\rm d}x\right]^{1/q}\nonumber \\
 & \geq C\cdot\left|\det T_{i}\right|\cdot\left(\left|b_{i}\right|^{k}+\left\Vert T_{i}\right\Vert ^{k}\right)\cdot\left(\int_{\mathbb{R}^{d}}\left[\chi_{B_{\delta}\left(0\right)}\left(T_{i}^{T}x\right)\right]^{q}\,{\rm d}x\right)^{1/q}\nonumber \\
\left(y=T_{i}^{T}x\right) & =C\cdot\left|\det T_{i}\right|^{1-\frac{1}{q}}\cdot\left(\left|b_{i}\right|^{k}+\left\Vert T_{i}\right\Vert ^{k}\right)\cdot\left(\int_{\mathbb{R}^{d}}\left[\chi_{B_{\delta}\left(0\right)}\left(y\right)\right]^{q}\,{\rm d}y\right)^{1/q}\nonumber \\
 & =C\cdot\left[\lambda\left(B_{\delta}\left(0\right)\right)\right]^{1/q}\cdot\left|\det T_{i}\right|^{1-\frac{1}{q}}\cdot\left(\left|b_{i}\right|^{k}+\left\Vert T_{i}\right\Vert ^{k}\right),\label{eq:TestFunctionSpaceSideDerivativeLowerLqEstimate}
\end{align}
where $\lambda$ denotes the Lebesgue measure. For $q=\infty$, equation
(\ref{eq:TestFunctionsMainEstimate}) easily shows that equation (\ref{eq:TestFunctionSpaceSideDerivativeLowerLqEstimate})
still holds. By putting everything together (recall equation (\ref{eq:NecessaryConditionBoundednessOfDerivativeApplied})),
we arrive at
\begin{align*}
\left\Vert c\right\Vert _{Y} & \gtrsim\left\Vert \partial^{\alpha}f_{x,\varepsilon,\alpha}\right\Vert _{L^{q}}\\
\left(\text{for suitable }x=\left(x_{i}\right)_{i\in I}\in\left(\mathbb{R}^{d}\right)^{I}\right) & \gtrsim\left\Vert \left(\left|c_{i}\right|\left|\det T_{i}\right|^{\frac{1}{p}-1}\cdot\left\Vert \partial^{\alpha^{\left(i\right)}}\left(\mathcal{F}^{-1}\gamma_{i}\right)\right\Vert _{L^{q}}\right)_{i\in I^{\left(\alpha\right)}}\right\Vert _{\ell^{q}}\\
 & \gtrsim\left\Vert \left(\left|c_{i}\right|\cdot\left|\det T_{i}\right|^{\frac{1}{p}-\frac{1}{q}}\cdot\left(\left|b_{i}\right|^{k}+\left\Vert T_{i}\right\Vert ^{k}\right)\right)_{i\in I^{\left(\alpha\right)}}\right\Vert _{\ell^{q}}\\
 & =\left\Vert \left(c_{i}\cdot\chi_{I^{\left(\alpha\right)}}\right)_{i\in I}\right\Vert _{\ell_{u^{\left(k,p,q\right)}}^{q}},
\end{align*}
where the implied constants are independent of $c$. Furthermore,
because of 
\[
I=\biguplus_{\alpha\in\mathbb{N}_{0}^{d},\left|\alpha\right|=k}I^{\left(\alpha\right)},
\]
the (quasi)-triangle inequality for $\ell_{u^{\left(k,p,q\right)}}^{q}\left(I\right)$
finally implies 
\[
\left\Vert c\right\Vert _{\ell_{u^{\left(k,p,q\right)}}^{q}}\lesssim\sum_{\alpha\in\mathbb{N}_{0}^{d},\left|\alpha\right|=k}\left\Vert \left(c_{i}\cdot\chi_{I^{\left(\alpha\right)}}\right)_{i\in I}\right\Vert _{\ell_{u^{\left(k,p,q\right)}}^{q}}\lesssim\left\Vert c\right\Vert _{Y},
\]
where the implied constants are independent of $c\in Y\cap\ell_{0}\left(I\right)$.
 This establishes the first part of the theorem.

For the second part, we use $x_{i}=0$ for all $i\in I$ and we choose
each $\varepsilon_{i}\in S^{1}$ in such a way that 
\begin{align*}
\varepsilon_{i}c_{i}\cdot\left[\partial^{\alpha^{\left(i\right)}}\left(\mathcal{F}^{-1}\gamma_{i}\right)\right]\left(0\right) & =\left|c_{i}\cdot\left[\partial^{\alpha^{\left(i\right)}}\left(\mathcal{F}^{-1}\gamma_{i}\right)\right]\left(0\right)\right|\\
\left(\text{by equation }\eqref{eq:TestFunctionsMainEstimate}\right) & \geq C\cdot\left|c_{i}\right|\cdot\left|\det T_{i}\right|\cdot\left(\left|b_{i}\right|^{k}+\left\Vert T_{i}\right\Vert ^{k}\right).
\end{align*}
Now, let $\alpha\in\mathbb{N}_{0}^{d}$ with $\left|\alpha\right|=k$
be arbitrary. By continuity of $\partial^{\alpha}f_{x,\varepsilon,\alpha}\in\mathcal{S}\left(\mathbb{R}^{d}\right)$,
the $L^{\infty}$ norm of the partial derivative $\partial^{\alpha}f_{x,\varepsilon,\alpha}$
coincides with the genuine supremum norm. Hence,
\begin{align*}
\left\Vert \partial^{\alpha}f_{x,\varepsilon,\alpha}\right\Vert _{L^{\infty}} & \geq\left|\partial^{\alpha}f_{x,\varepsilon,\alpha}\left(0\right)\right|\\
\left(x_{i}=0\text{ for all }i\in I\right) & =\left|\left[\partial^{\alpha}\sum_{i\in I^{\left(\alpha\right)}}\left(\varepsilon_{i}c_{i}\left|\det T_{i}\right|^{\frac{1}{p}-1}\cdot\mathcal{F}^{-1}\gamma_{i}\right)\right]\left(0\right)\right|\\
\left(\alpha=\alpha^{\left(i\right)}\text{ for }i\in I^{\left(\alpha\right)}\right) & =\left|\sum_{i\in I^{\left(\alpha\right)}}\varepsilon_{i}c_{i}\left|\det T_{i}\right|^{\frac{1}{p}-1}\cdot\left[\partial^{\alpha^{\left(i\right)}}\left(\mathcal{F}^{-1}\gamma_{i}\right)\right]\left(0\right)\right|\\
\left(\text{all summands nonnegative}\right) & \geq C\cdot\sum_{i\in I^{\left(\alpha\right)}}\left|c_{i}\right|\left|\det T_{i}\right|^{\frac{1}{p}}\cdot\left(\left|b_{i}\right|^{k}+\left\Vert T_{i}\right\Vert ^{k}\right).
\end{align*}
But as seen in equation (\ref{eq:NecessaryConditionBoundednessOfDerivativeApplied})
above, we have $\left\Vert \partial^{\alpha}f_{x,\varepsilon,\alpha}\right\Vert _{L^{q}}\lesssim\left\Vert c\right\Vert _{Y}$,
where the implied constant does not depend on $\varepsilon,x$ and
$c$. Thus, we can sum over all $\alpha\in\mathbb{N}_{0}^{d}$ with
$\left|\alpha\right|=k$, to obtain (recalling that $q=\infty$)
\begin{align*}
\left\Vert c\right\Vert _{Y} & \gtrsim C^{-1}\cdot\sum_{\left|\alpha\right|=k}\left\Vert \partial^{\alpha}f_{x,\varepsilon,\alpha}\right\Vert _{L^{\infty}}\\
 & \geq\sum_{i\in I}\left|c_{i}\right|\left|\det T_{i}\right|^{\frac{1}{p}}\cdot\left(\left|b_{i}\right|^{k}+\left\Vert T_{i}\right\Vert ^{k}\right)\\
\left(\text{since }\frac{1}{q}=0\right) & =\left\Vert \left(c_{i}\right)_{i\in I}\right\Vert _{\ell_{u^{\left(k,p,q\right)}}^{1}}.
\end{align*}
Since $\left(c_{i}\right)_{i\in I}\in Y\cap\ell_{0}\left(I\right)$
was arbitrary, this completes the proof of the second part of the
theorem.

For the first statement in the last part of the theorem, we assume
$2\leq q<\infty$ and we let $M>0$ with $\left\Vert T_{i}^{-1}\right\Vert \leq M$
for all $i\in I^{\left(0\right)}$. Furthermore, we use $x_{i}=0$
for all $i\in I$ and we assume $c=\left(c_{i}\right)_{i\in I}\in Y\cap\ell_{0}\left(I^{\left(0\right)}\right)$,
i.e.\@ $c_{i}=0$ for all $i\in I\setminus I^{\left(0\right)}$ and
hence $I_{0}={\rm supp}\, c\subset I^{\left(0\right)}$. Let $\alpha\in\mathbb{N}_{0}^{d}$
with $\left|\alpha\right|=k$. We want to apply Khintchine's inequality
(cf.\@ \cite[Proposition 4.5]{WolffHarmonicAnalysis}), so that we
consider $\varepsilon=\left(\varepsilon_{i}\right)_{i\in I^{\left(\alpha\right)}\cap I_{0}}$
as a random vector, with $\varepsilon_{i}$ independent and identically
distributed Rademacher random variables. As noted above in equation
(\ref{eq:NecessaryConditionBoundednessOfDerivativeApplied}), we have
\[
\left\Vert \partial^{\alpha}f_{x,\varepsilon,\alpha}\right\Vert _{L^{q}}\lesssim\left\Vert c\right\Vert _{Y},
\]
where the implied constant does not depend on $\varepsilon,x$ and
$c$. By raising this to the $q$-th power and taking expectations
(with respect to $\varepsilon$), we conclude (cf.\@ the definition
of $f_{x,\varepsilon,\alpha}$ from above)
\begin{align*}
\left\Vert c\right\Vert _{Y}^{q} & \gtrsim\mathbb{E}_{\varepsilon}\left\Vert \partial^{\alpha}f_{x,\varepsilon,\alpha}\right\Vert _{L^{q}}^{q}\\
\left(x_{i}=0\text{ and }\alpha=\alpha^{\left(i\right)}\,\forall i\in I^{\left(\alpha\right)}\right) & =\int_{\mathbb{R}^{d}}\mathbb{E}_{\varepsilon}\left|\sum_{i\in I^{\left(\alpha\right)}\cap I_{0}}\varepsilon_{i}c_{i}\left|\det T_{i}\right|^{\frac{1}{p}-1}\cdot\partial^{\alpha^{\left(i\right)}}\left[\mathcal{F}^{-1}\gamma_{i}\right]\left(x\right)\right|^{q}\,{\rm d}x\\
\left(\text{by Khintchine's inequality}\right) & \gtrsim\int_{\mathbb{R}^{d}}\left(\sum_{i\in I^{\left(\alpha\right)}\cap I_{0}}\left|c_{i}\left|\det T_{i}\right|^{\frac{1}{p}-1}\cdot\partial^{\alpha^{\left(i\right)}}\left[\mathcal{F}^{-1}\gamma_{i}\right]\left(x\right)\right|^{2}\right)^{q/2}\,{\rm d}x\\
\left(\text{by eq. }\eqref{eq:TestFunctionsMainEstimate}\right) & \gtrsim\int_{\mathbb{R}^{d}}\left[\sum_{i\in I^{\left(\alpha\right)}\cap I_{0}}\!\left|c_{i}\left|\det T_{i}\right|^{\frac{1}{p}-1}\cdot\left|\det T_{i}\right|\left(\left|b_{i}\right|^{k}+\left\Vert T_{i}\right\Vert ^{k}\right)\cdot\chi_{B_{\delta}\left(0\right)}\left(T_{i}^{T}x\right)\right|^{2}\right]^{\frac{q}{2}}\!\!\!\!{\rm d}x\\
 & =\int_{\mathbb{R}^{d}}\left(\sum_{i\in I^{\left(\alpha\right)}\cap I_{0}}\left|c_{i}\left|\det T_{i}\right|^{\frac{1}{p}}\left(\left|b_{i}\right|^{k}+\left\Vert T_{i}\right\Vert ^{k}\right)\cdot\chi_{B_{\delta}\left(0\right)}\left(T_{i}^{T}x\right)\right|^{2}\right)^{q/2}\,{\rm d}x.
\end{align*}
Now, note that if $\chi_{B_{\delta}\left(0\right)}\left(T_{i}^{T}x\right)\neq0$
for some $i\in I_{0}\subset I^{\left(0\right)}$, we get $T_{i}^{T}x\in B_{\delta}\left(0\right)$,
i.e.\@ $x\in T_{i}^{-T}\left(B_{\delta}\left(0\right)\right)$ and
hence $\left|x\right|\leq\delta\left\Vert T_{i}^{-T}\right\Vert =\delta\left\Vert T_{i}^{-1}\right\Vert \leq M\delta$.
Thus, we get
\begin{align*}
\left\Vert c\right\Vert _{Y}^{q} & \gtrsim\int_{B_{M\delta}\left(0\right)}\left(\sum_{i\in I^{\left(\alpha\right)}\cap I_{0}}\left|c_{i}\left|\det T_{i}\right|^{\frac{1}{p}}\left(\left|b_{i}\right|^{k}+\left\Vert T_{i}\right\Vert ^{k}\right)\cdot\chi_{B_{\delta}\left(0\right)}\left(T_{i}^{T}x\right)\right|^{2}\right)^{q/2}\,{\rm d}x\\
\left(\text{by Jensen's ineq., since }\frac{q}{2}\geq1\right) & \gtrsim\left[\int_{B_{M\delta}\left(0\right)}\sum_{i\in I^{\left(\alpha\right)}\cap I_{0}}\left|c_{i}\left|\det T_{i}\right|^{\frac{1}{p}}\left(\left|b_{i}\right|^{k}+\left\Vert T_{i}\right\Vert ^{k}\right)\cdot\chi_{B_{\delta}\left(0\right)}\left(T_{i}^{T}x\right)\right|^{2}\,{\rm d}x\right]^{q/2}\\
 & \overset{\left(\dagger\right)}{=}\left[\sum_{i\in I^{\left(\alpha\right)}\cap I_{0}}\left|c_{i}\left|\det T_{i}\right|^{\frac{1}{p}}\left(\left|b_{i}\right|^{k}+\left\Vert T_{i}\right\Vert ^{k}\right)\right|^{2}\cdot\int_{\mathbb{R}^{d}}\left|\chi_{B_{\delta}\left(0\right)}\left(T_{i}^{T}x\right)\right|^{2}\,{\rm d}x\right]^{q/2}\\
\left(\text{since }\chi_{B_{\delta}\left(0\right)}\left(T_{i}^{T}\cdot\right)=\chi_{T_{i}^{-T}\left(B_{\delta}\left(0\right)\right)}\right) & =\left[\sum_{i\in I^{\left(\alpha\right)}\cap I_{0}}\left|c_{i}\left|\det T_{i}\right|^{\frac{1}{p}}\left(\left|b_{i}\right|^{k}+\left\Vert T_{i}\right\Vert ^{k}\right)\right|^{2}\cdot\left|\det T_{i}^{-T}\right|\cdot\lambda\left(B_{\delta}\left(0\right)\right)\right]^{q/2}\\
 & \asymp\left[\sum_{i\in I^{\left(\alpha\right)}\cap I_{0}}\left|c_{i}\left|\det T_{i}\right|^{\frac{1}{p}-\frac{1}{2}}\left(\left|b_{i}\right|^{k}+\left\Vert T_{i}\right\Vert ^{k}\right)\right|^{2}\right]^{q/2}\\
 & =\left\Vert \left(c_{i}\cdot u_{i}^{\left(k,p,2\right)}\right)_{i\in I^{\left(\alpha\right)}}\right\Vert _{\ell^{2}}^{q}.
\end{align*}
Now, the same steps as in the proof of the first part of the theorem
show $Y\cap\ell_{0}\left(I^{\left(0\right)}\right)\hookrightarrow\ell_{u^{\left(k,p,2\right)}}^{2}\left(I\right)$,
as desired. The step marked with $\left(\dagger\right)$ in the estimate
above used exactly the same argument as before the displayed equation
to justify changing the domain of integration from $B_{M\delta}\left(0\right)$
to all of $\mathbb{R}^{d}$.

Now, let us prove the second embedding in the last part of the theorem.
To this end, recall from Lemma \ref{lem:ShiftedBallsFitIntoSets}
that there is some $\delta>0$ and for each $i\in I^{\left(0\right)}$
some $y_{i}\in\mathbb{R}^{d}$ with $\left|y_{i}\right|\asymp\left|b_{i}\right|+\left\Vert T_{i}\right\Vert $
and with $B_{\delta}\left(y_{i}\right)\subset Q_{i}$. Choose $j^{\left(i\right)}\in\underline{d}$
with 
\[
L_{i}:=\left|\left(y_{i}\right)_{j^{\left(i\right)}}\right|=\max_{j\in\underline{d}}\left|\left(y_{i}\right)_{j}\right|\geq\frac{\left|y_{i}\right|}{d}\asymp\left|b_{i}\right|+\left\Vert T_{i}\right\Vert 
\]
and define $\alpha^{\left(i\right)}:=k\cdot e_{j^{\left(i\right)}}\in\mathbb{N}_{0}^{d}$
with $\left|\alpha^{\left(i\right)}\right|=k$. To make the above
estimate precise, let $L>0$ satisy $L_{i}\geq L\cdot\left(\left|b_{i}\right|+\left\Vert T_{i}\right\Vert \right)$
for all $i\in I^{\left(0\right)}$. Note that we have $1=\left\Vert T_{i}^{-1}T_{i}\right\Vert \leq M\cdot\left\Vert T_{i}\right\Vert $
and thus 
\begin{equation}
L_{i}\geq L\cdot\left(\left|b_{i}\right|+\left\Vert T_{i}\right\Vert \right)\geq L\cdot\left\Vert T_{i}\right\Vert \geq\frac{L}{M}\qquad\forall i\in I^{\left(0\right)}.\label{eq:KhinchinWithoutWeightLiDefinition}
\end{equation}

Now, for $\alpha\in\mathbb{N}_{0}^{d}$ with $\left|\alpha\right|=k$,
let 
\[
I_{\alpha}:=\left\{ i\in I^{\left(0\right)}\with\alpha^{\left(i\right)}=\alpha\right\} .
\]
Fix $\alpha\in\mathbb{N}_{0}^{d}$ with $\left|\alpha\right|=k$.
By Lemma \ref{lem:RichnessOfDerivativeOfFunctionsWithFourierSupport},
there is a function $\varphi\in C_{c}^{\infty}\left(B_{\delta}\left(0\right)\right)$
with
\begin{equation}
\left(\partial^{\beta}\psi\right)\left(0\right)=\delta_{0,\beta}\qquad\text{ for all }\beta\in\mathbb{N}_{0}^{d}\text{ with }\left|\beta\right|\leq k,\text{ where }\psi:=\mathcal{F}^{-1}\varphi.\label{eq:KhinchinSpecialTestFunctionConstruction}
\end{equation}
Note $L_{y_{i}}\varphi\in C_{c}^{\infty}\left(B_{\delta}\left(y_{i}\right)\right)\subset C_{c}^{\infty}\left(Q_{i}\right)\subset C_{c}^{\infty}\left(\mathcal{O}\right)$
for each $i\in I^{\left(0\right)}$. Similar to the construction above,
let $c=\left(c_{i}\right)_{i\in I}\in Y\cap\ell_{0}\left(I^{\left(0\right)}\right)$
and $\varepsilon=\left(\varepsilon_{i}\right)_{i\in I}\in\left\{ \pm1\right\} ^{I}$
be arbitrary and define
\begin{align*}
g_{\varepsilon,\alpha} & :=\sum_{i\in I_{\alpha}}\left(\varepsilon_{i}c_{i}\cdot L_{y_{i}}\varphi\right)\in C_{c}^{\infty}\left(\mathcal{O}\right),\\
f_{\varepsilon,\alpha} & :=\mathcal{F}^{-1}g_{\varepsilon,\alpha}=\sum_{i\in I_{\alpha}}\left(\varepsilon_{i}c_{i}\cdot M_{y_{i}}\psi\right)\in\mathcal{S}_{\mathcal{O}}\left(\mathbb{R}^{d}\right).
\end{align*}

Using the same arguments as in equations (\ref{eq:TestFunctionLocalization})
and (\ref{eq:TestFunctionLocalizedNormEstimate}) above, we see
\begin{align*}
\left\Vert \mathcal{F}^{-1}\left(\varphi_{\ell}\cdot g_{\varepsilon,\alpha}\right)\right\Vert _{L^{p}} & \lesssim\sum_{i\in I_{\alpha}\cap\ell^{\ast}}\left|c_{i}\right|\cdot\left\Vert \mathcal{F}^{-1}\left(\varphi_{\ell}\cdot L_{y_{i}}\varphi\right)\right\Vert _{L^{p}}\\
 & \lesssim\sum_{i\in I_{\alpha}\cap\ell^{\ast}}\left|c_{i}\right|\cdot\left\Vert \mathcal{F}^{-1}\left(L_{y_{i}}\varphi\right)\right\Vert _{L^{p}}\\
 & \leq\left\Vert \mathcal{F}^{-1}\varphi\right\Vert _{L^{p}}\cdot\sum_{i\in\ell^{\ast}}\left|c_{i}\right|\\
 & \lesssim\left(\left|c\right|^{\ast}\right)_{\ell}
\end{align*}
for all $\ell\in I$ and thus $f_{\varepsilon,\alpha}\in\mathcal{S}_{\mathcal{O}}^{p,Y}\left(\mathbb{R}^{d}\right)$
with $\left\Vert f_{\varepsilon,\alpha}\right\Vert _{\mathcal{D}\left(\mathcal{Q},L^{p},Y\right)}\lesssim\left\Vert c\right\Vert _{Y}$.

Now, by continuity and by equation (\ref{eq:KhinchinSpecialTestFunctionConstruction}),
there is some $\delta>0$ with
\[
\left|\psi\left(x\right)\right|\geq\frac{1}{2}\text{ and }\left|\partial^{\beta}\psi\left(x\right)\right|\leq\frac{1}{4}\cdot\left(\frac{M}{L}+2\pi\right)^{-k}\qquad\forall\left|x\right|\leq\delta\,\forall\beta\in\mathbb{N}_{0}^{d}\text{ with }0<\left|\beta\right|\leq k.
\]
Using the Leibniz rule, this implies for $\left|x\right|\leq\delta$
that
\begin{align*}
\left|\left[\partial^{\alpha^{\left(i\right)}}\left(M_{y_{i}}\psi\right)\right]\left(x\right)\right| & =\left|\sum_{\beta\leq\alpha^{\left(i\right)}}\binom{\alpha^{\left(i\right)}}{\beta}\cdot\left(2\pi jy_{i}\right)^{\alpha^{\left(i\right)}-\beta}e^{2\pi j\left\langle y_{i},x\right\rangle }\cdot\left(\partial^{\beta}\psi\right)\left(x\right)\right|\\
 & \geq\left|\left(2\pi y_{i}\right)^{\alpha^{\left(i\right)}}\psi\left(x\right)\right|-\sum_{0\neq\beta\leq\alpha^{\left(i\right)}}\left|\binom{\alpha^{\left(i\right)}}{\beta}\left(2\pi jy_{i}\right)^{\alpha^{\left(i\right)}-\beta}\cdot\left(\partial^{\beta}\psi\right)\left(x\right)\right|\\
\left(\alpha^{\left(i\right)}=k\cdot e_{j^{\left(i\right)}}\text{ and }L_{i}\geq\left|\left(y_{i}\right)_{j}\right|\text{ for all }j\right) & \geq\frac{\left(2\pi L_{i}\right)^{k}}{2}-\frac{1}{4}\left(\frac{M}{L}+2\pi\right)^{-k}\cdot\sum_{\beta\leq\alpha}\binom{\alpha^{\left(i\right)}}{\beta}\cdot\left(2\pi L_{i}\right)^{\alpha^{\left(i\right)}-\beta}\\
\left(\text{multinomial theorem}\right) & =\frac{\left(2\pi L_{i}\right)^{k}}{2}-\frac{1}{4}\left(\frac{M}{L}+2\pi\right)^{-k}\cdot\left(\left(1,\dots,1\right)+\left(2\pi L_{i},\dots,2\pi L_{i}\right)\right)^{\alpha^{\left(i\right)}}\\
 & =\frac{\left(2\pi L_{i}\right)^{k}}{2}-\frac{1}{4}\left(\frac{M}{L}+2\pi\right)^{-k}\cdot\left(1+2\pi L_{i}\right)^{k}\\
\left(\text{since }L_{i}\geq L/M\text{ by eq. }\eqref{eq:KhinchinWithoutWeightLiDefinition}\right) & \geq\left(2\pi L_{i}\right)^{k}\cdot\frac{1}{2}-\frac{1}{4}\cdot\left(\frac{M}{L}+2\pi\right)^{-k}\cdot\left(\left(\frac{M}{L}+2\pi\right)L_{i}\right)^{k}\\
 & \geq L_{i}^{k}/4.
\end{align*}

But because of $\alpha^{\left(i\right)}=\alpha$ for $i\in I_{\alpha}$,
we get (with another application of Khintchine's inequality) 
\begin{align*}
\mathbb{E}_{\varepsilon}\left\Vert \partial^{\alpha}f_{\varepsilon,\alpha}\right\Vert _{L^{q}}^{q} & =\int_{\mathbb{R}^{d}}\mathbb{E}_{\varepsilon}\left|\sum_{i\in I_{\alpha}}\varepsilon_{i}c_{i}\cdot\left[\partial^{\alpha^{\left(i\right)}}\left(M_{y_{i}}\psi\right)\right]\left(x\right)\right|^{q}\,{\rm d}x\\
 & \asymp\int_{\mathbb{R}^{d}}\left(\sum_{i\in I_{\alpha}}\left|c_{i}\cdot\left[\partial^{\alpha^{\left(i\right)}}\left(M_{y_{i}}\psi\right)\right]\left(x\right)\right|^{2}\right)^{q/2}\,{\rm d}x\\
 & \ge\int_{B_{\delta}\left(0\right)}\left(\sum_{i\in I_{\alpha}}\left|c_{i}\cdot\left[\partial^{\alpha^{\left(i\right)}}\left(M_{y_{i}}\psi\right)\right]\left(x\right)\right|^{2}\right)^{q/2}\,{\rm d}x\\
 & \geq\int_{B_{\delta}\left(0\right)}\left(\sum_{i\in I_{\alpha}}\left|c_{i}\cdot\frac{L_{i}^{k}}{4}\right|^{2}\right)^{q/2}\,{\rm d}x\\
 & \gtrsim\left(\sum_{i\in I_{\alpha}}\left|c_{i}\cdot\left(\left|b_{i}\right|+\left\Vert T_{i}\right\Vert \right)^{k}\right|^{2}\right)^{q/2}.
\end{align*}
All in all, we finally get
\[
\left\Vert \left(c_{i}\cdot u_{i}^{\left(k,p,p\right)}\right)_{i\in I^{\alpha}}\right\Vert _{\ell^{2}}^{q}\lesssim\mathbb{E}_{\varepsilon}\left\Vert \partial^{\alpha}f_{\varepsilon,\alpha}\right\Vert _{L^{q}}^{q}\leq\left\Vert \iota_{\alpha}\right\Vert ^{q}\cdot\mathbb{E}_{\varepsilon}\left\Vert f_{\varepsilon,\alpha}\right\Vert _{\mathcal{D}\left(\mathcal{Q},L^{p},Y\right)}^{q}\lesssim\left\Vert \iota_{\alpha}\right\Vert ^{q}\cdot\left\Vert c\right\Vert _{Y}^{q},
\]
which yields the desired embedding $Y\cap\ell_{0}\left(I^{\left(0\right)}\right)\hookrightarrow\ell_{u^{\left(k,p,p\right)}}^{2}\left(I\right)$
by summing over $\alpha\in\mathbb{N}_{0}^{d}$ with $\left|\alpha\right|=k$,
since $I^{\left(0\right)}=\bigcup_{\left|\alpha\right|=k}I_{\alpha}$.

This completes the proof of Theorem \ref{thm:SequenceSpaceEmbeddingIsNecessary},
with the caveat that we postponed the proof of Claim \ref{claim:NecessaryConditionCentralClaim},
which we now give.
\end{proof}

\begin{proof}[Proof of Claim \ref{claim:NecessaryConditionCentralClaim}]
Let $\varepsilon>0$, $N\in\mathbb{N}$ and $a_{1},\dots,a_{N}\in\mathbb{R}^{d}$
as in Lemma \ref{lem:FinitelyManyNormalizationsSuffice}, i.e.\@
such that for each $i\in I$, there is some $m^{\left(i\right)}\in\underline{N}$
with $B_{\varepsilon}\left(a_{m^{\left(i\right)}}\right)\subset Q_{i}'$.
Lemma \ref{lem:RichnessOfDerivativeOfFunctionsWithFourierSupport}
shows that for each $\alpha\in\mathbb{N}_{0}^{d}$ with $\left|\alpha\right|\leq k$
and each $m\in\underline{N}$, there is some function $f_{\alpha,m}\in C_{c}^{\infty}\left(B_{\varepsilon}\left(a_{m}\right)\right)$
with
\[
\left[\partial^{\beta}\left(\mathcal{F}^{-1}f_{\alpha,m}\right)\right]\left(0\right)=\delta_{\alpha,\beta}\text{ for all }\beta\in\mathbb{N}_{0}^{d}\text{ with }\left|\beta\right|\leq k.
\]
Let 
\[
\mathscr{F}:=\left\{ f_{\alpha,m}\with m\in\underline{N}\text{ and }\alpha\in\mathbb{N}_{0}^{d}\text{ with }\left|\alpha\right|\leq k\right\} .
\]
Note that $\mathscr{F}\subset C_{c}^{\infty}\left(\mathbb{R}^{d}\right)$
is indeed finite.

We let $\eta=\eta\left(d,k\right)>0$ be some unspecified constant,
the precise value of which we will choose below%
\footnote{We will see that the choice $\eta=\eta\left(d,k\right):=\frac{1}{4\left(d^{k}+2^{k}d^{3k}\right)}$
is suitable.%
}. Now, by continuity, there is some $\delta>0$ such that
\begin{equation}
\left|\left[\partial^{\alpha}\left(\mathcal{F}^{-1}f_{\alpha,m}\right)\right]\left(x\right)\right|>\frac{1}{2}\quad\text{ and }\quad\left|\left[\partial^{\beta}\left(\mathcal{F}^{-1}f_{\alpha,m}\right)\right]\left(x\right)\right|<\eta\text{ for all }\beta\in\mathbb{N}_{0}^{d}\setminus\left\{ \alpha\right\} \text{ with }\left|\beta\right|\leq k\label{eq:MatrixDominatedTestFunctionConditionsOnDelta}
\end{equation}
holds for all $x\in\mathbb{R}^{d}$ with $\left|x\right|<\delta$
and all $\alpha\in\mathbb{N}_{0}^{d}$ with $\left|\alpha\right|\leq k$
and $m\in\underline{N}$.

Now, for $i\in I$ and $\left|\alpha\right|\leq k$, define $\varrho_{i,\alpha}:=\mathcal{F}^{-1}f_{\alpha,m^{\left(i\right)}}$,
as well as 
\[
\gamma_{i,\alpha}:\mathbb{R}^{d}\to\mathbb{C},\xi\mapsto f_{\alpha,m^{\left(i\right)}}\left(T_{i}^{-1}\left(\xi-b_{i}\right)\right)
\]
and $\theta_{i,\alpha}:=\mathcal{F}^{-1}\gamma_{i,\alpha}$. Precisely
the same calculation as in equation (\ref{eq:OriginalBAPUIntoNormalizedVersionSpaceSide})
yields
\begin{equation}
\theta_{i,\alpha}=\mathcal{F}^{-1}\gamma_{i,\alpha}=\left|\det T_{i}\right|\cdot M_{b_{i}}\left(\varrho_{i,\alpha}\circ T_{i}^{T}\right).\label{eq:MatrixDominatedTestFunctionNormalizationSpaceSide}
\end{equation}

We first indicate the construction in case of $i\in I_{1}$. Note
that $i\in I_{1}$ implies $k>0$ and $\left\Vert T_{i}\right\Vert >2\pi\left|b_{i}\right|$.
Now, for each $i\in I_{1}$, choose $k^{\left(i\right)},\ell^{\left(i\right)}\in\underline{d}$
satisfying 
\begin{equation}
K_{i}:=\left|\left(T_{i}\right)_{k^{\left(i\right)},\ell^{\left(i\right)}}\right|=\max_{k,\ell\in\underline{d}}\left|\left(T_{i}\right)_{k,\ell}\right|\geq\frac{\left\Vert T_{i}\right\Vert }{d^{2}}.\label{eq:MatrixDominatedTestFunctionKiDefinition}
\end{equation}
Here, the final (suboptimal) estimate is an easy consequence of the
definitions. Define $\alpha^{\left(i\right)}:=k\cdot e_{k^{\left(i\right)}}\in\mathbb{N}_{0}^{d}$
and $\beta^{\left(i\right)}:=k\cdot e_{\ell^{\left(i\right)}}$, where
$\left(e_{1},\dots,e_{d}\right)$ denotes the standard basis of $\mathbb{R}^{d}$.
Note $\left|\alpha^{\left(i\right)}\right|=k=\left|\beta^{\left(i\right)}\right|$.
Finally, set $\gamma_{i}:=\gamma_{i,\beta^{\left(i\right)}}$ and
$\varrho_{i}:=\varrho_{i,\beta^{\left(i\right)}}$, as well as $\theta_{i}:=\theta_{i,\beta^{\left(i\right)}}$.
Note by definition of $\gamma_{i,\alpha}$ that $\gamma_{i}$ indeed
satisfies equation (\ref{eq:MatrixDominatedTestFunctionNormalization})
with $f_{i}=f_{\beta^{\left(i\right)},m^{\left(i\right)}}\in\mathscr{F}$.
Furthermore, note that we have 
\begin{align*}
{\rm supp}\,\gamma_{i} & ={\rm supp}\,\gamma_{i,\beta^{\left(i\right)}}\\
 & =T_{i}\left({\rm supp}\, f_{\beta^{\left(i\right)},m^{\left(i\right)}}\right)+b_{i}\\
 & \subset T_{i}\left(B_{\varepsilon}\left(a_{m^{\left(i\right)}}\right)\right)+b_{i}\\
 & \subset T_{i}Q_{i}'+b_{i}=Q_{i}
\end{align*}
and thus $\gamma_{i}\in C_{c}^{\infty}\left(Q_{i}\right)$ as desired.
It thus remains to establish estimate (\ref{eq:MatrixDominatedTestFunctionEstimateFromBelow}).

For $i\in I_{2}$, we proceed similarly. We choose some $j^{\left(i\right)}\in\underline{d}$
with 
\[
L_{i}:=\left|\left(b_{i}\right)_{j^{\left(i\right)}}\right|=\max_{j\in\underline{d}}\left|\left(b_{i}\right)_{j}\right|\geq\frac{\left|b_{i}\right|}{d}.
\]
Furthermore, we define $\alpha^{\left(i\right)}:=k\cdot e_{j^{\left(i\right)}}\in\mathbb{N}_{0}^{d}$
with $\left|\alpha^{\left(i\right)}\right|=k$ and we choose $\gamma_{i}:=\gamma_{i,0}$
and $\varrho_{i}:=\varrho_{i,0}$, as well as $\theta_{i}:=\theta_{i,0}$.
Precisely as above, we see that $\gamma_{i}$ indeed satisfies equation
(\ref{eq:MatrixDominatedTestFunctionNormalization}) with $f_{i}=f_{0,m^{\left(i\right)}}\in\mathscr{F}$
and that $\gamma_{i}\in C_{c}^{\infty}\left(Q_{i}\right)$. Thus,
for $i\in I_{2}$, it remains to establish estimate (\ref{eq:VectorDominatedTestFunctionEstimateFromBelow}).

We start by establishing equation (\ref{eq:MatrixDominatedTestFunctionEstimateFromBelow})
for $i\in I_{1}$. To this end, first note that this estimate is trivial
in case of $T_{i}^{T}x\notin B_{\delta}\left(0\right)$. Hence, we
can (and will) assume for the following considerations that $T_{i}^{T}x\in B_{\delta}\left(0\right)$.
Next, note as above that $i\in I_{1}$ implies $k>0$ and hence $\alpha^{\left(i\right)}\neq0$,
so that Lemma \ref{lem:ChainRuleForLinearTransformations} (with $i_{1}=\dots=i_{k}=k^{\left(i\right)}$)
is applicable and yields
\begin{align}
 & \hphantom{=\,\,}\left|\left(\partial^{\alpha^{\left(i\right)}}\left[\varrho_{i}\circ T_{i}^{T}\right]\right)\left(x\right)\right|\nonumber \\
 & =\left|\sum_{\ell_{1},\dots,\ell_{k}\in\underline{d}}\left(T_{i}\right)_{k^{\left(i\right)},\ell_{1}}\cdots\left(T_{i}\right)_{k^{\left(i\right)},\ell_{k}}\cdot\left(\partial_{\ell_{1}}\cdots\partial_{\ell_{k}}\varrho_{i}\right)\left(T_{i}^{T}x\right)\right|\nonumber \\
 & \geq\left|\left(T_{i}\right)_{k^{\left(i\right)},\ell^{\left(i\right)}}^{k}\cdot\left(\partial_{\ell^{\left(i\right)}}^{k}\varrho_{i}\right)\left(T_{i}^{T}x\right)\right|-\sum_{\substack{\ell_{1},\dots,\ell_{k}\in\underline{d}\\
\left(\ell_{1},\dots,\ell_{k}\right)\neq\left(\ell^{\left(i\right)},\dots,\ell^{\left(i\right)}\right)
}
}\left(\left|\left(T_{i}\right)_{k^{\left(i\right)},\ell_{1}}\cdots\left(T_{i}\right)_{k^{\left(i\right)},\ell_{k}}\right|\cdot\left|\left(\partial_{\ell_{1}}\cdots\partial_{\ell_{k}}\varrho_{i}\right)\left(T_{i}^{T}x\right)\right|\right)\nonumber \\
 & \overset{\left(\ast\right)}{\geq}K_{i}^{k}\cdot\left[\left|\left(\partial^{\beta^{\left(i\right)}}\varrho_{i,\beta^{\left(i\right)}}\right)\left(T_{i}^{T}x\right)\right|-\sum_{\substack{\ell_{1},\dots,\ell_{k}\in\underline{d}\\
\left(\ell_{1},\dots,\ell_{k}\right)\neq\left(\ell^{\left(i\right)},\dots,\ell^{\left(i\right)}\right)
}
}\left|\left(\partial_{\ell_{1}}\cdots\partial_{\ell_{k}}\varrho_{i,\beta^{\left(i\right)}}\right)\left(T_{i}^{T}x\right)\right|\right]\nonumber \\
 & \overset{\left(\dagger\right)}{\geq}K_{i}^{k}\left(\frac{1}{2}-\eta\cdot d^{k}\right).\label{eq:MatrixDominatedTestFunctionMainTermFromBelow}
\end{align}
Here, the step marked with $\left(\ast\right)$ used the definition
of $K_{i}$ (cf.\@ equation (\ref{eq:MatrixDominatedTestFunctionKiDefinition}))
and of $\beta^{\left(i\right)}=k\cdot e_{\ell^{\left(i\right)}}$,
as well as of $\varrho_{i}=\varrho_{i,\beta^{\left(i\right)}}$. Finally,
the step marked with $\left(\dagger\right)$ made use $T_{i}^{T}x\in B_{\delta}\left(0\right)$,
of estimate (\ref{eq:MatrixDominatedTestFunctionConditionsOnDelta})
and of $\partial_{\ell_{1}}\cdots\partial_{\ell_{k}}\neq\partial^{\beta^{\left(i\right)}}$
for those indices $\left(\ell_{1},\dots,\ell_{k}\right)$ over which
the sum is taken. Furthermore, this step also used that the sum has
less than $d^{k}$ terms. The estimate given in equation (\ref{eq:MatrixDominatedTestFunctionMainTermFromBelow})
is nontrivial as soon as $\eta=\eta\left(d,k\right)$ satisfies $\eta<\frac{1}{2}d^{-k}$.
We will assume this for the rest of the proof.

Now, we note that equation (\ref{eq:MatrixDominatedTestFunctionNormalizationSpaceSide})
and an application of Leibniz's rule yield -- as in equation (\ref{eq:BAPUDerivativeLpEstimateLeibnizApplication})
-- that
\begin{align}
\left[\partial^{\alpha^{\left(i\right)}}\left(\mathcal{F}^{-1}\gamma_{i}\right)\right]\left(x\right) & =\left(\partial^{\alpha^{\left(i\right)}}\theta_{i}\right)\left(x\right)\nonumber \\
 & =\left|\det T_{i}\right|\cdot\sum_{\beta\leq\alpha^{\left(i\right)}}\left[\binom{\alpha^{\left(i\right)}}{\beta}\cdot\left(2\pi j\cdot b_{i}\right)^{\alpha^{\left(i\right)}-\beta}\cdot e^{2\pi j\left\langle b_{i},x\right\rangle }\cdot\left(\partial^{\beta}\left[\varrho_{i}\circ T_{i}^{T}\right]\right)\left(x\right)\right],\label{eq:MatrixDominatedTestFunctionLeibnizApplication}
\end{align}
where we have written $j$ for the imaginary unit to avoid confusion
with the index $i\in I_{1}$. We will keep this convention for the
remainder of the proof. In view of this identity and of equation (\ref{eq:MatrixDominatedTestFunctionMainTermFromBelow}),
our next step is to estimate for $\beta\leq\alpha^{\left(i\right)}$
with $\alpha^{\left(i\right)}\neq\beta$ the term
\begin{align}
 & \hphantom{\leq\,\,}\left|\left(2\pi j\cdot b_{i}\right)^{\alpha^{\left(i\right)}-\beta}\cdot e^{2\pi j\left\langle b_{i},x\right\rangle }\cdot\left(\partial^{\beta}\left[\varrho_{i}\circ T_{i}^{T}\right]\right)\left(x\right)\right|\nonumber \\
\left(2\pi\left|b_{i}\right|\leq\left\Vert T_{i}\right\Vert \leq d^{2}K_{i}\text{ and }\left|\smash{\alpha^{\left(i\right)}}\right|=k\right) & \leq d^{2k}\cdot K_{i}^{k-\left|\beta\right|}\cdot\left|\left(\partial^{\beta}\left[\varrho_{i}\circ T_{i}^{T}\right]\right)\left(x\right)\right|\nonumber \\
\left(\text{Lemma }\ref{lem:ChainRuleForLinearTransformations}\text{ with }\beta=\sum_{m=1}^{\left|\beta\right|}e_{i_{m}}\right) & \overset{\left(\dagger\right)}{\leq}d^{2k}K_{i}^{k-\left|\beta\right|}\sum_{\ell_{1},\dots,\ell_{\left|\beta\right|}\in\underline{d}}\left|\left(T_{i}\right)_{i_{1},\ell_{i}}\cdots\left(T_{i}\right)_{i_{\left|\beta\right|},\ell_{\left|\beta\right|}}\cdot\left(\partial_{\ell_{1}}\cdots\partial_{\ell_{\left|\beta\right|}}\varrho_{i}\right)\left(T_{i}^{T}x\right)\right|\nonumber \\
\left(\text{by definition of }K_{i}\right) & \leq d^{2k}\cdot K_{i}^{k}\cdot\sum_{\ell_{1},\dots,\ell_{\left|\beta\right|}\in\underline{d}}\left|\left(\partial_{\ell_{1}}\cdots\partial_{\ell_{\left|\beta\right|}}\varrho_{i,\beta^{\left(i\right)}}\right)\left(T_{i}^{T}x\right)\right|\nonumber \\
 & \overset{\left(\ast\right)}{\leq}d^{2k}\cdot K_{i}^{k}\cdot d^{\left|\beta\right|}\eta\nonumber \\
\left(\text{since }\left|\beta\right|\leq\left|\smash{\alpha^{\left(i\right)}}\right|=k\right) & \leq K_{i}^{k}\cdot d^{3k}\eta.\label{eq:MatrixDominatedTestFunctionSideTerms}
\end{align}
Here, the step marked with $\left(\ast\right)$ used that $\beta\leq\alpha^{\left(i\right)}$
together with $\beta\neq\alpha^{\left(i\right)}$ implies $\left|\beta\right|<\left|\alpha^{\left(i\right)}\right|=\left|\beta^{\left(i\right)}\right|$
and hence $\partial_{\ell_{1}}\cdots\partial_{\ell_{\left|\beta\right|}}\neq\partial^{\beta^{\left(i\right)}}$
for all $\ell_{1},\dots,\ell_{\left|\beta\right|}\in\underline{d}$,
so that equation (\ref{eq:MatrixDominatedTestFunctionConditionsOnDelta})
-- together with $T_{i}^{T}x\in B_{\delta}\left(0\right)$ -- justifies
the marked step. Note though, that the step marked with $\left(\dagger\right)$
is only justified for $\left|\beta\right|\in\mathbb{N}$, i.e.\@
for $\beta\neq0$ (cf.\@ Lemma \ref{lem:ChainRuleForLinearTransformations}).
But in case of $\beta=0$, we simply have
\begin{align*}
 & \hphantom{\leq\,\,}\left|\left(2\pi j\cdot b_{i}\right)^{\alpha^{\left(i\right)}-\beta}\cdot e^{2\pi j\left\langle b_{i},x\right\rangle }\cdot\left(\partial^{\beta}\left[\varrho_{i}\circ T_{i}^{T}\right]\right)\left(x\right)\right|\\
\left(\text{since }\varrho_{i}=\varrho_{i,\beta^{\left(i\right)}}=\mathcal{F}^{-1}f_{\beta^{\left(i\right)},m^{\left(i\right)}}\right) & =\left|\left(2\pi j\cdot b_{i}\right)^{\alpha^{\left(i\right)}}\cdot\left(\mathcal{F}^{-1}f_{\beta^{\left(i\right)},m^{\left(i\right)}}\right)\left(T_{i}^{T}x\right)\right|\\
\left(2\pi\left|b_{i}\right|\leq\left\Vert T_{i}\right\Vert \leq d^{2}K_{i}\text{ and }\left|\smash{\alpha^{\left(i\right)}}\right|=k\right) & \leq d^{2k}\cdot K_{i}^{k}\cdot\left|\left(\mathcal{F}^{-1}f_{\beta^{\left(i\right)},m^{\left(i\right)}}\right)\left(T_{i}^{T}x\right)\right|\\
\left(\text{by eq. }\eqref{eq:MatrixDominatedTestFunctionConditionsOnDelta}\text{ since }\left|\smash{\beta^{\left(i\right)}}\right|=k>0\right) & \overset{\left(\ast\right)}{\leq}d^{2k}\cdot K_{i}^{k}\cdot\eta\\
 & \leq K_{i}^{k}\cdot d^{3k}\eta.
\end{align*}
Thus, equation (\ref{eq:MatrixDominatedTestFunctionSideTerms}) also
holds for $\beta\neq0$. At $\left(\ast\right)$, we used that $k>0$,
since $i\in I_{1}$.

Altogether, equation (\ref{eq:MatrixDominatedTestFunctionLeibnizApplication})
and estimates (\ref{eq:MatrixDominatedTestFunctionMainTermFromBelow})
and (\ref{eq:MatrixDominatedTestFunctionSideTerms}) yield
\begin{align*}
\left|\left[\partial^{\alpha^{\left(i\right)}}\left(\mathcal{F}^{-1}\gamma_{i}\right)\right]\left(x\right)\right| & \geq\left|\det T_{i}\right|\cdot\left[K_{i}^{k}\left(\frac{1}{2}-d^{k}\eta\right)-\sum_{\substack{\beta\leq\alpha^{\left(i\right)}\\
\beta\neq\alpha^{\left(i\right)}
}
}\binom{\alpha^{\left(i\right)}}{\beta}K_{i}^{k}d^{3k}\eta\right]\\
 & \geq\left|\det T_{i}\right|K_{i}^{k}\cdot\left[\frac{1}{2}-\eta\left(d^{k}+d^{3k}\sum_{\beta\leq\alpha^{\left(i\right)}}\binom{\alpha^{\left(i\right)}}{\beta}\right)\right]\\
\left(\text{by the multi-binomial theorem}\right) & =\left|\det T_{i}\right|K_{i}^{k}\cdot\left[\frac{1}{2}-\eta\left(d^{k}+d^{3k}\left(\left(1,\dots,1\right)+\left(1,\dots,1\right)\right)^{\alpha^{\left(i\right)}}\right)\right]\\
 & =\left|\det T_{i}\right|K_{i}^{k}\cdot\left[\frac{1}{2}-\eta\left(d^{k}+d^{3k}2^{\left|\alpha^{\left(i\right)}\right|}\right)\right]\\
\left(\text{since }\left|\smash{\alpha^{\left(i\right)}}\right|=k\right) & =\left|\det T_{i}\right|K_{i}^{k}\cdot\left[\frac{1}{2}-\eta\left(d^{k}+d^{3k}2^{k}\right)\right].
\end{align*}
All in all, we see that if we choose%
\footnote{Note that this choice of $\eta$ also makes the assumption $\eta<\frac{1}{2}d^{-k}$
from above true.%
} $\eta=\eta\left(d,k\right):=\frac{1}{4\left(d^{k}+2^{k}d^{3k}\right)}$,
then
\[
\left|\left[\partial^{\alpha^{\left(i\right)}}\left(\mathcal{F}^{-1}\gamma_{i}\right)\right]\left(x\right)\right|\geq\frac{\left|\det T_{i}\right|\cdot K_{i}^{k}}{4}\geq\frac{\left|\det T_{i}\right|\cdot\left(\left\Vert T_{i}\right\Vert /d^{2}\right)^{k}}{4}=\left(4d^{2k}\right)^{-1}\cdot\left|\det T_{i}\right|\cdot\left\Vert T_{i}\right\Vert ^{k}
\]
holds for all $x\in\mathbb{R}^{d}$ with $T_{i}^{T}x\in B_{\delta}\left(0\right)$.
This finally establishes equation (\ref{eq:MatrixDominatedTestFunctionEstimateFromBelow})
with $C_{1}=\left(4d^{2k}\right)^{-1}$.

To complete the proof, we establish equation (\ref{eq:VectorDominatedTestFunctionEstimateFromBelow})
for $i\in I_{2}$. As above, we see that this estimate trivially holds
if $T_{i}^{T}x\notin B_{\delta}\left(0\right)$. Thus, in the following,
we will assume $T_{i}^{T}x\in B_{\delta}\left(0\right)$.

Now, let $\beta\in\mathbb{N}_{0}^{d}$ with $0\neq\beta\leq\alpha^{\left(i\right)}$
be arbitrary. Note that existence of such a multiindex $\beta$ implies
$0<\left|\beta\right|\leq\left|\alpha^{\left(i\right)}\right|=k$
and thus $2\pi\left|b_{i}\right|\geq\left\Vert T_{i}\right\Vert $,
since $i\in I_{2}$. Choose $i_{1},\dots,i_{\left|\beta\right|}\in\underline{d}$
with $\beta=\sum_{m=1}^{\left|\beta\right|}e_{i_{m}}$ and apply Lemma
\ref{lem:ChainRuleForLinearTransformations} to conclude
\begin{align}
\left|\left(\partial^{\beta}\left[\varrho_{i}\circ T_{i}^{T}\right]\right)\left(x\right)\right| & =\left|\sum_{\ell_{1},\dots,\ell_{\left|\beta\right|}\in\underline{d}}\left[\left(T_{i}\right)_{i_{1},\ell_{1}}\cdots\left(T_{i}\right)_{i_{\left|\beta\right|},\ell_{\left|\beta\right|}}\cdot\left(\partial_{\ell_{1}}\cdots\partial_{\ell_{\left|\beta\right|}}\varrho_{i}\right)\left(T_{i}^{T}x\right)\right]\right|\nonumber \\
\left(\text{since }\left|\smash{\left(T_{i}\right)_{j,\ell}}\right|\leq\left\Vert T_{i}\right\Vert \leq2\pi\left|b_{i}\right|\leq2\pi dL_{i}\right) & \leq\left(2\pi dL_{i}\right)^{\left|\beta\right|}\cdot\sum_{\ell_{1},\dots,\ell_{\left|\beta\right|}\in\underline{d}}\left|\left(\partial_{\ell_{1}}\cdots\partial_{\ell_{\left|\beta\right|}}\varrho_{i,0}\right)\left(T_{i}^{T}x\right)\right|\nonumber \\
 & \overset{\left(\ast\right)}{\leq}\left(2\pi dL_{i}\right)^{\left|\beta\right|}\cdot\sum_{\ell_{1},\dots,\ell_{\left|\beta\right|}\in\underline{d}}\eta\nonumber \\
 & =\left(2\pi d^{2}\cdot L_{i}\right)^{\left|\beta\right|}\cdot\eta.\label{eq:VectorDominatedTestFunctionSideTerms}
\end{align}
Here, the step marked with $\left(\ast\right)$ used equation (\ref{eq:MatrixDominatedTestFunctionConditionsOnDelta}),
which is applicable since we have $T_{i}^{T}x\in B_{\delta}\left(0\right)$
and $\varrho_{i,0}=\mathcal{F}^{-1}f_{0,m^{\left(i\right)}}$, as
well as $0<\left|\beta\right|\leq k$.

But for $\beta=0$, we have
\begin{align*}
\left|\left(\partial^{0}\left[\varrho_{i}\circ T_{i}^{T}\right]\right)\left(x\right)\right| & =\left|\varrho_{i,0}\left(T_{i}^{T}x\right)\right|\\
\left(\text{since }\varrho_{i,0}=\mathcal{F}^{-1}f_{0,m^{\left(i\right)}}\right) & =\left|\left(\mathcal{F}^{-1}f_{0,m^{\left(i\right)}}\right)\left(T_{i}^{T}x\right)\right|\\
\left(\text{by eq. }\eqref{eq:MatrixDominatedTestFunctionConditionsOnDelta}\text{ since }T_{i}^{T}x\in B_{\delta}\left(0\right)\right) & \geq\frac{1}{2}.
\end{align*}
Altogether, since equation (\ref{eq:MatrixDominatedTestFunctionLeibnizApplication})
also holds for $i\in I_{2}$, we conclude
\begin{align*}
 & \hphantom{\leq\,\,}\left|\left[\partial^{\alpha^{\left(i\right)}}\left(\mathcal{F}^{-1}\gamma_{i}\right)\right]\left(x\right)\right|\\
 & =\left|\left|\det T_{i}\right|\cdot\sum_{\beta\leq\alpha^{\left(i\right)}}\left[\binom{\alpha^{\left(i\right)}}{\beta}\cdot\left(2\pi j\cdot b_{i}\right)^{\alpha^{\left(i\right)}-\beta}\cdot e^{2\pi j\left\langle b_{i},x\right\rangle }\cdot\left(\partial^{\beta}\left[\varrho_{i}\circ T_{i}^{T}\right]\right)\left(x\right)\right]\right|\\
 & \geq\left|\det T_{i}\right|\!\cdot\!\left[\left|\left(2\pi jb_{i}\right)^{\alpha^{\left(i\right)}}\partial^{0}\!\left[\varrho_{i}\circ T_{i}^{T}\right]\!\left(x\right)\right|-\!\!\!\!\sum_{0\neq\beta\leq\alpha^{\left(i\right)}}\!\!\!\binom{\alpha^{\left(i\right)}}{\beta}\left|\left(2\pi jb_{i}\right)^{\alpha^{\left(i\right)}-\beta}\cdot\partial^{\beta}\!\left[\varrho_{i}\circ T_{i}^{T}\right]\!\left(x\right)\right|\right]\\
 & \overset{\left(\dagger\right)}{\geq}\left|\det T_{i}\right|\cdot\left[\left|2\pi\left(b_{i}\right)_{j^{\left(i\right)}}\right|^{k}\cdot\frac{1}{2}-\sum_{0\neq\beta\leq\alpha^{\left(i\right)}}\binom{\alpha^{\left(i\right)}}{\beta}\cdot\left(2\pi L_{i}\right)^{\left|\alpha^{\left(i\right)}\right|-\left|\beta\right|}\cdot\left(2\pi d^{2}\cdot L_{i}\right)^{\left|\beta\right|}\cdot\eta\right]\\
\left(\text{since }\left|\smash{\alpha^{\left(i\right)}}\right|=k\right) & \geq\left|\det T_{i}\right|\left(2\pi L_{i}\right)^{k}\cdot\left[\frac{1}{2}-\eta\cdot\sum_{\beta\leq\alpha^{\left(i\right)}}\binom{\alpha^{\left(i\right)}}{\beta}\left(d^{2}\right)^{\left|\beta\right|}\right]\\
\left(\left|\beta\right|\leq\left|\smash{\alpha^{\left(i\right)}}\right|=k\right) & \geq\left|\det T_{i}\right|\left(2\pi L_{i}\right)^{k}\cdot\left[\frac{1}{2}-\eta\cdot d^{2k}\sum_{\beta\leq\alpha^{\left(i\right)}}\binom{\alpha^{\left(i\right)}}{\beta}\right]\\
\left(\text{multi-binomial th.}\right) & =\left|\det T_{i}\right|\left(2\pi L_{i}\right)^{k}\cdot\left[\frac{1}{2}-\eta\cdot d^{2k}\left(\left(1,\dots,1\right)+\left(1,\dots,1\right)\right)^{\alpha^{\left(i\right)}}\right]\\
\left(\text{since }\left|\smash{\alpha^{\left(i\right)}}\right|=k\right) & =\left|\det T_{i}\right|\left(2\pi L_{i}\right)^{k}\cdot\left[\frac{1}{2}-\eta\cdot d^{2k}2^{k}\right].
\end{align*}
Here, the step marked with $\left(\dagger\right)$ is justified by
equation (\ref{eq:VectorDominatedTestFunctionSideTerms}) and because
of $\left|\smash{\left(b_{i}\right)_{j}}\right|\leq L_{i}$ for all
$j\in\underline{d}$.

Now, recall 
\[
\eta=\eta\left(d,k\right)=\frac{1}{4\left(d^{k}+2^{k}d^{3k}\right)}<\frac{1}{4\cdot2^{k}d^{3k}}\leq\frac{1}{4\cdot2^{k}d^{2k}}
\]
and $L_{i}\geq\frac{\left|b_{i}\right|}{d}$, so that we finally get
\begin{align*}
\left|\left[\partial^{\alpha^{\left(i\right)}}\left(\mathcal{F}^{-1}\gamma_{i}\right)\right]\left(x\right)\right| & \geq\left|\det T_{i}\right|\left(2\pi L_{i}\right)^{k}\cdot\left[\frac{1}{2}-\eta\cdot d^{2k}2^{k}\right]\\
 & \geq\frac{1}{4}\left|\det T_{i}\right|\left(2\pi L_{i}\right)^{k}\\
 & \geq\frac{1}{4d^{k}}\cdot\left|\det T_{i}\right|\cdot\left|b_{i}\right|^{k}
\end{align*}
for all $x\in\mathbb{R}^{d}$ with $T_{i}^{T}x\in B_{\delta}\left(0\right)$.
As observed above, this establishes equation (\ref{eq:VectorDominatedTestFunctionEstimateFromBelow})
with $C_{2}=\frac{1}{4d^{k}}$.
\end{proof}
As a corollary, we get the following -- somewhat surprising -- result.
\begin{cor}
Let $\mathcal{Q}=\left(Q_{i}\right)_{i\in I}=\left(T_{i}Q_{i}'+b_{i}\right)_{i\in I}$
be a tight regular covering of the open set $\emptyset\neq\mathcal{O}\subset\mathbb{R}^{d}$
such that $\sup_{i\in I}\left\Vert T_{i}^{-1}\right\Vert <\infty$
.

Let $\ell\in\mathbb{N}_{0}$ and $p\in\left(0,\infty\right]$ and
let $Y\leq\mathbb{C}^{I}$ be a $\mathcal{Q}$-regular sequence space
for which $Y\cap\ell_{0}\left(I\right)\subset Y$ is dense.

If $\mathcal{D}\left(\mathcal{Q},L^{p},Y\right)\hookrightarrow W^{\ell,q}\left(\mathbb{R}^{d}\right)$
for some $q\in\left(2,\infty\right)$ and if $p\leq2$, then $\mathcal{D}\left(\mathcal{Q},L^{p},Y\right)\overset{{\rm inj.}}{\hookrightarrow}W^{\ell,2}\left(\mathbb{R}^{d}\right)$.\end{cor}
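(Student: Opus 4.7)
The plan is to combine the necessary conditions of Theorem~\ref{thm:SequenceSpaceEmbeddingIsNecessary}(3)(a) -- which already at the sequence-space level upgrade the $L^{q}$-integrability to $L^{2}$ whenever $q\geq 2$ -- with the sufficient conditions of Corollary~\ref{cor:SufficientConditionsForSobolevEmbeddings} applied with target exponent $2$.

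First I would extract, from the hypothesis $\mathcal{D}(\mathcal{Q},L^{p},Y)\hookrightarrow W^{\ell,q}(\mathbb{R}^{d})$, boundedness of the operators $\iota_{\alpha}:\mathcal{S}_{\mathcal{O}}^{p,Y}(\mathbb{R}^{d})\to L^{q}(\mathbb{R}^{d}),\ f\mapsto\partial^{\alpha}f$ for each $\alpha\in\mathbb{N}_{0}^{d}$ with $|\alpha|\leq\ell$. Since $q>2>1$, the embedding $\iota$ from Definition~\ref{def:EmbeddingDefinition} satisfies $\iota f=f$ for all $f\in\mathcal{S}_{\mathcal{O}}^{p,Y}(\mathbb{R}^{d})$, so $\|\partial^{\alpha}f\|_{L^{q}}\leq\|\iota f\|_{W^{\ell,q}}\lesssim\|f\|_{\mathcal{D}(\mathcal{Q},L^{p},Y)}$. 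The assumption $\sup_{i\in I}\|T_{i}^{-1}\|<\infty$ permits the choice $I^{(0)}=I$ in Theorem~\ref{thm:SequenceSpaceEmbeddingIsNecessary}(3)(a), which, applied separately for each $k\in\{0,1,\dots,\ell\}$, yields
\[
Y\cap\ell_{0}(I)\hookrightarrow\ell_{u^{(k,p,2)}}^{2}(I)\qquad\text{for every }k\in\{0,1,\dots,\ell\}.
\]

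The main obstacle -- really a technicality rather than a conceptual difficulty -- will be to upgrade these inclusions from $Y\cap\ell_{0}(I)$ to all of $Y$ via the density hypothesis. Coordinate evaluation is continuous on any solid sequence space: if some $y\in Y$ has $y_{i}\neq 0$, then solidity forces $\delta_{i}\in Y$, and then $|\tilde y_{i}|\leq \|\tilde y\|_{Y}/\|\delta_{i}\|_{Y}$ for every $\tilde y\in Y$; at indices where every element of $Y$ vanishes there is nothing to verify. The analogous continuity holds on $\ell_{u^{(k,p,2)}}^{2}(I)$. Hence the bounded inclusion $Y\cap\ell_{0}(I)\to\ell_{u^{(k,p,2)}}^{2}(I)$ extends uniquely to a bounded linear map $Y\to\ell_{u^{(k,p,2)}}^{2}(I)$ (density of the domain together with completeness of the target Quasi-Banach space), and coordinatewise continuity forces this extension to coincide with the inclusion. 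In particular, $Y\hookrightarrow\ell_{u^{(0,p,2)}}^{2}(I)$ and $Y\hookrightarrow\ell_{u^{(\ell,p,2)}}^{2}(I)$.

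Finally, since $q^{\triangledown}=2$ when $q=2$, these two sequence-space embeddings, together with the standing hypothesis $p\leq 2$, are exactly the conditions demanded by Corollary~\ref{cor:SufficientConditionsForSobolevEmbeddings} with derivative order $\ell$ and target exponent $2$. Applying that corollary produces the embedding $\mathcal{D}(\mathcal{Q},L^{p},Y)\hookrightarrow W^{\ell,2}(\mathbb{R}^{d})$, and injectivity follows from part~(1) of the same corollary, since $2\geq 1$.
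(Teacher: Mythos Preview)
Your proof is correct and follows essentially the same route as the paper's: apply part~(3)(a) of Theorem~\ref{thm:SequenceSpaceEmbeddingIsNecessary} with $I^{(0)}=I$ to obtain $Y\cap\ell_{0}(I)\hookrightarrow\ell_{u^{(k,p,2)}}^{2}(I)$, upgrade to all of $Y$ by density (the paper phrases this as ``both spaces embed continuously into the Hausdorff space $\mathbb{C}^{I}$''), and then invoke Corollary~\ref{cor:SufficientConditionsForSobolevEmbeddings} with target exponent $2$. Your explicit verification that coordinate evaluation is continuous on solid sequence spaces is a slightly more hands-on version of the same density argument.
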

\begin{rem*}
This result is slightly surprising, since there is no embedding $W^{\ell,q}\left(\mathbb{R}^{d}\right)\hookrightarrow W^{\ell,2}\left(\mathbb{R}^{d}\right)$
for $q\neq2$.\end{rem*}
\begin{proof}
The assumptions easily imply that the prerequisites of the last part
of Theorem \ref{thm:SequenceSpaceEmbeddingIsNecessary} are satisfied
for each $0\leq k\leq\ell$ with $I^{\left(0\right)}=I$, so that
we get
\[
Y\cap\ell_{0}\left(I\right)\hookrightarrow\ell_{u^{\left(k,p,2\right)}}^{2}\left(I\right)
\]
for all $0\leq k\leq\ell$.

Since $Y\cap\ell_{0}\left(I\right)\subset Y$ is dense and since all
relevant spaces are sequence spaces (i.e.\@ they embed continuously
into the Hausdorff space $\mathbb{C}^{I}$, equipped with the product
topology), this implies 
\[
Y\hookrightarrow\ell_{u^{\left(k,p,2\right)}}^{2}\left(I\right)=\ell_{u^{\left(k,p,2\right)}}^{2^{\triangledown}}\left(I\right)
\]
for $0\leq k\leq\ell$. Since we also have $p\leq2$, Corollary \ref{cor:SufficientConditionsForSobolevEmbeddings}
yields $\mathcal{D}\left(\mathcal{Q},L^{p},Y\right)\overset{{\rm inj}.}{\hookrightarrow}W^{\ell,2}\left(\mathbb{R}^{d}\right)$.
\end{proof}

\section{Characterization of the embedding $\ell_{v}^{r}\left(I\right)\hookrightarrow\ell_{w}^{s}\left(I\right)$}

\label{sec:SimplifiedConditions}As we saw in Sections \ref{sec:SufficientConditions}
and \ref{sec:NecessaryConditions}, to decide existence of the embedding
\[
\mathcal{D}\left(\mathcal{Q},L^{p},Y\right)\hookrightarrow W^{k,q}\left(\mathbb{R}^{d}\right),
\]
we have to decide whether an embedding of the form $Y\hookrightarrow\ell_{u}^{r}\left(I\right)$
for certain $r\in\left(0,\infty\right]$ and a certain weight $u=\left(u_{i}\right)_{i\in I}$
is valid. In the present section, we will simplify this problem for
the case $Y=\ell_{v}^{s}\left(I\right)$ to the point where only finiteness
of a single sequence space norm has to be decided.
\begin{lem}
\label{lem:SequenceSpaceEmbeddingCharacterization}Let $u=\left(u_{i}\right)_{i\in I}$
and $v=\left(v_{i}\right)_{i\in I}$ be positive weights on an index
set $I\neq\emptyset$ and let $p,q\in\left(0,\infty\right]$. Then,
the map
\[
\iota:\left(\ell_{0}\left(I\right),\left\Vert \cdot\right\Vert _{\ell_{v}^{p}}\right)\hookrightarrow\ell_{u}^{q}\left(I\right),\left(c_{i}\right)_{i\in I}\mapsto\left(c_{i}\right)_{i\in I}
\]
is bounded if and only if
\begin{equation}
\left(u_{i}/v_{i}\right)_{i\in I}\in\ell^{q\cdot\left(p/q\right)'}\left(I\right).\label{eq:SequenceSpacesEmbeddingConvenientCondition}
\end{equation}
Here, the exponent $q\cdot\left(p/q\right)'$ has to be calculated
according to the following convention%
\footnote{Apart from the convention noted here, we have $p/q=\infty$ if $p=\infty$
and $q<\infty$ and (as in the rest of the paper) $r'=\infty$ if
$r\leq1$. Finally, $\infty'=1$.%
}:
\[
q\cdot\left(p/q\right)'=\begin{cases}
\infty, & \text{if }q=\infty,\\
q\cdot\left(p/q\right)', & \text{if }q<\infty.
\end{cases}
\]

If equation (\ref{eq:SequenceSpacesEmbeddingConvenientCondition})
is fulfilled, we even have $\ell_{v}^{p}\left(I\right)\hookrightarrow\ell_{u}^{q}\left(I\right)$.\end{lem}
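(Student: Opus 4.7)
The plan is to reduce the assertion to a statement about the simple multiplication operator $M_w : \ell^p(I) \to \ell^q(I)$, $(d_i)_{i\in I} \mapsto (w_i d_i)_{i\in I}$, where $w_i := u_i / v_i$. Indeed, the substitution $d_i := v_i c_i$ is a bijection $\ell_0(I) \to \ell_0(I)$ satisfying $\|c\|_{\ell_v^p} = \|d\|_{\ell^p}$ and $\|c\|_{\ell_u^q} = \|w d\|_{\ell^q}$, so that boundedness of $\iota$ is equivalent to the estimate $\|w d\|_{\ell^q} \lesssim \|d\|_{\ell^p}$ for every $d \in \ell_0(I)$. The exponent $t := q \cdot (p/q)'$ is precisely the exponent for which the classical H\"older calculation yields $\|w d\|_{\ell^q} \leq \|w\|_{\ell^t} \cdot \|d\|_{\ell^p}$, provided $p,q < \infty$; this gives sufficiency in the central case. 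The remaining cases $p = \infty$ or $q = \infty$ are handled directly by the corresponding trivial pointwise estimates, all of which land on the same exponent $t$ once the stated convention is applied.

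For necessity, I would argue by testing $\iota$ on suitably chosen finitely supported sequences. In the case $p \leq q$ (so $t = \infty$), testing against the Dirac sequences $d = \delta_j$ shows $w_j \leq \|\iota\|$ for every $j \in I$, giving $w \in \ell^\infty(I)$. In the case $p = \infty$ with $q < \infty$ (so $t = q$), testing against $d = \chi_F$ for a finite set $F \subset I$ yields $\bigl(\sum_{i \in F} w_i^q\bigr)^{1/q} \leq \|\iota\|$, and the supremum over finite $F$ gives $w \in \ell^q(I)$. The main obstacle is the remaining range $q < p < \infty$: here one must use the (classical, but slightly tricky) test sequence $d_i := w_i^{q/(p-q)} \chi_F(i)$ on a finite set $F$. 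Plugging this into the putative inequality and simplifying the exponents (using that the two exponents $qp/(p-q)$ appearing on both sides coincide with $t = q \cdot (p/q)'$) collapses to $\bigl(\sum_{F} w_i^t\bigr)^{(p-q)/p} \leq \|\iota\|^q$, and taking the supremum over finite $F \subset I$ then gives $w \in \ell^t(I)$ with norm at most $\|\iota\|$.

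Finally, to upgrade from the statement for $\ell_0(I)$ to the actual embedding $\ell_v^p(I) \hookrightarrow \ell_u^q(I)$ stated in the last sentence, I would use a simple truncation argument: for arbitrary $c \in \ell_v^p(I)$ and every finite $F \subset I$ the sequence $c \cdot \chi_F$ lies in $\ell_0(I)$, so the already proven finitely supported estimate gives
\[
\|c \cdot \chi_F\|_{\ell_u^q} \leq C \cdot \|c \cdot \chi_F\|_{\ell_v^p} \leq C \cdot \|c\|_{\ell_v^p}.
\]
Since $\|c\|_{\ell_u^q} = \sup_{F \subset I \text{ finite}} \|c \cdot \chi_F\|_{\ell_u^q}$ (by monotone convergence for $q < \infty$ and directly for $q = \infty$), the desired embedding follows. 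The only genuine work is the H\"older-exponent bookkeeping and, above all, the clever choice of test sequence for necessity in the range $q < p < \infty$; everything else is a direct verification that the convention on $q \cdot (p/q)'$ matches the boundary cases correctly.
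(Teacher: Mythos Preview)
Your proposal is correct and follows essentially the same route as the paper's proof. The substitution $d_i = v_i c_i$ reducing to an unweighted multiplication operator is a clean repackaging, but the core ingredients---H\"older with exponents $p/q$ and $(p/q)'$ for sufficiency, and the test sequence $d_i = w_i^{q/(p-q)}\chi_F$ for necessity in the range $q<p<\infty$---are identical to the paper's (the paper writes the same test sequence in $c$-coordinates as $c_i = (u_i/v_i)^{(p/q)'}/u_i$, which unwinds to exactly your $d_i$); your final truncation step is a harmless alternative to the paper's direct proof of sufficiency on all of $\ell_v^p$.
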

\begin{rem*}
In short, the last part of the lemma shows that it suffices to verify
an embedding on the (not necessarily dense) subspace $\ell_{v}^{p}\left(I\right)\cap\ell_{0}\left(I\right)$.
Thus, the restriction to finitely supported sequences in the necessary
conditions given in Theorem \ref{thm:SequenceSpaceEmbeddingIsNecessary}
is no essential restriction, at least if $Y$ is a weighted sequence
space.

Furthermore, note 
\begin{align}
q\cdot\left(p/q\right)'=\infty & \Longleftrightarrow q=\infty\text{ or }\left(q<\infty\text{ and }\left(p/q\right)'=\infty\right)\nonumber \\
 & \Longleftrightarrow q=\infty\text{ or }\left(q<\infty\text{ and }\frac{p}{q}\leq1\right)\nonumber \\
 & \Longleftrightarrow q=\infty\text{ or }\left(q<\infty\text{ and }p\leq q\right)\nonumber \\
 & \Longleftrightarrow p\leq q.\label{eq:SpecialExponentInfiniteCharacterization}
\end{align}
Finally, we have
\begin{equation}
\frac{1}{q\cdot\left(p/q\right)'}=\left(\frac{1}{q}-\frac{1}{p}\right)_{+},\label{eq:SpecialExponentReciprocal}
\end{equation}
where $x_{+}=\max\left\{ x,0\right\} $. Indeed, there are two cases:
\begin{casenv}
\item $\frac{1}{q}-\frac{1}{p}\leq0$, i.e.\@ $p\leq q$. As seen in equation
(\ref{eq:SpecialExponentInfiniteCharacterization}), this yields $q\cdot\left(p/q\right)'=\infty$
and hence
\[
\frac{1}{q\cdot\left(p/q\right)'}=0=\left(\frac{1}{q}-\frac{1}{p}\right)_{+}
\]
as claimed.
\item $\frac{1}{q}-\frac{1}{p}>0$, i.e.\@ $p>q$. By equation (\ref{eq:SpecialExponentInfiniteCharacterization})
again, this yields $q\cdot\left(p/q\right)'<\infty$ and in particular
$\left(p/q\right)'<\infty$. Hence,
\[
\frac{1}{q\cdot\left(p/q\right)'}=\frac{1}{q}\cdot\left(1-\frac{1}{p/q}\right)=\frac{1}{q}\cdot\left(1-\frac{q}{p}\right)=\frac{1}{q}-\frac{1}{p}=\left(\frac{1}{q}-\frac{1}{p}\right)_{+}.
\]

\end{casenv}
The two properties from equations (\ref{eq:SpecialExponentInfiniteCharacterization})
and (\ref{eq:SpecialExponentReciprocal}) will be used repeatedly
in Section \ref{sec:Applications} for concrete applications of our
embedding results.\end{rem*}
\begin{proof}[Proof of Lemma \ref{lem:SequenceSpaceEmbeddingCharacterization}]
We first establish the implication ``$\Leftarrow$''. To this end,
it suffices to show $\ell_{v}^{p}\left(I\right)\hookrightarrow\ell_{u}^{q}\left(I\right)$.
For the proof, we first make the following general observation: Hölder's
inequality 
\[
\left\Vert \left(x_{i}y_{i}\right)_{i\in I}\right\Vert _{\ell^{1}}=\sum_{i\in I}\left|x_{i}y_{i}\right|\leq\left\Vert \left(x_{i}\right)_{i\in I}\right\Vert _{\ell^{p}}\cdot\left\Vert \left(y_{i}\right)_{i\in I}\right\Vert _{\ell^{p'}}\quad,
\]
which is well known for $p\in\left[1,\infty\right]$ also holds for
$p\in\left(0,1\right)$, since in this case, we have the norm-decreasing
embedding $\ell^{p}\hookrightarrow\ell^{1}$ and hence
\[
\sum_{i\in I}\left|x_{i}y_{i}\right|\leq\left\Vert \left(x_{i}\right)_{i\in I}\right\Vert _{\ell^{1}}\cdot\left\Vert \left(y_{i}\right)_{i\in I}\right\Vert _{\ell^{\infty}}\leq\left\Vert \left(x_{i}\right)_{i\in I}\right\Vert _{\ell^{p}}\cdot\left\Vert \left(y_{i}\right)_{i\in I}\right\Vert _{\ell^{p'}}\quad.
\]

Thus, for $c=\left(c_{i}\right)_{i\in I}\in\ell_{v}^{p}\left(I\right)$
and $q<\infty$, we have
\begin{align*}
\left\Vert c\right\Vert _{\ell_{u}^{q}} & =\left\Vert \left(u_{i}\cdot c_{i}\right)_{i\in I}\right\Vert _{\ell^{q}}\\
 & =\left\Vert \left(\left(u_{i}\cdot c_{i}\right)^{q}\right)_{i\in I}\right\Vert _{\ell^{1}}^{1/q}\\
 & =\left\Vert \left(\left(v_{i}c_{i}\right)^{q}\cdot\left(\frac{u_{i}}{v_{i}}\right)^{q}\right)_{i\in I}\right\Vert _{\ell^{1}}^{1/q}\\
 & \leq\left[\left\Vert \left(\left(v_{i}c_{i}\right)^{q}\right)_{i\in I}\right\Vert _{\ell^{p/q}}\cdot\left\Vert \left(\left(u_{i}/v_{i}\right)^{q}\right)_{i\in I}\right\Vert _{\ell^{\left(p/q\right)'}}\right]^{1/q}\\
 & =\left\Vert \left(v_{i}c_{i}\right)_{i\in I}\right\Vert _{\ell^{p}}\cdot\left\Vert \left(u_{i}/v_{i}\right)_{i\in I}\right\Vert _{\ell^{q\cdot\left(p/q\right)'}}\\
 & =\left\Vert \left(u_{i}/v_{i}\right)_{i\in I}\right\Vert _{\ell^{q\cdot\left(p/q\right)'}}\cdot\left\Vert c\right\Vert _{\ell_{v}^{p}}<\infty.
\end{align*}
If otherwise $q=\infty$, we have $q\cdot\left(p/q\right)'=\infty$
and hence for each $i\in I$
\begin{align*}
u_{i}\cdot\left|c_{i}\right| & =\frac{u_{i}}{v_{i}}\cdot v_{i}\left|c_{i}\right|\\
 & \leq\left\Vert \left(u_{i}/v_{i}\right)_{i\in I}\right\Vert _{\ell^{q\cdot\left(p/q\right)'}}\cdot\left\Vert \left(v_{i}\left|c_{i}\right|\right)_{i\in I}\right\Vert _{\ell^{p}}\\
 & =\left\Vert \left(u_{i}/v_{i}\right)_{i\in I}\right\Vert _{\ell^{q\cdot\left(p/q\right)'}}\cdot\left\Vert c\right\Vert _{\ell_{v}^{p}}<\infty.
\end{align*}
Thus, we have shown $\ell_{v}^{p}\left(I\right)\hookrightarrow\ell_{u}^{q}\left(I\right)$
in all possible cases.

It remains to show ``$\Rightarrow$''. To this end, let us first
prove $u/v\in\ell^{\infty}\left(I\right)$ with $\left(u/v\right)_{i}=u_{i}/v_{i}$
for $i\in I$. Indeed, for arbitrary $i\in I$, we have
\[
u_{i}=\left\Vert \delta_{i}\right\Vert _{\ell_{u}^{q}}\leq\left\Vert \iota\right\Vert \cdot\left\Vert \delta_{i}\right\Vert _{\ell_{v}^{p}}=\left\Vert \iota\right\Vert \cdot v_{i},
\]
so that we get $0\leq u_{i}/v_{i}\leq\left\Vert \iota\right\Vert $
and hence $u/v\in\ell^{\infty}\left(I\right)$.

Thus, we can assume in the following that $\alpha:=q\cdot\left(p/q\right)'<\infty$
and thus $\beta:=\left(p/q\right)'<\infty$, which means $p/q>1$,
i.e.\@ $\infty\geq p>q$. Now, let $\emptyset\neq I_{0}\subset I$
be an arbitrary finite subset and define
\[
c_{i}:=\frac{\left(u_{i}/v_{i}\right)^{\beta}}{u_{i}}\qquad\text{ for }i\in I_{0}
\]
and $c_{i}=0$ for $i\in I\setminus I_{0}$. Note $c=\left(c_{i}\right)_{i\in I}\in\ell_{0}\left(I\right)$.
Because of $q<\infty$ and $\alpha=q\cdot\beta$, we have
\begin{align*}
\left\Vert \frac{u}{v}\cdot\chi_{I_{0}}\right\Vert _{\ell^{\alpha}}^{\beta} & =\left[\sum_{i\in I_{0}}\left(u_{i}/v_{i}\right)^{q\cdot\beta}\right]^{1/q}\\
 & =\left[\sum_{i\in I_{0}}\left(u_{i}c_{i}\right)^{q}\right]^{1/q}\\
 & =\left\Vert c\right\Vert _{\ell_{u}^{q}}\\
 & \leq\left\Vert \iota\right\Vert \cdot\left\Vert c\right\Vert _{\ell_{v}^{p}}\\
 & =\left\Vert \iota\right\Vert \cdot\left\Vert \left(\left(\frac{u_{i}}{v_{i}}\right)^{\beta}\cdot\frac{v_{i}}{u_{i}}\right)_{i\in I_{0}}\right\Vert _{\ell^{p}}.
\end{align*}
Now, let us first assume $p<\infty$. In this case, we have $1<p/q<\infty$,
so that $\beta=\left(p/q\right)'$ satisfies $\beta\in\left(1,\infty\right)$
and 
\[
\beta-1=\frac{1}{1/\beta}-1=\frac{1}{1-\frac{1}{p/q}}-1=\frac{1}{1-\frac{q}{p}}-1=\frac{1}{\frac{p-q}{p}}-1=\frac{p}{p-q}-1=\frac{q}{p-q}.
\]
We claim $p\left(\beta-1\right)=\alpha$. Indeed, this is equivalent
to
\begin{align*}
\frac{1}{p}\frac{1}{\beta-1}\overset{!}{=}\frac{1}{\alpha}=\frac{1}{q}\cdot\frac{1}{\left(p/q\right)'}=\frac{1}{q}\cdot\left(1-\frac{1}{p/q}\right) & \Longleftrightarrow\frac{1}{p}\cdot\frac{p-q}{q}\overset{!}{=}\frac{1}{q}\cdot\left(1-\frac{q}{p}\right)\\
 & \Longleftrightarrow\frac{p-q}{pq}\overset{!}{=}\frac{p-q}{pq}
\end{align*}
which is a tautology. Hence,
\[
\left\Vert \left(\left(u_{i}/v_{i}\right)^{\beta}\cdot v_{i}/u_{i}\right)_{i\in I_{0}}\right\Vert _{\ell^{p}}=\left\Vert \left(\left(u/v\right)_{i}^{\beta-1}\right)_{i\in I_{0}}\right\Vert _{\ell^{p}}=\left\Vert u/v\right\Vert _{\ell^{p\left(\beta-1\right)}\left(I_{0}\right)}^{\beta-1}=\left\Vert u/v\right\Vert _{\ell^{\alpha}\left(I_{0}\right)}^{\beta-1}.
\]
Altogether, we have thus shown
\[
\left\Vert \frac{u}{v}\cdot\chi_{I_{0}}\right\Vert _{\ell^{\alpha}}^{\beta}\leq\left\Vert \iota\right\Vert \cdot\left\Vert \frac{u}{v}\cdot\chi_{I_{0}}\right\Vert _{\ell^{\alpha}}^{\beta-1},
\]
so that rearranging yields $\left\Vert \frac{u}{v}\cdot\chi_{I_{0}}\right\Vert _{\ell^{\alpha}}\leq\left\Vert \iota\right\Vert $.
Note that this used finiteness of $\left\Vert \frac{u}{v}\cdot\chi_{I_{0}}\right\Vert _{\ell^{\alpha}}$
which holds, since $I_{0}\subset I$ is finite. But since $I_{0}\subset I$
was an arbitrary finite subset and because of $\alpha<\infty$, we
get $\left\Vert u/v\right\Vert _{\ell^{\alpha}}\leq\left\Vert \iota\right\Vert <\infty$.

It remains to consider the case $\alpha<\infty$, but $p=\infty$.
In this case, we have $p/q=\infty$ and hence $\beta=\left(p/q\right)'=1$,
which finally yields $\alpha=q\cdot\beta=q$. In this case, let again
$I_{0}\subset I$ be an arbitrary finite subset and define $c_{i}:=v_{i}^{-1}$
for $i\in I_{0}$ and $c_{i}=0$ for $i\in I\setminus I_{0}$. Then
\[
\left\Vert \frac{u}{v}\cdot\chi_{I_{0}}\right\Vert _{\ell^{\alpha}}=\left\Vert \left(u_{i}\cdot c_{i}\right)_{i\in I}\right\Vert _{\ell^{q}}=\left\Vert c\right\Vert _{\ell_{u}^{q}}\leq\left\Vert \iota\right\Vert \left\Vert c\right\Vert _{\ell_{v}^{p}}=\left\Vert \iota\right\Vert \left\Vert \chi_{I_{0}}\right\Vert _{\ell^{\infty}}\leq\left\Vert \iota\right\Vert .
\]
As above, we conclude $\left\Vert u/v\right\Vert _{\ell^{\alpha}}\leq\left\Vert \iota\right\Vert <\infty$.
\end{proof}
We can now state a simplified version of our embedding results.
\begin{cor}
\label{cor:SimplifiedSobolevEmbedding}Let $\mathcal{Q}=\left(Q_{i}\right)_{i\in I}=\left(T_{i}Q_{i}'+b_{i}\right)_{i\in I}$
be a tight regular covering of the open set $\emptyset\neq\mathcal{O}\subset\mathbb{R}^{d}$.

Let $n\in\mathbb{N}_{0}$ and $p,q,r\in\left(0,\infty\right]$ and
let $u=\left(u_{i}\right)_{i\in I}$ be a $\mathcal{Q}$-moderate
weight. For $t\in\left(0,\infty\right]$, define the weight $w^{\left(t\right)}=\left(\smash{w_{i}^{\left(t\right)}}\right)_{i\in I}$
by
\[
w_{i}^{\left(t\right)}:=\left|\det T_{i}\right|^{\frac{1}{p}-\frac{1}{t}}\cdot\left(1+\left|b_{i}\right|^{n}+\left\Vert T_{i}\right\Vert ^{n}\right)\qquad\text{ for }i\in I.
\]
Then the following hold:
\begin{enumerate}
\item If $p\leq q$ and if
\[
\frac{w^{\left(q\right)}}{u}\in\ell^{q^{\triangledown}\cdot\left(r/q^{\triangledown}\right)'}\left(I\right),
\]
then all assumptions of Corollary \ref{cor:SufficientConditionsForSobolevEmbeddings}
(with $k=n$) are satisfied. In particular,
\[
\mathcal{D}\left(\mathcal{Q},L^{p},\ell_{u}^{r}\right)\hookrightarrow W^{n,q}\left(\mathbb{R}^{d}\right).
\]
If $q=\infty$, then $\mathcal{D}\left(\mathcal{Q},L^{p},\ell_{u}^{r}\right)\hookrightarrow C_{b}^{n}\left(\mathbb{R}^{d}\right)$.
\item Conversely, if
\[
\mathcal{D}\left(\mathcal{Q},L^{p},\ell_{u}^{r}\right)\hookrightarrow W^{n,q}\left(\mathbb{R}^{d}\right),
\]
then $p\leq q$ and the following hold:

\begin{enumerate}
\item We have
\[
\frac{w^{\left(q\right)}}{u}\in\ell^{q\cdot\left(r/q\right)'}\left(I\right).
\]

\item If $q=\infty$, then
\[
\frac{w^{\left(q\right)}}{u}\in\ell^{r'}\left(I\right).
\]

\item If $I_{0}\subset I$ satisfies $\sup_{i\in I_{0}}\left\Vert T_{i}^{-1}\right\Vert <\infty$,
then we have the following:

\begin{enumerate}
\item \label{enu:SpecialKhinchinConditionWithoutWeight}If $q\in\left(0,\infty\right)$,
then
\[
\frac{w^{\left(p\right)}}{u}\in\ell^{2\cdot\left(r/2\right)'}\left(I_{0}\right).
\]

\item \label{enu:SpecialKhinchinConditionWithWeight}If $q\in\left[2,\infty\right)$,
then
\[
\frac{w^{\left(2\right)}}{u}\in\ell^{2\cdot\left(r/2\right)'}\left(I_{0}\right).\qedhere
\]

\end{enumerate}
\end{enumerate}
\end{enumerate}
\end{cor}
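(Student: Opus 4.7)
The plan is to reduce everything to the abstract criteria established in Corollary~\ref{cor:SufficientConditionsForSobolevEmbeddings} and Theorem~\ref{thm:SequenceSpaceEmbeddingIsNecessary} and then rewrite the resulting sequence-space embeddings using the characterization furnished by Lemma~\ref{lem:SequenceSpaceEmbeddingCharacterization}. A preliminary observation is that since $u$ is $\mathcal{Q}$-moderate, the space $\ell_u^r(I)$ is indeed a $\mathcal{Q}$-regular sequence space (cf.\@ the remark following Definition~\ref{def:QRegularSequenceSpace}), so that both the sufficient and the necessary criteria are applicable. The algebraic bridge between the abstract weights $u^{(k,p,q)}$ used in those theorems and the unified weight $w^{(q)}$ appearing here is the pointwise comparison
\[
|\det T_i|^{\frac{1}{p}-\frac{1}{q}} + u_i^{(n,p,q)} \asymp w_i^{(q)} \qquad\text{for all } i \in I,
\]
which will be used in both directions and is immediate from the definition $w_i^{(q)} = |\det T_i|^{\frac{1}{p}-\frac{1}{q}}(1 + |b_i|^n + \|T_i\|^n)$.

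For the sufficient direction, I would apply Lemma~\ref{lem:SequenceSpaceEmbeddingCharacterization} with source exponent $r$, source weight $u$, target exponent $q^{\triangledown}$, and target weight $w^{(q)}$; the hypothesis $w^{(q)}/u \in \ell^{q^{\triangledown}\cdot(r/q^{\triangledown})'}(I)$ is then exactly the condition yielding $\ell_u^r(I) \hookrightarrow \ell_{w^{(q)}}^{q^{\triangledown}}(I)$. By solidity and the pointwise comparison above, this immediately implies both $\ell_u^r \hookrightarrow \ell_v^{q^{\triangledown}}$ (with $v_i = |\det T_i|^{1/p - 1/q}$) and $\ell_u^r \hookrightarrow \ell_w^{q^{\triangledown}}$ (with $w_i = v_i(|b_i|^n + \|T_i\|^n)$), which are precisely the hypotheses of Corollary~\ref{cor:SufficientConditionsForSobolevEmbeddings}; the asserted Sobolev embedding, along with the $C_b^n$ refinement for $q = \infty$, then follows directly from parts (1) and (1)(c) of that corollary.

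For the necessary direction, first note that $\ell_u^r(I) \neq \{0\}$ (each $\delta_i$ lies in it), so Theorem~\ref{thm:NecessaryExponentRelation} applied to the zeroth-order map $\iota_0$ forces $p \leq q$. Existence of the Sobolev embedding in particular ensures boundedness of each $\iota_\alpha \colon \mathcal{S}_{\mathcal{O}}^{p,Y}(\mathbb{R}^d) \to L^q(\mathbb{R}^d)$ for $|\alpha| \leq n$, so Theorem~\ref{thm:SequenceSpaceEmbeddingIsNecessary} is applicable simultaneously with $k = 0$ and $k = n$. The resulting two embeddings $\ell_u^r \cap \ell_0(I) \hookrightarrow \ell_{u^{(0,p,q)}}^q(I)$ and $\ell_u^r \cap \ell_0(I) \hookrightarrow \ell_{u^{(n,p,q)}}^q(I)$ combine via the pointwise comparison into $\ell_u^r \cap \ell_0(I) \hookrightarrow \ell_{w^{(q)}}^q(I)$, which by Lemma~\ref{lem:SequenceSpaceEmbeddingCharacterization} is equivalent to $w^{(q)}/u \in \ell^{q\cdot(r/q)'}(I)$, establishing~(a). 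Part~(b) is obtained identically from part~(2) of Theorem~\ref{thm:SequenceSpaceEmbeddingIsNecessary}, which sharpens the target exponent to $1$ when $q = \infty$ (giving $\ell^{r'}$ after applying the Lemma), and items~(c)(i)--(ii) follow the same pattern using part~(3) of the same theorem, whose $\sup_{i \in I_0}\|T_i^{-1}\| < \infty$ hypothesis is precisely what is assumed on $I_0$, and whose target exponent $2$ converts via the Lemma into $\ell^{2\cdot(r/2)'}$.

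The main obstacle is less the content of the argument than the bookkeeping of which target exponent in Lemma~\ref{lem:SequenceSpaceEmbeddingCharacterization} yields which of the four Hölder exponents in the statement; once the correspondence (source weight, target weight, target exponent) is fixed for each of the four invocations of the Lemma, and once one observes that the last part of the Lemma automatically upgrades the embedding from $\ell_0(I) \cap \ell_u^r$ to all of $\ell_u^r$ (so no separate density argument is needed in the sufficient direction), the proof reduces to routine verification.
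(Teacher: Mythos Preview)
Your proposal is correct and follows essentially the same route as the paper's proof: both translate the weight condition into a sequence-space embedding via Lemma~\ref{lem:SequenceSpaceEmbeddingCharacterization}, exploit the pointwise comparison $w_i^{(t)} \asymp u_i^{(0,p,t)} + u_i^{(n,p,t)}$, and then invoke Corollary~\ref{cor:SufficientConditionsForSobolevEmbeddings} for the sufficient direction and Theorems~\ref{thm:NecessaryExponentRelation} and~\ref{thm:SequenceSpaceEmbeddingIsNecessary} (each applied with both $k=0$ and $k=n$) for the necessary direction. The bookkeeping you flag as the ``main obstacle'' is exactly what the paper does as well.
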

\begin{proof}

\begin{enumerate}
\item To avoid confusion with the weights from the present corollary, let
us write $v^{\ast},w^{\ast}$ for the weights $v,w$ as defined in
Corollary \ref{cor:SufficientConditionsForSobolevEmbeddings} (with
$k=n$). With this notation, we have $w_{i}^{\left(q\right)}\geq v_{i}^{\ast}$
and $w_{i}^{\left(q\right)}\geq w_{i}^{\ast}$ for all $i\in I$.
Now, the assumption $w^{\left(q\right)}/u\in\ell^{q^{\triangledown}\cdot\left(r/q^{\triangledown}\right)'}\left(I\right)$
implies by Lemma \ref{lem:SequenceSpaceEmbeddingCharacterization}
that 
\[
\ell_{u}^{r}\left(I\right)\hookrightarrow\ell_{w^{\left(q\right)}}^{q^{\triangledown}}\left(I\right)\hookrightarrow\ell_{v^{\ast}}^{q^{\triangledown}}\left(I\right)\cap\ell_{w^{\ast}}^{q^{\triangledown}}\left(I\right).
\]
Since we also have $p\leq q$, all assumptions of Corollary \ref{cor:SufficientConditionsForSobolevEmbeddings}
are satisfied.
\item Assume $\mathcal{D}\left(\mathcal{Q},L^{p},\ell_{u}^{r}\right)\hookrightarrow W^{n,q}\left(\mathbb{R}^{d}\right)$.
By Definition \ref{def:EmbeddingDefinition} and Theorem \ref{thm:NecessaryExponentRelation},
this implies $p\leq q$. Furthermore, it is easy to see that the given
embedding implies boundedness of the maps $\iota_{\alpha}$ from Theorem
\ref{thm:SequenceSpaceEmbeddingIsNecessary} for every $\alpha\in\mathbb{N}_{0}^{d}$
with $\left|\alpha\right|\leq n$. Thus, by applying Theorem \ref{thm:SequenceSpaceEmbeddingIsNecessary}
(with $Y=\ell_{u}^{r}\left(I\right)$ and $k=0$ or $k=n$, respectively),
we get 
\[
\ell_{u}^{r}\left(I\right)\cap\ell_{0}\left(I\right)\hookrightarrow\ell_{u^{\left(0,p,q\right)}}^{q}\left(I\right)\qquad\text{ and }\qquad\ell_{u}^{r}\left(I\right)\cap\ell_{0}\left(I\right)\hookrightarrow\ell_{u^{\left(n,p,q\right)}}^{q}\left(I\right)
\]
for the weights $u^{\left(0,p,q\right)},u^{\left(n,p,q\right)}$ from
that theorem. Using Lemma \ref{lem:SequenceSpaceEmbeddingCharacterization},
we derive
\[
\frac{u^{\left(0,p,q\right)}}{u}\in\ell^{q\cdot\left(r/q\right)'}\left(I\right)\qquad\text{ and }\qquad\frac{u^{\left(n,p,q\right)}}{u}\in\ell^{q\cdot\left(r/q\right)'}\left(I\right).
\]
But clearly, $w_{i}^{\left(q\right)}\leq u_{i}^{\left(0,p,q\right)}+u_{i}^{\left(n,p,q\right)}$
for all $i\in I$, so that $w^{\left(q\right)}/u\in\ell^{q\cdot\left(r/q\right)'}\left(I\right)$
as claimed.

In case of $q=\infty$, since all of the maps $\iota_{\alpha}$ (with
$\left|\alpha\right|\leq n$) from Theorem \ref{thm:SequenceSpaceEmbeddingIsNecessary}
are bounded, we get (with $Y=\ell_{u}^{r}\left(I\right)$ and with
$k=0$ or $k=n$, respectively) that
\[
\ell_{u}^{r}\left(I\right)\cap\ell_{0}\left(I\right)\hookrightarrow\ell_{u^{\left(0,p,q\right)}}^{1}\left(I\right)\qquad\text{ and }\qquad\ell_{u}^{r}\left(I\right)\cap\ell_{0}\left(I\right)\hookrightarrow\ell_{u^{\left(n,p,q\right)}}^{1}\left(I\right)
\]
for the weights $u^{\left(0,p,q\right)},u^{\left(n,p,q\right)}$ from
Theorem \ref{thm:SequenceSpaceEmbeddingIsNecessary}. Using Lemma
\ref{lem:SequenceSpaceEmbeddingCharacterization}, as well as $1\cdot\left(r/1\right)'=r'$,
we get $w^{\left(q\right)}/u\in\ell^{r'}\left(I\right)$ using essentially
the same arguments as above.

Finally if $I_{0}\subset I$ satisfies $M:=\sup_{i\in I_{0}}\left\Vert T_{i}^{-1}\right\Vert <\infty$,
then the third part of Theorem \ref{thm:SequenceSpaceEmbeddingIsNecessary}
(with $Y=\ell_{u}^{r}\left(I\right)$ and $k=0$ or $k=n$, respectively)
yields
\[
\ell_{u}^{r}\left(I\right)\cap\ell_{0}\left(I_{0}\right)\hookrightarrow\ell_{u^{\left(0,p,2\right)}}^{2}\left(I_{0}\right)\qquad\text{ and }\qquad\ell_{u}^{r}\left(I\right)\cap\ell_{0}\left(I_{0}\right)\hookrightarrow\ell_{u^{\left(n,p,2\right)}}^{2}\left(I_{0}\right)\qquad\text{ if }q\in\left[2,\infty\right),
\]
as well as
\[
\ell_{u}^{r}\left(I\right)\cap\ell_{0}\left(I_{0}\right)\hookrightarrow\ell_{u^{\left(0,p,p\right)}}^{2}\left(I_{0}\right)\qquad\text{ and }\qquad\ell_{u}^{r}\left(I\right)\cap\ell_{0}\left(I_{0}\right)\hookrightarrow\ell_{u^{\left(n,p,p\right)}}^{2}\left(I_{0}\right)\qquad\text{ if }q\in\left(0,\infty\right).
\]
Using Lemma \ref{lem:SequenceSpaceEmbeddingCharacterization} and
the estimate $w_{i}^{\left(t\right)}\leq u_{i}^{\left(0,p,t\right)}+u_{i}^{\left(n,p,t\right)}$
for all $i\in I$, we get $w^{\left(p\right)}/u\in\ell^{2\cdot\left(r/2\right)'}\left(I_{0}\right)$
if $q\in\left(0,\infty\right)$ and $w^{\left(2\right)}/u\in\ell^{2\cdot\left(r/2\right)'}\left(I_{0}\right)$
if $q\in\left[2,\infty\right)$.\qedhere

\end{enumerate}
\end{proof}

\section{Embeddings into ${\rm BV}\left(\mathbb{R}^{d}\right)$}

\label{sec:EmbeddingsIntoBV}In this short section, we make the (perhaps
surprising) observation that a decomposition space embeds into 
\[
{\rm BV}^{k}\left(\mathbb{R}^{d}\right):=\left\{ f:\mathbb{R}^{d}\to\mathbb{C}\with f\in L^{1}\left(\mathbb{R}^{d}\right)\text{ and }\forall\alpha\in\mathbb{N}_{0}^{d}\setminus\left\{ 0\right\} \text{ with }\left|\alpha\right|\leq k:\quad\partial^{\alpha}f\text{ is a finite measure}\right\} 
\]
if and only if it embeds into $W^{k,1}\left(\mathbb{R}^{d}\right)$.
Here, ${\rm BV}^{k}\left(\mathbb{R}^{d}\right)$ is equipped with
the norm
\[
\left\Vert f\right\Vert _{{\rm BV}^{k}}:=\left\Vert f\right\Vert _{L^{1}}+\sum_{\alpha\in\mathbb{N}_{0}^{d}\setminus\left\{ 0\right\} }\left\Vert \partial^{\alpha}f\right\Vert _{{\rm TV}},
\]
where $\left\Vert \mu\right\Vert _{{\rm TV}}:=\left|\mu\right|\left(\mathbb{R}^{d}\right)$
denotes the total variation norm of a (finite) Borel measure $\mu$
on $\mathbb{R}^{d}$.
\begin{cor}
\label{cor:EmbeddingIntoBVEquivalentToSobolevEmbedding}Let $\mathcal{Q}=\left(Q_{i}\right)_{i\in I}=\left(T_{i}Q_{i}'+b_{i}\right)_{i\in I}$
be a tight regular covering of the open set $\emptyset\neq\mathcal{O}\subset\mathbb{R}^{d}$.
Let $n\in\mathbb{N}$ and $p\in\left(0,\infty\right]$ and let $Y\leq\mathbb{C}^{I}$
be a $\mathcal{Q}$-regular sequence space.

If $Y\cap\ell_{0}\left(I\right)\subset Y$ is dense or if $Y=\ell_{u}^{r}\left(I\right)$
for some $r\in\left(0,\infty\right]$ and a $\mathcal{Q}$-moderate
weight $u=\left(u_{i}\right)_{i\in I}$, then
\[
\mathcal{D}\left(\mathcal{Q},L^{p},Y\right)\hookrightarrow{\rm BV}^{n}\left(\mathbb{R}^{d}\right)
\]
holds if and only if
\[
\mathcal{D}\left(\mathcal{Q},L^{p},Y\right)\hookrightarrow W^{n,1}\left(\mathbb{R}^{d}\right)
\]
is true. In this case, we even have $p\leq1$ and $Y\hookrightarrow\ell_{v}^{1}\left(I\right)$
with
\[
v_{i}:=\left|\det T_{i}\right|^{\frac{1}{p}-1}\cdot\left(1+\left|b_{i}\right|^{n}+\left\Vert T_{i}\right\Vert ^{n}\right),
\]
as well as $\mathcal{D}\left(\mathcal{Q},L^{p},Y\right)\overset{{\rm inj}.}{\hookrightarrow}W^{n,1}\left(\mathbb{R}^{d}\right)$.\end{cor}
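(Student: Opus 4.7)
The plan is to prove both implications by reducing to the sufficient and necessary conditions already established, with the core observation that on Schwartz functions, $\|\cdot\|_{BV^n}$ and $\|\cdot\|_{W^{n,1}}$ agree. The forward implication is essentially trivial: any $f\in W^{n,1}(\mathbb{R}^d)$ can be viewed in $BV^n(\mathbb{R}^d)$ by identifying each derivative $\partial^\alpha f\in L^1$ with the absolutely continuous measure $\partial^\alpha f\cdot\lambda$, whose total variation equals $\|\partial^\alpha f\|_{L^1}$. Hence $W^{n,1}(\mathbb{R}^d)\hookrightarrow BV^n(\mathbb{R}^d)$ with $\|\cdot\|_{BV^n}\leq\|\cdot\|_{W^{n,1}}$, and the implication $\mathcal{D}(\mathcal{Q},L^p,Y)\hookrightarrow W^{n,1}\Rightarrow\mathcal{D}(\mathcal{Q},L^p,Y)\hookrightarrow BV^n$ follows by composition.

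For the reverse direction, assume $\iota\colon\mathcal{D}(\mathcal{Q},L^p,Y)\to BV^n(\mathbb{R}^d)$ is a bounded embedding. For every $f\in\mathcal{S}_\mathcal{O}^{p,Y}(\mathbb{R}^d)\subset\mathcal{S}(\mathbb{R}^d)$, each derivative $\partial^\alpha f$ is again Schwartz, hence lies in $L^1(\mathbb{R}^d)$, and the same identification above gives $\|\partial^\alpha f\|_{TV}=\|\partial^\alpha f\|_{L^1}$. Since $\iota f=f$ by definition, I get
\[
\|\partial^\alpha f\|_{L^1}\leq\|f\|_{BV^n}\leq\|\iota\|\cdot\|f\|_{\mathcal{D}(\mathcal{Q},L^p,Y)}\qquad\text{for all }|\alpha|\leq n\,,
\]
so the map $\iota_\alpha\colon\mathcal{S}_\mathcal{O}^{p,Y}(\mathbb{R}^d)\to L^1(\mathbb{R}^d),\,f\mapsto\partial^\alpha f$ is bounded for every $|\alpha|\leq n$. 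This is the hypothesis of the necessary-conditions theorems of Section \ref{sec:NecessaryConditions}.

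Now Theorem \ref{thm:NecessaryExponentRelation} yields $p\leq 1$, and applying Theorem \ref{thm:SequenceSpaceEmbeddingIsNecessary} with $q=1$ (so that $q^\triangledown=q=1$) separately for $k=0$ and $k=n$ produces the two embeddings $Y\cap\ell_0(I)\hookrightarrow\ell_{u^{(0,p,1)}}^1(I)$ and $Y\cap\ell_0(I)\hookrightarrow\ell_{u^{(n,p,1)}}^1(I)$. Combining these, and observing $v_i=u_i^{(0,p,1)}+u_i^{(n,p,1)}$ up to constants, I obtain $Y\cap\ell_0(I)\hookrightarrow\ell_v^1(I)$. Under either hypothesis on $Y$, this upgrades to $Y\hookrightarrow\ell_v^1(I)$: if $Y\cap\ell_0(I)\subset Y$ is dense, I extend the bounded inclusion by continuity, using that convergence in both $Y$ and $\ell_v^1(I)$ implies coordinate-wise convergence in the Hausdorff space $\mathbb{C}^I$, so the extension remains the identity map; if $Y=\ell_u^r(I)$, I invoke Lemma \ref{lem:SequenceSpaceEmbeddingCharacterization} which shows that the ``finitely supported'' and ``full'' embeddings are equivalent, both being equivalent to $v/u\in\ell^{r'}(I)$.

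Finally, since the weight $v$ dominates both weights $v^{\mathrm{Cor}}_i=|\det T_i|^{1/p-1}$ and $w^{\mathrm{Cor}}_i=|\det T_i|^{1/p-1}(|b_i|^n+\|T_i\|^n)$ appearing in Corollary \ref{cor:SufficientConditionsForSobolevEmbeddings} (with $q=1$, $k=n$), the embedding $Y\hookrightarrow\ell_v^1(I)$ yields $Y\hookrightarrow\ell_{v^{\mathrm{Cor}}}^{q^\triangledown}(I)\cap\ell_{w^{\mathrm{Cor}}}^{q^\triangledown}(I)$ for $q^\triangledown=1$. Combined with $p\leq1=q$, Corollary \ref{cor:SufficientConditionsForSobolevEmbeddings}(1) becomes applicable and delivers the injective embedding $\mathcal{D}(\mathcal{Q},L^p,Y)\overset{\mathrm{inj.}}{\hookrightarrow}W^{n,1}(\mathbb{R}^d)$, as claimed. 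The only mildly delicate point in the whole argument is the extension from $Y\cap\ell_0(I)$ to $Y$, which is precisely why the dichotomy on $Y$ is imposed; everything else is a direct application of already proven results.
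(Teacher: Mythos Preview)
Your proof is correct and follows essentially the same approach as the paper's: both directions use that the $W^{n,1}$ and $BV^n$ norms agree on Schwartz functions, then the reverse implication feeds the resulting boundedness of the maps $\iota_\alpha$ into Theorems~\ref{thm:NecessaryExponentRelation} and~\ref{thm:SequenceSpaceEmbeddingIsNecessary} (with $q=1$, $k=0$ and $k=n$), upgrades $Y\cap\ell_0(I)\hookrightarrow\ell_v^1(I)$ to $Y\hookrightarrow\ell_v^1(I)$ via the dichotomy on $Y$, and closes with Corollary~\ref{cor:SufficientConditionsForSobolevEmbeddings}. The paper phrases the intermediate step via a map $\tilde{\iota}\colon\mathcal{S}_\mathcal{O}^{p,Y}\to W^{n,1}$ rather than going straight to the $\iota_\alpha$, but this is only a cosmetic difference.
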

\begin{rem*}
Note that the corollary remains true (with the same proof) even if
the definition of ${\rm BV}^{k}\left(\mathbb{R}^{d}\right)$ is changed
such that the elements of ${\rm BV}^{k}\left(\mathbb{R}^{d}\right)$
are only required to be finite measures instead of $L^{1}$ functions.\end{rem*}
\begin{proof}
Note that $\theta:W^{n,1}\left(\mathbb{R}^{d}\right)\hookrightarrow{\rm BV}^{n}\left(\mathbb{R}^{d}\right),f\mapsto f$
is an isometric embedding. This makes the implication ``$\Leftarrow$''
trivial.

For ``$\Rightarrow$'', note that by assumption, there is a bounded
linear map $\iota:\mathcal{D}\left(\mathcal{Q},L^{p},Y\right)\to{\rm BV}^{n}\left(\mathbb{R}^{d}\right)$
which satisfies $\iota f=f$ for all $f\in\mathcal{S}_{\mathcal{O}}^{p,Y}$.
But $\mathcal{S}_{\mathcal{O}}^{p,Y}\subset\mathcal{S}\left(\mathbb{R}^{d}\right)\subset W^{n,1}\left(\mathbb{R}^{d}\right)$,
so that $\tilde{\iota}:\mathcal{S}_{\mathcal{O}}^{p,Y}\to W^{n,1}\left(\mathbb{R}^{d}\right),f\mapsto f$
is well-defined and bounded since 
\[
\left\Vert \tilde{\iota}f\right\Vert _{W^{n,1}}=\left\Vert \theta\tilde{\iota}f\right\Vert _{{\rm BV}^{n}}=\left\Vert f\right\Vert _{{\rm BV}^{n}}=\left\Vert \iota f\right\Vert _{{\rm BV}^{n}}\leq\left\Vert \iota\right\Vert \cdot\left\Vert f\right\Vert _{\mathcal{D}\left(\mathcal{Q},L^{p},Y\right)}.
\]

By Theorem \ref{thm:NecessaryExponentRelation}, this implies $p\leq1$.
Furthermore, boundedness of $\tilde{\iota}$ easily implies that each
of the maps $\iota_{\alpha}$ (with $\left|\alpha\right|\leq n$)
from Theorem \ref{thm:SequenceSpaceEmbeddingIsNecessary} are bounded.
Thus, two applications of this theorem (for $k=0$ and $k=n$, respectively),
show
\[
Y\cap\ell_{0}\left(I\right)\hookrightarrow\ell_{u^{\left(0,p,1\right)}}^{1}\left(I\right)=\ell_{u^{\left(0,p,1\right)}}^{1^{\triangledown}}\left(I\right)\qquad\text{ and }\qquad Y\cap\ell_{0}\left(I\right)\hookrightarrow\ell_{u^{\left(n,p,1\right)}}^{1}\left(I\right)=\ell_{u^{\left(n,p,1\right)}}^{1^{\triangledown}}\left(I\right),
\]
with $u^{\left(0,p,1\right)}$ and $u^{\left(n,p,1\right)}$ as in
Theorem \ref{thm:SequenceSpaceEmbeddingIsNecessary}. Here, we also
used $1^{\triangledown}=1$.

But obviously $v_{i}\leq u_{i}^{\left(0,p,1\right)}+u_{i}^{\left(n,p,1\right)}$
for all $i\in I$, so that we get $Y\cap\ell_{0}\left(I\right)\hookrightarrow\ell_{v}^{1^{\triangledown}}\left(I\right)=\ell_{v}^{1}\left(I\right)$.
There are now two cases:
\begin{casenv}
\item If $Y\cap\ell_{0}\left(I\right)\leq Y$ is dense, we get $Y\hookrightarrow\ell_{v}^{1}\left(I\right)$,
since $Y$ and $\ell_{v}^{1}\left(I\right)$ both embed continuously
into the Hausdorff space $\mathbb{C}^{I}$, so that the unique continuous
extension of the embedding $Y\cap\ell_{0}\left(I\right)\hookrightarrow\ell_{v}^{1}\left(I\right)$
has to be given by the identity.
\item If $Y=\ell_{u}^{r}\left(I\right)$ for some $r\in\left(0,\infty\right]$,
then Lemma \ref{lem:SequenceSpaceEmbeddingCharacterization} yields
$Y\hookrightarrow\ell_{v}^{1}\left(I\right)$.
\end{casenv}
Since we have $Y\hookrightarrow\ell_{v}^{1}\left(I\right)$ in each
case and since we also have $p\leq1$ as seen above, Corollary \ref{cor:SufficientConditionsForSobolevEmbeddings}
implies $\mathcal{D}\left(\mathcal{Q},L^{p},Y\right)\overset{\text{inj.}}{\hookrightarrow}W^{n,1}\left(\mathbb{R}^{d}\right)$,
as desired.
\end{proof}

\section{Applications}

\label{sec:Applications}In this section, we apply our general, simplified
embedding results from Section \ref{sec:SimplifiedConditions} to
a large collection of examples, namely to
\begin{enumerate}
\item homogeneous and inhomogeneous Besov spaces,
\item $\alpha$-modulation spaces,
\item shearlet smoothness spaces,
\item shearlet-type coorbit spaces and
\item coorbit spaces of the diagonal group.\end{enumerate}
\begin{example}
\label{exa:HomogeneousBesovSpaces}(homogeneous Besov spaces)

Homogeneous Besov spaces can be obtained as decomposition spaces with
respect to a certain dyadic covering of $\mathcal{O}:=\mathbb{R}^{d}\setminus\left\{ 0\right\} $.
More precisely, the covering is given by $\mathcal{Q}=\left(Q_{n}\right)_{n\in\mathbb{Z}}=\left(T_{n}Q+b_{n}\right)_{n\in\mathbb{Z}}$
with $b_{n}:=0$ and $T_{n}:=2^{n}\cdot{\rm id}$ for all $n\in\mathbb{Z}$,
where $Q:=B_{4}\left(0\right)\setminus\overline{B_{1/4}\left(0\right)}$.
It is easy to see that $P:=B_{2}\left(0\right)\setminus\overline{B_{1/2}\left(0\right)}$
is compactly contained in $Q$ and that $\mathbb{R}^{d}\setminus\left\{ 0\right\} =\bigcup_{n\in\mathbb{Z}}T_{n}P$.
Finally, $x\in Q_{n}\cap Q_{m}$ implies $2^{n-2}\leq\left|x\right|\leq2^{m+2}$
and thus $n\leq m+4$. By symmetry, we arrive at $\left|n-m\right|\leq4$,
so that on the one hand $\left|n^{\ast}\right|\leq9$ and on the other
hand
\[
\sup_{n\in\mathbb{Z}}\sup_{m\in n^{\ast}}\left\Vert T_{n}^{-1}T_{m}\right\Vert =\sup_{n\in\mathbb{Z}}\sup_{m\in n^{\ast}}2^{m-n}\leq2^{4},
\]
so that $\mathcal{Q}$ is indeed a structured admissible covering
of $\mathbb{R}^{d}\setminus\left\{ 0\right\} $. By Theorem \ref{thm:StructuredAdmissibleCoveringsAreRegular},
this implies that $\mathcal{Q}$ is a tight regular covering of $\mathbb{R}^{d}\setminus\left\{ 0\right\} $.
Thus, the assumptions regarding $\mathcal{Q}$ in Corollary \ref{cor:SimplifiedSobolevEmbedding}
are satisfied.

Using the covering $\mathcal{Q}$, the usual homogeneous Besov spaces
are (up to certain identifications) given by
\[
\dot{\mathcal{B}}_{s}^{p,r}\left(\mathbb{R}^{d}\right)=\mathcal{D}\left(\mathcal{Q},L^{p},\ell_{u}^{r}\right)
\]
for $u=u^{\left(s\right)}=\left(2^{sn}\right)_{n\in\mathbb{Z}}$ and
$p,r\in\left(0,\infty\right]$, as well as $s\in\mathbb{R}$.

Let $k\in\mathbb{N}_{0}$ and $q\in\left(0,\infty\right]$. We are
interested in whether an embedding of the form $\dot{\mathcal{B}}_{s}^{p,r}\left(\mathbb{R}^{d}\right)\hookrightarrow W^{k,q}\left(\mathbb{R}^{d}\right)$
holds. To this end, note that the weight $v:=w^{\left(q\right)}$
from Corollary \ref{cor:SimplifiedSobolevEmbedding} (with $n=k$)
is in this case given by
\[
v_{n}=\left|\det T_{n}\right|^{\frac{1}{p}-\frac{1}{q}}\cdot\left(1+\left|b_{n}\right|^{k}+\left\Vert T_{n}\right\Vert ^{k}\right)\asymp2^{nd\left(\frac{1}{p}-\frac{1}{q}\right)}\left(1+2^{nk}\right)
\]
for $n\in\mathbb{Z}$. Hence,
\[
\frac{v_{n}}{u_{n}}\asymp2^{n\left[d\left(\frac{1}{p}-\frac{1}{q}\right)-s\right]}\left(1+2^{nk}\right)=2^{n\left[d\left(\frac{1}{p}-\frac{1}{q}\right)-s\right]}+2^{n\left[d\left(\frac{1}{p}-\frac{1}{q}\right)+k-s\right]}.
\]
Now, note that a weight of the form $\left(2^{\alpha n}\right)_{n\in\mathbb{Z}}$
is unbounded (and thus not contained in any space $\ell^{\theta}\left(\mathbb{Z}\right)$
for $\theta\in\left(0,\infty\right]$) as soon as $\alpha\neq0$.
For $\alpha=0$, the weight is constant and thus contained in $\ell^{\infty}\left(\mathbb{Z}\right)$,
but in no space $\ell^{\theta}\left(\mathbb{Z}\right)$ with $0<\theta<\infty$.

Altogether, we get for $\theta\in\left(0,\infty\right]$ that
\begin{align*}
\frac{v}{u}\in\ell^{\theta}\left(\mathbb{Z}\right) & \Longleftrightarrow\left(2^{n\left[d\left(\frac{1}{p}-\frac{1}{q}\right)-s\right]}\right)_{n\in\mathbb{Z}}\in\ell^{\theta}\left(\mathbb{Z}\right)\text{ and }\left(2^{n\left[d\left(\frac{1}{p}-\frac{1}{q}\right)+k-s\right]}\right)_{n\in\mathbb{Z}}\in\ell^{\theta}\left(\mathbb{Z}\right)\\
 & \Longleftrightarrow\theta=\infty\text{ and }s=d\left(\frac{1}{p}-\frac{1}{q}\right)\text{ and }s=d\left(\frac{1}{p}-\frac{1}{q}\right)+k,
\end{align*}
which can only hold for $k=0$. Thus, Corollary \ref{cor:SimplifiedSobolevEmbedding}
implies that $\dot{\mathcal{B}}_{s}^{p,r}\left(\mathbb{R}^{d}\right)\hookrightarrow W^{k,q}\left(\mathbb{R}^{d}\right)$
can only hold for $k=0$.

For $k=0$, note that Corollary \ref{cor:SimplifiedSobolevEmbedding}
implies that $\dot{\mathcal{B}}_{s}^{p,r}\left(\mathbb{R}^{d}\right)\hookrightarrow L^{q}\left(\mathbb{R}^{d}\right)=W^{0,q}\left(\mathbb{R}^{d}\right)$
holds as soon as we have $p\leq q$ and
\begin{align*}
\frac{v}{u}\in\ell^{q^{\triangledown}\cdot\left(r/q^{\triangledown}\right)'}\left(\mathbb{Z}\right) & \Longleftrightarrow q^{\triangledown}\cdot\left(r/q^{\triangledown}\right)'=\infty\text{ and }s=d\left(\frac{1}{p}-\frac{1}{q}\right)\\
\left(\text{by eq. }\eqref{eq:SpecialExponentInfiniteCharacterization}\right) & \Longleftrightarrow r\leq q^{\triangledown}\text{ and }s=d\left(\frac{1}{p}-\frac{1}{q}\right).
\end{align*}
By Corollary \ref{cor:SimplifiedSobolevEmbedding}, for $q\in\left(0,2\right]\cup\left\{ \infty\right\} $,
this condition is also necessary for $\dot{\mathcal{B}}_{s}^{p,r}\left(\mathbb{R}^{d}\right)\hookrightarrow L^{q}\left(\mathbb{R}^{d}\right)$
to hold. For $q\in\left(2,\infty\right)$, however, Corollary \ref{cor:SimplifiedSobolevEmbedding}
(together with similar considerations) only implies that
\[
r\leq q\text{ and }s=d\left(\frac{1}{p}-\frac{1}{q}\right)
\]
is a necessary condition for $\dot{\mathcal{B}}_{s}^{p,r}\left(\mathbb{R}^{d}\right)\hookrightarrow L^{q}\left(\mathbb{R}^{d}\right)$
to hold.

This raises the question of what happens for $q^{\triangledown}<r\leq q$,
i.e.\@ if the necessary condition is fulfilled, but the sufficient
condition is not. In general, I do not know if the embedding $\dot{\mathcal{B}}_{s}^{p,r}\left(\mathbb{R}^{d}\right)\hookrightarrow L^{q}\left(\mathbb{R}^{d}\right)$
holds in this case. But at least for $p=q\in\left(2,\infty\right)$,
we can say slightly more: In this case, $s=0$. Furthermore, for $I_{0}:=\mathbb{N}_{0}$,
we have $\sup_{i\in I_{0}}\left\Vert T_{i}^{-1}\right\Vert =\sup_{n\in\mathbb{N}_{0}}2^{-n}=1<\infty$,
so that part \ref{enu:SpecialKhinchinConditionWithoutWeight} of Corollary
\ref{cor:SimplifiedSobolevEmbedding} is applicable. Hence,
\[
\left(3\right)_{n\in\mathbb{N}_{0}}=\left(\frac{3}{2^{sn}}\right)_{n\in\mathbb{N}_{0}}=\frac{w^{\left(p\right)}}{u}\in\ell^{2\cdot\left(r/2\right)'}\left(I_{0}\right)=\ell^{2\cdot\left(r/2\right)'}\left(\mathbb{N}_{0}\right),
\]
which can only hold for $2\cdot\left(r/2\right)'=\infty$, i.e.\@
for $r\leq2$. Hence, for $p\in\left(2,\infty\right)$, we see
that $\dot{\mathcal{B}}_{s}^{p,r}\left(\mathbb{R}^{d}\right)\hookrightarrow L^{p}\left(\mathbb{R}^{d}\right)$
can only hold for $s=0$ and $r\in\left(0,2\right]$ and does hold
for $r\in\left(0,q^{\triangledown}\right]$. We have thus reduced
the ``uncertain'' region from $\left(q^{\triangledown},q\right]$
to $\left(q^{\triangledown},2\right]$.
\end{example}

\begin{example}
\label{exa:InhomogeneousBesovSpaces}(inhomogeneous Besov spaces)

Inhomogeneous Besov spaces are defined using a similar covering than
the homogeneous Besov spaces from the previous example. The only difference
is that the sets $Q_{n}$ for $n<0$, i.e.\@ the ``small'' sets
of the homogeneous dyadic covering are replaced by a single ball which
covers the low frequencies (including zero).

Precisely, we will use the covering $\mathcal{Q}=\left(Q_{n}\right)_{n\in\mathbb{N}_{0}}=\left(T_{n}Q_{n}'+b_{n}\right)_{n\in\mathbb{N}_{0}}$
with $T_{n}=2^{n}\cdot{\rm id}$ and $b_{n}:=0$, as well as $Q_{n}':=B_{4}\left(0\right)\setminus\overline{B_{1/4}\left(0\right)}$
for $n\in\mathbb{N}$. Finally, for $n=0$, we set 
\[
T_{0}:={\rm id}\qquad\text{ and }\qquad b_{0}:=0\qquad\text{ as well as }\qquad Q_{0}':=B_{2}\left(0\right).
\]
Note that $\mathcal{Q}$ is indeed a semi-structured admissible covering,
since $x\in Q_{n}\cap Q_{m}$ for $n,m\in\mathbb{N}$ implies $2^{n-2}\leq\left|x\right|\leq2^{n+2}$
and likewise $2^{m-2}\leq\left|x\right|\leq2^{m+2}$. Thus,
\[
2^{n-2}\leq2^{m+2}\qquad\text{ and }\qquad2^{m-2}\leq2^{n+2},
\]
which yields $n-4\leq m\leq n+4$, i.e.\@ $n^{\ast}\subset\left[\left\{ n-4,\dots,n+4\right\} \cup\left\{ 0\right\} \right]\cap\mathbb{N}_{0}$
and hence $\left|n^{\ast}\right|\leq10$ for all $n\in\mathbb{N}$.
Furthermore, $\left\Vert T_{n}^{-1}T_{m}\right\Vert =2^{m-n}\leq2^{4}$
and $\left\Vert T_{m}^{-1}T_{n}\right\Vert =2^{n-m}\leq2^{4}$.

Likewise, for $x\in Q_{0}\cap Q_{m}$ with $m\in\mathbb{N}$, we get
\[
2^{m-2}\leq\left|x\right|\leq2=2^{1},
\]
i.e.\@ $m\leq3$ and thus $0^{\ast}\subset\left\{ 0,\dots,3\right\} $,
which implies $\left|0^{\ast}\right|\leq4$. Furthermore, $\left\Vert T_{0}^{-1}T_{m}\right\Vert =2^{m}\leq2^{4}$
and $\left\Vert T_{m}^{-1}T_{0}\right\Vert =2^{-m}\leq1$.

Finally, let $P_{n}':=B_{2}\left(0\right)\setminus\overline{B_{1/2}\left(0\right)}$
for $n\in\mathbb{N}$ and $P_{0}':=B_{3/2}\left(0\right)$. Then $P_{n}'$
is open with $\overline{P_{n}'}\subset Q_{n}'$ for all $n\in\mathbb{N}_{0}$.
Finally, 
\[
\bigcup_{n\in\mathbb{N}}T_{n}P_{n}'=\bigcup_{n\in\mathbb{N}}\left(B_{2^{n+1}}\left(0\right)\setminus\overline{B_{2^{n-1}}\left(0\right)}\right)\supset\mathbb{R}^{d}\setminus\overline{B_{1}\left(0\right)}
\]
and hence $\bigcup_{n\in\mathbb{N}_{0}}T_{n}P_{n}'=\mathbb{R}^{d}$.
Thus, $\mathcal{Q}$ satisfies all assumptions of Theorem \ref{thm:StructuredAdmissibleCoveringsAreRegular},
so that $\mathcal{Q}$ is a tight regular covering of $\mathbb{R}^{d}$.

With the dyadic covering $\mathcal{Q}$, the usual inhomogeneous Besov
spaces are given (up to trivial identifications) for $p,r\in\left(0,\infty\right]$
and $s\in\mathbb{R}$ by
\[
\mathcal{B}_{s}^{p,r}\left(\mathbb{R}^{d}\right)=\mathcal{D}\left(\mathcal{Q},L^{p},\ell_{u}^{r}\right)
\]
for $u=u^{\left(s\right)}=\left(2^{sn}\right)_{n\in\mathbb{N}_{0}}$.
Now, let $q\in\left(0,\infty\right]$ and $k\in\mathbb{N}_{0}$ be
arbitrary. The weight $v:=w^{\left(q\right)}$ from Corollary \ref{cor:SimplifiedSobolevEmbedding}
(with $n=k$) is given by
\[
v_{n}=\left|\det T_{n}\right|^{\frac{1}{p}-\frac{1}{q}}\cdot\left(1+\left|b_{n}\right|^{k}+\left\Vert T_{n}\right\Vert ^{k}\right)\asymp2^{dn\left(\frac{1}{p}-\frac{1}{q}\right)}\cdot\left(1+2^{nk}\right)\asymp2^{dn\left(\frac{1}{p}-\frac{1}{q}\right)}\cdot2^{nk}=2^{n\left(k+d\left(\frac{1}{p}-\frac{1}{q}\right)\right)}
\]
for $n\in\mathbb{N}_{0}$. Due to the exponential nature of the weights
$u,v$, we have
\[
\frac{v}{u}\asymp\left(2^{n\left(k-s+d\left(\frac{1}{p}-\frac{1}{q}\right)\right)}\right)_{n\in\mathbb{N}_{0}}\in\ell^{\theta}\left(\mathbb{N}_{0}\right)\;\Longleftrightarrow\;\begin{cases}
k-s+d\left(\frac{1}{p}-\frac{1}{q}\right)\leq0, & \text{if }\theta=\infty,\\
k-s+d\left(\frac{1}{p}-\frac{1}{q}\right)<0, & \text{if }\theta<\infty.
\end{cases}
\]

Recall from equation (\ref{eq:SpecialExponentInfiniteCharacterization})
that $q\cdot\left(p/q\right)'=\infty$ if and only if $p\leq q$.
Thus, Corollary \ref{cor:SimplifiedSobolevEmbedding} shows that we
have $\mathcal{B}_{s}^{p,r}\left(\mathbb{R}^{d}\right)\hookrightarrow W^{k,q}\left(\mathbb{R}^{d}\right)$
as soon as $p\leq q$ and
\begin{equation}
\frac{v}{u}\in\ell^{q^{\triangledown}\cdot\left(r/q^{\triangledown}\right)'}\;\Longleftrightarrow\;\begin{cases}
s\geq k+d\left(\frac{1}{p}-\frac{1}{q}\right), & \text{if }r\leq q^{\triangledown},\\
s>k+d\left(\frac{1}{p}-\frac{1}{q}\right), & \text{if }r>q^{\triangledown}.
\end{cases}\label{eq:InhomogeneousBesovSufficient}
\end{equation}
Conversely, Corollary \ref{cor:SimplifiedSobolevEmbedding} shows
for $q\in\left(0,2\right]\cup\left\{ \infty\right\} $ that the above
conditions are also necessary for existence of the embedding $\mathcal{B}_{s}^{p,r}\left(\mathbb{R}^{d}\right)\hookrightarrow W^{k,q}\left(\mathbb{R}^{d}\right)$.

For the analysis in case of $q\in\left(2,\infty\right)$, instead
of Corollary \ref{cor:SimplifiedSobolevEmbedding}, we employ the
``Besov detour'' as described in Subsection \ref{sub:ComparisonAndBesovDetour}.
This will show that the sufficient criterion from the present paper
lacks sharpness for $q\in\left(2,\infty\right)$. Indeed, by equation
(\ref{eq:SobolevBesovInclusionHard}), we have
\[
\mathcal{B}_{k}^{q,2}\left(\mathbb{R}^{d}\right)=\mathcal{B}_{k}^{q,\min\left\{ q,2\right\} }\left(\mathbb{R}^{d}\right)\hookrightarrow F_{k}^{q,2}\left(\mathbb{R}^{d}\right)=W^{k,q}\left(\mathbb{R}^{d}\right),
\]
so that $\mathcal{B}_{s}^{p,r}\left(\mathbb{R}^{d}\right)\hookrightarrow W^{k,q}\left(\mathbb{R}^{d}\right)$
holds as soon as $\mathcal{B}_{s}^{p,r}\left(\mathbb{R}^{d}\right)\hookrightarrow\mathcal{B}_{k}^{q,2}\left(\mathbb{R}^{d}\right)$.
By (the proof of) \cite[Proposition 2.4]{HanWangAlphaModulationEmbeddings}
(with $\alpha=1$), this holds as soon as $p\leq q$ and
\begin{equation}
\begin{cases}
s\geq k+d\left(\frac{1}{p}-\frac{1}{q}\right), & \text{if }r\leq2,\\
s>k+d\left(\frac{1}{p}-\frac{1}{q}\right), & \text{if }r>2.
\end{cases}\label{eq:InhomogeneousBesovSufficientImproved}
\end{equation}
Note that this condition is strictly weaker than the sufficient condition
in equation (\ref{eq:InhomogeneousBesovSufficient}).

At least for $p=q\in\left(2,\infty\right)$, this relaxed sufficient
condition is sharp: If $\mathcal{B}_{s}^{p,r}\left(\mathbb{R}^{d}\right)\hookrightarrow W^{k,p}\left(\mathbb{R}^{d}\right)$,
then part \ref{enu:SpecialKhinchinConditionWithoutWeight} of Corollary
\ref{cor:SimplifiedSobolevEmbedding} (with $I_{0}=I=\mathbb{N}_{0}$
and $\sup_{i\in I_{0}}\left\Vert T_{i}^{-1}\right\Vert =\sup_{n\in\mathbb{N}_{0}}2^{-n}=1<\infty$)
yields
\[
\left(\frac{2^{kn}}{2^{sn}}\right)_{n\in\mathbb{N}_{0}}\asymp\frac{w^{\left(p\right)}}{u}\in\ell^{2\cdot\left(r/2\right)'}\left(I_{0}\right),
\]
which is easily seen to be equivalent to condition (\ref{eq:InhomogeneousBesovSufficientImproved}),
since $p=q$. I do not know if the condition (\ref{eq:InhomogeneousBesovSufficientImproved})
is also necessary for existence of the embedding if $p<q$.
\end{example}

\begin{example}
\label{exa:AlphaModulationSpaces}($\alpha$-modulation spaces)

In \cite[Theorem 2.6]{BorupNielsenPsiDOsOnMultivariateAlphaModulationSpaces},
it was shown that for $0\leq\alpha<1$ and $d\in\mathbb{N}$, there
is some $r_{1}=r_{1}\left(\alpha,d\right)>0$ such that the family
\[
\mathbb{O}^{\left(\alpha\right)}:=\mathbb{O}_{r}^{\left(\alpha\right)}:=\left(O_{k}^{\left(\alpha\right)}\right)_{k\in\mathbb{Z}^{d}\setminus\left\{ 0\right\} }:=\left(B_{r\left|k\right|^{\alpha_{0}}}\left(\left|k\right|^{\alpha_{0}}k\right)\right)_{k\in\mathbb{Z}^{d}\setminus\left\{ 0\right\} }=\left(T_{k}Q+b_{k}\right)_{k\in\mathbb{Z}^{d}\setminus\left\{ 0\right\} }
\]
with $\alpha_{0}:=\frac{\alpha}{1-\alpha}$, $Q:=B_{r}\left(0\right)$
and
\[
T_{k}:=\left|k\right|^{\alpha_{0}}\cdot{\rm id}\qquad\text{ as well as }\qquad b_{k}:=\left|k\right|^{\alpha_{0}}k
\]
for $k\in\mathbb{Z}^{d}\setminus\left\{ 0\right\} $ is an admissible
covering of $\mathbb{R}^{d}$ for each $r>r_{1}$. In \cite[Theorem 6.1.3]{VoigtlaenderPhDThesis},
it was furthermore shown that this covering is indeed a structured
admissible covering and hence -- by Theorem \ref{thm:StructuredAdmissibleCoveringsAreRegular}
-- a tight regular covering of $\mathbb{R}^{d}$.

For $p,s\in\left(0,\infty\right]$ and $\gamma\in\mathbb{R}$, the
$\alpha$-modulation space with integrability exponents $p,s$ and
and weight exponent $\gamma$ is given (up to certain identifications)
by
\[
M_{\gamma,\alpha}^{p,s}\left(\mathbb{R}^{d}\right)=\mathcal{D}\left(\mathbb{O}^{\left(\alpha\right)},L^{p},\ell_{u^{\left(\gamma,\alpha\right)}}^{s}\right),
\]
where $u_{k}^{\left(\gamma,\alpha\right)}:=\left|k\right|^{\gamma/\left(1-\alpha\right)}$.
That the weight $u^{\left(\gamma,\alpha\right)}$ is indeed $\mathbb{O}^{\left(\alpha\right)}$-moderate
is shown in \cite[Lemma 6.1.2]{VoigtlaenderPhDThesis}.

Now, for $n\in\mathbb{N}_{0}$ and $q\in\left(0,\infty\right]$, we
are interested in existence of an embedding
\[
M_{\gamma,\alpha}^{p,s}\left(\mathbb{R}^{d}\right)\hookrightarrow W^{n,q}\left(\mathbb{R}^{d}\right).
\]
Note that the relevant weight $v:=w^{\left(q\right)}$ from Corollary
\ref{cor:SimplifiedSobolevEmbedding} is given by
\begin{align*}
v_{k} & =\left|\det T_{k}\right|^{\frac{1}{p}-\frac{1}{q}}\cdot\left(1+\left|b_{k}\right|^{n}+\left\Vert T_{k}\right\Vert ^{n}\right)\\
 & =\left|k\right|^{d\alpha_{0}\left(\frac{1}{p}-\frac{1}{q}\right)}\cdot\left(1+\left(\left|k\right|^{\alpha_{0}+1}\right)^{n}+\left|k\right|^{n\alpha_{0}}\right)\\
 & \asymp\left|k\right|^{d\alpha_{0}\left(\frac{1}{p}-\frac{1}{q}\right)}\cdot\left|k\right|^{\frac{n}{1-\alpha}}\\
 & =\left|k\right|^{\frac{1}{1-\alpha}\left(n+\alpha d\left(\frac{1}{p}-\frac{1}{q}\right)\right)}.
\end{align*}
Here, we used $\left|k\right|\geq1$ for $k\in\mathbb{Z}^{d}\setminus\left\{ 0\right\} $,
as well as $0\leq\alpha_{0}<1+\alpha_{0}=1+\frac{\alpha}{1-\alpha}=\frac{1}{1-\alpha}$.
We conclude
\[
\frac{v_{k}}{u_{k}^{\left(\gamma,\alpha\right)}}\asymp\left|k\right|^{\frac{1}{1-\alpha}\left(n-\gamma+\alpha d\left(\frac{1}{p}-\frac{1}{q}\right)\right)}
\]
for $k\in\mathbb{Z}^{d}\setminus\left\{ 0\right\} $.

Thus, for $\theta\in\left(0,\infty\right)$, we have
\[
\left\Vert \frac{v}{u^{\left(\gamma,\alpha\right)}}\right\Vert _{\ell^{\theta}}^{\theta}\asymp\sum_{k\in\mathbb{Z}^{d}\setminus\left\{ 0\right\} }\left|k\right|^{\frac{\theta}{1-\alpha}\left(n-\gamma+\alpha d\left(\frac{1}{p}-\frac{1}{q}\right)\right)},
\]
which is finite if and only if
\begin{align}
 & \frac{\theta}{1-\alpha}\left(n-\gamma+\alpha d\left(\frac{1}{p}-\frac{1}{q}\right)\right)<-d\nonumber \\
\Longleftrightarrow & n-\gamma+\alpha d\left(\frac{1}{p}-\frac{1}{q}\right)<\frac{d\left(\alpha-1\right)}{\theta}\nonumber \\
\Longleftrightarrow & \gamma>n+\alpha d\left(\frac{1}{p}-\frac{1}{q}\right)-\frac{d\left(\alpha-1\right)}{\theta}\nonumber \\
\Longleftrightarrow & \gamma>n+\alpha d\left(\frac{1}{p}-\frac{1}{q}-\frac{1}{\theta}\right)+\frac{d}{\theta}.\label{eq:AlphaModulationEmbeddingIntermediate}
\end{align}
For the remaining case $\theta=\infty$, it is easy to see $v/u^{\left(\gamma,\alpha\right)}\in\ell^{\theta}\left(\mathbb{Z}^{d}\setminus\left\{ 0\right\} \right)$
if and only if
\begin{align*}
 & \frac{1}{1-\alpha}\left(n-\gamma+\alpha d\left(\frac{1}{p}-\frac{1}{q}\right)\right)\leq0\\
\Longleftrightarrow & n-\gamma+\alpha d\left(\frac{1}{p}-\frac{1}{q}\right)\leq0\\
\Longleftrightarrow & \gamma\geq n+\alpha d\left(\frac{1}{p}-\frac{1}{q}\right),
\end{align*}
which is precisely the same condition as in equation (\ref{eq:AlphaModulationEmbeddingIntermediate}),
except that the strict inequality is replaced by a non-strict one.

Now, Corollary \ref{cor:SimplifiedSobolevEmbedding} shows that the
embedding $M_{\gamma,\alpha}^{p,s}\left(\mathbb{R}^{d}\right)\hookrightarrow W^{n,q}\left(\mathbb{R}^{d}\right)$
is valid as soon as $p\leq q$ and
\[
\frac{v}{u^{\left(\gamma,\alpha\right)}}\in\ell^{q^{\triangledown}\cdot\left(s/q^{\triangledown}\right)'}\left(\mathbb{Z}^{d}\setminus\left\{ 0\right\} \right).
\]
There are now two cases:
\begin{casenv}
\item If $s\leq q^{\triangledown}$, then equation (\ref{eq:SpecialExponentInfiniteCharacterization})
shows $q^{\triangledown}\cdot\left(s/q^{\triangledown}\right)'=\infty$,
so that our considerations above imply that $v/u^{\left(\gamma,\alpha\right)}\in\ell^{q^{\triangledown}\cdot\left(s/q^{\triangledown}\right)'}\left(\mathbb{Z}^{d}\setminus\left\{ 0\right\} \right)$
is equivalent to
\[
\gamma\geq n+\alpha d\left(\frac{1}{p}-\frac{1}{q}\right)=n+\alpha d\left(\frac{1}{p}-\frac{1}{q}\right)+d\left(1-\alpha\right)\left(\frac{1}{q^{\triangledown}}-\frac{1}{s}\right)_{+}.
\]

\item If $s>q^{\triangledown}$, then equation (\ref{eq:SpecialExponentInfiniteCharacterization})
shows $q^{\triangledown}\cdot\left(s/q^{\triangledown}\right)'<\infty$
and equation (\ref{eq:SpecialExponentReciprocal}) yields
\[
\frac{1}{q^{\triangledown}\cdot\left(s/q^{\triangledown}\right)'}=\left(\frac{1}{q^{\triangledown}}-\frac{1}{s}\right)_{+}.
\]
By equation (\ref{eq:AlphaModulationEmbeddingIntermediate}), we thus
see that $v/u^{\left(\gamma,\alpha\right)}\in\ell^{q^{\triangledown}\cdot\left(s/q^{\triangledown}\right)'}\left(\mathbb{Z}^{d}\setminus\left\{ 0\right\} \right)$
is equivalent to
\begin{align*}
\gamma & >n+\alpha d\left(\frac{1}{p}-\frac{1}{q}-\left(\frac{1}{q^{\triangledown}}-\frac{1}{s}\right)_{+}\right)+d\left(\frac{1}{q^{\triangledown}}-\frac{1}{s}\right)_{+}\\
 & =n+\alpha d\left(\frac{1}{p}-\frac{1}{q}\right)+d\left(1-\alpha\right)\left(\frac{1}{q^{\triangledown}}-\frac{1}{s}\right)_{+}.
\end{align*}

\end{casenv}
Altogether, we have shown that $M_{\gamma,\alpha}^{p,s}\left(\mathbb{R}^{d}\right)\hookrightarrow W^{n,q}\left(\mathbb{R}^{d}\right)$
holds as soon as $p\leq q$ and
\begin{equation}
\begin{cases}
\gamma\geq n+d\left[\alpha\left(\frac{1}{p}-\frac{1}{q}\right)+\left(1-\alpha\right)\left(\frac{1}{q^{\triangledown}}-\frac{1}{s}\right)_{+}\right], & \text{if }s\leq q^{\triangledown},\\
\gamma>n+d\left[\alpha\left(\frac{1}{p}-\frac{1}{q}\right)+\left(1-\alpha\right)\left(\frac{1}{q^{\triangledown}}-\frac{1}{s}\right)_{+}\right], & \text{if }s>q^{\triangledown}.
\end{cases}\label{eq:AlphaModulationEasySufficient}
\end{equation}
In case of $q\in\left(0,2\right]\cup\left\{ \infty\right\} $, Corollary
\ref{cor:SimplifiedSobolevEmbedding} also shows that this condition
is necessary for existence of the embedding.

For the analysis in case of $q\in\left(2,\infty\right)$, we again
use the ``Besov detour'' as described in Subsection \ref{sub:ComparisonAndBesovDetour}.
If we have $M_{\gamma,\alpha}^{p,s}\left(\mathbb{R}^{d}\right)\hookrightarrow W^{n,q}\left(\mathbb{R}^{d}\right)$,
then equation (\ref{eq:SobolevBesovInclusionHard}) yields
\[
M_{\gamma,\alpha}^{p,s}\left(\mathbb{R}^{d}\right)\hookrightarrow W^{n,q}\left(\mathbb{R}^{d}\right)\overset{q\in\left(1,\infty\right)}{=}F_{n}^{q,2}\left(\mathbb{R}^{d}\right)\hookrightarrow\mathcal{B}_{n}^{q,\max\left\{ q,2\right\} }\left(\mathbb{R}^{d}\right)=\mathcal{B}_{n}^{q,q}\left(\mathbb{R}^{d}\right).
\]
But by \cite[Theorem 6.2.8]{VoigtlaenderPhDThesis}, the embedding
$M_{\gamma,\alpha}^{p,s}\left(\mathbb{R}^{d}\right)\hookrightarrow\mathcal{B}_{n}^{q,q}\left(\mathbb{R}^{d}\right)$
holds if and only if $p\leq q$ and
\begin{equation}
\begin{cases}
\gamma\geq n+d\left[\alpha\left(\frac{1}{p}-\frac{1}{q}\right)+\left(1-\alpha\right)\left(\frac{1}{q^{\triangledown}}-\frac{1}{s}\right)_{+}\right], & \text{if }s\leq q,\\
\gamma>n+d\left[\alpha\left(\frac{1}{p}-\frac{1}{q}\right)+\left(1-\alpha\right)\left(\frac{1}{q^{\triangledown}}-\frac{1}{s}\right)_{+}\right], & \text{if }s>q.
\end{cases}\label{eq:AlphaModulationBesovSufficient}
\end{equation}
Similarly, for $q\in\left(2,\infty\right)$, the same theorem and
equation (\ref{eq:SobolevBesovInclusionHard}) also show that we have
\[
M_{\gamma,\alpha}^{p,s}\left(\mathbb{R}^{d}\right)\hookrightarrow\mathcal{B}_{n}^{q,2}\left(\mathbb{R}^{d}\right)=\mathcal{B}_{n}^{q,\min\left\{ q,2\right\} }\left(\mathbb{R}^{d}\right)\hookrightarrow F_{n}^{q,2}\left(\mathbb{R}^{d}\right)=W^{n,q}\left(\mathbb{R}^{d}\right)
\]
if $p\leq q$ and
\[
\begin{cases}
\gamma\geq n+d\left[\alpha\left(\frac{1}{p}-\frac{1}{q}\right)+\left(1-\alpha\right)\left(\frac{1}{q^{\triangledown}}-\frac{1}{s}\right)_{+}\right], & \text{if }s\leq2,\\
\gamma>n+d\left[\alpha\left(\frac{1}{p}-\frac{1}{q}\right)+\left(1-\alpha\right)\left(\frac{1}{q^{\triangledown}}-\frac{1}{s}\right)_{+}\right], & \text{if }s>2.
\end{cases}
\]
Thus, we obtain a complete characterization for $q\in\left(0,2\right]\cup\left\{ \infty\right\} $
and for $q\in\left(2,\infty\right)$ if $s\notin\left(2,q\right]$.
To obtain this last result, however, we had to resort to \cite{VoigtlaenderPhDThesis},
instead of the results in this paper.

I do not know if the condition (\ref{eq:AlphaModulationBesovSufficient})
is sharp \emph{in general}. But for $\alpha=0$ and $p=q\in\left(2,\infty\right)$,
the characterization given in \cite{KobayashiSugimotoModulationSobolevInclusion}
shows that $M_{\gamma,0}^{p,s}\left(\mathbb{R}^{d}\right)\hookrightarrow W^{n,p}\left(\mathbb{R}^{d}\right)$
holds if and only if condition (\ref{eq:AlphaModulationBesovSufficient})
(with $p=q$ and $\alpha=0$) is satisfied.
\end{example}

\begin{example}
\label{exa:ShearletSmoothnessSpaces}(Shearlet smoothness spaces)

Shearlet smoothness spaces have first been introduced by Labate et
al.\@ in \cite{Labate_et_al_Shearlet}. These spaces are defined
using a suitable covering of $\mathbb{R}^{2}$. This covering is (slightly
modified) given by (cf.\@ \cite[Definition 6.4.1]{VoigtlaenderPhDThesis})
\[
\mathcal{S}=\left(S_{i}\right)_{i\in I}=\left(T_{i}Q+b_{i}\right)_{i\in I},
\]
where $I=I_{0}\cup\left\{ 0\right\} $ with
\[
I_{0}=\left\{ \left(n,m,\varepsilon,\delta\right)\in\mathbb{N}_{0}\times\mathbb{Z}\times\left\{ \pm1\right\} \times\left\{ 0,1\right\} \with\:\left|m\right|\leq2^{n}\right\} 
\]
and $T_{0}:=4\cdot{\rm id}$, as well as $b_{0}:=\left(\begin{smallmatrix}-4\\
0
\end{smallmatrix}\right)$. Furthermore,
\[
Q:=U_{\left(-1,1\right)}^{\left(\frac{1}{3},3\right)}:=\left\{ \left(\begin{matrix}x\\
y
\end{matrix}\right)\in\left(\frac{1}{3},3\right)\times\mathbb{R}\with\frac{y}{x}\in\left(-1,1\right)\right\} 
\]
and
\[
T_{n,m,\varepsilon,\delta}:=\varepsilon\left(\begin{matrix}0 & 1\\
1 & 0
\end{matrix}\right)^{\delta}\cdot\left(\begin{matrix}2^{2n} & 0\\
2^{n}m & 2^{n}
\end{matrix}\right)\qquad\text{ as well as }\qquad b_{n,m,\varepsilon,\delta}:=0
\]
for $\left(n,m,\varepsilon,\delta\right)\in I_{0}$.

This covering is a slight modification of the one given by Labate
et al.\@, since the covering constructed in \cite{Labate_et_al_Shearlet}
is \emph{not} a structured admissible covering, contrary to the statement
of \cite[Proposition 4.1]{Labate_et_al_Shearlet}. In fact, the covering
given in \cite{Labate_et_al_Shearlet} does not admit an associated
partition of unity, since the interiors of the sets fail to cover
all of $\mathbb{R}^{2}$. The covering given here is indeed a structured
admissible covering, cf.\@ \cite[Lemma 6.4.2]{VoigtlaenderPhDThesis}.

Now, given $p,r\in\left(0,\infty\right]$ and $\beta\in\mathbb{R}$,
the shearlet smoothness space with parameters $p,r,\beta$ is given
by
\[
\mathcal{S}_{\beta}^{p,r}\left(\mathbb{R}^{2}\right)=\mathcal{D}\left(\mathcal{S},L^{p},\ell_{u^{\left(\beta\right)}}^{r}\right)
\]
with $u_{0}^{\left(\beta\right)}:=1$ and $u_{n,m,\varepsilon,\delta}^{\left(\beta\right)}=2^{2n\beta}$
for $\left(n,m,\varepsilon,\delta\right)\in I_{0}$. We are interested
in existence of the embedding $\mathcal{S}_{\beta}^{p,r}\left(\mathbb{R}^{2}\right)\hookrightarrow W^{k,q}\left(\mathbb{R}^{2}\right)$
for $q\in\left(0,\infty\right]$ and $k\in\mathbb{N}_{0}$. The relevant
weight from Corollary \ref{cor:SimplifiedSobolevEmbedding} (with
$n=k$) is given by
\begin{align*}
v_{i}:=w_{i}^{\left(q\right)} & =\left|\det T_{i}\right|^{\frac{1}{p}-\frac{1}{q}}\cdot\left(1+\left|b_{i}\right|^{k}+\left\Vert T_{i}\right\Vert ^{k}\right)\\
 & \asymp2^{n\left[2k+3\left(\frac{1}{p}-\frac{1}{q}\right)\right]}.
\end{align*}
for $i=\left(n,m,\varepsilon,\delta\right)\in I_{0}$. Here, we used
$\left|m\right|\leq2^{n}$ and $n\geq0$ for $i\in I_{0}$, so that
we get
\[
\left\Vert T_{i}\right\Vert =\left\Vert \left(\begin{matrix}2^{2n} & 0\\
2^{n}m & 2^{n}
\end{matrix}\right)\right\Vert \asymp\max\left\{ 2^{2n},2^{n}\left|m\right|,2^{n}\right\} =2^{2n}\geq1
\]
and $\left|\det T_{i}\right|=2^{3n}$, as well as $b_{i}=0$ for $i=\left(n,m,\varepsilon,\delta\right)\in I_{0}$.
Altogether, we get
\[
\frac{v_{i}}{u_{i}^{\left(\beta\right)}}\asymp2^{n\left[2k-2\beta+3\left(\frac{1}{p}-\frac{1}{q}\right)\right]}\text{ for }i=\left(n,m,\varepsilon,\delta\right)\in I_{0}.
\]

Since the single term for $i=0$ is irrelevant for membership of $v/u^{\left(\beta\right)}$
in $\ell^{\theta}\left(I\right)$ for $\theta\in\left(0,\infty\right)$,
we get
\begin{align*}
\frac{v}{u^{\beta}}\in\ell^{\theta}\left(I\right) & \Longleftrightarrow\left\Vert \left(\frac{v_{i}}{u_{i}^{\left(\beta\right)}}\right)_{i\in I_{0}}\right\Vert _{\ell^{\theta}}^{\theta}<\infty\\
 & \Longleftrightarrow\sum_{n=0}^{\infty}\sum_{m=-2^{n}}^{2^{n}}2^{\theta n\left[2k-2\beta+3\left(\frac{1}{p}-\frac{1}{q}\right)\right]}<\infty\\
 & \Longleftrightarrow\sum_{n=0}^{\infty}2^{n\left[1+\theta\left(2k-2\beta+3\left(\frac{1}{p}-\frac{1}{q}\right)\right)\right]}<\infty\:,
\end{align*}
which is equivalent to
\begin{align*}
 & 1+\theta\left(2k-2\beta+3\left(\frac{1}{p}-\frac{1}{q}\right)\right)<0\\
\Longleftrightarrow & 2k-2\beta+3\left(\frac{1}{p}-\frac{1}{q}\right)<-\frac{1}{\theta}\\
\Longleftrightarrow & k+\frac{3}{2}\left(\frac{1}{p}-\frac{1}{q}\right)+\frac{1}{2\theta}<\beta.
\end{align*}
In case of $\theta=\infty$, we see that $v/u^{\left(\beta\right)}\in\ell^{\theta}\left(I\right)$
if and only if $v/u^{\left(\beta\right)}$ is bounded if and only
if
\begin{align*}
 & 2k-2\beta+3\left(\frac{1}{p}-\frac{1}{q}\right)\leq0\\
\Longleftrightarrow & k+\frac{3}{2}\left(\frac{1}{p}-\frac{1}{q}\right)\leq\beta.
\end{align*}

Now, we are in a position to apply Corollary \ref{cor:SimplifiedSobolevEmbedding},
which shows that $\mathcal{S}_{\beta}^{p,r}\left(\mathbb{R}^{2}\right)\hookrightarrow W^{k,q}\left(\mathbb{R}^{2}\right)$
holds if we have $p\leq q$ and $\frac{v}{u^{\left(\beta\right)}}\in\ell^{q^{\triangledown}\cdot\left(r/q^{\triangledown}\right)'}\left(I\right)$.
There are now two cases:
\begin{casenv}
\item If $r\leq q^{\triangledown}$, then equation (\ref{eq:SpecialExponentInfiniteCharacterization})
shows $q^{\triangledown}\cdot\left(r/q^{\triangledown}\right)'=\infty$,
so that our considerations above imply that $v/u^{\left(\beta\right)}\in\ell^{q^{\triangledown}\cdot\left(r/q^{\triangledown}\right)'}\left(I\right)$
is equivalent to
\[
\beta\geq k+\frac{3}{2}\left(\frac{1}{p}-\frac{1}{q}\right)=k+\frac{3}{2}\left(\frac{1}{p}-\frac{1}{q}\right)+\frac{1}{2}\left(\frac{1}{q^{\triangledown}}-\frac{1}{r}\right)_{+}.
\]

\item If $r>q^{\triangledown}$, then equation (\ref{eq:SpecialExponentInfiniteCharacterization})
shows $q^{\triangledown}\cdot\left(r/q^{\triangledown}\right)'<\infty$
and equation (\ref{eq:SpecialExponentReciprocal}) yields
\[
\frac{1}{q^{\triangledown}\cdot\left(r/q^{\triangledown}\right)'}=\left(\frac{1}{q^{\triangledown}}-\frac{1}{r}\right)_{+}.
\]
By our considerations above, $v/u^{\left(\beta\right)}\in\ell^{q^{\triangledown}\cdot\left(r/q^{\triangledown}\right)'}\left(I\right)$
is thus equivalent to
\[
\beta>k+\frac{3}{2}\left(\frac{1}{p}-\frac{1}{q}\right)+\frac{1}{2}\left(\frac{1}{q^{\triangledown}}-\frac{1}{r}\right)_{+}.
\]

\end{casenv}
Altogether, we have shown that $\mathcal{S}_{\beta}^{p,r}\left(\mathbb{R}^{2}\right)\hookrightarrow W^{k,q}\left(\mathbb{R}^{2}\right)$
holds as soon as $p\leq q$ and
\[
\begin{cases}
\beta\geq k+\frac{3}{2}\left(\frac{1}{p}-\frac{1}{q}\right)+\frac{1}{2}\left(\frac{1}{q^{\triangledown}}-\frac{1}{r}\right)_{+}, & \text{if }r\leq q^{\triangledown}\\
\beta>k+\frac{3}{2}\left(\frac{1}{p}-\frac{1}{q}\right)+\frac{1}{2}\left(\frac{1}{q^{\triangledown}}-\frac{1}{r}\right)_{+}, & \text{if }r>q^{\triangledown}.
\end{cases}
\]
By Corollary \ref{cor:SimplifiedSobolevEmbedding}, for $q\in\left(0,2\right]\cup\left\{ \infty\right\} $,
these conditions are also necessary for existence of the embedding.

In fact, also for $q\in\left(2,\infty\right)$, our sufficient condition
turns out to be reasonably sharp. To see this, we again use the ``Besov
detour'', in conjunction with a result from \cite{VoigtlaenderPhDThesis}:
If the embedding $\mathcal{S}_{\beta}^{p,r}\left(\mathbb{R}^{2}\right)\hookrightarrow W^{k,q}\left(\mathbb{R}^{2}\right)$
holds, then equation (\ref{eq:SobolevBesovInclusionHard}) shows (recall
$q\in\left(2,\infty\right)\subset\left(1,\infty\right)$)
\[
\mathcal{S}_{\beta}^{p,r}\left(\mathbb{R}^{2}\right)\hookrightarrow W^{k,q}\left(\mathbb{R}^{2}\right)=F_{k}^{q,2}\left(\mathbb{R}^{2}\right)\hookrightarrow\mathcal{B}_{k}^{q,\max\left\{ q,2\right\} }\left(\mathbb{R}^{2}\right)=\mathcal{B}_{k}^{q,q}\left(\mathbb{R}^{2}\right).
\]
Using \cite[Theorem 6.4.3]{VoigtlaenderPhDThesis}, this implies that
$p\leq q$ and
\begin{equation}
\begin{cases}
\beta\geq k+\frac{3}{2}\left(\frac{1}{p}-\frac{1}{q}\right)+\frac{1}{2}\left(\frac{1}{q^{\triangledown}}-\frac{1}{r}\right)_{+} & \text{if }r\leq q,\\
\beta>k+\frac{3}{2}\left(\frac{1}{p}-\frac{1}{q}\right)+\frac{1}{2}\left(\frac{1}{q^{\triangledown}}-\frac{1}{r}\right)_{+}, & \text{if }r>q.
\end{cases}\label{eq:ShearletSmoothnessNecessaryBesov}
\end{equation}
Conversely, as in the case of $\alpha$-modulation spaces, the ``Besov
detour'', in conjunction with \cite[Theorem 6.4.3]{VoigtlaenderPhDThesis},
also shows that we have $\mathcal{S}_{\beta}^{p,r}\left(\mathbb{R}^{2}\right)\hookrightarrow W^{k,q}\left(\mathbb{R}^{2}\right)$
as soon as $p\leq q$ and
\[
\begin{cases}
\beta\geq k+\frac{3}{2}\left(\frac{1}{p}-\frac{1}{q}\right)+\frac{1}{2}\left(\frac{1}{q^{\triangledown}}-\frac{1}{r}\right)_{+} & \text{if }r\leq2,\\
\beta>k+\frac{3}{2}\left(\frac{1}{p}-\frac{1}{q}\right)+\frac{1}{2}\left(\frac{1}{q^{\triangledown}}-\frac{1}{r}\right)_{+}, & \text{if }r>2.
\end{cases}
\]

Thus, even for $q\in\left(2,\infty\right)$, the only difference
between this improved sufficient condition and the improved necessary
condition (\ref{eq:ShearletSmoothnessNecessaryBesov}) from above
is that the sufficient condition requires a strict inequality, while
the necessary criterion only yields a non-strict inequality. Furthermore,
this difference only occurs for $r\in\left(2,q\right]$.

Again, I do not know if the necessary condition (\ref{eq:ShearletSmoothnessNecessaryBesov})
is sharp in general. But in view of the results from \cite{KobayashiSugimotoModulationSobolevInclusion}
for modulation spaces (cf.\@ the previous example), I conjecture
(at least for $p=q\in\left(2,\infty\right)$) that the necessary condition
(\ref{eq:ShearletSmoothnessNecessaryBesov}) is also sufficient for
existence of the embedding $\mathcal{S}_{\beta}^{p,r}\left(\mathbb{R}^{2}\right)\hookrightarrow W^{k,q}\left(\mathbb{R}^{2}\right)$.
\end{example}

\begin{example}
\label{exa:ShearletCoorbitSpaces}(Shearlet coorbit spaces)

For $c\in\mathbb{R}$, let 
\[
H^{\left(c\right)}:=\left\{ \varepsilon\left(\begin{matrix}a & b\\
0 & a^{c}
\end{matrix}\right)\with a\in\left(0,\infty\right),\: b\in\mathbb{R},\:\varepsilon\in\left\{ \pm1\right\} \right\} ,
\]
denote the \textbf{Shearlet type group with parameter $c$} (as in
equation (\ref{eq:ShearletTypeGroup})). In \cite[Corollary 6.3.5 and Theorem 4.6.4]{VoigtlaenderPhDThesis},
it was shown that the coorbit space
\[
{\rm Co}\left(L_{v}^{p,r}\left(\mathbb{R}^{2}\rtimes H^{\left(c\right)}\right)\right)
\]
for $p,r\in\left(0,\infty\right]$ and a moderate weight $v:H^{\left(c\right)}\to\left(0,\infty\right)$,
is canonically isomorphic to a certain decomposition space $\mathcal{D}\left(\mathcal{S}^{\left(c\right)},L^{p},\ell_{u^{\left(r\right)}}^{r}\right)$,
where the weight $u^{\left(r\right)}$ depends on $r$ and on the
weight $v$. Here, the coorbit space is formed with respect to the
\textbf{quasi-regular representation} of $\mathbb{R}^{2}\rtimes H^{\left(c\right)}$,
which acts by translations and ($L^{2}$ normalized) dilations on
$L^{2}\left(\mathbb{R}^{2}\right)$. Furthermore, the space $L_{v}^{p,r}\left(\mathbb{R}^{2}\rtimes H^{\left(c\right)}\right)$
is a mixed, weighted Lebesgue space, with (quasi)-norm given by
\[
\left\Vert f\right\Vert _{L_{v}^{p,r}}:=\left[\int_{H^{\left(c\right)}}\left(v\left(h\right)\cdot\left\Vert f\left(\cdot,h\right)\right\Vert _{L^{p}\left(\mathbb{R}^{d}\right)}\right)^{r}\frac{{\rm d}h}{\left|\det h\right|}\right]^{1/r},
\]
with the usual modifications for $r=\infty$. Finally, the weight
$v:H^{\left(c\right)}\to\left(0,\infty\right)$ is called \textbf{moderate}
if it satisfies $v\left(xyz\right)\leq v_{0}\left(x\right)v\left(y\right)v_{0}\left(z\right)$
for all $x,y,z\in H^{\left(c\right)}$ and some locally bounded, submultiplicative
weight $v_{0}:H^{\left(c\right)}\to\left(0,\infty\right)$.

More precisely, the structured admissible covering $\mathcal{S}^{\left(c\right)}=\left(T_{i}^{\left(c\right)}Q\right)_{i\in I}$
of $\mathcal{O}=\mathbb{R}^{\ast}\times\mathbb{R}$ is defined as
follows (cf.\@ \cite[Corollary 6.3.5]{VoigtlaenderPhDThesis}): We
have $I=\mathbb{Z}^{2}\times\left\{ \pm1\right\} $ and
\[
T_{n,m,\varepsilon}^{\left(c\right)}=\varepsilon\cdot\left(\begin{matrix}2^{n} & 0\\
0 & 2^{nc}
\end{matrix}\right)\cdot\left(\begin{matrix}1 & 0\\
m & 1
\end{matrix}\right)\text{ for }\left(n,m,\varepsilon\right)\in I,
\]
as well as
\[
Q=\left\{ \left(\begin{matrix}x\\
y
\end{matrix}\right)\in\left(\frac{1}{2},2\right)\times\mathbb{R}\with\frac{y}{x}\in\left(-1,1\right)\right\} .
\]
Finally, the weight $u^{\left(r\right)}$ is given by
\[
u_{n,m,\varepsilon}^{\left(r\right)}=2^{-n\left(1+c\right)\left(\frac{1}{2}-\frac{1}{r}\right)}\cdot v\left(\left(T_{n,m,\varepsilon}^{\left(c\right)}\right)^{-T}\right)
\]
for $\left(n,m,\varepsilon\right)\in I$.

For the specific choice $c=\frac{1}{2}$ and weights $v=v^{\left(s\right)}$
of the form 
\[
v^{\left(s\right)}\left(\varepsilon\cdot\left(\begin{matrix}a & b\\
0 & a^{1/2}
\end{matrix}\right)\right)=a^{s},
\]
the coorbit space ${\rm Co}\left(L_{v^{\left(s\right)}}^{p,p}\left(\mathbb{R}^{2}\rtimes H^{\left(1/2\right)}\right)\right)$
was studied in \cite{Dahlke_etal_sh_coorbit1,Dahlke_etal_sh_coorbit2}.
In the second of those papers, the authors developed embeddings of
a \emph{subspace} of ${\rm Co}\left(L_{v^{\left(s\right)}}^{p,p}\left(\mathbb{R}^{2}\rtimes H^{\left(1/2\right)}\right)\right)$
into a \emph{sum} of two (homogeneous) Besov spaces. This is a very
different result than what we are interested in, namely establishing
an embedding
\[
{\rm Co}\left(L_{v}^{p,r}\left(\mathbb{R}^{2}\rtimes H^{\left(c\right)}\right)\right)\hookrightarrow W^{k,q}\left(\mathbb{R}^{2}\right)
\]
of the \emph{whole} coorbit space into a \emph{single Sobolev} space.

For our setting, it will turn out to be natural to consider weights
of the form $v=v^{\left(\alpha,\beta\right)}$ with
\[
v^{\left(\alpha,\beta\right)}:H^{\left(c\right)}\to\left(0,\infty\right),A=\varepsilon\left(\begin{matrix}a & b\\
0 & a^{c}
\end{matrix}\right)\mapsto a^{\alpha}\cdot\left\Vert A^{-T}\right\Vert ^{\beta}.
\]
Up to a slight transform in the parameters $\alpha,\beta$, this weight
coincides with the one considered in \cite[equation (6.3.15) and Theorem 6.3.11]{VoigtlaenderPhDThesis}
and is thus moderate.

With this choice, the weight $u^{\left(r\right)}$ from above is given
by
\[
u_{n,m,\varepsilon}^{\left(r\right)}=2^{-n\left(1+c\right)\left(\frac{1}{2}-\frac{1}{r}\right)}\cdot2^{-n\alpha}\cdot\left\Vert T_{n,m,\varepsilon}^{\left(c\right)}\right\Vert ^{\beta}.
\]
Furthermore, the weight $v:=w^{\left(q\right)}$ from Corollary \ref{cor:SimplifiedSobolevEmbedding}
satisfies
\[
v_{i}=\left|\det T_{i}^{\left(c\right)}\right|^{\frac{1}{p}-\frac{1}{q}}\cdot\left(1+\left|b_{i}\right|^{k}+\left\Vert T_{i}^{\left(c\right)}\right\Vert ^{k}\right)\text{ for }i\in I=\mathbb{Z}^{2}\times\left\{ \pm1\right\} ,
\]
so that the quotient $w=\left(w_{i}\right)_{i\in I}$ of the two weights
satisfies
\begin{align}
w_{n,m,\varepsilon} & :=\frac{v_{n,m,\varepsilon}}{u_{n,m,\varepsilon}^{\left(r\right)}}\nonumber \\
 & =2^{n\left[\alpha+\left(1+c\right)\left(\frac{1}{2}-\frac{1}{r}+\frac{1}{p}-\frac{1}{q}\right)\right]}\cdot\left(1+\left\Vert T_{n,m,\varepsilon}^{\left(c\right)}\right\Vert ^{k}\right)\cdot\left\Vert T_{n,m,\varepsilon}^{\left(c\right)}\right\Vert ^{-\beta}\nonumber \\
 & =2^{n\left[\alpha+\left(1+c\right)\gamma\right]}\cdot\left(\left\Vert T_{n,m,\varepsilon}^{\left(c\right)}\right\Vert ^{-\beta}+\left\Vert T_{n,m,\varepsilon}^{\left(c\right)}\right\Vert ^{-\left(\beta-k\right)}\right)\label{eq:ShearletCoorbitMainWeight}
\end{align}
with $\gamma:=\frac{1}{2}-\frac{1}{r}+\frac{1}{p}-\frac{1}{q}$. As
a preparation for the application of Corollary \ref{cor:SimplifiedSobolevEmbedding},
we want to characterize the condition $w\in\ell^{\theta}\left(I\right)$
in terms of $\alpha,\beta\in\mathbb{R}$ and $\theta\in\left(0,\infty\right]$.

To this end, note
\[
\left\Vert T_{n,m,\varepsilon}^{\left(c\right)}\right\Vert =\left\Vert \left(\begin{matrix}2^{n} & 0\\
2^{nc}m & 2^{nc}
\end{matrix}\right)\right\Vert \asymp2^{n}+2^{nc}+2^{nc}\left|m\right|.
\]
In particular $\left\Vert T_{0,m,\varepsilon}^{\left(c\right)}\right\Vert \asymp1+\left|m\right|$
and hence $w_{0,m,\varepsilon}\asymp\left(1+\left|m\right|\right)^{-\beta}+\left(1+\left|m\right|\right)^{-\left(\beta-k\right)}$.
Thus, if we have $w\in\ell^{\theta}\left(I\right)\subset\ell^{\infty}\left(I\right)$,
then $\beta-k\geq0$, i.e.\@ $\beta\geq k\geq0$.

In summary, our problem reduces to characterizing the condition $w^{\left(a,b\right)}\in\ell^{\theta}\left(\mathbb{Z}^{2}\right)$
for
\[
w_{n,m}^{\left(a,b\right)}:=2^{an}\cdot\left\Vert T_{n,m,1}^{\left(c\right)}\right\Vert ^{-b},
\]
where $a\in\mathbb{R}$ and $b\geq0$ are arbitrary. For this, we
distinguish two cases:
\begin{casenv}
\item $c\geq1$. Define 
\begin{align*}
M_{1} & :=\left\{ \left(n,m\right)\in\mathbb{Z}^{2}\with n\geq0\text{ and }m\neq0\right\} ,\\
M_{2} & :=\left\{ \left(n,m\right)\in\mathbb{Z}^{2}\with n\geq0\text{ and }m=0\right\} ,\\
M_{3} & :=\left\{ \left(n,m\right)\in\mathbb{Z}^{2}\with n<0\text{ and }\left|m\right|\geq\left\lceil 2^{n\left(1-c\right)}\right\rceil \right\} ,\\
M_{4} & :=\left\{ \left(n,m\right)\in\mathbb{Z}^{2}\with n<0\text{ and }\left|m\right|\leq\left\lceil 2^{n\left(1-c\right)}\right\rceil -1\right\} .
\end{align*}
We now distinguish the four subcases corresponding to $\left(n,m\right)\in M_{i}$
for $i\in\underline{4}$.

\begin{enumerate}
\item For $\left(n,m\right)\in M_{1}$, note $cn\geq n$ and $\left|m\right|\geq1$,
so that $2^{nc}\left|m\right|\geq2^{nc}\geq2^{n}$. All in all, this
yields $\left\Vert T_{n,m,\varepsilon}^{\left(c\right)}\right\Vert \asymp2^{nc}\left|m\right|$.
\item For $\left(n,m\right)\in M_{2}$, note again $cn\geq n$ and hence
$2^{nc}\geq2^{n}\geq0=2^{nc}\left|m\right|$. All in all, this yields
$\left\Vert T_{n,m,\varepsilon}^{\left(c\right)}\right\Vert \asymp2^{nc}$.
\item For $\left(n,m\right)\in M_{3}$, we have $\left|m\right|\geq\left\lceil 2^{n\left(1-c\right)}\right\rceil \geq2^{n\left(1-c\right)}$
and thus $2^{nc}\left|m\right|\geq2^{n}\geq2^{nc}$, where the last
step used that $n<0$ and $c\geq1$ imply $cn\leq n$. All in all,
we get $\left\Vert T_{n,m,\varepsilon}^{\left(c\right)}\right\Vert \asymp2^{nc}\left|m\right|$.
\item For $\left(n,m\right)\in M_{4}$, we have $\left|m\right|\!\leq\!\left\lceil 2^{n\left(1-c\right)}\right\rceil -1$.
Hence, $\left|m\right|\!\leq2^{n\left(1-c\right)}$, which implies
$2^{nc}\left|m\right|\!\leq2^{n}$. Since $n<0$ and $c\geq1$, we
also have $cn\leq n$ and thus $2^{nc}\leq2^{n}$. Altogether, this
yields $\left\Vert T_{n,m,\varepsilon}^{\left(c\right)}\right\Vert \asymp2^{n}$.
\end{enumerate}

In summary, we have shown
\[
w_{n,m}^{\left(a,b\right)}=2^{an}\cdot\left\Vert T_{n,m,\varepsilon}^{\left(c\right)}\right\Vert ^{-b}\asymp\begin{cases}
2^{an}\cdot\left(2^{nc}\left|m\right|\right)^{-b}=2^{n\left(a-bc\right)}\left|m\right|^{-b}, & \text{if }\left(n,m\right)\in M_{1},\\
2^{an}\cdot\left(2^{nc}\right)^{-b}=2^{n\left(a-bc\right)}, & \text{if }\left(n,m\right)\in M_{2},\\
2^{an}\cdot\left(2^{nc}\left|m\right|\right)^{-b}=2^{n\left(a-bc\right)}\cdot\left|m\right|^{-b}, & \text{if }\left(n,m\right)\in M_{3},\\
2^{an}\cdot\left(2^{n}\right)^{-b}=2^{n\left(a-b\right)}, & \text{if }\left(n,m\right)\in M_{4}.
\end{cases}
\]
Based on this asymptotic behaviour, we can now characterize finiteness
of $\left\Vert w^{\left(a,b\right)}\right\Vert _{\ell^{\infty}\left(M_{i}\right)}$
for each $i\in\underline{4}$.
\begin{enumerate}
\item On $M_{1}$, we see that 
\begin{align*}
\left\Vert w^{\left(a,b\right)}\right\Vert _{\ell^{\infty}\left(M_{1}\right)} & \asymp\left\Vert \left(2^{n\left(a-bc\right)}\cdot\left|m\right|^{-b}\right)_{n\geq0,m\neq0}\right\Vert _{\ell^{\infty}}\\
\left(b\geq0,\,\left|m\right|\geq1\right) & =\left\Vert \left(2^{n\left(a-bc\right)}\right)_{n\geq0}\right\Vert _{\ell^{\infty}}
\end{align*}
is finite if and only if $a-bc\leq0$.
\item The norm $\left\Vert w^{\left(a,b\right)}\right\Vert _{\ell^{\infty}\left(M_{2}\right)}\asymp\left\Vert \left(2^{n\left(a-bc\right)}\right)_{n\geq0}\right\Vert _{\ell^{\infty}}$
is finite iff the condition $a-bc\leq0$ from the preceding case
is satisfied.
\item On $M_{3}$, we see that
\begin{align*}
\left\Vert w^{\left(a,b\right)}\right\Vert _{\ell^{\infty}\left(M_{3}\right)} & \asymp\left\Vert \left(2^{n\left(a-bc\right)}\cdot\left|m\right|^{-b}\right)_{n<0,\,\left|m\right|\geq\left\lceil 2^{n\left(1-c\right)}\right\rceil }\right\Vert _{\ell^{\infty}}\\
\left(b\geq0\right) & =\left\Vert \left(2^{n\left(a-bc\right)}\cdot\left\lceil 2^{n\left(1-c\right)}\right\rceil ^{-b}\right)_{n<0}\right\Vert _{\ell^{\infty}}\\
 & \overset{\left(\ast\right)}{\asymp}\left\Vert \left(2^{n\left(a-bc\right)}\cdot2^{-nb\left(1-c\right)}\right)_{n<0}\right\Vert _{\ell^{\infty}}\\
 & =\left\Vert \left(2^{-n\left(b-a\right)}\right)_{n<0}\right\Vert _{\ell^{\infty}}
\end{align*}
is finite if and only if $b-a\leq0$. To justifty the step marked
with $\left(\ast\right)$, note that we have $n\left(1-c\right)\geq0$
for $n<0$ and thus $2^{n\left(1-c\right)}\geq1$, which easily yields
\[
2^{n\left(1-c\right)}\leq\left\lceil 2^{n\left(1-c\right)}\right\rceil \leq2^{n\left(1-c\right)}+1\leq2\cdot2^{n\left(1-c\right)}.
\]

\item On $M_{4}$, we see that
\[
\left\Vert w^{\left(a,b\right)}\right\Vert _{\ell^{\infty}\left(M_{4}\right)}\asymp\left\Vert \left(2^{n\left(a-b\right)}\right)_{n<0,\:\left|m\right|\leq\left\lceil 2^{n\left(1-c\right)}\right\rceil -1}\right\Vert _{\ell^{\infty}}=\left\Vert \left(2^{-n\left(b-a\right)}\right)_{n<0}\right\Vert _{\ell^{\infty}}
\]
is finite if and only if $b-a\leq0$.
\end{enumerate}

In summary, we see (for $b\geq0$) that $\left\Vert w^{\left(a,b\right)}\right\Vert _{\ell^{\infty}\left(\mathbb{Z}^{2}\right)}$
is finite if and only if we have $b\leq a\leq bc$.

\noindent Now, we characterize the condition $\left\Vert w^{\left(a,b\right)}\right\Vert _{\ell^{\theta}\left(\mathbb{Z}^{2}\right)}<\infty$
for $b\geq0$ in terms of $a,b$ and $\theta\in\left(0,\infty\right)$.
As above, we distinguish four cases:
\begin{enumerate}
\item On $M_{1}$, we see that
\begin{align*}
\left\Vert w^{\left(a,b\right)}\right\Vert _{\ell^{\theta}\left(M_{1}\right)}^{\theta} & =\sum_{n=0}^{\infty}\left[\sum_{m\in\mathbb{Z}\setminus\left\{ 0\right\} }\left(w_{n,m}^{\left(a,b\right)}\right)^{\theta}\right]\\
 & \asymp\left(\sum_{n=0}^{\infty}2^{n\theta\left(a-bc\right)}\right)\cdot\left(\sum_{m=1}^{\infty}m^{-\theta b}\right)
\end{align*}
is finite if and only if $a-bc<0$ and $\theta b>1$.
\item On $M_{2}$, we see that
\[
\left\Vert w^{\left(a,b\right)}\right\Vert _{\ell^{\theta}\left(M_{2}\right)}^{\theta}\asymp\sum_{n=0}^{\infty}2^{n\theta\left(a-bc\right)}
\]
is finite if and only if $a-bc<0$, which is already implied by finiteness
of $\left\Vert w^{\left(a,b\right)}\right\Vert _{\ell^{\theta}\left(M_{1}\right)}$.
\item On $M_{3}$, we see that (since we can assume that $b\theta>1$ by
finiteness of $\left\Vert w^{\left(a,b\right)}\right\Vert _{\ell^{\theta}\left(M_{1}\right)}$),
finiteness of
\begin{align*}
\left\Vert w^{\left(a,b\right)}\right\Vert _{\ell^{\theta}\left(M_{3}\right)}^{\theta} & \asymp\sum_{n=-\infty}^{-1}\sum_{\left|m\right|\geq\left\lceil 2^{n\left(1-c\right)}\right\rceil }\left(w_{n,m}^{\left(a,b\right)}\right)^{\theta}\\
 & \asymp\sum_{n=1}^{\infty}\left(2^{-n\theta\left(a-bc\right)}\sum_{m=\left\lceil 2^{-n\left(1-c\right)}\right\rceil }^{\infty}m^{-b\theta}\right)\\
\left(\text{since }b\theta>1\right) & \overset{\left(\ast\right)}{\asymp}\sum_{n=1}^{\infty}\left(2^{-n\theta\left(a-bc\right)}\cdot\left\lceil 2^{-n\left(1-c\right)}\right\rceil ^{1-b\theta}\right)\\
\left(\text{since }n\left(c-1\right)\geq0\right) & \asymp\sum_{n=1}^{\infty}\left(2^{-n\theta\left(a-bc\right)}\cdot2^{n\left(c-1\right)\left(1-b\theta\right)}\right)\\
 & =\sum_{n=1}^{\infty}\left(2^{n\left(c+\theta\left(b-a\right)-1\right)}\right)
\end{align*}
is equivalent (under the assumption $\left\Vert w^{\left(a,b\right)}\right\Vert _{\ell^{\theta}\left(M_{1}\right)}<\infty$)
to
\[
c+\theta\left(b-a\right)-1<0.
\]
Here, at $\left(\ast\right)$, we used (for $m_{0}=\left\lceil 2^{n\left(c-1\right)}\right\rceil \in\mathbb{N}$)
the estimate
\begin{align}
\sum_{m=m_{0}}^{\infty}m^{\varrho} & =\sum_{m=m_{0}}^{\infty}\int_{m}^{m+1}m^{\varrho}\,{\rm d}x\nonumber \\
 & \overset{\left(\dagger\right)}{\asymp_{\varrho}}\sum_{m=m_{0}}^{\infty}\int_{m}^{m+1}x^{\varrho}\,{\rm d}x\nonumber \\
 & =\int_{m_{0}}^{\infty}x^{\varrho}\,{\rm d}x\nonumber \\
 & =\begin{cases}
\frac{x^{1+\varrho}}{1+\varrho}\bigg|_{m_{0}}^{\infty}=\frac{m_{0}^{1+\varrho}}{-\left(1+\varrho\right)}\asymp_{\varrho}\; m_{0}^{1+\varrho}, & \text{if }\varrho<-1,\\
\infty, & \text{if }\varrho\geq-1,
\end{cases}\label{eq:PowerSeriesAsymptotic}
\end{align}
which is justified, since inside the integral at $\left(\dagger\right)$,
we have $1\leq m_{0}\leq m\leq x\leq m+1$ and thus $m\leq x\leq m+1\leq2m$,
which finally yields $m^{\varrho}\asymp_{\varrho}x^{\varrho}$ for
arbitrary $\varrho\in\mathbb{R}$.
\item On $M_{4}$, the norm
\begin{align*}
\left\Vert w^{\left(a,b\right)}\right\Vert _{\ell^{\theta}\left(M_{4}\right)}^{\theta} & \asymp\sum_{n=-\infty}^{-1}\sum_{m=-\left\lceil 2^{n\left(1-c\right)}\right\rceil +1}^{\left\lceil 2^{n\left(1-c\right)}\right\rceil -1}2^{n\theta\left(a-b\right)}\\
 & =\sum_{\ell=1}^{\infty}2^{-\ell\theta\left(a-b\right)}\left(2\cdot\left\lceil 2^{-\ell\left(1-c\right)}\right\rceil -1\right)\\
 & \overset{\left(\ast\right)}{\asymp}\sum_{\ell=1}^{\infty}2^{-\ell\theta\left(a-b\right)}2^{-\ell\left(1-c\right)}\\
 & =\sum_{\ell=1}^{\infty}2^{-\ell\left[1-c+\theta\left(a-b\right)\right]}
\end{align*}
is finite if and only if $1-c+\theta\left(a-b\right)>0$. Here, the
step marked with $\left(\ast\right)$ used that we have for $\ell\in\mathbb{N}$
that $-\ell\left(1-c\right)\geq0$ and hence $2^{-\ell\left(1-c\right)}\geq1$,
which implies
\[
\qquad\qquad\qquad\quad2^{-\ell\left(1-c\right)}\leq\left\lceil 2^{-\ell\left(1-c\right)}\right\rceil \leq2\!\cdot\!\left\lceil 2^{-\ell\left(1-c\right)}\right\rceil -1\leq2\!\cdot\!\left\lceil 2^{-\ell\left(1-c\right)}\right\rceil \leq2\!\cdot\!\left(2^{-\ell\left(1-c\right)}+1\right)\leq4\cdot2^{-\ell\left(1-c\right)}.
\]

\end{enumerate}

\noindent Altogether, we have shown for $\theta\in\left(0,\infty\right)$
and $b\geq0$ that $\left\Vert w^{\left(a,b\right)}\right\Vert _{\ell^{\theta}\left(\mathbb{Z}^{2}\right)}$
is finite if and only if we have
\begin{align*}
 & b\theta>1\text{ and }a-bc<0\text{ and }1-c+\theta\left(a-b\right)>0\\
\Longleftrightarrow & b\theta>1\text{ and }a\in\left(b+\frac{c-1}{\theta},\: cb\right).
\end{align*}

\item $c\in\left(-\infty,1\right)$. Here, we use the modified sets
\begin{align*}
M_{1} & :=\left\{ \left(n,m\right)\in\mathbb{Z}^{2}\with n\geq0\text{ and }\left|m\right|\leq\left\lceil 2^{\left(1-c\right)n}\right\rceil \right\} ,\\
M_{2} & :=\left\{ \left(n,m\right)\in\mathbb{Z}^{2}\with n\geq0\text{ and }\left|m\right|\geq\left\lceil 2^{\left(1-c\right)n}\right\rceil +1\right\} ,\\
M_{3} & :=\left\{ \left(n,m\right)\in\mathbb{Z}^{2}\with n<0\text{ and }m\neq0\right\} ,\\
M_{4} & :=\left\{ \left(n,m\right)\in\mathbb{Z}^{2}\with n<0\text{ and }m=0\right\} .
\end{align*}
Completely analogous to the previous case, one can show
\[
w_{n,m}^{\left(a,b\right)}=2^{an}\cdot\left\Vert T_{n,m,\varepsilon}^{\left(c\right)}\right\Vert ^{-b}\asymp\begin{cases}
2^{an}\cdot2^{-bn}=2^{n\left(a-b\right)}, & \text{if }\left(n,m\right)\in M_{1},\\
2^{an}\cdot\left(2^{cn}\left|m\right|\right)^{-b}=2^{n\left(a-bc\right)}\left|m\right|^{-b}, & \text{if }\left(n,m\right)\in M_{2},\\
2^{an}\cdot\left(2^{nc}\left|m\right|\right)^{-b}=2^{n\left(a-bc\right)}\left|m\right|^{-b}, & \text{if }\left(n,m\right)\in M_{3},\\
2^{an}\cdot\left(2^{nc}\right)^{-b}=2^{n\left(a-bc\right)}, & \text{if }\left(n,m\right)\in M_{4}.
\end{cases}
\]
Using this simplified expression for $w^{\left(a,b\right)}$, one
can characterize finiteness of $\left\Vert w^{\left(a,b\right)}\right\Vert _{\ell^{\infty}\left(M_{i}\right)}$
for $i\in\underline{4}$ as in the previous case. The results are
as follows:

\begin{enumerate}
\item On $M_{1}$,  $\left\Vert w^{\left(a,b\right)}\right\Vert _{\ell^{\infty}\left(M_{1}\right)}$
is finite if and only if $a-b\leq0$.
\item On $M_{2}$,  $\left\Vert w^{\left(a,b\right)}\right\Vert _{\ell^{\infty}\left(M_{2}\right)}$
is finite if and only if $a-b\leq0$.
\item On $M_{3}$,  $\left\Vert w^{\left(a,b\right)}\right\Vert _{\ell^{\infty}\left(M_{3}\right)}$
is finite if and only if $a-bc\geq0$.
\item On $M_{4}$,  $\left\Vert w^{\left(a,b\right)}\right\Vert _{\ell^{\infty}\left(M_{4}\right)}$
is finite if and only if $a-bc\geq0$.
\end{enumerate}

\noindent Altogether, we see that $\left\Vert w^{\left(a,b\right)}\right\Vert _{\ell^{\infty}\left(\mathbb{Z}^{2}\right)}$
is finite if and only if we have $bc\leq a\leq b$.

Finally, one can characterize the condition $\left\Vert w^{\left(a,b\right)}\right\Vert _{\ell^{\theta}\left(\mathbb{Z}^{2}\right)}<\infty$
for $b\geq0$ in terms of $a,b$ and $\theta\in\left(0,\infty\right)$,
as for $c\geq1$. The results are as follows:
\begin{enumerate}
\item On $M_{1}$, $\left\Vert w^{\left(a,b\right)}\right\Vert _{\ell^{\theta}\left(M_{1}\right)}^{\theta}$
is finite if and only if $1-c+\theta\left(a-b\right)<0$.
\item On $M_{2}$,  $\left\Vert w^{\left(a,b\right)}\right\Vert _{\ell^{\theta}\left(M_{2}\right)}^{\theta}$
is finite if and only if we have $b\theta>1$ and $\theta\left(a-b\right)+1-c<0$.
\item On $M_{3}$,  $\left\Vert w^{\left(a,b\right)}\right\Vert _{\ell^{\theta}\left(M_{3}\right)}^{\theta}$
is finite if and only if we have $b\theta>1$ and $\theta\left(bc-a\right)<0$,
which is equivalent to $bc-a<0$.
\item On $M_{4}$,  $\left\Vert w^{\left(a,b\right)}\right\Vert _{\ell^{\theta}\left(M_{4}\right)}^{\theta}$
is finite if and only if we have $\theta\left(bc-a\right)<0$, i.e.\@
if and only if $bc-a<0$.
\end{enumerate}

\noindent Altogether, we see for $\theta\in\left(0,\infty\right)$
and $b\geq0$ that $\left\Vert w^{\left(a,b\right)}\right\Vert _{\ell^{\theta}\left(\mathbb{Z}^{2}\right)}$
is finite if and only if we have
\begin{align*}
 & b\theta>1\text{ and }bc-a<0\text{ and }1-c+\theta\left(a-b\right)<0\\
\Longleftrightarrow & b\theta>1\text{ and }a\in\left(cb,\: b+\frac{c-1}{\theta}\right).
\end{align*}

\end{casenv}

Finally, recall that we wanted to characterize finiteness of $\left\Vert w\right\Vert _{\ell^{\theta}\left(I\right)}$
with $w$ as in equation (\ref{eq:ShearletCoorbitMainWeight}), i.e.\@
with 
\[
w_{n,m,\varepsilon}\asymp w_{n,m}^{\left(\alpha+\left(1+c\right)\gamma,\:\beta\right)}+w_{n,m}^{\left(\alpha+\left(1+c\right)\gamma,\;\beta-k\right)}.
\]
Using the characterization that we just obtained, we see that $\left\Vert w\right\Vert _{\ell^{\theta}\left(I\right)}$
is finite if and only if the following holds:
\begin{equation}
\begin{cases}
\beta\overset{!}{\geq}k\text{ and }\beta\overset{!}{\leq}\alpha+\left(1+c\right)\gamma\overset{!}{\leq}c\left(\beta-k\right), & \text{if }c\geq1\text{ and }\theta=\infty,\\
\beta\overset{!}{\geq}k\text{ and }\max\left\{ c\beta,c\left(\beta-k\right)\right\} \overset{!}{\leq}\alpha+\left(1+c\right)\gamma\overset{!}{\leq}\beta-k, & \text{if }c<1\text{ and }\theta=\infty,\\
\beta\overset{!}{>}k+\frac{1}{\theta}\text{ and }\beta+\frac{c-1}{\theta}\overset{!}{<}\alpha+\left(1+c\right)\gamma\overset{!}{<}c\left(\beta-k\right), & \text{if }c\geq1\text{ and }\theta<\infty,\\
\beta\overset{!}{>}k+\frac{1}{\theta}\text{ and }\max\left\{ c\beta,c\left(\beta-k\right)\right\} \overset{!}{<}\alpha+\left(1+c\right)\gamma\overset{!}{<}\beta-k+\frac{c-1}{\theta}, & \text{if }c<1\text{ and }\theta<\infty.
\end{cases}\label{eq:ShearletCoorbitMainCondition}
\end{equation}
Here, we used $\gamma=\frac{1}{2}-\frac{1}{r}+\frac{1}{p}-\frac{1}{q}$,
cf.\@ equation (\ref{eq:ShearletCoorbitMainWeight}). Note
\[
\max\left\{ c\beta,c\left(\beta-k\right)\right\} =\begin{cases}
c\beta, & \text{if }c\geq0,\\
c\left(\beta-k\right), & \text{if }c<0.
\end{cases}
\]

Finally, we are in a position to apply Corollary \ref{cor:SimplifiedSobolevEmbedding}.
A sufficient condition for existence of the embedding 
\begin{equation}
{\rm Co}\left(L_{v^{\left(\alpha,\beta\right)}}^{p,r}\left(\mathbb{R}^{2}\rtimes H^{\left(c\right)}\right)\right)\cong\mathcal{D}\left(\mathcal{S}^{\left(c\right)},L^{p},\ell_{u^{\left(r\right)}}^{r}\right)\hookrightarrow W^{k,q}\left(\mathbb{R}^{2}\right)\label{eq:ShearletCoorbitEmbedding}
\end{equation}
is that $p\leq q$ and $w\in\ell^{q^{\triangledown}\cdot\left(r/q^{\triangledown}\right)'}\left(\mathbb{Z}^{2}\times\left\{ \pm1\right\} \right)$.
In view of equations (\ref{eq:ShearletCoorbitMainCondition}), (\ref{eq:SpecialExponentInfiniteCharacterization})
and (\ref{eq:SpecialExponentReciprocal}), the second part of this
condition is equivalent to the following:
\begin{equation}
\begin{cases}
\beta\geq k\text{ and }\beta\leq\alpha+\left(1+c\right)\gamma\leq c\left(\beta-k\right), & \text{if }c\geq1\text{ and }r\leq q^{\triangledown},\\
\beta\geq k\text{ and }\max\left\{ c\beta,c\left(\beta-k\right)\right\} \leq\alpha+\left(1+c\right)\gamma\leq\beta-k, & \text{if }c<1\text{ and }r\leq q^{\triangledown},\\
\beta>k+\frac{1}{q^{\triangledown}}-\frac{1}{r}\text{ and }\beta+\left(c-1\right)\left(\frac{1}{q^{\triangledown}}-\frac{1}{r}\right)<\alpha+\left(1+c\right)\gamma<c\left(\beta-k\right), & \text{if }c\geq1\text{ and }r>q^{\triangledown},\\
\beta>k+\frac{1}{q^{\triangledown}}-\frac{1}{r}\text{ and }\max\left\{ c\beta,c\left(\beta-k\right)\right\} <\alpha+\left(1+c\right)\gamma<\beta-k+\left(c-1\right)\left(\frac{1}{q^{\triangledown}}-\frac{1}{r}\right), & \text{if }c<1\text{ and }r>q^{\triangledown}.
\end{cases}\label{eq:ShearletMainConditionSimplified}
\end{equation}
Furthermore, Corollary \ref{cor:SimplifiedSobolevEmbedding} shows
for $q\in\left(0,2\right]\cup\left\{ \infty\right\} $ that these
conditions are also necessary for existence of the embedding (\ref{eq:ShearletCoorbitEmbedding}).
Finally, a necessary (but (probably) not sufficient) condition for
existence of the embedding in case of $q\in\left(2,\infty\right)$
is obtained by replacing $q^{\triangledown}$ by $q$ everywhere in
the sufficient condition (\ref{eq:ShearletMainConditionSimplified})
(and by additionally requiring $p\leq q$).

In the present setting, I refrain from invoking the ``Besov detour''
for $q\in\left(2,\infty\right)$, since the necessary and sufficient
conditions for existence of the embedding $\mathcal{D}\left(\mathcal{S}^{\left(c\right)},L^{p},\ell_{u}^{r}\right)\hookrightarrow\mathcal{B}_{n}^{q,s}\left(\mathbb{R}^{2}\right)$
from \cite[Theorem 6.3.12]{VoigtlaenderPhDThesis} lack sharpness
for $q\in\left(2,\infty\right)$, since the covering $\mathcal{S}^{\left(c\right)}$
is not moderate%
\footnote{See \cite[Definition 3.3.1]{VoigtlaenderPhDThesis} for the definition
of moderateness of one covering with respect to another one.%
} with respect to the dyadic ``Besov'' covering. Thus, the results
in \cite{VoigtlaenderPhDThesis} do not yield an improvement in this
case.

Before closing this example, I want to draw attention to a special
case of the characterization from above: The starting point for the
present paper was the following question posed by Holger Rauhut:
\begin{quote}
When is there an embedding of a shearlet coorbit space into a BV space?
\end{quote}
In view of Corollary \ref{cor:EmbeddingIntoBVEquivalentToSobolevEmbedding},
this is equivalent to asking when ${\rm Co}\left(L_{v^{\left(\alpha,\beta\right)}}^{p,r}\left(\mathbb{R}^{2}\rtimes H^{\left(1/2\right)}\right)\right)\hookrightarrow W^{1,1}\left(\mathbb{R}^{2}\right)$
is true. This amounts to choosing $k=q=1$ and $c=\frac{1}{2}$. Because
of $q=1\in\left(0,2\right]$, we thus obtain a \emph{complete solution}
to this question: The desired embedding holds \emph{if and only if}
we have $p\leq1$ and
\[
\begin{cases}
\beta\geq1\text{ and }\frac{\beta}{2}\leq\alpha+\frac{3}{2}\left(\frac{1}{p}-\frac{1}{r}-\frac{1}{2}\right)\leq\beta-1, & \text{if }r\leq1,\\
\beta>2-\frac{1}{r}\text{ and }\frac{\beta}{2}<\alpha+\frac{3}{2}\left(\frac{1}{p}-\frac{1}{r}-\frac{1}{2}\right)<\beta-1-\frac{1}{2}\left(1-\frac{1}{r}\right), & \text{if }r>1.
\end{cases}
\]
Slightly simplified, this last condition reads
\begin{equation}
\begin{cases}
\beta\geq1\text{ and }\frac{\beta}{2}+\frac{3}{4}\leq\alpha+\frac{3}{2}\left(\frac{1}{p}-\frac{1}{r}\right)\leq\beta-\frac{1}{4}, & \text{if }r\leq1,\\
\beta>2-\frac{1}{r}\text{ and }\frac{\beta}{2}+\frac{3}{4}<\alpha+\frac{3}{2}\left(\frac{1}{p}-\frac{1}{r}\right)<\beta-\frac{3}{4}+\frac{1}{2r}, & \text{if }r>1.
\end{cases}\label{eq:ShearletBVEmbeddingSimplified}
\end{equation}
Note that we have $\left(\beta-\frac{1}{4}\right)-\left(\frac{\beta}{2}+\frac{3}{4}\right)=\frac{\beta}{2}-1\geq0$
if and only if $\beta\geq2$. For $r\leq1$, this shows that the condition
 in equation (\ref{eq:ShearletBVEmbeddingSimplified}) can be satisfied
(with a suitable choice of $\alpha\in\mathbb{R}$) if and only if
$\beta\geq2$. Likewise, $\left(\beta-\frac{3}{4}+\frac{1}{2r}\right)-\left(\frac{\beta}{2}+\frac{3}{4}\right)=\frac{\beta}{2}-\frac{3}{2}+\frac{1}{2r}>0$
if and only if $\beta>3-\frac{1}{r}$. This shows that for $r>1$,
the condition in equation (\ref{eq:ShearletBVEmbeddingSimplified})
can be satisfied (for a suitable choice of $\alpha\in\mathbb{R}$)
if and only if $\beta>3-\frac{1}{r}$.

As concluding remarks, we note the following:
\begin{enumerate}
\item The condition $\beta\geq k$ is always necessary for the embedding
${\rm Co}\left(L_{v^{\left(\alpha,\beta\right)}}^{p,r}\left(\mathbb{R}^{2}\rtimes H^{\left(c\right)}\right)\right)\hookrightarrow W^{k,q}\left(\mathbb{R}^{2}\right)$.
Thus, using the weight $v^{\left(s\right)}$ as in \cite{Dahlke_etal_sh_coorbit1,Dahlke_etal_sh_coorbit2}
(which amounts to choosing $\beta=0$), one can \emph{never} obtain
an embedding into a Sobolev space $W^{k,q}\left(\mathbb{R}^{2}\right)$
of \emph{positive} smoothness $k\geq1$.
\item Using the approach in the present paper, we were able to handle arbitrary
$c\in\mathbb{R}$, whereas in \cite[Theorem 6.3.11]{VoigtlaenderPhDThesis},
only the range $c\in\left(0,1\right]$ was considered, since in the
remaining range, the dyadic ``Besov covering'' is \emph{incompatible}
with the covering $\mathcal{S}^{\left(c\right)}$, i.e.\@ neither
is subordinate to the other one. This shows that, in addition to being
easier to apply, the results in the present paper can be employed
in some settings in which the findings from \cite{VoigtlaenderPhDThesis}
are not applicable.\qedhere
\end{enumerate}
\end{example}

\begin{example}
\label{exa:DiagonalCoorbitSpaces}(Coorbit spaces of the diagonal
group)

In this example, we consider coorbit spaces of the \textbf{diagonal
group} $D\leq{\rm GL}\left(\mathbb{R}^{d}\right)$ for $d\in\mathbb{N}$,
given by
\[
D=\left\{ {\rm diag}\left(a_{1},\dots,a_{d}\right)=\left(\begin{matrix}a_{1}\\
 & \ddots\\
 &  & a_{d}
\end{matrix}\right)\with a_{1},\dots,a_{d}\in\mathbb{R}^{\ast}=\mathbb{R}\setminus\left\{ 0\right\} \right\} .
\]
The general setup for the definition of coorbit spaces with respect
to $D$ is as in the case of shearlet-type coorbit spaces (cf.\@
Example \ref{exa:ShearletCoorbitSpaces}).

It is easy to see that the dual action $D\times\mathbb{R}^{d}\to\mathbb{R}^{d},\left(h,\xi\right)\mapsto h^{-T}\xi$
has the unique open(!) orbit $\mathcal{O}=\left(\mathbb{R}^{\ast}\right)^{d}$,
on which it acts with trivial (and hence compact) stabilizers. Thus,
$D$ is an admissible dilation group in the sense of \cite{FuehrVoigtlaenderCoorbitSpacesAsDecompositionSpaces}
and \cite[Chapter 4]{VoigtlaenderPhDThesis}.

Now, let $I:=\mathbb{Z}^{d}\times\left\{ \pm1\right\} ^{d}$ and define
\[
A_{k,\varepsilon}:={\rm diag}\left(\varepsilon_{1}\cdot2^{k_{1}},\dots,\varepsilon_{d}\cdot2^{k_{d}}\right)\in D\qquad\text{ for }\left(k,\varepsilon\right)\in I.
\]
We claim that $\left(A_{i}\right)_{i\in I}$ is well-spread in $D$.
To this end, note that 
\[
U_{0}:=D\cap\left\{ A\in\mathbb{R}^{d\times d}\with\left\Vert A-{\rm id}\right\Vert <\frac{1}{2}\right\} 
\]
is a unit neighborhood in $D$. Thus, there is a (smaller) unit neighborhood
$U\subset D$ with $U=U^{-1}$ and $UU\subset U_{0}$.

We claim that $\left(A_{i}\right)_{i\in I}$ is $U$-separated. Indeed,
it is easy to see $A_{k,\varepsilon}^{-1}A_{\ell,\delta}=A_{\varepsilon\delta,\ell-k}$
and hence 
\begin{align*}
\left\Vert A_{k,\varepsilon}^{-1}A_{\ell,\delta}-{\rm id}\right\Vert  & =\left\Vert {\rm diag}\left(\left(\varepsilon\delta\right)_{1}\cdot2^{\ell_{1}-k_{1}}-1,\dots,\left(\varepsilon\delta\right)_{d}\cdot2^{\ell_{d}-k_{d}}-1\right)\right\Vert \\
 & =\max\left\{ \left|1-\left(\varepsilon\delta\right)_{i}\cdot2^{\ell_{i}-k_{i}}\right|\with i\in\underline{d}\right\} .
\end{align*}
Now, if $\varepsilon_{i}\neq\delta_{i}$ for some $i\in\underline{d}$,
this implies $\left(\varepsilon\delta\right)_{i}\cdot2^{\ell_{i}-k_{i}}<0$
and thus $\left\Vert A_{k,\varepsilon}^{-1}A_{\ell,\delta}-{\rm id}\right\Vert >1>\frac{1}{2}$.
If otherwise $\varepsilon_{i}=\delta_{i}$ for all $i\in\underline{d}$,
then $\left\Vert A_{k,\varepsilon}^{-1}A_{\ell,\delta}-{\rm id}\right\Vert <\frac{1}{2}$
implies $2^{\ell_{i}-k_{i}}\in B_{1/2}\left(1\right)=\left(\frac{1}{2},\frac{3}{2}\right)\subset\left(\frac{1}{2},2\right)$
and hence $\ell_{i}=k_{i}$ for all $i\in\underline{d}$. We have
thus shown $\left\Vert A_{k,\varepsilon}^{-1}A_{\ell,\delta}-{\rm id}\right\Vert \geq\frac{1}{2}$
if $\left(k,\varepsilon\right)\neq\left(\ell,\delta\right)$. But
$A_{k,\varepsilon}U\cap A_{\ell,\delta}U\neq\emptyset$ implies $A_{k,\varepsilon}^{-1}A_{\ell,\delta}\in UU^{-1}=UU\subset U_{0}$
and hence $\left\Vert A_{k,\varepsilon}^{-1}A_{\ell,\delta}-{\rm id}\right\Vert <\frac{1}{2}$,
i.e.\@ $\left(k,\varepsilon\right)=\left(\ell,\delta\right)$. Thus,
$\left(A_{i}\right)_{i\in I}$ is indeed $U$-separated.

Now, since $D$ is homeomorphic to $\left(\mathbb{R}^{\ast}\right)^{d}$,
it is easy to see that 
\[
V:=\left\{ {\rm diag}\left(a_{1},\dots,a_{d}\right)\with a_{1},\dots,a_{d}\in\left[\frac{1}{2},2\right]\right\} \subset D
\]
is a compact unit neighborhood. But $\left(A_{i}\right)_{i\in I}$
is $V$-dense and thus well-spread in $D$. Indeed, for arbitrary
$A={\rm diag}\left(a_{1},\dots,a_{d}\right)\in D$, we can choose
\[
\varepsilon_{i}:={\rm sgn}\left(a_{i}\right)\in\left\{ \pm1\right\} \qquad\text{ and }\qquad k_{i}:=\left\lfloor \log_{2}\left(\left|a_{i}\right|\right)\right\rfloor \in\mathbb{Z}\qquad\text{ for }i\in\underline{d}.
\]
With this choice, we easily see $-\log_{2}\left(\left|a_{i}\right|\right)\leq-k_{i}\leq1-\log_{2}\left(\left|a_{i}\right|\right)$
and thus $1\leq2^{-k_{i}}\left|a_{i}\right|\leq2$ for all $i\in\underline{d}$,
which implies
\[
A_{k,\varepsilon}^{-1}A={\rm diag}\left(2^{-k_{1}}\left|a_{1}\right|,\dots,2^{-k_{d}}\left|a_{d}\right|\right)\in V
\]
and hence $A\in A_{k,\varepsilon}V$.

Now, let $P:=\left(\frac{2}{3},\frac{3}{2}\right)^{d}$ and $Q:=\left(\frac{1}{2},2\right)^{d}$.
Then $\overline{P}\subset Q$ and it is easy to see $\bigcup_{i\in I}A_{i}^{-T}P=\left(\mathbb{R}^{\ast}\right)^{d}=\mathcal{O}$.
Thus, $\mathcal{P}:=\left(A_{i}^{-T}P\right)_{i\in I}$ and $\mathcal{Q}:=\mathcal{Q}_{D}:=\left(A_{i}^{-T}Q\right)_{i\in I}$
are both coverings of $\mathcal{O}$ induced by $D$ (cf.\@ \cite[Definition 4.4.3]{VoigtlaenderPhDThesis})
and hence semi-structured admissible coverings of $\mathcal{O}$,
by \cite[Lemma 4.4.4]{VoigtlaenderPhDThesis}. Consequently, $\mathcal{Q}$
is a structured admissible covering of $\mathcal{O}$ and in particular
a tight regular covering and thus also an $L^{p}$-decomposition covering
of $\mathcal{O}$ for all $p\in\left(0,\infty\right]$, cf.\@ Theorem
\ref{thm:StructuredAdmissibleCoveringsAreRegular} and Corollary \ref{cor:RegularPartitionsAreBAPUs}.

Now, by \cite[Theorem 4.6.4]{VoigtlaenderPhDThesis}, we get (up to
canonical identifications) that
\[
{\rm Co}\left(L_{v}^{p,r}\left(\mathbb{R}^{d}\rtimes D\right)\right)=\mathcal{D}\left(\mathcal{Q}_{D},L^{p},\ell_{u^{\left(r\right)}}^{r}\right)
\]
for every moderate weight $v:D\to\left(0,\infty\right)$, where the
weight $u^{\left(r\right)}$ is given by
\[
u_{i}^{\left(r\right)}:=\left|\det A_{i}\right|^{\frac{1}{2}-\frac{1}{r}}\cdot v\left(A_{i}\right)\qquad\text{ for }i\in I.
\]
See Example \ref{exa:ShearletCoorbitSpaces} for the definition of
a moderate weight and note that each submultiplicative, continuous
weight is moderate. Furthermore, \cite[Lemma 4.2.7]{VoigtlaenderPhDThesis}
shows that the set of submultiplicative, continuous weights is closed
under multiplication and addition and also under taking maximums and
inversions, where the inversion of a weight $v:G\to\left(0,\infty\right)$
is given by $v^{\vee}\left(x\right):=v\left(x^{-1}\right)$.

Now, we describe the class of weights $v:D\to\left(0,\infty\right)$
that we will consider in this example: For $\ell\in\underline{d}$,
the weight 
\[
v_{1}^{\left(\ell\right)}:D\to\left(0,\infty\right),{\rm diag}\left(a_{1},\dots,a_{d}\right)\mapsto\left|a_{\ell}\right|
\]
is continuous and (sub)multiplicative. This yields submultiplicativity
of the weight $v_{2}^{\left(\ell\right)}:=\max\left\{ 1,\smash{v_{1}^{\left(\ell\right)}}\right\} $,
which satisfies
\[
v_{2}^{\left(\ell\right)}\left({\rm diag}\left(a_{1},\dots,a_{d}\right)\right)=\begin{cases}
\left|a_{\ell}\right|, & \text{if }\left|a_{\ell}\right|\geq1,\\
1, & \text{if }\left|a_{\ell}\right|\leq1.
\end{cases}
\]
But since $v_{2}^{\left(\ell\right)}$ is a submultiplicative weight
on $D$, the same is true of $v_{3}^{\left(\ell\right)}:D\to\left(0,\infty\right),A\mapsto v_{2}^{\left(\ell\right)}\left(A^{-1}\right)$
and thus also of
\[
v^{\left(\ell,\alpha_{1},\alpha_{2}\right)}:D\to\left(0,\infty\right),A={\rm diag}\left(a_{1},\dots,a_{d}\right)\mapsto\left(v_{2}^{\left(\ell\right)}\left(A\right)\right)^{\alpha_{1}}\cdot\left(v_{3}^{\left(\ell\right)}\left(A\right)\right)^{-\alpha_{2}}=\begin{cases}
\left|a_{\ell}\right|^{\alpha_{1}}, & \text{if }\left|a_{\ell}\right|\geq1,\\
\left|a_{\ell}\right|^{\alpha_{2}}, & \text{if }\left|a_{\ell}\right|<1,
\end{cases}
\]
for arbitrary $\alpha_{1},\alpha_{2}\in\mathbb{R}$. Finally, for
$\alpha,\beta\in\mathbb{R}^{d}$, the weight 
\[
v^{\left(\alpha,\beta\right)}:D\to\left(0,\infty\right),A\mapsto\prod_{\ell=1}^{d}v^{\left(\ell,\alpha_{\ell},\beta_{\ell}\right)}\left(A\right)
\]
is submultiplicative as a product of submultiplicative weights.

Now, we are interested in existence of an embedding
\begin{equation}
{\rm Co}\left(L_{v^{\left(\alpha,\beta\right)}}^{p,r}\left(\mathbb{R}^{d}\rtimes D\right)\right)=\mathcal{D}\left(\mathcal{Q}_{D},L^{p},\ell_{u^{\left(r\right)}}^{r}\right)\hookrightarrow W^{k,q}\left(\mathbb{R}^{d}\right)\label{eq:DiagonalCoorbitDesiredEmbedding}
\end{equation}
for $k\in\mathbb{N}_{0}$ and $q\in\left(0,\infty\right]$, where
\begin{align*}
u_{k,\varepsilon}^{\left(r\right)} & =\left|\det A_{k,\varepsilon}\right|^{\frac{1}{2}-\frac{1}{r}}\cdot v^{\left(\alpha,\beta\right)}\left(A_{k,\varepsilon}\right)\\
 & =\prod_{\ell=1}^{d}\begin{cases}
2^{k_{\ell}\left(\alpha_{\ell}+\frac{1}{2}-\frac{1}{r}\right)}, & \text{if }k_{\ell}\geq0,\\
2^{k_{\ell}\left(\beta_{\ell}+\frac{1}{2}-\frac{1}{r}\right)}, & \text{if }k_{\ell}<0.
\end{cases}
\end{align*}
In the present setting, the weight $v:=w^{\left(q\right)}$ from Corollary
\ref{cor:SimplifiedSobolevEmbedding} is given by
\begin{align*}
v_{k,\varepsilon} & =\left|\det T_{k,\varepsilon}\right|^{\frac{1}{p}-\frac{1}{q}}\cdot\left(1+\left|b_{k,\varepsilon}\right|^{n}+\left\Vert T_{k,\varepsilon}\right\Vert ^{n}\right)\\
 & \asymp\left|\det A_{k,\varepsilon}\right|^{\frac{1}{q}-\frac{1}{p}}\cdot\left(1+\left\Vert A_{k,\varepsilon}^{-1}\right\Vert ^{n}\right)\\
 & =\left(1+\max\left\{ 2^{-nk_{\ell}}\with\ell\in\underline{d}\right\} \right)\cdot\prod_{\ell=1}^{d}2^{k_{\ell}\left(\frac{1}{q}-\frac{1}{p}\right)}\:,
\end{align*}
where it is important to note $T_{i}=A_{i}^{-T}$ for $i\in I$, since
$\mathcal{Q}_{D}=\left(A_{i}^{-T}Q\right)_{i\in I}$.

Thus, the relevant quotient $\frac{v}{u^{\left(r\right)}}$ satisfies
\begin{align}
\frac{v_{k,\varepsilon}}{u_{k,\varepsilon}^{\left(r\right)}} & \asymp\left(1+\max\left\{ 2^{-nk_{\ell}}\with\ell\in\underline{d}\right\} \right)\cdot\prod_{\ell=1}^{d}\begin{cases}
2^{k_{\ell}\left(\gamma-\alpha_{\ell}\right)}, & \text{if }k_{\ell}\geq0,\\
2^{k_{\ell}\left(\gamma-\beta_{\ell}\right)}, & \text{if }k_{\ell}<0
\end{cases}\nonumber \\
 & \asymp\left(1+\sum_{\ell=1}^{d}2^{-nk_{\ell}}\right)\cdot v^{\left(\gamma_{e}-\alpha,\,\gamma_{e}-\beta\right)}\left({\rm diag}\left(2^{k_{1}},\dots,2^{k_{d}}\right)\right)\nonumber \\
 & =v^{\left(\gamma_{e}-\alpha,\:\gamma_{e}-\beta\right)}\left({\rm diag}\left(2^{k_{1}},\dots,2^{k_{d}}\right)\right)+\sum_{\ell=1}^{d}v^{\left(\gamma_{e}-\alpha-ne_{\ell},\:\gamma_{e}-\beta-ne_{\ell}\right)}\left({\rm diag}\left(2^{k_{1}},\dots,2^{k_{d}}\right)\right),\label{eq:DiagonalGroupMainWeight}
\end{align}
with $\gamma:=\frac{1}{q}-\frac{1}{p}+\frac{1}{r}-\frac{1}{2}$ and
$\gamma_{e}:=\gamma\cdot\left(1,\dots,1\right)\in\mathbb{R}^{d}$.

Now, due to the exponential nature and due to the product structure
of the weight 
\[
w^{\left(\alpha,\beta\right)}\left(k_{1},\dots,k_{d}\right):=v^{\left(\alpha,\beta\right)}\left({\rm diag}\left(2^{k_{1}},\dots,2^{k_{d}}\right)\right)=\prod_{\ell=1}^{d}\begin{cases}
2^{k_{\ell}\alpha_{\ell}}, & \text{if }k_{\ell}\geq0,\\
2^{k_{\ell}\beta_{\ell}}, & \text{if }k_{\ell}<0,
\end{cases}
\]
we see
\begin{equation}
w^{\left(\alpha,\beta\right)}\in\ell^{\theta}\left(\mathbb{Z}^{d}\right)\Longleftrightarrow\begin{cases}
\alpha_{\ell}\leq0\text{ and }\beta_{\ell}\geq0\text{ for all }\ell\in\underline{d}, & \text{if }\theta=\infty,\\
\alpha_{\ell}<0\text{ and }\beta_{\ell}>0\text{ for all }\ell\in\underline{d}, & \text{if }\theta<\infty.
\end{cases}\label{eq:DiagonalGroupGeneralCharacterization}
\end{equation}

For brevity, let us write $a\leq b$ for $a,b\in\mathbb{R}^{d}$ if
$a_{\ell}\leq b_{\ell}$ for all $\ell\in\underline{d}$. The notation
$a<b$ is defined analogously. In view of equation (\ref{eq:DiagonalGroupMainWeight}),
we see
\begin{align*}
 & \frac{v}{u^{\left(r\right)}}\in\ell^{\theta}\left(\mathbb{Z}^{d}\times\left\{ \pm1\right\} ^{d}\right)\\
\Longleftrightarrow & \begin{cases}
\gamma_{e}-\alpha\leq0\text{ and }\gamma_{e}-\beta\geq0\text{ and }\gamma_{e}-\alpha-ne_{\ell}\leq0\text{ and }\gamma_{e}-\beta-ne_{\ell}\geq0\text{ for all }\ell\in\underline{d}, & \text{if }\theta=\infty,\\
\gamma_{e}-\alpha<0\text{ and }\gamma_{e}-\beta>0\text{ and }\gamma_{e}-\alpha-ne_{\ell}<0\text{ and }\gamma_{e}-\beta-ne_{\ell}>0\text{ for all }\ell\in\underline{d}, & \text{if }\theta<\infty
\end{cases}\\
\Longleftrightarrow & \begin{cases}
\alpha_{\ell}\geq\gamma\text{ and }\beta_{\ell}\leq\gamma-n\text{ for all }\ell\in\underline{d}, & \text{if }\theta=\infty,\\
\alpha_{\ell}>\gamma\text{ and }\beta_{\ell}<\gamma-n\text{ for all }\ell\in\underline{d}, & \text{if }\theta<\infty.
\end{cases}
\end{align*}
Now, Corollary \ref{cor:SimplifiedSobolevEmbedding} (in conjunction
with equation (\ref{eq:SpecialExponentInfiniteCharacterization}))
shows that the embedding (\ref{eq:DiagonalCoorbitDesiredEmbedding})
holds if we have $p\leq q$ and
\begin{equation}
\begin{cases}
\alpha_{\ell}\geq\frac{1}{q}-\frac{1}{p}+\frac{1}{r}-\frac{1}{2}\text{ and }\beta_{\ell}\leq\frac{1}{q}-\frac{1}{p}+\frac{1}{r}-\frac{1}{2}-n\text{ for all }\ell\in\underline{d}, & \text{if }r\leq q^{\triangledown},\\
\alpha_{\ell}>\frac{1}{q}-\frac{1}{p}+\frac{1}{r}-\frac{1}{2}\text{ and }\beta_{\ell}<\frac{1}{q}-\frac{1}{p}+\frac{1}{r}-\frac{1}{2}-n\text{ for all }\ell\in\underline{d}, & \text{if }r>q^{\triangledown}.
\end{cases}\label{eq:DiagonalGroupSimplified}
\end{equation}
For $q\in\left(0,2\right]\cup\left\{ \infty\right\} $, these conditions
are also necessary for existence of the embedding (\ref{eq:DiagonalCoorbitDesiredEmbedding}).
For $q\in\left(2,\infty\right)$, a necessary condition for (\ref{eq:DiagonalCoorbitDesiredEmbedding})
is that $p\leq q$ and that equation (\ref{eq:DiagonalGroupSimplified})
holds, with $q^{\triangledown}$ replaced by $q$ throughout.

In summary, due to the exponential nature of the weight $v^{\left(\alpha,\beta\right)}$,
our criteria are again reasonably sharp, even for $q\in\left(2,\infty\right)$.
The only difference between sufficient and necessary criteria is that
for $q\in\left(2,\infty\right)$ and $r\in\left(q^{\triangledown},q\right]$,
the sufficient condition requires a \emph{strict} inequality, while
the necessary condition only yields a non-strict estimate. We again
refrain from using the ``Besov detour'', since the methods from
\cite{VoigtlaenderPhDThesis} are also not sharp, since the covering
$\mathcal{Q}=\mathcal{Q}_{D}$ is not moderate with respect to the
dyadic ``Besov'' covering.\end{example}
\begin{rem}[Concluding remarks]
The examples in this section demonstrate the vast generality of our
embedding results: Using a uniform and easily applicable method, we
were able to give sufficient conditions and necessary conditions for
embeddings of largely different decomposition spaces into Sobolev
spaces. Furthermore, for $q\in\left(0,2\right]\cup\left\{ \infty\right\} $,
we still achieve the best of two (usually) contradictory properties:
generality \emph{and} sharpness. In fact, we can completely characterize
the existence of the desired embeddings.

Only for $q\in\left(2,\infty\right)$, we pay for our generality:
In this case, specialized results like those in \cite{KobayashiSugimotoModulationSobolevInclusion}
are superior to our criteria. Nevertheless, to my knowledge, the
criteria developed in this paper are currently the best ones known
for large classes of spaces; in particular for shearlet-type coorbit
spaces and coorbit spaces with respect to the diagonal group.
\end{rem}

\section*{Acknowledgements}

I would like to thank Hartmut Führ and Hans Feichtinger for fruitful
discussions and inspiring comments. Furthermore, I thank Holger Rauhut
for posing the question whether shearlet coorbit spaces embed into
BV spaces. In a sense, this question caused me to write this paper.

This research was funded by the Excellence Initiative of the German
federal and state governments, and by the German Research Foundation
(DFG), under the contract FU 402/5-1.

\bibliographystyle{plain}
\bibliography{Bibliography}

\end{document}